\newtheorem{Thm}{Theorem}[section]
\newtheorem{Prop}[Thm]{Proposition}
\newtheorem{Cor}[Thm]{Corollary}
\newtheorem{Lem}[Thm]{Lemma}
\theoremstyle{definition}
\newtheorem*{definition}{Definition}
\newtheorem*{remark}{Remark}
\numberwithin{equation}{section}
\begin{document}

\newcommand{\Coim}{\mathrm{Coim}}

\newcommand{\Q}[0]{\mathbb{Q}}
\newcommand{\F}[0]{\mathbb{F}}
\newcommand{\Z}[0]{\mathbb{Z}}
\newcommand{\N}[0]{\mathbb{N}}
\renewcommand{\O}[0]{\mathcal{O}}
\newcommand{\p}[0]{\mathfrak{p}}
\newcommand{\m}[0]{\mathrm{m}}
\newcommand{\Tr}{\mathrm{Tr}}
\newcommand{\Hom}[0]{\mathrm{Hom}}
\newcommand{\Gal}[0]{\mathrm{Gal}}
\newcommand{\Res}[0]{\mathrm{Res}}
\newcommand{\id}{\mathrm{id}}
\newcommand{\cl}{\mathrm{cl}}
\newcommand{\Li}{\mathrm{Li}}
\newcommand{\wt}{\widetilde}
\newcommand{\mult}{\mathrm{mult}}
\newcommand{\adm}{\mathrm{adm}}
\newcommand{\tr}{\mathrm{tr}}
\newcommand{\pr}{\mathrm{pr}}
\newcommand{\Ker}{\mathrm{Ker}}
\newcommand{\ab}{\mathrm{ab}}
\newcommand{\sep}{\mathrm{sep}}
\newcommand{\f}{\mathrm{f}}
\newcommand{\Md}{\mathrm{Md}}
\newcommand{\triv}{\mathrm{triv}}
\newcommand{\alg}{\mathrm{alg}}
\newcommand{\ur}{\mathrm{ur}}
\newcommand{\Coker}{\mathrm{Coker}}
\newcommand{\Aut}{\mathrm{Aut}}
\newcommand{\Ext}{\mathrm{Ext}}
\newcommand{\Iso}{\mathrm{Iso}}
\newcommand{\M}{\mathcal{M}}
\newcommand{\GL}{\mathrm{GL}}
\newcommand{\Fil}{\mathrm{Fil}}
\newcommand{\Fr}{\mathrm{Fr}}
\newcommand{\an}{\mathrm{an}}
\renewcommand{\c}{\mathcal }
\newcommand{\uL}{\underline{\mathcal L}}
\newcommand{\uM}{\underline{\mathrm{M}\mathrm{\Gamma }}}
\newcommand{\W}{\mathcal W}
\renewcommand{\L}{\mathcal L}
\newcommand{\R}{\mathcal R}
\newcommand{\crys}{\mathrm{crys}}
\newcommand{\st}{\mathrm{st}}
\newcommand{\CM}{\mathrm{CM\Gamma }}
\newcommand{\CV}{\mathcal{C}\mathcal{V}}
\newcommand{\To}{\longrightarrow}
\newcommand{\MF}{\underline{\mathrm{MF}}}

\renewcommand{\c}{\mathcal}
\newcommand{\uC}{\underline{\mathrm{CM}\mathrm{\Gamma }}}
\renewcommand{\L}{\mathcal L}

\title[Varieties with Bad Reduction at 3 Only]
{Projective Varieties with Bad Semi-stable Reduction at 3 Only}
\author{Victor Abrashkin}
\address{Department of Mathematical Sciences, Durham University, Science Laboratories, 
South Rd, Durham DH1 3LE, United Kingdom\ \&\ Steklov Mathematical 
Institute, Gubkina str. 8, 119991, Moscow, Russia}
\email{victor.abrashkin@durham.ac.uk}
\date{July 20, 2010}
\keywords{$p$-adic semi-stable representations, Shafarevich Conjecture}
\subjclass[2010]{11S20, 11G35, 14K15}

\begin{abstract} Suppose $F=W(k)[1/p]$ where $W(k)$ is 
the ring of Witt vectors with coefficients in algebraically 
closed field $k$ of characteristic $p\ne 2$. 
We construct integral theory of $p$-adic semi-stable representations of 
the absolute Galois group of $F$ with Hodge-Tate weights from $[0,p)$. 
This modification of Breuil's theory results in the following application 
in the spirit of Shafarevich's Conjecture. 
If $Y$ is a projective algebraic variety over $\Q $ with good reduction modulo 
all primes $l\ne 3$ and semi-stable reduction modulo 3 
then for the Hodge numbers of 
$Y_{\mathbb C}=Y\otimes _{\Q}\mathbb C$, it holds 
$h^2(Y_{\mathbb C})=h^{1,1}(Y_{\mathbb C})$.
\end{abstract}
\maketitle

\section*{Introduction}

Everywhere in the paper $p$ is a fixed prime number, $p\ne 2$, $k$ 
is algebraically closed field of charactersitic $p$, 
$F$ is the fraction field of the ring of Witt vectors $W(k)$, 
$\bar F$ is a fixed algebraic closure of $F$ and 
$\Gamma _F=\Gal (\bar F/F)$ is the absolute Galois group of $F$. 

Suppose $Y$ is a projective algebraic variety over $\Q $. 
Denote by $Y_{\mathbb C}$ the corresponding complex variety 
$Y\otimes _{\Q }\mathbb C$. For integers $n,m\geqslant 0$, set 
$h^n(Y_{\mathbb C})=\dim _{\mathbb C}H^n(Y_{\mathbb C},\mathbb C)$ and 
$h^{n,m}(Y_{\mathbb C})=\dim _{\mathbb C}H^n(\Omega _{Y_{\mathbb C}}^{m})$.

The main result of this paper can be stated as follows.

\begin{Thm} \label{T0.1} If $Y$ has semi-stable 
reduction modulo $3$ and good reduction 
 modulo all primes $l\ne 3$ then $h^2(Y_{\mathbb C})=h^{1,1}(Y_{\mathbb C})$. 
\end{Thm}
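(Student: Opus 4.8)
The plan is to apply the integral theory of semi-stable representations developed above with $p=3$, so that the second \'etale cohomology of $Y$, whose Hodge--Tate weights lie in $[0,3)=[0,p)$, falls exactly within its scope.

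\textbf{Step 1 (global input).} Spreading $Y$ out to a smooth proper scheme over $\Z[1/3]$ and to a semi-stable model over $\Z_3$, one invokes the smooth and semi-stable comparison theorems (good reduction forces the representation to be unramified for each $l\ne 3$; Tsuji's theorem, formerly the $C_{\st}$-conjecture, at $3$) to see that $V:=H^2(Y_{\bar\Q},\Q_3)$ is a representation of $\Gamma_{\Q}$ which is unramified outside $3$ and semi-stable at $3$ with Hodge--Tate weights in $\{0,1,2\}$. Moreover $\dim_{\Q_3}V=h^2(Y_{\mathbb C})$, and the multiplicities of the Hodge--Tate weights $0,1,2$ are $h^{0,2}(Y_{\mathbb C}),h^{1,1}(Y_{\mathbb C}),h^{2,0}(Y_{\mathbb C})$. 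By Hodge symmetry, equivalently by the Poincar\'e duality isomorphism $V\cong V^{\vee}(-2)$, one has $h^{0,2}=h^{2,0}$, so the asserted equality $h^2=h^{1,1}$ is equivalent to $h^{2,0}=0$, i.e. to the vanishing of the Hodge--Tate weight-$2$ part of $V$.

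\textbf{Step 2 (reduction mod $3$).} Choose a $\Gamma_{\Q}$-stable $\Z_3$-lattice $T\subset V$ and set $\bar T=T/3T$. By the theory of this paper, $T$, being semi-stable at $3$ with weights in $[0,p)$, is the Galois module attached to a strongly divisible $S$-lattice $\mathcal M$, and $\bar T$ is attached to the torsion object $\bar{\mathcal M}=\mathcal M/3\mathcal M$; the jumps of the filtration of $\bar{\mathcal M}$ have the same multiplicities as the Hodge--Tate weights of $V$. In particular $\dim_{\F_3}\mathrm{gr}^2\bar{\mathcal M}=h^{2,0}$, so it suffices to prove $\Fil^2\bar{\mathcal M}=0$.

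\textbf{Step 3 (ramification and discriminant bounds).} Apply the ramification estimate for torsion semi-stable representations furnished by the integral theory: the field $\Q(\bar T)$ cut out by $\bar T$ is unramified outside $3$, and its wild ramification at $3$ is bounded explicitly (the higher ramification subgroups in the lower numbering vanish beyond a small explicit index). This bounds the $3$-adic valuation of the different, hence the root discriminant of $\Q(\bar T)$; feeding it into the unconditional Odlyzko--Minkowski discriminant lower bounds forces $\Q(\bar T)$ to lie in a fixed small extension of $\Q$. Consequently $\Gal(\Q(\bar T)/\Q)$ is so restricted that the semisimplification of $\bar T$ is a sum of powers of the mod-$3$ cyclotomic character; equivalently, every Jordan--H\"older factor of $\bar{\mathcal M}$ is one of the elementary rank-one Breuil modules of weight $0$, $1$ or $2$.

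\textbf{Step 4 (conclusion).} It remains to exclude the weight-$2$ elementary modules from $\bar{\mathcal M}$. The quantity $\sum_i i\cdot\dim\mathrm{gr}^i$ is additive on short exact sequences of torsion Breuil modules, it equals $h^{1,1}+2h^{2,0}$, and $\dim\bar{\mathcal M}=h^2$; this already forces $\dim\mathrm{gr}^0\bar{\mathcal M}=\dim\mathrm{gr}^2\bar{\mathcal M}$, consistently with the duality $\bar{\mathcal M}\cong\bar{\mathcal M}^{\vee}(-2)$ coming from Poincar\'e duality. Combining Step 3 with this self-duality and with the fact that $V$ is pure of weight $2$, one pins down the shape of $\bar{\mathcal M}$ sufficiently to conclude that no elementary weight-$2$ Breuil subquotient can occur, whence $\Fil^2\bar{\mathcal M}=0$; therefore $h^{2,0}=0$ and $h^2(Y_{\mathbb C})=h^{1,1}(Y_{\mathbb C})$.

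The main obstacle is Step 3: obtaining a ramification bound sharp enough to run the discriminant argument when $p=3$, where the wild ramification is only barely under control --- this is precisely the point of modifying Breuil's theory so as to accommodate weights in the full range $[0,p)$ while retaining effective ramification estimates. A second delicate point is the module-theoretic analysis in Step 4: the elementary Breuil modules of weights $0$ and $2$ carry the same Galois module $\F_3$, and one must use the semi-stable integral structure (and not merely the Galois module $\bar T$) to show the weight-$2$ ones do not appear in $\bar{\mathcal M}$.
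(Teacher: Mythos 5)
Your Steps 1--3 follow the paper's strategy (comparison theorems, reduction mod $3$, ramification bound plus Odlyzko to pin down the field cut out by the mod-$3$ representation), but Step 4 contains the genuine gap: the exclusion of the weight-$2$ part cannot be carried out at the level of the torsion module $\bar{\mathcal M}$ by the mechanism you describe. The lattice in $\Q_3(2)$ gives a perfectly admissible strongly divisible module, unramified outside $3$ and semi-stable (indeed crystalline) at $3$, whose mod-$3$ reduction passes every ramification and field-of-definition test you impose in Step 3 (note moreover that $\F_3(2)\cong\F_3(0)$, so the mod-$3$ Galois module does not even see the weight). Self-duality is also powerless here: Poincar\'e duality forces $\dim\mathrm{gr}^0=\dim\mathrm{gr}^2$, so weight-$0$ and weight-$2$ pieces are excluded or admitted together, and $H^2$ of a K3 or abelian surface (self-dual, with $h^{2,0}=1$) shows there is no contradiction between self-duality and a nonzero $\Fil^2$. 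The only mechanism that kills the weights $0$ and $2$ is Deligne's purity applied to Frobenius eigenvalues at primes $l\ne 3$ of good reduction, and that argument lives on the characteristic-zero representation $V$, not on $\bar{\mathcal M}$: you must first know that every $\Gamma_{\Q}$-equivariant Jordan--H\"older factor of $V$ itself is a Tate twist $\Q_3(i)$, $i=0,1,2$, and then observe that eigenvalues $1$ and $l^2$ violate purity of weight $2$, leaving only $\Q_3(1)$.

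This is exactly the part of the paper your proposal has no counterpart for. Knowing that the semisimplification of $\bar T$ is a sum of characters does not imply that $V^{ss}$ is a sum of Tate twists (a priori $V$ could be an irreducible piece with reducible reduction); the paper rules this out by a d\'evissage of the whole $3$-divisible group attached to $T$ inside the mixed local-global category $\uM_{\Q}^t$: Proposition \ref{P5.1} (splitting off the \'etale and multiplicative parts via Ext-vanishing and the Appendix-A theorems on $p$-divisible groups in pre-abelian categories), and, crucially, Propositions \ref{P5.3}--\ref{P5.4}, where the vanishing of $\Ext(\c L(1/2),\c L(1/2))$ in the relevant subcategory is deduced from Lemma \ref{L5.5} together with Breuil's classification of $2$-dimensional semi-stable non-crystalline representations. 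This lifting of the mod-$3$ structure to a statement about $V$ is the step that requires the fully faithful modified functor $\widetilde{\CV}^{ft}$ and the Ext computations of Sections \ref{S1}--\ref{S2}; without it, and with Step 4 replaced by the duality/purity heuristic as you state it, the proof does not go through.
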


Remind that a generalization of the Shafarevich Conjecture about the non-existence of non-trivial 
abelian varieties over $\Q$ with everywhere good reduction was proved by Fontaine 
\cite{refFo2} 
and the author \cite {refAb2}, and states that 
\begin{equation}\label{E0.1}
h^1(Y_{\mathbb C})=h^3(Y_{\mathbb C})=0,\ \ h^2(Y_{\mathbb C})
=h^{1,1}(Y_{\mathbb C})
\end{equation}
if $Y$ has everywhere good reduction. (The Shafarevich Conjecture 
appears then as the equality 
$h^1(Y_{\mathbb C})=0$.) 
This result became possible due to the following two important achievements 
of Fontaine's theory of $p$-adic crystalline representations:
\medskip 

--- the Fontaine-Messing theorem relating etale and de 
Rham cohomology of smooth proper schemes over $W(k)$ 
in dimensions $[0,p)$, \cite{refFM} (it was later proved by 
Faltings in full generality, \cite{refFa});

--- the Fontaine-Laffaille integral theory of crystalline representations 
of $\Gamma _F$ with Hodge-Tate weights from $[0,p-2]$, \cite{refFL}.

Note that the Fontaine-Laffaille theory works essentially for 
Hodge-Tate weights from  
$[0,p)$ but does not give all Galois invariant lattices in the corresponding 
crystalline representations. Nevertheless, this theory admits  
improvement developed 
by the author in \cite{refAb1}. As a result, there was obtained a suitable 
integral theory for the case of Hodge-Tate  weights from $[0,p)$, which allowed us 
to prove some extras to statements \eqref{E0.1},  in particular, that modulo 
the 
Generalized Riemann Hypothesis it holds 
$h^4(Y_{\mathbb C})=h^{2,2}(Y_{\mathbb C})$.  

Since that time there was a huge progress in the study  
of semi-stable $p$-adic representations. Tsuji \cite{refTsu} proved 
a semi-stable case of the relation between etale and crystalline cohomology and 
Breuil \cite{refBr1, refBr2} developed an analogue of the Fontaine-Laffaille theory 
in the context of semi-stable representations 
(even for ramified basic fields). The papers 
\cite{refB-K} and \cite{refSch} studied the problem 
of the existence of abelian varieties over $\Q$ with only one prime of 
bad semi-stable reduction. Note that the progress in this direction 
is quite restrictive because our knowledge of algebraic number fields with 
prescribed ramification at a given prime number 
$p$ (and unramified outside $p$) is very 
far from to be complete. Theorem \ref{T0.1} represents 
an exceptional situation where the 
standard tools: the Odlyzko estimates of the minimal discriminants of 
algebraic number fields and the modern computing facilities (SAGE) are sufficient to 
resolve upcoming problems. In addition, the proof of 
this theorem requires a modification of Breuil's 
theory to work with semi-stable representations of $\Gamma _F$ with 
Hodge-Tate weights from $[0,p)$.

The structure of this paper can be described as follows. 

In Section \ref{S1} we introduce the category $\uL ^*$ of filtered 
$(\varphi ,N)$-modules over $\c W_1:=k[[u]]$. This is a special 
pre-abelian category, that is an additive category 
with kernels, cokernels 
and sufficiently nice behaving short exact sequences. Note that 
such categories play quite appreciable role in all our constructions. 
In Section \ref{S2}  we construct the functor $\c V^*$ from $\uL ^*$ to 
the category  of $\F _p[\Gamma _F]$-modules $\uM _F$ by introducing a ``truncated'' 
version of Fontaine's ring of semi-stable periods $\hat A_{st}$. 
The functor $\c V^*$ is not fully faithful but by taking 
into account the maximal etale subobjects of 
filtered modules from $\uL ^*$ we define a modification 
$\CV^*$ of $\c V^*$. This functor gives already a fully faithful functor 
from $\uL^*$ to the category of cofiltered  $\Gamma _F$-modules $\uC _F$. 
In Section \ref{S3} we give an interpretation of Breuil's theory in terms of 
$\c W:=W(k)[[u]]$-modules 
(Breuil worked with modules over the divided 
powers envelope of $\c W$) 
by introducing the  
category  of filtered $(\varphi ,N)$-modules $\uL ^{ft}$
over $\c W$. The advantage of this construction is that the 
objects of this category appear as strict subquotients of 
$p$-divisible groups in suitable pre-abelian category. This allows 
us to use 
devissage despite that all involved categories are not abelian. 
We also introduce the subcategories $\uL ^{u, ft}$ and, resp., $\uL ^{m, ft}$ of 
unipotent and, resp., multiplicative objects in $\uL ^{ft}$ and prove that 
any $\L\in\uL ^{ft}$ is a canonical extension 
\begin{equation} \label{E0.2}
 0\longrightarrow \L ^u
\longrightarrow\L\longrightarrow\L ^m\longrightarrow 0
\end{equation}
of a multiplicative object $\L ^m$ by a unipotent object $\L ^u$. 
In Section \ref{S4} we study Breuil's functor 
$\c V^{ft}:\uL ^{ft}\longrightarrow\uM _F$ in the situation of 
Hodge-Tate weights from $[0,p)$. We show that on the subcategory 
$\uL ^{u, ft}$  this functor is still 
fully faithful by proving that on the subcategory of 
killed by $p$ unipotent objects the functors $\c V^{ft}$ and $\c V^*$ 
coincide. 
Then we show that for any killed by $p$ object $\L$ of $\uL ^{ft}$, 
the functor $\c V^{ft}$ transforms the standard short exact 
sequence \eqref{E0.2} into a short exact sequence in $\uM _F$, which admits a 
functorial splitting. This splitting is used then to construct 
a modified version $\widetilde{\CV}^{ft} :
\uL ^{ft}\longrightarrow\uC _F$ of $\c V ^{ft}$, which is already 
fully faithful. This gives us an efficient control on all 
Galois invariant lattices of semi-stable representations with weights from 
$[0,p)$. Especially, we have an explicit description of all killed by $p$ 
subquotients of such lattices and the corresponding ramification estimates.
Finally, in Section \ref{S5} we give a proof of 
Theorem \ref{T0.1} following the strategy from \cite{refAb2}. 

Essentially, we obtain 
the following result: if $V$ is a 3-adic representation of 
$\Gamma _{\Q}=\Gal (\bar\Q /\Q)$ which is unramified outside 
3 and is semi-stable at 3 
then there is a $\Gamma _{\Q}$-equivariant filtration by $\Q_3$-subspaces 
$V=V_0\supset V_1\supset V_2\supset V_3=0$ such that for $0\leqslant i\leqslant 2$, 
the $\Gamma _{\Q}$-module $V_{i}/V_{i+1}$ is isomorphic to the product of finitely 
many copies of the Tate twist $\Q _3(i)$. If $V=H^2_{et}(Y_{F},\Q_3)$ then 
looking at the eigenvalues of the Frobenius morphisms of reductions modulo $l\ne 3$,  
we obtain that $V=V_1$ and $V_2=0$, and this implies that 
$h^2(Y_{\mathbb C})=h^{1,1}(Y_{\mathbb C})$. 

Note that our construction of the modification of Breuil's 
functor gives automatically 
the modification of the Fontaine-Laffaille functor, 
which essentially coincides with  
the modification constructed in \cite{refAb1}. It is worth mentioning that 
switching from Breuil's $S$-modules to $\c W$-modules means moving in the direction 
of Kisin's approach \cite{refKis} and recent approach to integral 
theory of $p$-adic representations by Liu \cite{Li1,Li2}. It would be also 
very interesting to study the opportunity to modify  
Breuil's functor over ramified base \cite{refCa1, refCa2} to the case of 
Hodge-Tate weights from $[0,p)$. 
Finally, mention quite surprising matching of the ramification estimates 
for semi-stable representations and the Leopoldt conjecture for the field 
$\Q (\root 3\of 3,\zeta _{9})$, cf. Section \ref{S5}. 

{\bf Acknowledgements.} The author is very grateful to  
Shin Hattori for numerous and helpful discussions.

\newpage

\section{The categories $\widetilde{\uL} ^*$, $\widetilde{\uL}_0 ^*$, 
$\uL ^*$, $\uL _0^*$ }
\label{S1}

Remind that $k$ is algebraically closed field of characteristic $p>2$. 
Let $\c W=W(k)[[u]]$, where $W(k)$ is the ring of Witt vectors with coefficients 
in $k$ and $u$ is an indeterminate. Denote by $\sigma $ 
the automorphism of $W(k)$ induced by the $p$-th power map on $k$ and 
agree to use the same symbol for its continuous extension 
to $\c W$ such that $\sigma (u)=u^p$. Denote by 
$N:\c W\longrightarrow\c W$ the continuous $W(k)$-linear derivation such
that $N(u)=-u$.

We shall often use below the following statement. 

\begin{Lem} \label{L1.1} Suppose $L$ is a finitely generated  
$\W $-module and $\c A$ is a 
$\sigma $-linear operator on $L$. Then the operator $\id _L-\c A$ is epimorphic. 
If, in addition, $\c A$ is nilpotent then $\id _L-\c A$ is bijective. 
\end{Lem}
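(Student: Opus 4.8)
The plan is to reduce everything to a linear-algebra statement over the field $k$, exploiting that $\W_1=k[[u]]$ is a complete local ring with residue field $k$. First I would treat the nilpotent case, which is clean: if $\c A$ is nilpotent, say $\c A^{n+1}=0$, then the formal inverse $\sum_{i=0}^{n}\c A^i$ makes sense as an actual operator on $L$ (the sum is finite), and one checks directly that $(\id-\c A)\bigl(\sum_{i=0}^n\c A^i\bigr)=\id-\c A^{n+1}=\id$, and likewise on the other side. Note $\c A$ being $\sigma$-linear is irrelevant here: the telescoping identity is purely formal and uses only that $\c A$ is additive and that $\c A^{n+1}=0$. So bijectivity in the nilpotent case is immediate.

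For the general epimorphicity claim I would argue by successive approximation using the $u$-adic filtration. Write $L$ as a finite $\W_1$-module; since we only need surjectivity I may as well assume $L$ is free of finite rank (a quotient of a free module, and surjectivity passes to quotients). Given a target $v\in L$, I want to solve $(\id-\c A)(x)=v$. I would build $x$ as a $u$-adically convergent series $x=\sum_{j\geqslant 0}x_j$ with $x_j\in u^jL$, determined recursively. The key observation is that $\c A$ is $\sigma$-linear with $\sigma(u)=u^p$, so $\c A(u^jL)\subseteq u^{pj}L$; in particular $\c A$ strictly increases the $u$-adic order for $j\geqslant 1$ and on the associated graded $\mathrm{gr}^0 L=L/uL$ (a finite-dimensional $k$-vector space) $\c A$ induces a $\sigma_k$-semilinear endomorphism $\bar{\c A}$, where $\sigma_k$ is the $p$-power Frobenius on $k$.

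Thus the heart of the matter is the case $L=\mathrm{gr}^0L$, i.e.\ showing that $\id-\bar{\c A}$ is surjective (hence, by finite-dimensionality, bijective) on a finite-dimensional $k$-vector space with $\bar{\c A}$ a $\sigma_k$-semilinear operator. This is where I expect the only real content to sit, and the hypothesis that $k$ is algebraically closed is essential: a $\sigma_k$-semilinear operator on $k^n$ is governed by a Lang-type argument. Concretely, after choosing a basis $\bar{\c A}$ is $x\mapsto B\,x^{(\sigma_k)}$ for a matrix $B$, and one wants to solve $x-B x^{(\sigma_k)}=w$; a standard density/dimension argument (the morphism $x\mapsto x-Bx^{(\sigma_k)}$ of affine spaces over $k$ has finite kernel and hence is dominant, so surjective on $k$-points since $k$ is algebraically closed — exactly Lang's theorem for the algebraic group $\mathbb{G}_a^n$ with the twisted Frobenius endomorphism) gives surjectivity. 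Having this, I would lift back: first solve modulo $uL$, subtract, land in $uL$, and repeat; because $\c A$ multiplies $u$-order by $p>1$, the corrections lie in $u^{p^m}L$ at stage $m$ and the series converges $u$-adically, giving an honest solution in $L$. Finally, the nilpotent case already handled shows $\id-\c A$ is moreover injective there, completing the proof. The main obstacle, then, is just isolating the Lang-theorem input for $\sigma_k$-semilinear maps over an algebraically closed field; everything else is bookkeeping with the $u$-adic filtration.
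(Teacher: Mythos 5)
Your proof is correct, and its skeleton matches the paper's: both arguments reduce to the residue space $L/uL$ (the paper does this in one line; your successive-approximation argument, using $\c A(u^jL)\subset u^{pj}L$ so that $\id-\c A$ is automatically invertible on $uL$ by a $u$-adically convergent geometric series, is exactly the implicit justification), and the nilpotent case is the same formal telescoping in both. Where you genuinely diverge is in the treatment of the induced $\sigma_k$-semilinear operator $\bar{\c A}$ on the finite-dimensional $k$-space: you get surjectivity of $\id-\bar{\c A}$ by a Lang-type geometric argument -- $x\mapsto x-Bx^{(\sigma_k)}$ is an additive endomorphism of $\mathbb{G}_a^n$ whose kernel is finite (the Jacobian of the defining equations is the identity, so the kernel is \'etale, hence $0$-dimensional), so the image is a closed subgroup of full dimension and the map is surjective on $k$-points. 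The paper instead uses pure semilinear algebra: the Fitting-type decomposition $L/uL=L_1\oplus L_2$ with $\c A$ bijective on $L_1$ and nilpotent on $L_2$, the Dieudonn\'e fact that a bijective $\sigma$-semilinear operator over algebraically closed $k$ has a basis of fixed vectors ($L_1=L_0\otimes_{\F_p}k$ with $\c A|_{L_0}=\id$), and then Artin--Schreier surjectivity of $\alpha\mapsto\alpha-\alpha^p$ on $k$. The two are equivalent uses of the hypothesis that $k$ is algebraically closed; your route avoids constructing the fixed-point basis at the cost of importing algebraic-geometry input (and note that, strictly speaking, Lang--Steinberg requires the twisting endomorphism $x\mapsto Bx^{(\sigma_k)}$ to be surjective, which fails for singular $B$ -- but your own finite-kernel/dominance argument, which only uses additivity and connectedness of $\mathbb{G}_a^n$, covers that case, so nothing is missing). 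One small remark: your preliminary reduction to free $L$ is harmless (a $\sigma$-linear operator does lift through a surjection from a free module, since it corresponds to a linear map out of the projective module $\sigma^*F$), but it is also unnecessary, as the $u$-adic approximation works verbatim for any finitely generated $\W_1$-module.
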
 

\begin{proof} Part b) is obvious. 
In order to prove a) notice first that we can 
replace $L$ by $L/uL$ and, therefore, assume that 
$L$ is a finitely generated  $W(k)$-module. 
Clearly, it will be enough to consider the case $pL=0$. 
Then there is a decomposition of $k$-vector spaces $L=L_1\oplus L_2$, where $\c A$ 
is invertible on $L_1$ and nilpotent on $L_2$. 
It remains to note that $L_1=L_{0}\otimes _{\F _p}k$, where 
$L_0$ is a finite dimensional $\F _p$-vector space such that $\c A|_{L_0}=\id $. 
The existence of $L_0$ is a standard fact of $\sigma $-linear algebra: 
if $s=\dim _kL_1$ and $A\in M_s(k)$ is a matrix of $\c A|_{L_1}$ 
in some $k$-basis of $L_1$ then  
$L_0=\{(x_1,\dots ,x_s)\in k^s\ |\ 
(x_1^p,\dots ,x_s^p)A=(x_1,\dots ,x_s)\}$; the $\F _p$-linear space 
$L_0$  has dimension $s$ because 
the corresponding equations determine an etale algebra of rank $p^s$ 
over algebraically closed field $k$.
 
\end{proof}
\medskip 

\subsection{Definitions and general properties} 
\label{S1.1}

Let $\W _1=\W/p\W$ with induced $\sigma $, $\varphi $ and $N$.

\begin{definition} The objects of the category 
$\widetilde{\uL} _0^*$   are the 
triples \linebreak 
$\L=(L, F(L), \varphi )$, where 

$\bullet $\  $L$ and $F(L)$ are $\W _1$-modules such that 
$L\supset F(L)$;

$\bullet $ \  $\varphi :F(L)\longrightarrow L$ is a $\sigma $-linear
morphism of $\W _1$-modules; (Note that 
$\varphi (F(L))$ is a $\sigma (\c W_1)$-submodule in $L$.)

If $\L _1=(L_1,F(L_1),\varphi )$ is also an object of 
$\widetilde{\uL} _0^*$ then the morphisms 
$f\in\Hom _{\widetilde{\uL} _0^*}(\L _1,\L )$ are 
given by $\W _1$-linear maps $f:L_1\longrightarrow L$ such that 
$f(F(L_1))\subset F(L)$ and $f\varphi =\varphi f$. 
\end{definition} 

\begin{definition} The objects of the category 
$\widetilde{\uL} ^*$   are the 
quadruples $\L=(L, F(L), \varphi , N)$, where 

$\bullet $\  $(L,F(L),\varphi )$ is an object of the category $\widetilde{\uL} ^*_0$;

$\bullet $\ $N:L\longrightarrow L/u^{2p}L$  is a $\c W_1$-differentiation, i.e. 
for all $w\in\W _1$ and $l\in L$, 
$N(wl)=N(w)(l\operatorname{mod}u^{2p}L)+wN(l)$;
 
$\bullet $\ if $\L _1=(L_1,F(L_1),\varphi ,N)$ is another object of 
$\widetilde{\uL} ^*$ then  the morphisms  
$\Hom _{\widetilde{\uL} ^*}(\L _1,\L )$ are 
given by $f:(L_1, F(L_1),\varphi )\To (L,F(L),\varphi )$ 
from  $\widetilde{\uL }^*_0$ such that 
$fN=Nf$. (We use the same notation $f$ for 
the reduction of $f$ modulo $u^{2p}L$.)
\end{definition} 

The categories $\widetilde{\uL} ^*$ and $\widetilde{\uL }^*_0$ are   
additive.

\begin{definition} The category $\uL _0^*$ is a 
full subcategory of $\widetilde{\uL }_0^*$  
consisting of the objects $\L =(L,F(L),\varphi )$ such that 

$\bullet $\ $L$ is a free $\W _1$-module of finite rank;

$\bullet $\ $F(L)\supset u^{p-1}L$;

$\bullet $\ the natural embedding $\varphi (F(L))\subset L$ induces the identification 
$\varphi (F(L))\otimes _{\sigma (\c W_1)}\c W_1=L$.  
\end{definition}

Note that $\varphi $ induces a map $F(L)/u^{2p}L\To L/u^{2p}L$: use that 
$u^{2p}L\subset u^{p+1}F(L)\subset u^2F(L)$ and $\varphi (u^2F(L))\subset u^{2p}L$. 
We shall 
denote this map by the same symbol $\varphi $. 

\begin{definition} The category $\uL ^*$ is a 
full subcategory of $\widetilde{\uL }^*$  
consisting of the objects $\L =(L,F(L),\varphi ,N)$ such that 

$\bullet $\ $(L,F(L),\varphi )\in\uL ^*_0$;

$\bullet $\ for all $l\in F(L)$, $uN(l)\in F(L)\operatorname{mod}u^{2p}L$ and 
$N(\varphi (l))=\varphi (uN(l))$.   
\end{definition}

The categories $\uL ^*_0$ and $\uL ^*$ are additive. 

In the case of objects $(L,F(L),\varphi ,N)$ of 
$\uL ^*$ the 
morphism $N$ can be uniquely recovered from  
the $\c W_1$-differentiation $N_1=N\operatorname{mod}u^pL$ 
due to the following property. 

\begin{Prop} \label{P1.2} Suppose $(L,F(L),\varphi )\in\uL ^*_0$ and 
$N_1:L\mapsto L/u^pL$ is a $\c W_1$-differentiation such that for any 
$m\in F(L)$, $uN_1(m)\in F(L)\operatorname{mod}u^pL$ and 
$N_1(\varphi (l))=\varphi (uN_1(l))$. Then there is a unique 
$\c W_1$-differentiation $N:L\To L/u^{2p}L$ such 
that $N\operatorname{mod}u^p=N_1$ and $(L,F(L),\varphi ,N)\in\uL ^*$. 
\end{Prop}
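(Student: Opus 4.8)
The plan is to prove uniqueness first and then to produce $N$ from an arbitrary $\W _1$-differentiation lifting $N_1$ by a single correction step. Throughout I will use the following elementary observations. Since $\mathrm{char}\,k=p$, the derivation $N$ on $\W _1$ satisfies $N(\sigma (w))=0$ and $N(w)\in u\W _1$ for all $w\in\W _1$. Since $u^{p-1}L\subset F(L)$ we have $u^{2p}L\subset u^{p+1}L\subset u^{p-1}L\subset F(L)$; consequently $\varphi $ induces a $\sigma $-linear map $F(L)/u^{2p}L\to L/u^{2p}L$ (because $\varphi (u^{2p}L)=u^{p(p+1)}\varphi (u^{p-1}L)\subset u^{2p}L$), whose reduction modulo $u^p$ is the map appearing in the hypothesis on $N_1$, and moreover $F(L)+u^pL=F(L)$. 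This last equality forces every $\W _1$-differentiation $N:L\to L/u^{2p}L$ with $N\bmod u^p=N_1$ to satisfy $uN(m)\in F(L)/u^{2p}L$ for $m\in F(L)$ (reduce $uN(m)$ modulo $u^p$ and use $uN_1(m)\in F(L)\bmod u^pL$), so that $\varphi (uN(m))$ is defined. Finally, the identification $\varphi (F(L))\otimes _{\sigma (\W _1)}\W _1=L$ forces $\varphi $ to be injective on $F(L)$ — its $\W _1$-linear extension $\W _1\otimes _{\sigma ,\W _1}F(L)\to L$ is a surjection between free $\W _1$-modules of the same rank, hence an isomorphism — and a lift of $N_1$ to a $\W _1$-differentiation $L\to L/u^{2p}L$ exists (define it on a $\W _1$-basis of $L$ and extend by the Leibniz rule); any two such lifts differ by a $\W _1$-linear map into $I:=u^pL/u^{2p}L$.

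For uniqueness, suppose $N,N'$ both satisfy the conclusion. Then $D:=N-N'$ is $\W _1$-linear with values in $I$, and for $m\in F(L)$
\[
D(\varphi (m))=\varphi (uN(m))-\varphi (uN'(m))=\varphi (uD(m)).
\]
Since $D(m)\in u^pL$ we have $uD(m)\in u^{p+1}L$, hence $\varphi (uD(m))\in u^{2p}L$ is zero in $L/u^{2p}L$; thus $D$ vanishes on $\varphi (F(L))$. As $\varphi (F(L))$ generates $L$ over $\W _1$ and $D$ is $\W _1$-linear, $D=0$, so $N=N'$.

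For existence, fix a lift $N_0$ of $N_1$ as above and set
\[
c(m):=N_0(\varphi (m))-\varphi (uN_0(m))\in L/u^{2p}L,\qquad m\in F(L).
\]
Reducing modulo $u^p$ and using $N_1(\varphi (l))=\varphi (uN_1(l))$ gives $c(m)\in I$. One then checks that $c:F(L)\to I$ is additive and $\sigma $-linear: in the computation of $c(wm)$, besides the expected term $\sigma (w)c(m)$ there appears a term proportional to $N(\sigma (w))$, which vanishes, and a term $\sigma (uN(w))\varphi (m)$, which lies in $u^{2p}L$ because $N(w)\in u\W _1$ forces $uN(w)\in u^2\W _1$ and hence $\sigma (uN(w))\in u^{2p}\W _1$. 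Since $\varphi :F(L)\to\varphi (F(L))$ is a $\sigma $-linear isomorphism, $c\circ\varphi ^{-1}:\varphi (F(L))\to I$ is $\sigma (\W _1)$-linear, and via $L=\varphi (F(L))\otimes _{\sigma (\W _1)}\W _1$ it extends uniquely to a $\W _1$-linear map $D:L\to I$ with $D\circ\varphi =-c$ on $F(L)$. Put $N:=N_0+D$. It reduces to $N_1$ modulo $u^p$ because $D$ has values in $I$, and, using once more that $uD(m)\in u^{p+1}L$ so that $\varphi (uD(m))=0$ in $L/u^{2p}L$, one obtains $N(\varphi (m))-\varphi (uN(m))=c(m)+D(\varphi (m))=0$ for every $m\in F(L)$, which is the required property.

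The step needing the most care is the verification that the defect $c$ is genuinely a $\sigma $-linear map into $I$, since this is exactly what makes the ``division by $\varphi $'' legitimate. It rests on the two characteristic-$p$ facts above — the vanishing of $N\circ\sigma $ on $\W _1$, and the fact that the correction term, carrying a factor $\sigma (uN(w))$ with $uN(w)\in u^2\W _1$ and hence $\sigma (uN(w))\in u^{2p}\W _1$, is killed in $L/u^{2p}L$ — together with the injectivity of $\varphi $ on $F(L)$. The remaining points are routine bookkeeping with the chain of inclusions $u^{2p}L\subset u^{p+1}L\subset u^{p-1}L\subset F(L)$.
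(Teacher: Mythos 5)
Your proof is correct. It differs from the paper's in organization rather than in substance, so a short comparison: the paper fixes a $\mathcal{W}_1$-basis $m_1,\dots,m_s$ of $F(L)$, observes that the $\varphi(m_i)$ form a $\mathcal{W}_1$-basis of $L$, and simply \emph{defines} $N(\varphi(m_i)):=\varphi\bigl(uN_1(m_i)'\bigr)$ for arbitrary lifts $N_1(m_i)'$, extending by the Leibniz rule and verifying the identity on a general $m=\sum_i w_im_i$; uniqueness is then read off from the formula. You instead start from an arbitrary lift $N_0$ of $N_1$, measure the defect $c(m)=N_0(\varphi(m))-\varphi(uN_0(m))$, show it is a $\sigma$-semilinear map into $u^pL/u^{2p}L$, and remove it by a $\mathcal{W}_1$-linear correction $D$ produced from the identification $L=\varphi(F(L))\otimes_{\sigma(\mathcal{W}_1)}\mathcal{W}_1$, proving uniqueness separately by noting that the difference of two solutions is $\mathcal{W}_1$-linear and vanishes on $\varphi(F(L))$. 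The computational core is the same in both arguments — everything hinges on $N\circ\sigma=0$, on $\sigma(uN(w))\in u^{2p}\mathcal{W}_1$ for $w\in\mathcal{W}_1$ (the paper's ``$\sigma(N(w_i))\in u^p\sigma(\mathcal{W}_1)$''), and on $\varphi(u^{p+1}L)\subset u^{2p}L$ — but the routes buy different things: your torsor-and-correction formulation avoids choosing a basis of $F(L)$, cleanly separates existence from uniqueness, and actually substantiates the uniqueness that the paper dispatches with ``clearly the only candidate''; the price is the extra input that $\varphi$ is injective on $F(L)$ (needed to form $c\circ\varphi^{-1}$), which you justify correctly by the rank argument, whereas the paper's explicit construction on the basis $\varphi(m_i)$ is shorter and never needs to invert $\varphi$.
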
 

\begin{proof}
 Choose a $\c W_1$-basis $m_1,\dots ,m_s$ of $F(L)$. Then $l_1=\varphi (m_1)$, 
\dots , $l_s=\varphi (m_s)$ is a $\c W_1$-basis of $L$ and a 
$\sigma (\c W_1)$-basis of $\varphi (F(L))$. 

Let $N(l_i):=\varphi (uN_1(m_i)')\in L/u^{2p}L$, 
where $N_1(m_i)'$ are some lifts of 
$N_1(m_i)$ to $L/u^{2p}L$. Clearly, 
the elements $N(l_i)\in \varphi (F(L))\subset L/u^{2p}L$ are well-defined
 (use that $\varphi (u^{p+1}L)\subset u^{2p}L$). 

For any $l=\sum _iw_il_i\in L$, let 
$N(l):=\sum  _iN(w_i)l_i+\sum _iw_iN(l_i)$. Then $N:L\To L/u^{2p}$ is 
a $\c W_1$-differentiation and 
$N\operatorname{mod}u^p=N_1$. Clearly, $N$ is the only candidate to satisfy 
the requirements of our Proposition. 

Now suppose $m=\sum _iw_im_i\in F(L)$ with all $w_i\in \c W_1$. Then 
$N(\varphi (m))=\sum _i\sigma (w_i)l_i\operatorname{mod}u^{2p}$. 
On the other hand, 
$\varphi (uN(m))$ equals 
$$\sum _iu^p\sigma (N(w_i))l_i+
\sum _i\varphi (w_iuN(m_i))=\sum _i\sigma (w_i)l_i\operatorname{mod}u^{2p}$$
because all $\sigma (N(w_i))\in u^p\sigma (\c W_1)$. 

The proposition is proved. 
\end{proof}

\begin{remark}
 By above Proposition in the definition of objects of $\uL ^*$ one can replace 
$N:L\To L/u^{2p}L$ by 
$N_1=N\operatorname{mod}u^pL$ and use  $N$ as a unique extension of $N_1$ if 
neccessary. An example of the situation where we do need 
to extend $N_1$ is described in Proposition \ref{P1.3} below. Another situation 
is related to the definition of the truncated version 
$R^0_{st}$ of $\hat A_{st}$ in Subsection \ref{S2}. Here 
we need $N$ to be defined modulo some smaller module than $u^pL$, e.g. $u^{p+1}L$. 
Our choice was done in favour of the module $u^{2p}L$ because 
it is the smallest possible 
module where the definition of $N$ makes sense. 
\end{remark}

\begin{Prop} \label{P1.3} $\uL _0^*$ and $\uL ^*$ are 
pre-abelian categories (cf. Appendix A for the concept of pre-abelian category). 
\end{Prop}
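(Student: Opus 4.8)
The plan is to show that $\uL_0^*$ and $\uL^*$ admit kernels and cokernels; the pre-abelian property then amounts to checking these are compatible with the additive structure already noted, and that the relevant short exact sequences behave well (this is the content referred to in the Introduction and presumably spelled out in the Appendix). I would treat $\uL_0^*$ first and then explain how the differentiation $N$ is carried along. Given $f:\L_1\to\L$ in $\uL_0^*$, write $f:L_1\to L$ for the underlying $\c W_1$-linear map. The naive kernel $\Ker f$ and cokernel $\Coker f$ as $\c W_1$-modules need not be free, so the key point is that one must \emph{saturate}: define the kernel object to have underlying module $K = (\Ker f)^{\mathrm{sat}}$, the largest $\c W_1$-submodule of $L_1$ containing $\Ker f$ with free (equivalently $u$-torsion-free, since $\c W_1 = k[[u]]$ is a DVR) quotient, and dually define the cokernel object to have underlying module $C = L/\big((\mathrm{im}\,f)^{\mathrm{sat}}\big)$ where $(\mathrm{im}\,f)^{\mathrm{sat}}$ is the saturation of $f(L_1)$ in $L$. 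One then equips $K$ with $F(K) := K\cap F(L_1)$ and $C$ with $F(C) := $ image of $F(L)$ in $C$.

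The steps I would carry out, in order: first, verify that with these definitions the three axioms defining objects of $\uL_0^*$ hold for $K$ and $C$ — freeness of the underlying module is built into the saturation, the condition $F(\cdot)\supset u^{p-1}(\cdot)$ is inherited (for $K$ because $u^{p-1}K\subset u^{p-1}L_1\cap K\subset F(L_1)\cap K$ using $u^{p-1}L_1\subset F(L_1)$; for $C$ it is immediate from surjectivity of $L\twoheadrightarrow C$), and the condition that $\varphi(F(\cdot))$ generates the underlying module over $\c W_1$ after base change along $\sigma$ — this last one is where a short argument is needed, using that $\varphi$ is $\sigma$-semilinear and that saturating commutes suitably with $\varphi(F(L))\otimes_{\sigma(\c W_1)}\c W_1 = L$. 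Second, check the universal properties: any $g:\L'\to\L_1$ with $fg=0$ factors through $K$ (one needs that $g(L')\subset\Ker f$ actually lands in the saturation, which is automatic, and that $F$ and $\varphi$ are respected), and dually for $C$. Third, pass to $\uL^*$: given $f$ compatible with $N$, the saturated kernel $K$ and cokernel $C$ inherit a $\c W_1$-differentiation into $K/u^{2p}K$ and $C/u^{2p}C$ respectively — here I would invoke Proposition 1.2 to reduce to constructing $N_1 = N\bmod u^p$ on $K$ and $C$, which is straightforward since $\Ker f$ and $\mathrm{im}\,f$ are visibly $N$-stable modulo the relevant powers of $u$, and then check the two compatibility conditions ($uN(l)\in F\bmod u^{2p}$ and $N\varphi = \varphi(uN(\cdot))$) are inherited.

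The main obstacle I expect is the freeness/saturation bookkeeping, specifically verifying that the third axiom ($\varphi(F(L))\otimes_{\sigma(\c W_1)}\c W_1 \xrightarrow{\sim} L$) survives passage to the saturated kernel and cokernel. For the cokernel, the issue is that after killing the saturation of $\mathrm{im}\,f$ one must check $\varphi(F(C))$ still generates $C$; since $\varphi$ is only $\sigma$-semilinear, "generates over $\c W_1$ after $\otimes_{\sigma}$" is a statement about the cokernel of a semilinear map, and controlling how saturation interacts with it requires care — one likely argues by reducing mod $u$ and using that $\sigma$ on $k$ is bijective (in the spirit of Lemma 1.1). A secondary subtlety is that $N$ is only defined modulo $u^{2p}$, so "$N$-stable" must be interpreted with the appropriate truncation, and one must confirm that the induced $N$ on $K$ and $C$ is well-defined independently of lifting choices — again Proposition 1.2 is the tool that makes this clean. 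Once these are settled, checking that the canonical map $\Coim f\to\mathrm{im}\,f$ is defined (giving the pre-abelian structure with its short exact sequences, as in the Appendix) is routine.
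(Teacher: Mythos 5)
Your treatment of $\uL_0^*$ matches the paper's construction: the kernel is the naive kernel with $F(K)=K\cap F(L)$ (no saturation is actually needed there, since the quotient by $\Ker f$ embeds into a free $\c W_1$-module and is therefore torsion-free, so your "saturated kernel" is just $\Ker f$; had it been strictly larger it would in fact violate the universal property), and the cokernel is obtained by killing the torsion of $M/f(L)$, with the induced filtrations. The one verification you defer — that $\varphi(F(\cdot))\otimes_{\sigma(\c W_1)}\c W_1$ recovers the underlying module for the kernel, the image $L'=f(L)$, the coimage $M'$ (the saturation of $L'$ in $M$) and the cokernel — is done in the paper by a rank count combined with torsion-freeness of the relevant quotients, close in spirit to what you sketch; that part is completable along your lines.

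The genuine gap is in the passage to $\uL^*$. You claim the kernel and image are "visibly $N$-stable modulo the relevant powers of $u$", but this is precisely the non-trivial point, because $N$ takes values only in $L/u^{2p}L$. From $fN=Nf$ you only get $N(K)\subset\Ker\bigl(f\bmod u^{2p}\bigr)$, and that kernel is strictly larger than $K/u^{2p}K$ whenever $L'$ is not saturated: for a lift $x$ of $N(k)$ one has $f(x)\in u^{2p}M\cap L'=u^{2p}M'\cap L'$, and the conclusion you need, $x\in K+u^pL$ (equivalently $f(x)\in u^pL'$), holds exactly when $u^pM'\subset L'$. The same obstruction, measured by the torsion module $M'/L'$, appears when descending $N$ to $M'$ and then to the cokernel $M/M'$. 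The paper supplies this bound as Lemma \ref{L1.4} ($\varphi(F(L'))\supset u^p\varphi(F(M'))$, hence $L'\supset u^pM'$), and its proof is not formal bookkeeping: it uses the contracting behaviour of $\varphi$ on the filtration (iterating $l\mapsto\varphi(u^{a}l)$) to show $M'/L'$ is killed by $u^p$. Only with that in hand does one obtain $N(K)\bmod u^p\subset K/u^pK$ and $N(M')\bmod u^p\subset M'/u^pM'$, which is what Proposition \ref{P1.2} requires to produce the differentiations modulo $u^{2p}$ on the kernel and cokernel. Without an analogue of Lemma \ref{L1.4}, your reduction to Proposition \ref{P1.2} does not go through.
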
 

\begin{proof} 
Suppose $\c S$ is an additive category and $f\in\Hom _{\c S}(A,B)$, $A,B\in\c S$.
Then $i\in\Hom _{\c S}(K,A)$ is a kernel of $f$ if for any 
$D\in\c S$, the sequence of abelian groups  
$$0\To \Hom _{\c S}(D,K)\overset{i_*}\To 
\Hom _{\c S}(D,A)\overset{f_*}\To \Hom _{\c S}(D,B)$$
is exact.  
Similarly, $j\in\Hom _{\c S}(B,C)$, $B,C\in\c S$,  is a cokernel of $f$ if for any 
$D\in\c S$, the sequence 
$$0\To\Hom _{\c S}(C,D)\overset{j^*}\To \Hom _{\c S}(B,D)
\overset{f^*}\To\Hom _{\c S}(A,D)$$
is exact. 

Let $FF_{\c W_1}$ be the category of free $\c W_1$-modules with filtration. 
This category is 
pre-abelian. More precisely, consider the objects  
$\c L=(L,F(L))$ and $\c M=(M,F(M))$  
in $FF_{\c W_1}$ and let $f\in\Hom _{FF_{\c W_1}}(\c L,\c M)$. 

Then $\Ker _{FF_{\c W_1}}f$ is a natural embedding 
$i_{\c L}:\c K=(K,F(K))\To\c L$, where 
$K=\Ker (f:L\To M)$ and $F(K)=K\cap F(L)$. 
The coimage $\operatorname{Coim}_{FF_{\c W_1}}f=
\Coker _{FF_{\c W_1}}(\Ker _{FF_{\c W_1}}f)$ appears as 
a natural projection $j_{\c L}:\c L\To \c L'=(L',F(L'))$, where 
$L'=f(L)$ and $F(L')=f(F(L))$.

Similarly, 
$\Coker f$ is a natural projection $j_{\c M}:\c M\To \c C=(C,F(C))$, 
where $C=(M/L')/(M/L')_{tor}$ and $F(C)=j_{\c M}(F(M))$. 
Then the image $\operatorname{Im}_{FF_{\c W_1}}f=
\Ker _{FF_{\c W_1}}(\Coker _{FF_{\c W_1}}f)$ is 
a natural embedding $\c M'=(M',F(M'))\To\c M$, where $M'$ is the kernel of 
$j_{\c M}$ and $F(M')=F(M)\cap M'$. 

As usually, there is a natural map $\c L'\To \c M'$ induced by $L'\subset M'$. 
Note that $M/M'=C$ is free and $M'/L'$ is torsion $\c W_1$-modules and 
these properties completely characterize $M'$ as a $\c W_1$-submodule of $M$.

Now suppose $\c L=(L,F(L),\varphi )$, $\c M=(M,F(M),\varphi )$ are objects of 
$\uL ^*_0$ and $f\in\Hom _{\uL ^*_0}(\c L, \c M)$. Use the obvious 
forgetful functor $\uL ^*_0\To FF_{\c W_1}$ and the same notation for the 
corresponding images of 
$\c L$, $\c M$ and $f$. Show that $\c K=\Ker _{FF_{\c W_1}}f$ and 
$\c C=\Coker _{FF_{\c W_1}}f$ have the 
natural structures of objects of $\uL ^*_0$ 
and with respect to this structure they become 
the kernel and, resp, cokernel of $f$ 
in $\uL ^*_0$. 
Indeed, 
$$u^{p-1}K=u^{p-1}L\cap K\subset F(L)\cap K=F(K)=\Ker (f:F(L)\To F(M)).$$
Therefore, $\varphi (F(K))\subset K\cap \varphi (F(L))$ and there is a  
natural embedding $\iota :\varphi (F(K))\otimes _{\sigma \c W_1}\c W_1\subset K$. 
On the one hand, 
$$\operatorname{rk}_{\sigma \c W_1}\varphi (F(K))=\operatorname{rk}_{\c W_1}F(K)=
\operatorname{rk}_{\c W_1}K.$$
On the other hand, $F(L)/F(K)\subset L/K=L'$ have no $\c W_1$-torsion.  
This implies that the quotient 
$\varphi (F(L))/\varphi (F(K))$ has no $\sigma\c W_1$-torsion and the factor of 
$L=\varphi (F(L))\otimes _{\sigma \c W_1}\c W_1$ by 
$\varphi (F(K))\otimes _{\sigma\c W_1}\c W_1$ also has no $\c W_1$-torsion. 
So, $\iota $ becomes the equality 
$\varphi (F(K))\otimes _{\sigma\c W_1}\c W_1=K$ and 
$\c K=(K,F(K),\varphi )=\Ker _{\uL ^*_0}f$.  

The above description of $\Ker _{\uL^*_0}$ implies that $u^{p-1}L'\subset F(L')$, 
\linebreak 
$\varphi (F(L'))=\varphi (F(M))/\varphi (F(K))$ and 
$L'=\varphi (F(L'))\otimes _{\sigma\c W_1}\c W_1$. In other words, 
$\c L'=(L',F(L'),\varphi )\in\uL ^*_0$. 

Now note that for $\c M'=(M',F(M'))$, we have 
$$u^{p-1}M'=(u^{p-1}M)\cap M'\subset F(M)\cap M'=F(M')$$
and, therefore, $F(M')/F(L')$ is torsion $\c W_1$-module and 
\medskip 

$\bullet $\ $(\varphi (F(M'))\otimes _{\sigma\c W_1}\c W_1)/L'$ 
is torsion $\c W_1$-module;
\medskip 

On the other hand, $F(M)/F(M')=F(C)$ is torsion free $\c W_1$-module. This  
implies that $\varphi (F(M))/\varphi (F(M'))$ 
is torsion free $\sigma\c W_1$-module and, therefore, 
\medskip 

$\bullet $\ $M/(\varphi (F(M'))\otimes _{\sigma\c W_1}\c W_1)$ 
is torsion free $\c W_1$-module. 
\medskip 

The above two conditions completely characterize $M'$ as a submodule of $M$. 
Therefore, $\varphi (F(M'))\otimes _{\sigma\c W_1}\c W_1=M'$,  
$\c M'=(M',F(M'),\varphi )\in\uL ^*_0$ and   
$(M/M', F(M)/F(M'),\varphi )=(C,F(C),\varphi )=\c C\in\uL^*_0$. Now a formal 
verification shows that $\c C=\Coker _{\uL^*_0}f$.

Again 
$\operatorname{Coim}_{\uL ^*_0}f=(L',F(L'),\varphi )$ and 
$\operatorname{Im}_{\uL ^*_0}f=(M', F(M'),\varphi )$ together 
with their natural embedding 
$\operatorname{Coim}_{\uL ^*_0}f\To\operatorname{Im}_{\uL ^*_0}f$ in $\uL^*_0$. 
As a matter of fact, these two objects of the category 
$\uL ^*_0$ do not differ very much due to the following Lemma. 

\begin{Lem} \label{L1.4} 
 $\varphi (F(L'))\supset u^p\varphi (F(M'))$ (and, therefore, $L'\supset u^pM'$). 
\end{Lem}

\begin{proof}[Proof of Lemma] 
Otherwise, there is an $l\in\varphi (F(L'))\setminus u^p\varphi (F(L'))$ such that 
$l\in u^{2p}\varphi (F(M'))$. 

Form the sequence $l_n\in L'$ such that $l_1=l$ and for all 
$n\geqslant 2$, $l_{n+1}=\varphi (u^{a_n}l_n)$, where $a_n\geqslant 0$ is such that 
$u^{a_n}l_n\in F(L')\setminus uF(L')$. Clearly, 
all $l_n\notin uF(L')\supset u^{p}L'$. 

On the other hand, $l\in u^{2p}\varphi (F(M'))\subset u^{p+1}F(M')$ and, therefore, 
for all $n\geqslant 1$, $l_n\in \varphi ^n(u^{2p}M')\subset u^{p^n+p}M'$. So, for 
$n\gg 0$, $l_n\in u^pL'$. 
The contradiction. 
\end{proof}

Now suppose $\c L=(L,F(L),\varphi ,N)$ and $\c M=(M,F(M),\varphi ,N)$ are 
objects of $\uL ^*$ and $f\in\Hom _{\uL ^*}(\c L,\c M)$. 
Prove that the kernel $(K,F(K),\varphi )$ and the 
cokernel $(C,F(C),\varphi )$ of $f$ in the category $\uL ^*_0$ have a natural 
structure of objects of the category $\uL ^*$. 

Clearly, $N(K)\subset\Ker \left (f\operatorname{mod}u^{2p}:
L/u^{2p}L\To M/u^{2p}M\right )$. The above Lemma \ref{L1.4} 
implies that 
$u^pL'\supset u^{2p}M'$ and we obtain the following natural maps  
$$L'/u^pL'\overset{\alpha }\longleftarrow L'/u^{2p}M'\overset{\beta }
\To M'/u^{2p}M'\overset{\gamma }\To M/u^{2p}M.$$
Note that $\alpha $ is epimorphic but $\beta $ and $\gamma $ are monomorphic. 
This implies that 
$N(K)\subset \Ker (L/u^{2p}L\To L'/u^pL')$ and  
$$N(K)\operatorname{mod}u^pL\subset \Ker (L/u^pL\To L'/u^pL')=K/u^pK.$$
Therefore, by Proposition \ref{P1.2}, $N$ (as a unique lift 
of $N_1=N\operatorname{mod}u^p$) 
maps $K$ to $K/u^{2p}K$ and $(K,F(K),\varphi ,N)\in\uL ^*$. 

The above property of $\Ker _{\uL ^*}f$ implies that 
$N(L')\subset L'/u^{2p}L'$. Now use that 
$u^pM'\subset L'$, $u^{2p}L'\subset u^{2p}M'$ and 
$N(u^pM')\subset u^pM/u^{2p}M$ to deduce that 
$$N(u^pM')\subset (L'/u^{2p}M')\cap (u^pM/u^{2p}M)=u^pM'/u^{2p}M'.$$
So, $N\operatorname{mod}u^p$ maps $M'$ to $M'/u^pM'$ and again by Proposition 
\ref{P1.2} 
$N(M')\subset M'/u^{2p}M'$. This means that  the kernel of the above constructed  
$\Coker f:(M,F(M),\varphi )\To (C,F(C),\varphi )$ 
is provided with the structure of object of the category 
$\uL ^*$. Therefore,  $N$ induces the map $N:C\To C/u^{2p}C$ and 
$(C,F(C),\varphi ,N)\in\uL^*$.
The proposition is proved. 
\end{proof}

 The above proof shows that the 
kernels and cokernels in the category $\uL ^*$   
appear on the level of filtered modules as 
the kernel and cokernel of  the corresponding map of filtered modules 
$(L_1,F(L_1))$ to $(L,F(L))$ 
in the category of filtered $\W _1$-modules. Therefore, 
the category $\uL ^*$ is special, cf. Appendix A, and we can apply 
the corresponding formalism 
of short exact sequences. In particular, if we take 
another object 
$\L _2=(L_2, F(L_2), \varphi ,N)\in\uL ^*$ then 

$\bullet $\ $i\in\Hom _{\uL ^*}(\L _1, \L )$ is  strict 
monomorphism iff $i:L_1\longrightarrow L$ is injective and $i(L_1)\cap
F(L)=i(F(L_1))$;

$\bullet $\ $j\in\Hom _{\uL ^*}(\L , \L _2)$ is  strict 
epimorphism iff $j:L\longrightarrow L_2$ is epimorphic and 
$j(F(L))=F(L_2)$.
\medskip 

As usually, cf.\,Appendix A, if a monomorphism 
$i$ is strict then the monomorphism  
$j=\Coker\, i$ is  strict, and if an epimorphism $j$ is  
strict then the monomorphism $i=\Ker\,j$ is  strict, and 
under these assumptions 
$0\longrightarrow\L _1\overset{i}\longrightarrow 
\L\overset{j}\longrightarrow \L _2\longrightarrow 0$ 
is short exact sequence.  
\medskip 

With relation to the above result that the categories $\uL ^*_0$ 
and $\uL ^*$ are pre-abelian, note that the situation with the categories 
$\widetilde{\uL}^*_0$ and $\widetilde{\uL}^*$ is different. Indeed, 
let  
$FM_{\c W_1}$ be the category of filtered (not necessarily free) modules over 
$\c W_1$. This category is pre-abelian: for  
$\c M_i=(M_i,F(M_i))$, $i=1,2$, and 
$f\in\Hom _{FM_{\c W_1}}(\c M_1,\c M_2)$, one has  
$\Ker _{FM_{\c W_1}}f=(\Ker f, \Ker f\cap F(M_1))$ (together with its 
natural embedding into $\c M_1$) and 
$\Coker _{FM_{\c W_1}}f=(\Coker f, F(M_2)/(f(M_1)\cap F(M_2))$ 
(together with the natural projection from $\c M_2$). 

Now suppose that $\c M_i=(M_i, F(M_i), \varphi )\in\widetilde{\uL}_0^*$, 
$i=1,2$, and $f\in\Hom _{\widetilde{\uL}^*_0}(\c M_1,\c M_2)$. 
Then $\Ker _{\widetilde{\uL}^*_0}f$ exists (and coincides on the level 
of filtered modules with $\Ker _{FM_{\c W_1}}f$) but 
$\Coker _{\widetilde{\uL}^*_0}f$ exists 
(and coincides on the level of filtered modules with 
$\Coker _{FM_{\c W_1}}f$) only if $f(F(M_1))=f(M_1)\cap F(M_2)$. 
In particular, on the level of filtered modules the composition 
$\Coker _{FM_{\c W_1}}(\Ker _{FM_{\c W_1}}f)$ always 
makes sense and coincides with 
the natural projection $\c M_1\To (f(M_1), f(F(M_1))$. 
Therefore, one can introduce the concept of strict epimorphism  
in $\widetilde{\uL}^*_0$: $f$ is strict epimorphism iff 
$f(M_1)=M_2$ and $f(F(M_1))=F(M_2)$.  

The following situation will appear several times below. 

\begin{Lem} \label{L1.5} 
Suppose $\c M_1, \c M_2\in\widetilde{\uL} ^*_0$,   
$\iota \in\Hom _{\widetilde{\uL}_0^*}(\c M_1,\c M_2)$ 
is a strict epimorphism and   
$\Ker _{\widetilde{\uL}^*_0}\iota =(K,K,\varphi )$. Then for any 
$\c L\in\uL ^*_0$,  
$$\iota ^*:\Hom _{\widetilde{\uL} ^*_0}(\c L,\c M_1)
\To\Hom _{\widetilde{\uL} ^*_0}(\c L,\c M_2)$$ 
is epimorphic. In addition, if $\varphi |_K$ is nilpotent 
then $\iota ^*$ is bijective. 
\end{Lem} 

\begin{proof} The structure of $\c L=(L,F(L),\varphi )$ can be given by 
a vector $\bar l=(l_1,\dots ,l_s)$ and a matrix $C\in M_s(\c W_1)$ such that 

--- $l_1,\dots ,l_s$ is a $\c W_1$-basis of $L$;

--- if $\bar lC=\bar m=(m_1,\dots ,m_s)$ then $m_1,\dots ,m_s$ is a $\c W_1$-basis of $F(L)$;

--- $\bar l=\varphi (\bar m):=(\varphi (m_1),\dots ,\varphi (m_s))$. 
\medskip 

Suppose 
$M_1=(M_1,F(M_1),\varphi )$ and $M_2=(M_2,F(M_2),\varphi )$.

Any $f\in\Hom _{\widetilde{\uL}_0^*}(\c L,\c M_2)$ is given by 
$f(\bar l)\in M_2^s$ such that $f(\bar l)C\in F(M_2)^s$ and $\varphi (f(\bar l)C)=f(\bar l)$. 

Choose an $\hat f(\bar l)\in M_1^s$ such that $\hat f(\bar l)\operatorname{mod}K=f(\bar l)$. 
Then $\hat f(\bar l)C$ modulo $K$ belongs to 
$F(M_2)^s$ and, therefore, 
$\hat f(\bar l)C\in F(M_1)^s$. Clearly, $\bar k_0:=\varphi (\hat f(\bar l)C)-\hat f(\bar l) 
\in K^s$. We must prove the existence of $\bar k_1\in K^s$ such that 
$\varphi ((\hat f(\bar l)+\bar k_1)C)=\hat f(\bar l)+\bar k_1$. This is equivalent to 
$$\bar k_1-\varphi (\bar k_1C)=\bar k_0$$
and the existence of $\bar k_1$ follows from Lemma \ref{L1.1}. This proves that 
$\iota ^*$ is surjective. If $\varphi |_K$ is nilpotent then the bijectivity 
of $\iota ^*$ follows in a similar way from part b) of Lemma \ref{L1.1}.  
\end{proof}

\subsection{Standard exact sequences}\label{S1.2}  
Suppose $\L =(L,F(L),\varphi ,N)\in\uL ^*$. Introduce a $\sigma $-linear map 
$\phi :L\longrightarrow L$ by  
$\phi :l\mapsto \varphi (u^{p-1}l)$. 

\begin{definition} The object $\L $ is etale (resp., connected) 
if $\phi \,\mathrm{mod}\,u$ 
is invertible (resp., nilpotent) on $L/uL$. 
\end{definition} 

Let $\c L(0)=(\c W_1, \c W_1u^{p-1}, \varphi ,N)\in\uL ^*$, where 
$\varphi (u^{p-1})=1$ and $N(1)=u^p\operatorname{mod}u^{2p}$. Then $\c L(0)$ is etale. As a matter of fact, 
it is the simplest etale object of $\uL ^*$ due to the following Lemma. 

\begin{Lem} \label{L1.6} 
 Suppose $\c L=(L,F(L),\varphi ,N)\in\uL ^*$ is etale. Then $\c L$ is a 
product of finitely many copies of $\c L(0)$. 
\end{Lem}

\begin{proof}
 If $\widetilde{L}_0=\{l\in L/uL\ |\ \phi (l)=l\}$ then 
$L/uL=\widetilde{L}_0\otimes _{\F _p}k$. Then there is a unique 
$\F _p$-submodule $L_0$ of $L$ such that $\phi |_{L_0}=\id $ and 
$L=L_0\otimes _{\F _p}\c W_1$. 

Suppose $l\in L_0$. Then $\varphi (u^{p-1}l)=l$ and 
$N(l)=N(\varphi (u^{p-1}l))=\varphi (uN(u^{p-1}l))=\varphi (u^p(l\operatorname{mod}u^{2p})+u^pN(l))
=u^pl\operatorname{mod}u^{2p}L$. 
Therefore, if $e_1,\dots ,e_s$ is an $\F _p$-basis 
of $L_0$ then all $(\c W_1e_i, \c W_1u^{p-1}e_i)$ determine the subobjects 
$\c L_i\simeq\c L(0)$ of $\c L$ and $\c L\simeq \c L_1\times\dots\times\c L_s$. 
\end{proof}

\begin{Prop}\label{P1.7} Any $\L\in\uL ^*$ contains a unique 
maximal etale subobject 
$(\L ^{et}, i^{et})$ and  a unique maximal connected quotient object 
$(\L ^c,j^c)$ and the sequence 
$0\longrightarrow\L ^{et}\overset{i^{et}}
\longrightarrow\L\overset{j^{c}}\longrightarrow
\L ^c\longrightarrow 0$ is short exact. 
\end{Prop} 

\begin{proof} 
Let $\c L=(L,F(L),\varphi ,N)$ and, as earlier, let 
$\phi :L\To L$ be such that for any $l\in L$, $\phi (l)=\varphi (u^{p-1}l)$. 
Then for $\widetilde{L}=L/uL$, we have the $k$-linear subspaces 
$\widetilde{L}^{et}$ and $\widetilde{L}^c$ in $\widetilde{L}$ such that 
$\tilde\phi :=\phi\,\operatorname{mod}u$ is invertible on 
$\widetilde{L}^{et}$ and nilpotent on $\widetilde{L}^c$ and 
$\widetilde{L}=\widetilde{L}^{et}\oplus\widetilde{L}^c$.

Then there is a unique $\c W_1$-submodule $L^{et}$ of $L$ such that 
$\phi |_{L^{et}}$ is invertible and $L^{et}/uL^{et}=\widetilde{L}^{et}$. 
The filtered submodule $(L^{et}, u^{p-1}L^{et})$ determines 
an etale subobject $\iota ^{et}:\c L^{et}\To\c L$. Clearly, 
$u^{p-1}L^{et}\subset L^{et}\cap F(L)$. If the inverse embedding 
does not take place then there is an $l\in L^{et}\setminus uL^{et}$ such that 
$u^{p-1}l\in uF(L)$. Therefore, $\phi (l)=\varphi (u^{p-1}l)\in u^pL$ but 
$\phi |_{L^{et}}$ is invertible. So, $\iota ^{et}$ is strict monomorphism and we can consider 
$\Coker\, \iota ^{et}=j^c:\c L\To\c L^c$. Clearly, $\c L^c$ is connected. 
The maximality properties of $\c L^{et}$ and $\c L^c$ are formally  
implied by the following easy statement: 
\medskip 

{\it if $\c L_1\in\uL ^*$ is etale and $\c L_2\in\uL ^*$ is connected then 
$\Hom _{\uL ^*}(\c L_1,\c L_2)=0$.}
\medskip 
\end{proof}

Suppose $\L =(L,F(L),\varphi ,N)\in\uL ^*$. Then 
$\varphi (F(L))$ is a $\sigma (\W _1)$-module and 
$L=\varphi (F(L))\otimes _{\sigma (\c W _1)}\c W_1$. 
If $l\in L$ and for $0\leqslant i<p$, 
$l^{(i)}\in F(L)$ are such that 
$l=\sum _{0\leqslant i<p}\varphi (l^{(i)})\otimes u^i$,  
set $V(l)=l^{(0)}$. Then $V\mathrm{mod}\, u$ is a 
$\sigma ^{-1}$-linear endomorphism of the $k$-vector space $L/uL$. 

\begin{definition} The module 
$\L $ is multiplicative (resp., unipotent) 
if $\widetilde{V}:=V\,\mathrm{mod}\,u$ 
is invertible (resp., nilpotent) on $\widetilde{L}:=L/uL$. 
\end{definition} 

Let $\c L(1)=(\c W_1, \c W_1, \varphi ,N)\in\uL ^*$, where 
$\varphi (1)=1$ and $N(1)=0$. Then $\c L(1)$ is multiplicative. 
As a matter of fact, 
it is the simplest multiplicative object of 
$\uL ^*$ due to the following Lemma. 

\begin{Lem} \label{L1.8} 
Suppose $\c L=(L,F(L),\varphi ,N)\in\uL ^*$ is multiplicative,  
then $\c L$ is the product of finitely many copies of $\c L(1)$. 
 \end{Lem} 

\begin{proof} Clearly, the embedding $F(L)\To L$ 
induces the identification $F(L)/uF(L)=L/uL$ and, therefore, 
$F(L)=L$.  

 Let $\widetilde{L}_0\subset\widetilde{L}$ be such that 
$\widetilde{V}|_{\widetilde{L}_0}=\id $. If $l\in L$ is such that 
$l\operatorname{mod}\,uL\in\widetilde{L}_0$ then 
$\varphi (l)\equiv l\operatorname{mod}uL$. This implies the existence 
of a unique $l'\in L$ such that $l'\equiv l\,\operatorname{mod}uL$ and 
$\varphi (l')=l'$. 
In other words, there is an $\F _p$-submodule $L_0$ in $L$ such that 
$L=L_0\otimes _{\F _p}\c W_1$ and $\varphi |_{L_0}=\id $. 

If $l\in L_0$ then $N(l)=N(\varphi (l))=\varphi (uN(l))=u^p\varphi (N(l))=0$. 
So, if $e_1,\dots ,e_s$ is an $\F _p$-basis of $L_0$ then the 
filtered modules $(\c W_ie_i, \c W_1e_i)$ determine the 
subobjects $\c L_i\simeq\c L(1)$ of $\c L$ and 
$\c L\simeq\c L_1\times\dots\times\c L_s$.  
\end{proof}

\begin{Prop}\label{P1.9} Any $\L =(L,M,\varphi ,N)\in\uL ^*$ 
contains a unique maximal multiplicative 
quotient object 
$(\L ^{m}, j^{m})$ and a unique maximal unipotent subobject 
$(\L ^u,i^u)$ and the sequence 
$$0\longrightarrow\L ^{u}\overset{i^{u}}
\longrightarrow\L\overset{j^{m}}\longrightarrow
\L ^m\longrightarrow 0$$  
is exact. 
\end{Prop} 

\begin{proof} 
Let $\widetilde{L}=L/uL$, $\widetilde{M}=M/uM$ and 
$\widetilde{L}=\widetilde{L}^m\oplus\widetilde{L}^u$, where 
$\widetilde{V}:=V\operatorname{mod}u$ is invertible on 
$\widetilde{L}^m$ and nilpotent on $\widetilde{L}^u$. 

Note that $\varphi $ induces a  
$\sigma $-linear isomorphism $\tilde\varphi :\widetilde{M}\To\widetilde{L}$.  
Denote by $\tilde\iota :\widetilde{M}\To\widetilde{L}$ the 
$k$-linear morphism induced by 
the embedding $M\subset L$. 
With this notation, for any $l\in\widetilde{L}$, 
$\widetilde{V}(l)=\tilde\iota (\tilde\varphi ^{-1}(l))$. 

Consider the filtration $\widetilde{L}^u\supset \wt V\widetilde{L}^u
\supset\dots\supset \wt V^s\widetilde{L}^u=\{0\}$ and set for 
$1\leqslant i\leqslant s+1$, 
$\widetilde{M}_i=\tilde\varphi ^{-1}(\wt V^{i-1}\widetilde{L}^u)$. 
Then $\widetilde{M}_1\supset \widetilde{M}_2\supset\dots\supset
\widetilde{M}_s\supset\widetilde{M}_{s+1}=\{0\}$ and for 
$1\leqslant i\leqslant s$, 
\begin{equation} \label{ED1}
 \tilde\iota (\widetilde{M}_i)=\wt V^i\wt L^u=\tilde\varphi (\widetilde{M}_{i+1}).
\end{equation}

For $1\leqslant i\leqslant s+1$, introduce the $\c W_1$-submodules 
$M_i^{(0)}$ of $M$ 
such that $M_1^{(0)}\supset M_2^{(0)}\supset\dots
\supset M_s^{(0)}\supset M_{s+1}^{(0)}=0$ and  
$M_i^{(0)}/uM_i^{(0)}=\widetilde{M}_i$ with respect to the natural projection 
$M\To\widetilde{M}$. Then conditions 
\eqref{ED1} imply that for all $i$, 
$M_i^{(0)}\subset \varphi (M_{i+1}^{(0)})\otimes _{\sigma\c W_1}\c W_1+uL.$ 

Let $\widetilde{M}^m=\tilde\varphi ^{-1}(\widetilde{L}^m)$ 
and let $M^m\subset M$ be a $\c W_1$-submodule 
such that $M^m/uM^m=\widetilde{M}^m$ with respect to the natural 
projection $M\To\widetilde{M}$. Then 
\begin{equation} \label{ED2} 
M^m+uL=\varphi (M^m)\otimes_{\sigma\c W_1}\c W_1+uL
\end{equation}
and $M=M^m\oplus M_1^{(0)}$.

Prove the existence of ``more precise'' lifts $M_i^{(n)}$ of $\widetilde{M}_i$, 
where $0\leqslant i\leqslant s+1$ and $n\geqslant 1$. 

\begin{Lem} \label{L1.10} 
 For all $n\geqslant 1$ and $0\leqslant i\leqslant s+1$, 
there are $\c W_1$-modules $M_i^{(n)}$ 
such that 
\medskip 

{\rm a)}  $M_1^{(n)}\supset M_2^{(n)}\supset \dots 
\supset M_s^{(n)}\supset M_{s+1}^{(n)}=\{0\}$ and 
$M_i^{(n)}/uM_i^{(n)}=\widetilde{M}_i$ with respect to the natural 
projection $M\To\widetilde{M}$; 
\medskip 

{\rm b)} 
$M_i^{(n)}\subset \varphi (M_{i+1}^{(n)})\otimes _{\sigma\c W_1}\c W_1+
u\varphi (M_1^{(n)})\otimes _{\sigma\c W_1}\c W_1+u^{n+1}L$; 
\medskip 

{\rm c)} $M_i^{(n-1)}+u^nM=M_i^{(n)}+u^nM$.
\end{Lem}

\begin{proof} [Proof of Lemma] 
The modules $M_i^{(0)}$, $0\leqslant i\leqslant s+1$, satisfy the requirements  
a) and b) of our Lemma. Therefore, we can assume that 
the modules $M_i^{(n)}$ satisfying the requirements 
a)-c) have been already constructed for $n=N-1$, where $N\geqslant 1$.  

Note that $M=M^m\oplus M_1^{(N-1)}$ (it is known for $N=1$ and follows from c) 
for $N>1$). Therefore, \eqref{ED2} implies that  
$$L=\varphi (M^m)\otimes _{\sigma\c W_1}\c W_1+
\varphi (M_1^{(N-1)})\otimes _{\sigma\c W_1}\c W_1\qquad\qquad\qquad  $$
$$\qquad \qquad\qquad \subset 
M^m+\varphi (M_1^{(N-1)})\otimes _{\sigma\c W_1}\c W_1+uL.$$

Therefore, for $1\leqslant i\leqslant s$ (use b) for $n=N-1$),  
 
$$M_i^{(N-1)}\subset \varphi (M_{i+1}^{(N-1)})\otimes _{\sigma\c W_1}\c W_1+
u\varphi (M_1^{(N-1)})\otimes _{\sigma\c W_1}\c W_1+u^{N}M^m+u^{N+1}L$$
and we can define the submodules $M_i^{(N)}$ in such a way that the 
property c) holds for $n=N$ 
\begin{equation}\label{ED2} 
M_i^{(N)}+u^NM^m=M_i^{(N-1)}+u^NM^m
\end{equation}  
and 
\begin{equation}\label{ED3} 
M_i^{(N)}\subset \varphi (M_{i+1}^{(N-1)})\otimes _{\sigma\c W_1}\c W_1+
u\varphi (M_1^{(N-1)})\otimes _{\sigma\c W_1}\c W_1+u^{N+1}L.
\end{equation} 

Note that \eqref{ED2} implies that 
$\varphi (M_i^{(N)})+u^{Np}L=\varphi (M_i^{(N-1)})+u^{Np}L$ and, therefore, 
we can replace $\varphi (M_i^{(N-1)})$ and 
$\varphi (M_1^{(N-1)})$ by $\varphi (M_i^{(N)})$ and, 
resp. $\varphi (M_1^{(N)})$ in \eqref{ED3}. 
The lemma is proved. 
\end{proof} 

Let $M^u=\bigcap _{n\geqslant 0}(M_1^{(n)}+u^{n+1}M)$. Then 
$M^u/uM^u=\widetilde{M}^u$ with respect to the natural 
projection $M\To\widetilde{M}$ and $M=M^m\oplus M^u$. 

Let $L^u=\varphi (M^u)\otimes _{\sigma\c W_1}\c W_1$. Then 
$\operatorname{rk} _{\c W_1}L^u=\operatorname{rk} _{\c W_1}M^u$ and 
$$L^u=\bigcap _{n\geqslant 0}\left (\varphi (M_1^{(n)})\otimes _{\sigma\c W_1}
\c W_1+u^{(n+1)p}L\right )\supset M^u$$
(use Lemma \ref{L1.10}b)). On the other hand,  
$$L=\varphi (M^m\oplus M^u)\otimes _{\sigma\c W_1}\c W_1=M^m\oplus L^u$$
implies that $M^u\supset u^{p-1}L^u$ and $L^u\cap M=M^u$. Therefore, 
the filtered module $(L^u,M^u)$ defines a unipotent subobject 
$\c L^u$ of $\c L$ in the category $\uL ^*_0$ 
and the natural embedding $\c L^u\To\c L$ is strict. 

Suppose $l\in M^u$ and $N(l)\equiv l_0+l_1\operatorname{mod}u^pL$, 
where $l_0\in M^m$ and 
$l_1\in L^u$. Then $uN(l)\equiv ul_0+ul_1\in (M^m
\oplus M^u)\operatorname{mod}u^pL$ and 
$N(\varphi (l))=\varphi (uN(l))\equiv\varphi (ul_1)\operatorname{mod}u^pL$ 
implies that $N(L^u)\subset L^u\operatorname{mod}u^pL$. Then 
from Proposition \ref{P1.2} 
it follows that $\c L^u$ is a subobject of $\c L$ in the category $\uL ^*$. 
Clearly, the quotient $\c L/\c L^u:=\c L^m$ is multiplicative. 

The maximality of $\c L^{u}$ and $\c L^m$ are formally  
implied by the following easy property of objects $\c L_1, \c L_2\in\uL ^*$: 
\newline 
{\it if $\c L_1$ is unipotent and $\c L_2$ is 
multiplicative then $\Hom _{\uL ^*}(\c L_1,\c L_2)=0$.}
\medskip 
\end{proof}

Using the above results we can introduce the subcategories 
$\uL ^{*et}$, $\uL ^{*c}$, $\uL ^{*m}$, $\uL ^{*u}$ in $\uL ^*$. 
They consist of, resp., etale, connected, multiplicative and unipotent objects 
of the cattegory $\uL ^*$. 
The correspondences 
$\c L\mapsto \c L^{et}$, $\c L\mapsto\c L^c$, $\c L\mapsto \c L^m$, 
$\c L\mapsto\c L^u$ determine the natural exact functors from $\uL ^*$ to, resp., 
$\uL ^{*et}$, $\uL ^{*c}$, $\uL ^{*m}$ and $\uL ^{*u}$. 
\medskip 

\subsection{The category $\uL ^*_{cr}$} \label{S1.3}

\begin{Prop} \label{P1.11} Suppose $\L =(L,F(L),\varphi, N)\in\uL ^*$. 
Then the following conditions are equivalent:

(a) $N(F(L))\subset F(L)\operatorname{mod}u^{2p}L$;

(b)  $N(\varphi (F(L)))\subset u^pL\operatorname{mod}u^{2p}L$.
\end{Prop}

\begin{proof} $(a)\Rightarrow (b)$: if for any 
$l\in F(L)$, $N(l)\in F(L)\operatorname{mod}u^{2p}L$ then 
$N(\varphi (l))=\varphi (uN(l))=u^p\varphi (N(l))\in u^pL\operatorname{mod}u^{2p}L$.

$(b)\Rightarrow (a)$: for any $l\in F(L)$, 
$\varphi (uN(l))=N(\varphi (l))\in u^pL\operatorname{mod}u^{2p}L$; 
now use that $\varphi $
induces embedding of $F(L)/uF(L)$ into $L/u^pL$ to deduce that 
$uN(l)\in uF(L)\operatorname{mod}u^{2p}L$, i.e.  
$N(l)\in F(L)\operatorname{mod}u^{2p}L$ (use that $u^{p-1}L\subset F(L)$).
\end{proof}

\begin{definition} The category $\uL ^*_{cr}$ is a full 
subcategory of $\uL ^*$ consisting of $(L,F(L),\varphi ,N)$ such that 
$N:L\longrightarrow L$ satisfies the equivalent conditions from 
Proposition \ref{P1.11}.  
\end{definition}

\begin{remark} a) If $\c L=(L,F(L),\varphi ,N)\in\uL ^*_{cr}$ then 
 $N_1=N\operatorname{mod}u^p$ is a unique $\c W_1$-differentiation 
$N_1:L\To L/u^p$ whose restriction to $\varphi (F(L))$ is the zero map. 
Therefore, any $\c L\in\uL ^*_0$ has at most one structure 
of object of the category $\uL ^*$.

b) Any etale or multiplicative object from $\uL ^*$ belongs to $\uL ^*_{cr}$. 

c) If $f$ is a morphism in $\uL ^*_{cr}$ then $\Ker _{\uL ^*}f=\Ker _{\uL^*_{cr}}f$ and 
$\Coker _{\uL ^*}f=\Coker _{\uL ^*_{cr}}f$. In particular, we can introduce 
the full subcategories $\uL ^{*et}_{cr}$, $\uL ^{*c}_{cr}$, $\uL ^{*m}_{cr}$, 
$\uL ^{*u}_{cr}$ of, resp., etale, connected, multiplicative and unipotent 
objects of $\uL ^*_{cr}$. 
\end{remark}

\begin{Prop} \label{P1.12} Suppose $\L =(L,F(L),\varphi ,N)\in\uL ^*_{cr}$. 
Then  
there is a $\sigma (\W _1)$-basis $l_1,\dots ,l_s$ 
of $\varphi (F(L))$ and integers $0\leqslant c_i<p$, 
where $1\leqslant i\leqslant s$, such that 
$u^{c_1}l_1,\dots ,u^{c_s}l_s$ is a $\W _1$-basis of $F(L)$. 
\end{Prop}

\begin{proof} Choose a $\W _1$-basis $m_1,\dots ,m_s$ of $L$ 
such that for suitable integers $c_1,\dots ,c_s$, the elements 
$u^{c_1}m_1,\dots ,u^{c_s}m_s$ form a $\W _1$-basis of $F(L)$. 
Clearly all $0\leqslant c_i<p$. 

For $1\leqslant i\leqslant s$ and $j\geqslant 0$, 
let $l_{ij}\in \varphi (F(L))$ be such that 
$m_i=\sum_{j\geqslant 0}u^jl_{ij}$. Note that 
$\{l_{i0}\ |\ 1\leqslant i\leqslant s\}$ is a $\sigma (\W _1)$-basis of 
$\varphi (F(L))$ and it will be sufficient to prove that 
all $u^{c_i}l_{i0}\in F(L)$ because then the elements $l_i:=l_{i0}$
will satisfy the requirements of our proposition.  

For all $1\leqslant i\leqslant s$, the element 
$$N(u^{c_i}m_i)=
-\sum_{j}(j+c_i)u^{j+c_i}(l_{ij}\operatorname{mod}u^{2p}L)+\sum_{j}u^{j+c_i}N(l_{ij})$$
belongs to $F(L)\operatorname{mod}u^{2p}L$ if and only if 
$\sum _{j}(j+c_i)u^{j+c_i}l_{ij}\in F(L)$. 
(Use that $u^pL\subset uF(L)$.) 
This implies that for all integers $k\geqslant 0$, 
$\sum _{j}(j+c_i)^ku^{j+c_i}l_{ij}\in F(L)$. 
Therefore, for any $\alpha\in\Z /p\Z $, 
$$\sum _{(j+c_i)\mathrm{mod}p=\alpha }u^{j+c_i}l_{ij}\in F(L).$$
In particular, taking $\alpha =c_i\mathrm{mod}\,p$ and using that 
$u^pl_{ij}\in F(L)$, we obtain that $u^{c_i}l_{i0}\in F(L)$.
\end{proof} 

\begin{remark}
 a) Suppose $\c L=(L,F(L),\varphi )\in\uL ^*_0$ and satisfies the conclusion 
of Proposition \ref{P1.12}. Define the $\c W_1$-differentiation 
$N_1:L\To L/u^pL$ by setting $N_1(l_1)=\dots =N(l_s)=0$. 
If $N:L\To L/u^{2p}L$ is the extension of $N_1$ given by Propostion \ref{P1.2} 
then $(L,F(L),\varphi ,N)\in\uL ^*_{cr}$. In other words, 
Proposition \ref{P1.12} characterizes the objects of $\uL ^*_0$ coming from 
$\uL ^*_{cr}$.

b)  For an object $(L,F(L),\varphi ,N)\in\uL ^*_{cr}$, 
Proposition \ref{P1.12} implies
 that if $\sum _{0\leqslant i<p}u^il_i\in F(L)$, where  
all $l_i\in \varphi (F(L))$, then all $u^il_i\in F(L)$. 
\end{remark}

Consider the category of filtered Fontaine-Laffaille modules
$\underline{\mathrm{MF}}_{p-1}$ from \cite{refFL}. The objects of this category are finite
dimensional $k$-vector spaces $M$ with decreasing filtration 
of length $p$ 
by subspaces $M=M^0\supset M^1\supset \dots \supset M^{p-1}\supset
M^p=0$ and $\sigma $-linear maps 
$\varphi _i:M^i\longrightarrow M$ such that 
$\Ker\,\varphi _i\supset M^{i+1}$, where $0\leqslant i< p$,
 and $\sum _i\mathrm{Im}\,\varphi _i=M$. 
The morphisms in $\underline{\mathrm{MF}}_{p-1}$ are the morphisms of filtered 
vector spaces which commute with the corresponding morphisms $\varphi
_i$, $0\leqslant i<p$. 

The category $\MF _{p-1}$ is abelian. The object $M$ of $\MF _{p-1}$ is:

--- etale (resp., multiplicative) if $M^1=0$ (resp., $M=M^{p-1}$);

--- connected (resp., unipotent) if $M$ has no etale 
(resp., multiplicative) subquotient.

Introduce the full subcategories $\MF _{p-1}^{et}$, $\MF _{p-1}^{m}$, 
$\MF _{p-1}^{c}$ and $\MF _{p-1}^u$ of, resp., etale, multiplicative, connected and unippotent 
objects in $\MF _{p-1}$. These subcategories are closed 
under the operations of taking 
subobjects and quotient objects and, therefore, are also abelian. 
For any $M\in\MF _{p-1}$, there are standard exact sequences 
$0\To M^{et}\To M\To M^c\To 0$ and 
$0\To M^u\To M\To M^m\To 0$, 
where $M^{et}$ (resp., $M^u$) is the maximal etale (resp., unipotent) subobject and 
$M^c$ (resp., $M^m$) is the maximal connected (resp., multiplicative) quotient object. 

The categories $\uL ^*_{cr}$  and $\MF _{p-1}$ do not differ very much. 

Indeed, introduce the functor $\Md :\widetilde{\uL} ^*\To\widetilde{\uL} ^*$ 
induced on the level of filtered modules by $(L,F(L))\mapsto (L/u^pL, F(L)/u^pL)$. 
Denote by $\Md (\uL^*_{cr})$ the full subcategory of $\wt{\uL} ^*$ 
consisting of the objects $\Md (\c L)$, where $\c L\in\uL ^*_{cr}$. 

Define the functor $\c F:\MF _{p-1}\To \wt{\uL} ^*$ as follows. 
Let $M\in\MF _{p-1}$ with the corresponding filtration $M^i$ and 
$\sigma $-linear morphisms $\varphi _i$, $0\leqslant i<p$. 
Then on the level of objects, $\c F(M)=(L,F(L),\varphi ,N)$, where 
$L=M\otimes _k\c W_1/u^p\c W_1$, 
$F(L)=\sum _{0\leqslant i<p}u^{p-1-i}\c W_1(M^i\otimes 1)$ and for any $m\in M_i$, 
$\varphi (u^{p-1-i}m_i)=\varphi _i(m_i)$. One can easily 
see that $\c F$ is equivalence of the categories 
$\MF _{p-1}$ and $\Md (\uL ^*_{cr})$. 

Now the difference between the categories $\uL ^*_{cr}$ and $\MF _{p-1}$ 
is described by the following Proposition. 

\begin{Prop} \label{P1.13} 
For $\c L_1,\c L_2\in\uL ^*_{cr}$, $\Md $ induces a surjection    
from $\Hom _{\uL^*_{cr}}(\c L_1,\c L_2)$ to  
$\Hom _{\widetilde{\uL}^*}(\Md (\c L_1),\Md (\c L_2))$ 
and its kernel coincides with 
$(i_{\c L_2*}\circ j_{\c L_1}^{*})\Hom _{\uL ^*}(\c L_1^m, \c L_2^{et})$, 
where $i_{\c L_2}:\c L_2^{et}\To\c L_2$ (resp., 
$j_{\c L_1}:\c L_1\To\c L_1^m$) is the maximal etale subobject in $\c L_2$ 
(resp., multiplicative quotient object for $\c L_1$). 
 \end{Prop}

\begin{proof} For $\c L_2=(L_2, F(L_2), \varphi ,N)$, let $\phi :L_2\To L_2$ 
be such that $\phi (l)=\varphi (u^{p-1}l)$ for any $l\in L_2$. Let 
$L_2^c=\{l\in L_2\ |\ \phi (l)^n\underset{n\to\infty }\To 0\}$ and let 
$\c L_2'\in\widetilde{\uL} ^*$ be the filtered module 
$(L_2/u^pL^c, F(L_2)/u^pL_2^c)$ with $\varphi $ and $N$ induced from $\c L_2$. 
Then there are natural strict epimorphisms 
$$\c L_2\overset{\alpha }\To\c L_2'\overset{\beta }\To\Md (\c L_2),$$
where $\Ker\,\alpha $ is associated with the filtered module 
$(u^pL_2^c, u^pL_2^c)$ and $\Ker\,\beta $ --- with $(u^pL_2/u^pL_2^c, u^pL_2/u^pL_2^c)$. 

Clearly, $\varphi |_{u^pL_2^c}$ is nilpotent and then  
by Lemma  \ref{L1.5},  
$$\alpha _*:\Hom _{\uL ^*}(\c L_1,\c L_2)
\To\Hom _{\widetilde{\uL }^*}(\c L_1, \c L_2')$$
is bijective. Note that the natural embedding $L_2^{et}\To L_2$ 
induces the identification 
$u^pL_2/u^pL_2^c=u^pL_2^{et}/u^{p+1}L_2^{et}$. 
Let $\c L_2''=(u^pL_2, u^pL_2)\in\uL ^*$ 
with induced $\varphi $ and $N$. Then $\c L_2''$ is multiplicative  
and there is a natural projection $\gamma :\c L_2''\To \Ker\,\beta $ such that 
$\Ker\,\gamma $ is associated with $(u^{p+1}L_2^{et}, u^{p+1}L_2^{et})$. 
Note that   
$\varphi $ is nilpotent on $u^{p+1}L_2^{et}$. Applying Lemma \ref{L1.5}  
we obtain that 
$$\beta _*:\Hom _{\widetilde{\uL} ^*}(\c L_1, \c L_2')\To 
\Hom _{\widetilde{\uL}^*}(\c L_1, \Md (\c L_2))$$
is surjective and 
$$\Ker\,\beta _*=\Hom _{\widetilde{\uL} ^*}(\c L_1,\Ker\beta )=
\Hom _{\widetilde{\uL}^*}(\c L_1,\c L'')\simeq  
\Hom (\c L_1^m,\c L_2'').$$
 It remains to note that 
$\Hom _{\widetilde{\uL}^*}(\c L_1^m,\c L_2'')=\Hom _{\uL ^*}(\c L_1^m, \c L_2^{et})$ 
via the natural embedding of $\c L''$ into $\c L_2^{et}$. 
\end{proof} 

\begin{Cor} \label{C1.14} The functor $\Md\circ \c F^{-1}$ induces equivalence 
of the categories $\uL ^{*c}_{cr}$ (resp., $\uL ^{*u}_{cr}$) 
and $\MF _{p-1}^c$ (resp., $\MF _{p-1}^u$). 
\end{Cor}

\medskip

\subsection{Simple objects in $\uL ^*$}\label{S1.4}

\begin{definition} An object $\L $ of $\uL ^*$ is simple if 
any strict monomorphism $i:\L _1\longrightarrow \L$ in $\uL^*$ 
is either isomorphism or the zero morphism. Equivalently, 
$\L $ is simple iff any strict epimorphism $j:\L\longrightarrow\L _2$ 
is either isomorphism or the zero morphism. 
\end{definition}

All simple objects in $\uL ^*$ can be described as follows. 

Let $[0,1]_p=\{r\in\Q\ | \ 0\leqslant r\leqslant 1, v_p(r)=0\}$, 
where $v_p$ is a $p$-adic valuation. 
Then any $r\in [0,1]_p$ can be uniquely written as 
$r=\sum _{i\geqslant  1}a_ip^{-i}$, 
where the digits $0\leqslant a_i=a_i(r)<p$ form  a periodic
sequence. The minimal positive period of this sequence will be denoted
by $s(r)$. 

Let $\tilde r=1-r$. Then $\tilde r\in [0,1]_p$ and $\tilde r=
\sum _{i\geqslant 1}\tilde a_ip^{-i}$, where for all $i\geqslant 1$, 
the digits $\tilde a_i=a_i(\tilde r)$ are such that 
$a_i+\tilde a_i=p-1$. 

\begin{definition} For $r\in [0,1]_p$, let 
$\L (r)=(L(r), F(L(r)), \varphi ,N)$ be the following object 
of the category $\uL _{cr}^*$: 

$\bullet $\ $L(r)=\oplus _{i\in\Z/s(r)}\W _1l_i$;

$\bullet $\ $F(L(r))=\sum _{i\in\Z/s(r)}\c W_1 u^{\tilde a_i}l_i$;

$\bullet $\ for $i\in\Z /s(r)$, $\varphi (u^{\tilde a_i}l_i)=l_{i+1}$. 

$\bullet $\ $N$ is uniquely recovered from the condition 
$N|_{\varphi (F(L))}=0\operatorname{mod}u^p$, cf. Proposition \ref{P1.2}.   
\end{definition}

\begin{remark}
 If $r=0$ or $r=1$ we obtain the objects $\c L(0)$ and $\c L(1)$ introduced 
in Subsection \ref{S1.2}. Note also that $\c L(r)$ is connected 
iff $r\ne 0$ and 
 unipotent iff $r\ne 1$.  
\end{remark}

For $n\in\N $ and $r\in [0,1]_p$, set 
$r(n)=\sum _{i\geqslant 1}a_{i+n}(r)p^{-i}$. Extend this definition
to any 
$n\in\Z$ by setting $r(n):=r(n+Ns(r))$ for a sufficiently large
$N\in\N$.  

\begin{Prop} \label{P1.15} a) If $r\in [0,1]_p$ then $\L (r)$ is simple;

b) if $r_1,r_2\in [0,1]_p$ then 
$\L (r_1)\simeq\L (r_2)$ if and only if there is 
an $n\in\Z$ such that $r_1=r_2(n)$;

c) if $\c L$ is a simple object of the category $\uL ^*$ 
then there is an $r\in [0,1]_p$ such that $\L \simeq \L (r)$.  
\end{Prop} 

\begin{proof} 
 Lemma \ref{L1.16} below implies that the simple objects in 
the categories $\uL ^*_{cr}$ and $\uL ^*$ 
are the same. By Corollary \ref{C1.14}, 
the functor $\Md\circ\c F^{-1}$ transforms 
simple objects of $\uL ^*$ to simple objects in $\MF _{p-1}$. 
It remains to note that an analogue of 
our Proposition for the category $\MF _{p-1}$ is proved in \cite{refFL}.
\end{proof} 

\begin{Lem} \label{L1.16} For any $\c L\in\uL ^*$, there is  
an $\c L^{cr}\in\uL ^*_{cr}$ and 
a strict monomorphism 
$\iota ^{cr}\in\Hom _{\uL ^*}(\c L^{cr},\c L)$ such that  
if $\iota '\in\Hom _{\uL ^*}(\c L',\c L)$ is a strict monomorphism and 
$\c L'\in\uL ^*_{cr}$ then there is a strict monomorphism 
$\alpha :\c L'\To \c L^{cr}$ such that $\iota '=\iota ^{cr}\circ\alpha $. 
\end{Lem}

\begin{proof}[Proof of Lemma]
 Suppose $\c L=(L,F(L),\varphi ,N)$. Consider the 
$k$-linear space  $M:=\varphi (F(L))/u^p\varphi (F(L))$. Let  
$\widetilde{L}=M\otimes _k(\c W_1/u^p\c W_1)=L/u^pL$, 
$\tilde F=F(L)/u^pL$ and  
$\tilde\varphi :\widetilde{F}\To {M}$ be the map induced by $\varphi $.

Proceed by induction to define for all $i\geqslant 1$, the subspaces 
$M_i\subset M$ and the $\c W_1$-submodules 
$\widetilde{F}_i\subset\widetilde{L}$ 
as follows.  

From the definition of $N:L\To L/u^{2p}L$ it follows easily 
that  $N$ induces a $k$-linear map 
$\widetilde{N}_1:M\To M$ and 
$\widetilde{N}_1^p=0$. Therefore, $M_1:=\Ker\,\widetilde{N}_1$ 
is a non-trivial subspace in $M$. 

Suppose $i\geqslant 1$ and $M _i$ has been already  defined.  Let 
$\widetilde{F}_i$ be the submodule of the  
elements of the form $u^al$ in 
$M\otimes _k(\c W_1/u^p\c W_1)$, where 
$a\geqslant 0$, $l\in M_i$ and $u^al\in \widetilde{F}$.   
Then set 
$M_{i+1}=\tilde\varphi (\widetilde{F}_i)$. 

Verify that for all indices $i$, $M_{i+1}\subset M_i$. If $i=1$ we must prove that 
$\wt N_1(M_2)=0$. Indeed, $M_2$ is spanned by $\varphi (u^al)$, where 
$l\in M_1$ and $u^al\in\wt F_1$. But 
$N(\varphi (u^al))=\varphi (uN(u^al))=\varphi (-u^{a+1}l+u^{a+1}N(l))\in u^pL$. 
If $i>1$ then we can assume by induction that $M_{i-1}\subset M_i$. 
This implies that $\wt F_{i-1}\subset \wt F_i$ and $M_i\subset M_{i+1}$. 

We obtained a decreasing sequence of non-trivial finite dimensional 
$k$-linear spaces $\{M_i\ |\ i\geqslant 1\}$.  
For $i\gg 1$, these spaces become a 
non-trivial constant space    
$M^{cr}\subset M$ such that if 
$\widetilde{F}^{cr}=\{u^al\in\widetilde{F}\ |\ a\geqslant 0, 
l\in M^{cr}\}$ then $\tilde\varphi (\widetilde{F}^{cr})=M^{cr}$. 
This subspace $M^{cr}$ 
has the maximality property: if $M'\subset M$ is such that for 
$\widetilde{F}'=\{u^al\in\widetilde{F}\ |\ a\geqslant 0, l\in M'\}$, 
$\tilde\varphi (\widetilde{F}')=M'$ then $M'\subset M^{cr}$. 
Indeed, show as earlier that $M'\subset M_1$ and then proceed 
by induction proving that $M'\subset M_i$ for all $i\geqslant 1$. 

Now in notation from Subsection \ref{S1.3}, there is an $\c L^{cr}\in\uL ^*_{cr}$ 
such that $\Md (\c L^{cr})=(M^{cr}\otimes _k(\c W_1/u^p\c W_1), 
\widetilde{F}^{cr}, \tilde\varphi ,\widetilde{N})$, where $\widetilde{N}|_{M^{cr}}=0$. 
Then from proposition \ref{P1.13} it follows the existence of a strict monomorphism 
$\iota ^{cr}:\c L^{cr}\To\c L$. 
If $\iota ':\c L'=(L', F(L'), \varphi ,N)\To\c L$ is strict monomorphism 
and $\c L'\in\uL ^*_{cr}$ then 
$\Md (\c L')$ is associated with the filtered module 
$(M'\otimes _k(\c W_1/u^p\c W_1), \widetilde{F}')$ and by the above maximality 
property of $M^{cr}$, $M'$ is a subspace in $M^{cr}$ 
and $\Md (\c L')$ is a strict subobject of 
$\Md (\c L^{cr})$. This gives the required strict embedding $\alpha $. 
The Lemma is proved. 
\end{proof}

\subsection{Extensions in $\uL ^*$}\label{S1.5} \ 
Suppose $r_1,r_2\in [0,1]_p$. 
Choose an $s\in\N $ which is divisible by 
$s(r_1)$ and $s(r_2)$ and  
introduce the objects  
$\L _1=(L_1,F(L_1),\varphi ,N)$ and $\L _2=(L_2,F(L_2),\varphi ,N)$ 
of the category $\uL ^*_{cr}$ as follows: 

  $L_1=\oplus _{i\in\Z /s}\W _1l_i^{(1)}$, 
$F(L_1)=\sum _{i\in\Z /s}
\W _1u^{\tilde a_i}l_i^{(1)}$, where 
$r_1=\sum_{i\geqslant 1}a_ip^{-i}$ with the 
digits $0\leqslant a_i<p$, 
$\tilde a_i=(p-1)-a_i$ 
and for all $i\in\Z/s$, 
$\varphi (u^{\tilde a_i}l_i^{(1)})=l_{i+1}^{(1)}$; 

 $L_2=\oplus _{j\in\Z/s}\W _1l_j^{(2)}$, 
$F(L_2)=\sum _{j\in\Z/s}
\W _1u^{\tilde b_j}l_j^{(2)}$, where 
$r_2=\sum_{j\geqslant 1}b_jp^{-j}$ with the digits 
$0\leqslant b_j<p$, $\tilde b_j=(p-1)-b_j$, 
and for all $j\in\Z/s$, 
$\varphi (u^{\tilde b_j}l_j^{(2)})=l_{j+1}^{(2)}$. 

\begin{Lem} \label{L1.17} For $\kappa =1,2$, $\L _\kappa $ is isomorphic to the product of 
$s/s(r_\kappa )$ copies of the (simple) object $\L (r_\kappa )$. 
\end{Lem}

\begin{proof} Take $\kappa =1$. 
For $\gamma\in\F _{p^s}$ and $\bar\imath \in\Z /s(r_1)$, 
let $m_{\bar\imath }(\gamma )=
\sum _{i\operatorname{mod}s(r_1)=\bar\imath }\sigma ^i(\gamma )l_i^{(1)}$ 
and $M(\gamma )=\sum _{\bar\imath \in\Z /s(r_1)}\c W_1
m_{\bar\imath }(\gamma )\subset L_1$. 
Then all $\c M(\gamma )=(M(\gamma ), M(\gamma )\cap F(L_1), \varphi , N)$ 
with induced $\varphi $ and $N$ 
are subobjects of $\c L_1$ isomorphic to $\c L(r_1)$. 
If $\gamma _1,\dots ,\gamma _d$ is an $\F _{p^{s(r_1)}}$-basis 
of $\F _{p^s}$ then  
$\c M(\gamma _1)\times\dots \times\c M(\gamma _d)$ is isomorphic to $\c L_1$. 
(Use that $d=s/s(r_1)$ and $\operatorname{det}(\sigma ^i(\gamma _j))\ne 0$, where 
for a given $\bar\imath $, $i$ is such that 
$i\operatorname{mod}s(r_1)=\bar\imath $ and $1\leqslant j\leqslant d$.) 
\end{proof}

If $\c L=(L,F(L),\varphi ,N)\in\uL ^*$ then we shall use the same notation 
$\c L$ for the image $(L,F(L),\varphi )$ of $\c L$ under the forgetful 
functor from $\uL ^*$ to $\uL^*_0$. Clearly, this forgetful 
functor induces a group homomorphism $\Ext _{\uL ^*}(\c L_2,\c L_1)
\To \Ext _{\uL^*_0}(\c L_2,\c L_1)$. 

Suppose 
$\L =(L, F(L), \varphi )\in\Ext _{\uL _0^*}(\L _2,\L _1)$. 
Consider a $\sigma(\W _1)$-linear section 
$S:l_j^{(2)}\mapsto l_j$, $j\in\Z /s$, of the corresponding 
epimorphic map 
$\varphi (F(L))\longrightarrow\varphi (F(L_2))$. Then
\medskip 

a) \ $L=L_1\oplus \left (\oplus _{j\in\Z/s}
\W _1l_j\right )$;
\medskip 

b)\ for all indices $j\in\Z/s$, there are unique 
elements $v_j\in L_1$, such that 
$F(L)=F(L_1)+\sum _{j\in\Z/s} \c W_1(u^{\tilde b_j}l_j+v_j)$ 
and $\varphi (u^{\tilde b_j}l_j+v_j)=l_{j+1}$; 
\medskip 

c)\ $F(L)\supset u^{p-1}L$ if and only if for 
all $j\in\Z/s$, $u^{b_j}v_j\in F(L_1)$;
\medskip 

d)\ if $S':l_j^{(2)}\mapsto l_j'=l_j+\varphi (w_{j-1})$, where
$j\in\Z/s$ and $w_{j-1}\in F(L_1)$, is another section of the epimorphism 
$\varphi (F(L))\longrightarrow\varphi (F(L_2))$ then for the
corresponding elements $v'_j\in L_1$, it holds  
$v'_j-v_j=w_{j}-u^{\tilde b_j}\varphi (w_{j-1})$.
\medskip 

The constructions from 
above items a)-d) can be summarized as follows. 

\begin{Lem} \label{L1.18}
Let 
$Z(\c L_2, \c L_1)=\{(v_j)_{j\in\Z /s}\in L_1^s\  |\ u^{b_j}v_j\in F(L_1)\}$   
be a subgroup in  
$L_1^s$ and let 
$$B(\c L_2, \c L_1)=\{(w_j-u^{\tilde b_j}\varphi (w_{j-1}))_{j\in\Z /s}\ |
\ w_j\in F(L_1)\}$$  
be a subgroup of $Z(\c L_2, \c L_1)$. Then there is a natural 
isomorphism of abelian groups 
$Z(\c L_2,\c L_1)/B(\c L_2,\c L_1)
\simeq \Ext _{\uL ^*_0}(\c L_2,\c L_1)$.  
\end{Lem} 
\medskip

\begin{Prop} \label{P1.19} Any $\L \in\Ext _{\uL_0^*}(\L _2,\L _1)$  
appears from a  
system of factors $(v_j)_{j\in\Z /s}\in Z(\c L_2, \c L_1)$ 
satisfying the following
normalization condition  

{\bf (C1)} if $v_j=\sum _{i,t}\gamma _{ijt}u^tl_i^{(1)}$ with  
$\gamma _{ijt}\in k$, then $\gamma _{ij\tilde b_j}=0$. 
\end{Prop} 
\begin{proof} Choose a section $S$ of the projection 
$\varphi ‎(F(L))\longrightarrow \varphi (F(L_2))$ with the minimal 
 set 
$\gamma (S)=\{(i,j,\tilde b_j)\ |\ \gamma _{ij\tilde b_j}\ne 0\}$. 
Suppose $\gamma (S)\ne\emptyset $ (otherwise, the proposition 
is proved) and let $(v_j)_{j\in\Z/s}$ be the corresponding system 
of factors. 

Suppose $(i_0,j_0, \tilde b_{j_0})\in \gamma (S)$ and   
$\gamma =\gamma _{i_0j_0\tilde b_{j_0}}$. Replace 
$(v_j)_{j\in\Z /s}$ 
by an equivalent system $(v'_j)_{j\in\Z /s}$ via the elements 
$w_j\in F(L_1)$ such that $w_j=0$ if $j\ne j_0-1$ and 
$w_{j_0-1}=\sigma ^{-1}(\gamma )u^{\tilde a_{i_0-1}}l_{i_0-1}^{(1)}$. 

If   
$v'_{j}=\sum _{i,t}\gamma '_{ijt}u^tl_i^{(1)}$ then   
\medskip 

--- $\gamma '_{i_0j_0\tilde b_{j_0}}=0$;
\medskip 

--- $\gamma '_{i_0-1,j_0-1,\tilde a_{i_0-1}}=\sigma ^{-1}
(\gamma )+\gamma _{i_0-1,j_0-1,\tilde a_{i_0-1}}$;  
\medskip 

--- for all remaining indices $\gamma '_{ijt}=\gamma _{ijt}$. 
\medskip 

Then 
$\gamma (S')\subset \gamma (S)\setminus \{(i_0,j_0,\tilde b_{j_0})\}
\cup\{(i_0-1,j_0-1,\tilde a_{i_0-1})\}$ and the minimality condition for $S$ 
implies $(i_0-1,j_0-1,\tilde a_{i_0-1})\in\gamma (S')\setminus\gamma (S)$. 
Therefore, 
$\tilde a_{i_0-1}=\tilde b_{j_0-1},\ \gamma _{i_0-1,j_0-1,\tilde a_{i_0-1}}=0,\   
\gamma '_{i_0-1,j_0-1,\tilde b_{j_0-1}}=\sigma ^{-1}(\gamma )$,   
and the new section $S'$ again satisfies the minimality condition. 

Repeating the above procedure we obtain for all $n\in\Z /s$, that 
$\tilde a_{i_0-n}=\tilde b_{j_0-n}$, that is 
$\tilde r_1(i_0)=\tilde r_2(j_0)$. 

Choose $\beta \in k$ such that 
$\sigma ^s(\beta )-\beta =\gamma $ and consider 
$w_j\in F(L_1)$ such that for all $0\leqslant n<s$, 
$w_{j_0+n}=\sigma ^n(\beta )u^{\tilde b_{j_0+n}}l_{i_0+n}^{(1)}$. 
Then for the corresponding new system of factors $(v'_j)_{j\in\Z /s}$, where 
$$v'_j=v_j+w_{j}-u^{\tilde b_{j}}\varphi (w_{j-1})=
\sum_{i,t}\gamma '_{ijt}u^tl_i^{(1)},$$
it holds $\gamma '_{i_0,j_0,\tilde b_{j_0}}=0$, and 
$\gamma _{ijt}=\gamma '_{ijt}$ if 
$(i,j,t)\ne (i_0,j_0,\tilde b_{j_0})$. 
This contradicts to the minimality condition for $S$. 
\end{proof} 

\begin{Prop}\label{P1.20} Any $\L \in\Ext _{\uL^*}(\L _2,\L _1)$ 
can be described via  a system of factors 
$(v_j)_{j\in\Z/s}$, satisfying the above 
condition $(\bf C1)$ and the 
normalization condition
\medskip  

{\bf (C2)} the coefficients $\gamma _{ijt}=0$ if 
$t>\tilde a_i$. 
\end{Prop} 

\begin{proof} Suppose 
$v^{(0)}=(v_j)_{j\in\Z /s}$ is such that 
$v_{j_0}=\gamma u^{t_0}l_{i_0}^{(1)}$ with $\gamma\in k$, 
$t_0>\tilde a_{i_0}$ and 
for $j\ne j_0$, $v_j=0$. It will be sufficient to prove 
that any such system of factors is trivial.

Take the elements $w_j^{(0)}$, $j\in\Z /s$, such that 
$w^{(0)}_{j_0}=-\gamma u^{t_0}l_{i_0}^{(1)}$ and $w_j^{(0)}=0$ if $j\ne j_0$. 
Then the corresponding equivalent system 
$(v_j^{(1)})_{j\in\Z /s}$ is such that 
$v_j^{(1)}=0$ if $j\ne j_0+1$, and 
$v^{(1)}_{j_0+1}=\gamma ^pu^{t_1}l_{i_0+1}^{(1)}$, where 
$t_1=\tilde b_{j_0+1}+(t_0-\tilde a_{i_0})p$. 
This implies that $t_1\geqslant p>\tilde a_{i_0+1}$,  
$t_1-\tilde a_{i_0+1}\geqslant t_0-\tilde a_{i_0}$, 
and $t_1-\tilde a_{i_0+1}>t_0-\tilde a_{i_0}$ unless $\tilde b_{j_0+1}=0$, 
$t_1=p$ and $\tilde a_{i_0+1}=p-1$. 

Repeat this procedure by using   
for all $n\geqslant 0$, the appropriate elements 
$w_j^{(n)}$, $j\in\Z /s$, to obtain the equivalent systems of factors 
$(v_j^{(n)})_{j\in\Z /s}$ such that 
$v_j^{(n)}=0$ if $j\ne j_0+n$, and 
$v_{j_0+n}^{(n)}=\gamma ^{p^n}u^{t_n}l_{i_0+n}^{(1)}$. 

If $(\tilde r_2,\tilde r_1,t_0)\ne (0,1,p)$ then 
$t_n\to\infty $ and we can use the elements 
$w_j=\sum_{n\geqslant 0}w_j^{(n)}$, $j\in\Z /s$,  
to trivialize the original system of factors $v^{(0)}$. 

If $(\tilde r_2,\tilde r_1,t_0)=(0,1,p)$, we can trivialize 
$v^{(0)}$ via the elements $w_j$, $j\in\Z /s$, where 
for $0\leqslant n<s$, $w_{j_0+n}=\kappa ^{p^n}u^pl_{i_0+n}^{(1)}$ 
and $\kappa\in k$ is such that $\sigma ^s(\kappa )-\kappa =\gamma $. 
\end{proof}

\begin{Prop} \label{P1.21} 
Suppose $\L =(L,F(L),\varphi )\in\Ext _{\uL_0^*}(\c L_2,\c L_1)$ 
is given via a system of factors  
$(v_j)_{j\in\Z/s}$ satisfying  the normalization condition $\bf{(C1)}$. 
Then $\c L$ comes from $\uL ^*_{cr}$ 
if and only if all $v_j\in F(L_1)$. 
\end{Prop} 

\begin{proof} Let $N_1:L\To L/u^pL$ be a $\c W_1$-differentiation such that 
for all $j\in \Z/s$, $N_1(l_j)=0$ (and, of course, $N_1(l_j^{(1)})=0$). 
If all $v_j\in F(L_1)$, $F(L)$ is generated by 
the elements $u^{\tilde b_j}l_j$ and 
$u^{\tilde a_j}l_j^{(1)}$, $j\in\Z /s$. 
If $m$ is any of these elements then the basic identity 
$N_1(\varphi (m))=\varphi (uN_1(m))$ is, clearly, satisfied.  
By Proposition \ref{P1.2}, $N_1$ can be extended to a unique   
$\c W_1$-differentiation $N:L\To L/u^{2p}$ and  
$\c L=(L,F(L),\varphi ,N)\in\uL ^*_{cr}$. 

Suppose now that $\L=(L,F(L),\varphi ,N)\in\uL ^*_{cr}$ 
and for all $j\in\Z /s$,  
$v_j=\sum_{i,t}\gamma _{ijt}u^tl_i^{(1)}$ with  
$\gamma _{ij\tilde b_j}=0$. 
Consider the following congruence 
(use that $-u^{\tilde b_j}l_j\equiv v_j\operatorname{mod}F(L)$)
\begin{equation}\label{E1.1} 
N(u^{\tilde b_j}l_j+v_j)\equiv 
\sum _{i,t}\gamma _{ijt}(\tilde b_j-t)u^tl_i^{(1)}+u^{\tilde b_j}N(l_j)
\mathrm{mod}F(L).
\end{equation}
The condition $\L\in\uL^*_{cr}$ implies that 
$N(u^{\tilde b_j}l_j+v_j)\in F(L)\operatorname{mod}u^{2p}L$ and 
$N(l_j)\in u^pL\operatorname{mod}u^{2p}L\subset F(L)\operatorname{mod}u^{2p}L$. 
This means that all $(\tilde b_j-t)\gamma _{ijt}u^tl^{(1)}_i\in F(L_1)$. 
Therefore, for $t\ne \tilde b_j$, 
$\gamma _{ijt}u^tl_i^{(1)}\in F(L_1)$, and $v_j\in F(L_1)$. 
The proposition is proved. 
\end{proof}  

\begin{definition} A pair 
$(i_0,j_0)\in (\Z/s)^2$ 
is $(r_1,r_2)_{cr}$-admissible if 
$\tilde a_{i_0}\ne \tilde b_{j_0}$ 
and there is an $m_0=m_{cr}(i_0,j_0)\in\N$ such that 
for $1\leqslant m<m_0$, $\tilde a_{i_0+m}=\tilde b_{j_0+m}$ but 
$\tilde a_{i_0+m_0}>\tilde b_{j_0+m_0}$. 
\end{definition}

\begin{remark}
 For any $(r_1,r_2)_{cr}$-admissible pair of indices $(i_0,j_0)$, 
it holds $\tilde r_1(i_0)>\tilde r_2(j_0)$ (or, equivalently, 
$r_1(i_0)<r_2(j_0)$). 
\end{remark}

\begin{definition} For $(i_0,j_0)\in (\Z /s)^2$ 
and $\gamma\in k$, denote by 
$E_{cr}(i_0,j_0,\gamma )$ the extension 
$\L\in\Ext_{\uL ^*_{cr}}(\L_2,\L_1)$ 
given by the system of factors $(v_j)_{j\in\Z/s}$ such that 
$v_{j_0}=\gamma u^{\tilde a_{i_0}}l_{i_0}^{(1)}$ and 
$v_j=0$ if $j\ne j_0$. 
\end{definition}

\begin{Prop}\label{P1.22} Any element 
$\L\in\Ext _{\uL^*_{cr}}(\L _2, \L_1)$ can be obtained 
as a sum of $E_{cr}(i,j,\gamma _{ij})$, where 
$(i,j)\in (\Z /s)^2$ runs over the set of  
$(r_1,r_2)_{cr}$-admissible 
pairs and all coefficients $\gamma _{ij}\in k$. 
\end{Prop}

\begin{proof} Propositions \ref{P1.19}-\ref{P1.21} imply that any 
$\L =(L,F(L),\varphi ,N)$ from the group 
$\Ext _{\uL ^*_{cr}}(\L _2,\L _1)$ 
can be presented as a sum of extensions 
$E_{cr}(i,j,\gamma _{ij})$, where 
$i,j\in\Z/s$ are such that $\tilde a_i\ne\tilde b_j$, 
and $\gamma _{ij}\in k$. 

If $m_0\in\N$ is such that  
for $1\leqslant m<m_0$, it holds $\tilde a_{i+m}=\tilde b_{j+m}$ 
but $\tilde a_{i+m_0}<\tilde b_{j+m_0}$, then the 
extension $E_{cr}(i,j,\gamma _{ij})$ is trivial, cf. the proof 
of Proposition \ref{P1.20}.  
The proposition is proved. 
\end{proof}

The above proposition describes 
the subgroup $\Ext _{\uL ^*_{cr}}(\L _2,\L _1)$ 
of \linebreak  
$\Ext _{\uL ^*}(\L _2,\L _1)$. In particular, working 
modulo this subgroup we can describe the extensions in 
the whole category $\uL^*$ via the systems of factors 
$(v_j)_{j\in\Z/s}\in Z(\c L_2, \c L_1)$ such that all  
$v_j=\sum_{i,t}\gamma _{ijt}u^tl_i^{(1)}$ 
satisfy the normalization conditions {\bf (C1)} and 
\medskip 

{\bf (C3)} {\it  if $t\geqslant\tilde a_i$ then $\gamma _{ijt}=0$}.
\medskip 

\begin{Prop} \label{P1.23}Suppose the system of factors 
$(v_j)_{j\in\Z/s}$ 
satisfies the conditions ${\bf (C1)}$ and ${\bf (C3)}$. 
If it determines $\L=(L,F(L),\varphi )\in 
\Ext _{\uL^*_0}(\c L_2,\c L_1)$ from the image of 
$\Ext _{\uL^*}(\c L_2,\c L_1)$ then: 
\medskip

{\rm a)}\ $\gamma _{ijt}=0$ if $t<\tilde a_i-1$;
\medskip

{\rm b)}\ if $t=\tilde a_i-1$ and there is an 
$m_0\in\N$ such that 
for all $1\leqslant m<m_0$, 
$\tilde a_{i+m}-1=\tilde b_{j+m}$
but $\tilde a_{i+m_0}-1>\tilde b_{j+m_0}$, then $\gamma _{ijt}=0$;
\medskip 

{\rm c)}\ if $t=\tilde a_i-1$ and for all $m\in\Z/s$, 
$\tilde a_{i+m}-1=\tilde b_{j+m}$ then $\gamma _{ijt}=0$. 
\end{Prop}

\begin{proof} Suppose $\c L=(L,F(L),\varphi ,N)\in\Ext _{\uL ^*}(\c L_2,\c L_1)$ 
and $(v_j)_{j\in\Z /s}$ describes the image of $\c L$ in 
$\Ext _{\uL ^*_0}(\c L_2,\c L_1)$. 
By the definition of $N$,  $uN(u^{\tilde b_j}l_j+v_j)\in F(L)$, and this implies that   
$\gamma _{ijt}=0$ if $t<\tilde a_i-1$, $t\ne\tilde b_j$  
(use congruence \eqref{E1.1}). This proves a).  

Now we can set for all indices $i$ and $j$,  
$\gamma _{ij}:=\gamma _{i,j,\tilde a_i-1}$. 

Let $\kappa _{ij}\in k$ be  
such that $N(l_j)\equiv \sum _i\kappa _{ij}l_i^{(1)}\mathrm{mod}u^pL$ 
and suppose $\gamma _{ij}\ne 0$ 
(this implies that $\tilde b_j\ne \tilde a_i-1$). 
For $m\geqslant 0$, consider the relations 
\begin{equation} \label{E1.3}N(l_{j+m+1})=
\varphi (uN(u^{\tilde b_{j+m}}l_{j+m}+v_{j+m})).
\end{equation}

If $m=0$ then \eqref{E1.3} implies 
$\kappa _{i+1,j+1}=
\gamma _{ij}^p(\tilde b_j-\tilde a_i+1)$. 
Suppose that there is an $m_0\geqslant 0$ such that 
for all $1\leqslant m<m_0$, $\tilde a_{i+m}-1=\tilde b_{j+m}$ but  
$\tilde a_{i+m_0}-1\ne \tilde b_{j+m_0}$. Then \eqref{E1.3} 
together with   
\eqref{E1.1} (where $j$ is replaced by $j+m$) imply that for 
$1\leqslant m<m_0$, 
$$\kappa _{i+m+1,j+m+1}=\kappa _{i+m,j+m}^p=
\gamma _{ij}^{p^{m+1}}(\tilde b_j-\tilde a_i+1).$$

In particular, $N(l_{j+m_0})\mathrm{mod}\,u^pL$ contains 
$l^{(1)}_{i+m_0}$ with the coefficient 
$\gamma _{ij}^{p^{m_0}}(\tilde b_j-\tilde a_i+1)$. 
Therefore, 
$uN(u^{\tilde b_{j+m_0}}l_{j+m_0}+v_{j+m_0})
\mathrm{mod}\,u^pL$ contains 
$l^{(1)}_{j+m_0}$ with the coefficient 
$ u^{\tilde b_{j+m_0}+1}\gamma _{ij}^{p^{m_0}}(\tilde b_j-\tilde a_i+1)$. 
But this monomial must belong to 
$F(L_1)$. This proves that if $\gamma _{ij}\ne 0$ then  
$\tilde b_{j+m_0}+1>\tilde a_{i+m_0}$.

Finally, suppose that for all $m\geqslant 1$, $\tilde a_{i+m}-1=\tilde b_{j+m}$. 
Then $\tilde a_i-1=\tilde a_{i+s}-1=\tilde b_{j+s}=\tilde b_{j}$ and 
$\gamma _{ij}=\gamma _{i,j,\tilde a_i-1}=\gamma _{i,j,\tilde b_j}=0$.
\end{proof}

\begin{remark}
 With notation from the proof of above proposition the elements 
$v_j=\sum _i\gamma _{ij}u^{\tilde a_i-1}l_i^{(1)}$ determine 
a system of factors from $Z(\c L_2,\c L_1)$ iff $\gamma _{ij}=0$ 
when either $\tilde a_i=0$ or $\tilde b_j=p-1$ 
(in this case $v_j$ should belong to $F(L_1)$). 
\end{remark}

\begin{definition} A pair 
$(i_0,j_0)\in (\Z/s)^2$ 
is $(r_1,r_2)_{st}$-admissible if: 

$\bullet $\ $\tilde b_{j_0}\ne p-1$ and $\tilde a_{i_0}\ne 0$, cf. above remark;  

$\bullet $\ 
$\tilde a _{i_0}-1\ne \tilde b_{j_0}$; 

$\bullet $\ 
there is an $m_0=m_{st}(i_0,j_0)\in\N$ such that  
for $1\leqslant m<m_0$, $\tilde a_{i_0+m}-1=\tilde b_{j_0+m}$ 
but $\tilde a_{i_0+m_0}-1<\tilde b_{j_0+m_0}$. 
\end{definition} 

\begin{definition} A pair $(i_0,j_0)\in (\Z/s)^2$ is 
$(r_1,r_2)_{sp}$-admissible if 
$i_0=0$ and for all $m\in\Z/s$, 
$\tilde a_{m}-1=\tilde b_{j_0+m}$. 
\end{definition}

\begin{Prop} \label{P1.24} 
 {\rm a)} If $(i_0,j_0)$ is an  $(r_1,r_2)_{st}$-admissible pair  
then  
$r_1(i_0)+1/(p-1)>r_2(j_0)$; 
\medskip 

{\rm b)} if $(0,j_0)$ is  
an $(r_1,r_2)_{sp}$-admissible pair then  
$r_1+1/(p-1)=r_2(j_0)$. 
\end{Prop}

\begin{proof}
a) Here for $1\leqslant m<m_0$, $a_{i_0+m}+1=b_{j_0+m}$ 
and $a_{i_0+m_0}\geqslant b_{j_0+m_0}$. Therefore, 
$$r_1(i_0)+1/(p-1)>\sum _{1\leqslant m\leqslant m_0}(a_{i_0+m}+1)p^{-m}>$$
$$\sum _{1\leqslant m\leqslant m_0}b_{j_0+m}p^{-m}+
\sum _{m>m_0}(p-1)p^{-m}\geqslant r_2(j_0).$$
The part b) can be obtained similarly. 
\end{proof}

Using the calculations from the proof of 
Proposition \ref{P1.23} we obtain the following two statements. 

\begin{Prop} \label{P1.25} Suppose $(i_0,j_0)\in (\Z/s)^2$ 
is $(r_1,r_2)_{st}$-admissible and $\gamma\in k$. 
Then there is a unique $E_{st}(i_0,j_0,\gamma )\in\Ext _{\uL^*}(\L_2,\L_1)$ 
given by the system of factors 
$(v_j)_{j\in\Z/s}$ such that 
$v_{j_0}=\gamma u^{\tilde a_{i_0}-1}l_{i_0}^{(1)}$ and 
$v_j=0$ if $j\ne j_0$, and the map $N$, which is uniquely determined 
by the condition:

$\bullet $\  if $j=j_0+m$ with 
$1\leqslant m\leqslant m_{st}(i_0,j_0)$ then 
$$N(l_{j_0+m})\equiv \gamma ^{p^m}(\tilde b_{j_0}-
\tilde a_{i_0}+1)l_{i_0+m}^{(1)}
\mathrm{mod}\,u^pL$$ 
and, otherwise, $N(l_j)\equiv 0\,\mathrm{mod}\,u^pL$.
\end{Prop}

\begin{Prop} \label{P1.26} Suppose $(0,j_0)\in (\Z/s)^2$ 
is $(r_1,r_2)_{sp}$-admissible and $\gamma\in \F _q$, $q=p^s$. 
Then there is a unique $E_{sp}(j_0,\gamma )\in\Ext _{\uL^*}(\L_2,\L_1)$ 
given by the zero system of factors 
and the map $N$, which is uniquely determined 
by the condition: 

$\bullet$ \ $N(l_{j_0+m})\equiv\gamma ^{p^m}l_{m}^{(1)}
\mathrm{mod}(u^pL)$, $m\in\Z /s$. 
\end{Prop}

The following proposition gives the uniqueness 
property of the decomposition of elements of $\Ext _{\uL ^*}(\c L_2,\c L_1)$ 
into a sum of standard extensions.

\begin{Prop} \label{P1.27} Any element 
$\L\in\Ext _{\uL^*}(\L _2, \L_1)$ 
appears as a unique  
sum of the extensions $E_{cr}(i,j,\gamma ^{cr}_{ij})$, 
$E_{st}(i,j,\gamma ^{st}_{ij})$ 
and $E_{sp}(j,\gamma ^{sp}_{0j})$, where 
all $\gamma ^{cr}_{ij}, \gamma ^{st}_{ij}\in k$ but $\gamma ^{sp}_{0j}\in\F _q$, and 
$\gamma ^{cr}_{ij}=0$, resp. $\gamma ^{st}_{ij}=0$, $\gamma ^{sp}_{0j}=0$, if 
the corresponding pair of lower indices  is not 
$(r_1,r_2)_{cr}$-admissible, resp. $(r_1,r_2)_{st}$-admissible, 
$(r_1,r_2)_{sp}$-admissible. 
\end{Prop}

\begin{proof} By Proposition \ref{P1.23}, any  
$\c L\in\Ext _{\uL ^*}(\c L_2, \c L_1)$ can be 
decomposed as a sum of the above special extensions. 
To prove the uniqueness of such decomposition, assume that $\c L$ 
represents a trivial element of $\Ext _{\uL ^*}(\c L_2,\c L_1)$ and prove that all 
involved coefficients $\gamma ^{cr}_{ij}$, 
$\gamma ^{st}_{ij}$ and $\gamma ^{sp}_{0j}$ are 
equal to 0.

The image of $\c L$ in $\Ext _{\uL ^*_0}(\c L_2,\c L_1)$ is given 
by the system of factors 
$(v^{cr}_j+v^{st}_j)_{j\in\Z /s}$ such that 

--- $v^{cr}_j=\sum_{i}\gamma ^{cr}_{ij}u^{\tilde a_i}l_i^{(1)}$; 
\medskip 

--- $v^{st}_j=\sum_{i}\gamma ^{st}_{ij}u^{\tilde a_i-1}l_i^{(1)}$.  
\medskip 

Let $w_j\in F(L_1)$ be 
such that for all $j$, $v_j=w_j-u^{\tilde b_j}\varphi (w_{j-1})$. 

If  
$w_j\equiv\sum _{i}\kappa _{ij}u^{\tilde a_i}l_i^{(1)}\mathrm{mod}\,uF(L)$ 
with $\kappa _{ij}\in k$, then  for all $i$ and $j$, 
\begin{equation}\label{E1.2} \gamma _{ij}^{cr}u^{\tilde a_i}+
\gamma _{ij}^{st}u^{\tilde a_i-1}\equiv 
\kappa _{ij}u^{\tilde a_{i}}-\kappa _{i-1,j-1}^pu^{\tilde b_j}
 \mathrm{mod}\,u^{\tilde a_i+1}.
\end{equation}

Suppose $(i_0,j_0)$ is $(r_1,r_2)_{st}$-admissible. 
Then $\tilde a_{i_0}-1\ne \tilde b_{j_0}$ 
and comparing the coefficients for $u^{\tilde a_{i_0}-1}$ 
in \eqref{E1.2} we deduce that 
$\gamma _{i_0j_0}^{st}=0$. Therefore, all $\gamma _{ij}^{st}=0$.  

Suppose $(i_0,j_0)$ is $(r_1,r_2)_{cr}$-admissible. Then for 
$m_0=m_{cr}(i_0,j_0)$, $\tilde a_{i_0}\ne \tilde b_{j_0}$, 
$\tilde a_{i_0+m}=\tilde b_{j_0+m}$ if 
$1\leqslant m<m_0$, and $\tilde a_{i_0+m_0}>\tilde b_{j_0+m_0}$. Then \eqref{E1.2} 
implies that $\gamma ^{cr}_{i_0j_0}=\kappa _{i_0j_0}$, 
$\kappa _{i_0+m,j_0+m}=\kappa _{i_0+m-1,j_0+m-1}^p$ for $1\leqslant m<m_0$, and 
$\kappa _{i_0+m_0-1,j_0+m_0-1}=0$. Therefore, $\gamma _{i_0j_0}^{cr}=0$.

Finally, $\c L$ is the trivial element of the group 
$\Ext _{\uL ^*}(\c L_2,\c L_1)$ 
and, therefore, for all $j$, $N(l_j)\in u^pL$. Then from the description of 
standard extensions $E_{sp}(j,\gamma _{0j}^{sp})$ in Proposition \ref{P1.26} 
it follows that all $\gamma _{0j}^{sp}=0$. 
\end{proof}

\section{The functor $\CV ^*:\uL ^*\longrightarrow\uC _F$}
\label{S2}

\subsection{ The object $\c R ^0_{st}\in\widetilde{\uL }^*$}
\label{S2.1}

Let $R=\underset{n}\varprojlim (\bar O/p)_n$ be Fontaine's ring;  
it has a natural structure of 
$k$-algebra via the map $k\longrightarrow R$ given by  
$\alpha\mapsto\varprojlim ([\sigma ^{-n}\alpha ]\mathrm{mod}\,p)$, 
where for any $\gamma\in k$, $[\gamma ]\in W(k)\subset\bar O$ is the 
Teichm\"uller representative of $\gamma $. 
Let $\m _R$ be the maximal ideal of $R$. 

Choose  
$x_0=(x_0^{(n)}\,\mathrm{mod}\,p)_{n\geqslant 0}\in R$ and 
$\varepsilon =(\varepsilon ^{(n)}\mathrm{mod}\,p)_{n\geqslant 0}$ 
such that for all $n\geqslant 0$, $x_0^{(n+1)p}=x_0^{(n)}$ and 
$\varepsilon ^{(n+1)p}=\varepsilon ^{(n)}$ with 
$x_0^{(0)}=-p$, $\varepsilon ^{(0)}=1$ but $\varepsilon ^{(1)}\ne 1$. 
We shall denote by $v_R$ the valution on $R$ such that $v_R(x_0)=1$.

Let $Y$ be an indeterminate. 

Consider  
the divided power envelope $R\langle Y\rangle $ of $R[Y]$ with 
respect to the ideal $(Y)$. If for $j\geqslant 0$, 
$\gamma _j(Y)$ is the $j$-th divided power of $Y$ then 
$R\langle Y\rangle =\oplus _{j\geqslant 0}R\gamma _j(Y)$.  
Denote by $R_{st}$ the completion $\prod _{j\geqslant 0}R\gamma _j(Y)$ 
of $R\langle Y\rangle $ and  set, 
$\Fil ^p R_{st}=\prod _{j\geqslant p}R\gamma _j(Y)$. 
Define the $\sigma $-linear morphism 
of the $R$-algebra $R_{st}$ by the correspondence $Y\mapsto x_0^pY$;  
it will be denoted below by the same symbol $\sigma $. 

Introduce a $\W _1$-module structure on $R_{st}$ by the $k$-algebra morphism 
$\W _1\longrightarrow R_{st}$ such that 
$u\mapsto \iota (u):=x_0\exp (-Y)=x_0\sum _{j\geqslant 0}(-1)^j\gamma _j(Y)$. 
Set $F(R_{st})=\sum _{0\leqslant i<p}x_0^{p-1-i}R\gamma _i(Y)+\Fil ^pR_{st}$. 
Define the continuous $\sigma $-linear morphism of $R$-modules  
$\varphi\,:F(R_{st})\longrightarrow R_{st}$ by setting 
for  $0\leqslant i<p$,  
$\varphi (x_0^{p-1-i}\gamma _i(Y))=\gamma _i(Y)(1-(i/2)x_0^pY)$, 
and for $i\geqslant p$, $\varphi (\gamma _i(Y))=0$. 
Let $N$ be a unique $R$-differentiation of $R_{st}$ such that 
$N(Y)=1$.

\begin{Prop}\label{P2.1} 

{\rm a)} If $a\in R_{st}$ 
and $b\in F(R_{st})$ then  
$$\varphi (ab)=\sigma (a)\varphi (b)\operatorname{mod}x_0^{2p}R_{st};$$

{\rm b)} $\varphi \operatorname{mod}x_0^{2p}R_{st}$ 
is a $\sigma $-linear morphism of $\c W_1$-modules; 

{\rm c)} for any $b\in R_{st}$ and $w\in \c W_1$, 
$N(wb)=N(w)b+wN(b)$;

{\rm d)} for any $l\in F(R_{st})$,  
$uN(l)\in F(R_{st})$ and 
$$N(\varphi (l))=\varphi (uN(l))\operatorname{mod}x_0^{2p}R_{st}.$$ 
\end{Prop}

\begin{proof} a) It is sufficient to verify it for 
$a=Y$ and $b=x_0^{p-1-i}\gamma _i(Y)$, $0\leqslant i<p$.

b) Use that the multiplication by $\sigma (u)=u^p$ 
comes as the multiplication by 
$\iota (u) ^p=x_0^p\equiv x_0^p\exp (-x_0^pY)=
\sigma (\iota (u))\operatorname{mod}x_0^{2p}R_{st}$. 

c) Use that $N(\iota (u))=-\iota (u)$.

d) It will be enough to check the identity for 
$l=x_0^{p-1-i}\gamma _i(Y)$ with $1\leqslant i<p$. 
Then 
$N(\varphi (l))=\gamma _{i-1}(Y)(1-(1/2)(i+1)x_0^pY)$. On the other hand,   
$uN(l)=
x_0^{p-1-(i-1)}\gamma _{i-1}(Y)\exp (-Y)$ and 
$\varphi (uN(l))$ is equal to 
$$\gamma _{i-1}(Y)\left (1-\frac{i-1}{2}x_0^pY\right )
\exp (-x_0^pY)\equiv\gamma _{i-1}(Y)\left (1-\frac{i+1}{2}x_0^pY\right )
\operatorname{mod}x_0^{2p}.$$ 
\end{proof}

Introduce a $\Gamma _F$-action on 
$R_{st}\operatorname{mod}x_0^{p^2/(p-1)}R_{st}$ as follows.

For any 
$\tau\in\Gamma _F$, let $k(\tau )\in\Z$ be such that 
$\tau (x_0)=\varepsilon ^{k(\tau )}x_0$ and 
let $\widetilde{\log}(1+X)=X-X^2/2+\dots -X^{p-1}/(p-1)$ 
be the truncated logarithm. 
For any $\tau\in\Gamma _F$, define 
a linear map $\tau :R_{st}\To R_{st}$ by extending the natural action of 
$\tau $ on $R$ and  setting 
for $\tau\in\Gamma _F$ and $j\geqslant 0$,  
$$\tau (\gamma _j(Y)):=\sum _{0\leqslant i\leqslant\min\{j,p-1\} }\gamma _{j-i}(Y)
\gamma _i(\wt{\log}\varepsilon).$$

Note that the cocycle relation 
$\varepsilon ^{k(\tau _1)}(\tau _1\varepsilon )^{k(\tau )}=
\varepsilon ^{k(\tau _1\tau )}$, where 
$\tau _1,\tau\in \Gamma _F$, implies the cocycle relation 
$$k(\tau _1)\widetilde{\log}\varepsilon +k(\tau )\widetilde{\log}
(\tau _1(\varepsilon ))\equiv k(\tau _1\tau )\widetilde{\log}\varepsilon 
\operatorname{mod}x_0^{p^2/(p-1)}.$$
(Use that $\widetilde{\log}(1+X)^k\equiv k\wt{\log}(1+X)
\operatorname{mod}(X^p)$ and 
$\varepsilon\equiv 1\operatorname{mod}x_0^{p/(p-1)}$.) 
In addition, for any $k\in\Z $, the obvious congruence 
$$(1+X)^k=\exp (k\log (1+X))\equiv \wt{\exp}(k\wt{\log}(1+X))\operatorname{mod}(X^p)$$
implies that for any $\tau\in\Gamma _F$, $\tau (x_0\exp (-Y))\equiv 
x_0\exp (-Y)\operatorname{mod}x_0^{p^2/(p-1)}$. 

Therefore, the correspondences $\gamma _j(Y)\mapsto \tau (\gamma _j(Y))$ 
induce a $\Gamma _F$-action on $\c W_1$-algebra 
${R}_{st}\operatorname{mod}x_0^{p^2/(p-1)}R_{st}$, which extends the natural 
$\Gamma _F$-action on $R$.   
 
\begin{Prop} \label{P2.2} For any $\tau\in\Gamma _F$, 

{\rm a)} $\tau (F(R_{st}))= F(R_{st})$;
\medskip 

{\rm b)} for any $a\in F(R_{st})$, 
$\tau (\varphi (a))\equiv \varphi (\tau (a)
\operatorname{mod}x_0^{p+1/(p-1)}R_{st}$;
\medskip 

{\rm c)} for any $b\in R_{st}$, $\tau (N(b))=N(\tau (b))$. 
\end{Prop}
 
\begin{proof} The proof is straightforward in cases 
a) and c). Part b) follows by direct calculation from 
the following Lemma.
\end{proof}

\begin{Lem} \label{L2.3} 
 $\sigma (\wt{\log}\varepsilon )/x_0^p\equiv \wt{\log}
\varepsilon \operatorname{mod}x_0^{p+1/(p-1)}R$. 
\end{Lem}

\begin{proof} Consider Fontaine's element 
$$t^+=\log [\varepsilon ]=\sum _{n\geqslant 1} (-1)^{n-1}\frac{([\varepsilon ]-1)^n}{n}
=\sum _{m\in\Z }p^m[\eta _m]\in A_{cr}$$
where all $\eta _m\in R$. Then $t^+\in\Fil ^1A_{cr}$ and $\sigma t^+=pt^+$. This 
implies for all $m\in\Z $, that $\eta _m=\sigma ^{-m}\eta _0$. 

Consider $\c H\subset A_{cr}$ consisting of the elements 
of the form $\sum _{m\in\Z }p^m[r_m]$ such that for $m\leqslant 0$, 
$v_R(r_m)\geqslant p^2/(p-1)$ (this is automatic for $m\leqslant -2$), 
$v_R(r_1)\geqslant p^2/(p-1)-1$ and $v_R(r_2)\geqslant p^2/(p-1)-2$. 
Then $\c H$ is an additive subgroup in $A_{cr}$. 

Verify that 
\medskip 

$\bullet $\ {\it for all $n\geqslant p$, $([\varepsilon ]-1)^n/n\in\c H$.}
\medskip  

Indeed, 
 the congruence 
$[\varepsilon ]\equiv 1+[a_0] \operatorname{mod}\,pW(R)$ (where $a_0=\varepsilon -1$) 
implies that 
$[\varepsilon ]=\lim _{m\to\infty }(1+[\sigma ^{-m}a_0])^{p^m}$. 
Therefore, 
$$[\varepsilon ]-1=\sum _{m\geqslant 0}[a_m]p^m=
[a_0]\left (1+\sum _{m\geqslant 1}p^m[b_m]\right ),$$
where $v_R(a_m)=p^{1-m}/(p-1)$, $v_R(b_1)=-1$ and $v_R(b_2)=-1-1/p$. 

If $n\not\equiv 0\operatorname{mod}p$ then $([\varepsilon ]-1)^n\equiv 
[a_{0n}]+p[a_{1n}]+p^2[a_{2n}]\operatorname{mod}p^3W(R)$ with 
$v_R(a_{0n})=v_R(a_{1n})+1=v_R(a_{2n})+2=pn/(p-1)$. 
This proves that $([\varepsilon ]-1)^n/n\in\c H$ for 
all $n\not\equiv 0\operatorname{mod}p$, $n>p$. 

As for all remaining $n\geqslant p$, just note that for all $M\geqslant 1$, 
$$([\varepsilon ]-1)^{p^M}\equiv [a_0]^{p^M}
(1+p^{M+1}[b_1]+p^{M+2}[b_{2M})])\operatorname{mod}p^{M+3}W(R),$$
where $v_R(b_{2M})=-2$. 
\medskip 
 
The above calculations mean that 
$t^+\equiv\wt{\log}[\varepsilon ]\operatorname{mod}\c H$. Therefore,  if 
$$\wt{\log}[\varepsilon ]=[\omega _0]+p[\omega _1]+p^2[\omega _2]
\operatorname{mod}p^3W(R)$$
then $\omega _0=\wt{\log}\varepsilon \equiv \eta _0\operatorname{mod}x_0^{p^2/(p-1)}R$, 
$$\omega _1\equiv\eta _1\equiv \sigma ^{-1}\eta _0\equiv \sigma ^{-1}\wt{\log}\varepsilon 
\operatorname{mod}x_0^{p^2/(p-1)-1}R$$ 
and 
$\omega _2\equiv\eta _2\operatorname{mod}x_0^{p^2/(p-1)-2}R$. 

Now note that $\wt{\log}[\varepsilon ]\in \Fil ^1A_{cr}\bigcap W(R)$, that is 
$\wt{\log}[\varepsilon ]$ is divisible by $[x_0]+p$ in $W(R)$. The division algorithm 
gives $(\omega _1-\omega _0/x_0)/x_0\equiv \omega _2\operatorname{mod}x_0^{1/(p-1)}R$. 
Therefore, $\sigma (\omega _1)\equiv \sigma (\omega _0)
/x_0^p\operatorname{mod}x_0^p\sigma (\omega _2)R$. 
The lemma is proved. 
\end{proof}

By above results  we can introduce 
$\c R^0_{st}=(R^0_{st}, F(R^0_{st}), \varphi ,N)\in\wt{\uL}^*$, where  
$R^0_{st}=R_{st}\operatorname{mod}x_0^p\m_R$ and $F(R^0_{st})=
F(R_{st})\operatorname{mod}x_0^p\m _R$ 
with induced $\sigma $-linear map $\varphi $ and 
$\c W_1$-differentiation $N$. The above defined $\Gamma _F$-action 
on $R^0\operatorname{mod}x_0^p\m _R$ 
respects the structure of  
$\c R^0_{st}$ as an object of the category $\wt{\uL}^*$.  
In our setting the filtered Galois module $R^0_{st}$ plays a role 
of Fontaine's ring $\hat A_{st}$.

\subsection{The functor $\c V^*$} \label{S2.2}

If $\L=(L,F(L),\varphi ,N)\in\widetilde{\uL}^*$ then the triple 
$(L,F(L),\varphi )$ is an object of $\widetilde{\uL} ^*_0$ 
which will be denoted  below by the same symbol $\L $. 

\begin{definition} Let  
$\c R^0=(R^0,F(R^0),\varphi )\in\widetilde{\uL} ^*_0$, where 
$R^0=R/x_0^p\m _R$, $F(R^0)=x_0^{p-1}R^0$, 
the $\c W_1$-module structure on $R^0$ is given via  
$u\mapsto x_0$ and $\phi $ is induced by the map $r\mapsto r/x_0^{p(p-1)}$, 
$r\in R$. 
\end{definition}

For any $\c L =(L,F(L),\varphi ,N)\in\uL ^*$, consider the $\Gamma _F$-module 
$\c V^*(\c L )=\Hom _{\widetilde{\uL }^*}(\c L ,\c R^0_{st})$. 
If $f\in \c V^*(\c L )$ and $i\geqslant 0$, introduce the $k$-linear morphisms 
$f_i:L\longrightarrow R^0$ such that for any $l\in L$, 
$f(l)=\sum _{i\geqslant 0}f_i(l)\gamma _i(Y)$. The correspondence 
$f\mapsto f_0$ gives the homomorphism of abelian groups  
$\pr _0:\c V^*(\c L )\longrightarrow \c V^*_0(\c L ):=
\Hom _{\widetilde{\uL }_0^*}(\c L ,\c R^0)$.

\begin{Prop} \label{P2.4} $\pr _0$ is isomorphism of abelian groups. 
\end{Prop}

\begin{proof} Clearly, $\pr _0$ is additive. 
Suppose $f\in\Ker\, \pr _0$. Then for all $i\geqslant 0$ and 
$l\in L$,  
$f_i(l)=f_0(N^i(l)))=0$, i.e. $f=0$.

Suppose $g\in \Hom _{\widetilde{\uL}^*_0}(\c L ,\c R^0)$. 
This means that $g:L\To R^0$ is a $\sigma$-linear morphism of 
$\c W_1$-modules, $g(F(L))\subset F(R^0)$ and for any 
$l\in F(L)$, $g(\varphi (l))=(g(l)/x_0^{p-1})^p$. 

Set for any $l\in L$, 
$f(l)=g(l)+g(Nl)\gamma _1(Y)+\dots +g(N^il)\gamma _i(Y)+\dots $. 
Then for any $l\in L$, $f(N(l))=N(f(l))$ and our Proposition 
is implied by the following Lemma.  
\end{proof}

\begin{Lem} \label{L2.5}
{\rm a)} 
For any $l\in L$, $f(ul)=x_0\exp (-Y)f(l)$;

{\rm b)} for any $l\in F(L))$, \ $\varphi (f(l))=f(\varphi (l))$. 
\end{Lem}

\begin{proof}[Proof of Lemma] 
a) For any $l\in L$, 
$f(ul)=
\sum _{i\geqslant 0}g(N^i(ul))\gamma _i(Y)= $  
$$x_0\sum _{i\geqslant 0}
g((N-\id )^il)\gamma _i(Y) =
x_0\sum _{i,s}(-1)^{i-s}\binom {i}{s}g(N^sl)\gamma _s(Y)\gamma _j(Y)$$
$$=
x_0\sum _{j,s}(-1)^jg(N^sl)\gamma _s(Y)\gamma _j(Y)= 
x_0\exp (-Y)f(l).$$

b) Let $l\in L$. Prove by induction on $i\geqslant 1$ that 
$$N^i(\varphi (l))=\varphi ((uN)^i(l))=
-\frac{i(i-1)}{2}u^p\varphi (u^{i-1}N^{i-1}(l))
+\varphi (u^iN^i(l)).$$
Then  
$$g(N^i(\varphi (l)))=-\frac{i(i-1)}{2}x_0^p
\left (\frac{g(u^{i-1}N^{i-1}(l))}{x_0^{p-1}}\right )^p
+\left (\frac{g(u^iN^i(l))}{x_0^{p-1}}\right )^p$$
and 
$f(\varphi (l))$ is equal to 
$\sum _{i\geqslant 0}g(N^i(\varphi (l))\gamma _i(Y)=$
$$\sum _{i\geqslant 0}\left (\frac{g(N^il)}{x_0^{p-1-i}}\right )^p
\left (\gamma _i(Y)-\frac{i(i+1)}{2}x_0^p\gamma _{i+1}(Y)\right )=
\varphi (f(l)).$$
\end{proof}

\begin{Cor} \label{C2.6} 
{\rm a)}\ If $\mathrm{rk} _{\c W_1}L=s$ then 
$|\c V^*(\c L )|=p^s$;
 
{\rm b)}\ the correspondence $\c L\mapsto \c V^*(\c L )$ 
induces an exact functor $\c V^*$ from 
$\uL ^*$ to the category of $\F _p[\Gamma _F]$-modules.
\end{Cor}

\begin{proof} a) Proceed as in \cite{refAb1, refAb3}. Suppose the structure of 
the filtered $\varphi $-module $\L $ is given by a choice of 
a $\c W_1$-basis $m_1,\dots ,m_s$ of $F(L)$ and a non-degenerate matrix 
$A\in M_s(\c W_1)$ such that 
$(m_1,\dots ,m_s)=(\varphi (m_1),\dots ,\varphi (m_s))A$.  
Let $\bar X=(X_1,\dots ,X_s)$ be a vector with 
 $s$ independent variables and let $R_0=\mathrm{Frac}\,R$. 
Consider the quotient $A_{\L }$ of the polynomial ring 
$R_0[\bar X]$ by the ideal generated by the coordinates 
of the vector $(\bar XA)^{(p)}-x_0^{p(p-1)}\bar X$. (For a matrix $C$ 
the matrix $C^{(p)}$ is obtained by raising all 
elements of $C$ to $p$-th power.) 
Then $A_{\L }$ is etale $R_0$-algebra of rank $p^s$ 
(use that $(u^{p-1}I_s)A^{-1}\in M_s(\c W_1)$) 
and all its $\bar R_0$-points  give rise to 
elements of the group $\Hom _{\widetilde{\uL}^*_0}(\c L,\c R)$, 
where $\c R=(R,x_0^{p-1}R,\varphi )\in\widetilde{\uL}^*_0$ is such that 
for any $r\in R$, $\varphi (x_0^{p-1}r)=r^p$.  It remains 
to note that $\varphi |_{x_0^p\m _R}$ is nilpotent, by Lemma \ref{L1.5}, 
the natural projection $\c R\To\c R^0$ induces bijection from 
$\Hom _{\widetilde{\uL}^*_0}(\c L,\c R)$ to 
$\Hom _{\widetilde{\uL}^*_0}(\c L,\c R^0)=\c V^*_0(\c L)$ and by 
Proposition \ref{P2.4}, $|\c V^*_0(\c L)|=|\c V^*(\c L)|$. 

b) This follows from a) because the functor 
$\L\mapsto \c V^*_0(\L)$ is left exact. 
\end{proof}

\medskip 

Introduce the ideal 
$\wt{J}=\sum _{0\leqslant i<p}x_0^{p-i}
\m _R\gamma _i(Y)+\Fil ^pR^0_{st}$ 
of $R^0_{st}$. Then $F(R^0_{st})\supset \wt{J}$ 
and $\varphi |_{\wt{J}}$ is nilpotent.  
For    
$\wt{\c R}_{st}^0=(R^0_{st}/\wt{J}, F(R^0_{st})/\wt{J},
\varphi\operatorname{mod}\wt{J})\in\wt{\uL}^*_0$,  
there is a natural projection 
$\c R^0_{st}\To\wt{\c R}_{st}^0$ in $\wt{\uL}^*_0$ and for any 
$\c L\in\uL ^*_0$, $\Hom _{\wt{\uL}^*_0}(\c L, \c R^0_{st})=
\Hom _{\wt{\uL}^*_0}(\c L,\wt{\c R}_{st}^0)$. 
This implies the following description  
of the $\Gamma _F$-modules $\c V^*(\c L )$ where $\c L\in\uL ^*$  
(use the identification $\pr _0$ 
of Proposition \ref{P2.4}). 

\begin{Cor}\label{C2.8} 
$$\c V^*(\c L )=\left\{\sum _{0\leqslant i<p}
N^{*i}(f_0)\gamma _i(Y)\,\mathrm{mod}\,\wt{J}\ |\ f_0
\in\Hom _{\widetilde{\uL}_0^*}(\L ,\c R^0)\right\}$$
\end{Cor}

\begin{remark} a) In the above description of 
$\c V^*(\c L)$, for any $l\in L$, $N^*(f_0)(l)=f_0(N(l))$. In addition, 
all $N^{*i}(f_0)\gamma _i(Y)$ depend just on $N_1=N\operatorname{mod}u^pL$. 

b) If $\c L\in\uL ^{*u}$ then  
in the above Corollary we can replace ${\c R}^0$ and 
$\widetilde{J}$ by, respectively,  
${\c R}^u=(R/x_0^pR, x_0^{p-1}R/x_0^pR,
\varphi )\in\widetilde{\uL} ^*_0$ 
and the ideal 
$\widetilde{J}^u=\sum _{0\leqslant i<p}Rx_0^{p-i}\gamma _i(Y)+\Fil ^pR^0_{st}$. 
In particular, for unipotent modules the whole theory can be developed in the context of 
$k[u]/u^p$-modules.   
\end{remark}
\medskip

\subsection{The category $\uC _F$ and the functor $\CV ^*$}\label{S2.3}

\begin{definition} The objects of the category $\uC _F$ are the triples 
$\c H=(H,H^0,j)$, where $H,H^0$ are finite $\Z _p[\Gamma _F]$-modules, 
$\Gamma _F$ acts trivially on $H^0$ and $j:H\longrightarrow H^0$ 
is an epimorphic map of $\Z _p[\Gamma _F]$-modules. 
If $\c H_1=(H_1,H_1^0,j_1)\in\uC _F$ then 
$\Hom _{\uC _F}(\c H_1,\c H)$ consists of the couples 
$(f,f^0)$, where $f:H_1\longrightarrow H$ and 
$f^0:H_1^0\longrightarrow H^0$ are morphisms of 
$\Gamma _F$-modules such that $jf=f^0j_1$. 
\end{definition} 

The category $\uC _F$ is pre-abelian, 
cf. Appendix \ref{A}, and its objects 
have a natural group structure. 
In particular, with above notation, 
$\Ker (f,f^0)=(\Ker f,j_1(\Ker f))$ together with the 
natural embedding to $\c H_1$. Similarly, 
$\Coker (f,f^0)=(H/f(H_1), H^0/j(f(H_1)))$. For example, the map 
$(\id ,0):(H,H)\To (H,0)$ has the trivial kernel and cokernel. In addition, 
the monomorphism 
$(f_1,f_1^0):\c H_1\longrightarrow\c H$ is strict if and only if 
$f_1(\Ker \,j_1)=f_1(H_1)\cap\,\Ker\, j$. Suppose 
$\c H_2=(H_2,H_2^0,j_2)$ and 
$(f_2,f_2^0):\c H\longrightarrow\c H_2$ is an epimorphism. 
Then it is strict if and only if $f_2^0$  induces epimorphic map 
from $\Ker j$ to $\Ker\, j_2$. In $\uC _F$ we can use  
formalism of short exact sequenes and the corresponding 
6-terms $\Hom _{\underline{\CM} _F}-\Ext _{\underline{\CM} _F}$ 
exact sequences, cf. Appendix \ref{A}.

\begin{definition} Suppose $\L\in\uL ^*$ and 
$i^{et}:\L ^{et}\longrightarrow\L$ is 
the maximal etale subobject. Then 
$\CV ^*:\uL^*\longrightarrow\uC _F$ is the functor such that 
$\CV ^*(\L)=(\c V ^*(\L ),\c V^*(\L ^{et}), \c V^*(i^{et}))$.
\end{definition}

The simple objects in $\uC _F$ are of the form either 
$(H,0,0)$, 
where $H$ is a simple $\Z _p[\Gamma _F]$-module, 
or $(\F _p, \F _p,\id )$, where 
$\F _p$ is provided with the trivial $\Gamma _F$-action. 
In this context it will be very convenient to use the following formalism.

For $s\in\N$, consider Serre's fundamental characters 
$\chi _s:\Gamma _F\longrightarrow k^*$. Here for $\tau\in\Gamma _F$, 
$\chi _s(\tau )=(\tau x_s)/x_s\,\mathrm{mod}\,x_0^p$, where 
$x_s\in R$ is such that $x_s^{p^s-1}=x_0$. 
If $\chi $ 
is any continuous (1-dimensional) character of $\Gamma _F$ then 
there are $s,m\in\N $ such that $0<m\leqslant p^s-1$ and 
$\chi =\chi _s^m$. Set $r(\chi )=m/(p^s-1)$. Then 
$r(\chi )$ depends only on $\chi $ and the correspondence 
$\chi\mapsto r(\chi )$ gives 
a bijection from the set of all 
continuous (1-dimensional) 
characters of $\Gamma _F$ with values in $k^*$ to  
the set $[0,1]_p\setminus\{0\}$. 

For $r\in[0,1]_p$, $r\ne 0$, introduce the $\Gamma _F$-module $\F (r)$ 
such that $\F (r)=\F _{p^{s(r)}}$, where $s(r)$ 
is the period of the $p$-digit expansion of $r$, cf. Subsection \ref{S1.2}, 
 with the $\Gamma _F$-action given by 
the character $\chi $ such that $r(\chi )=r$. We have:

--- all $\F (r)$ are simple $\Z _p[\Gamma _F]$-modules;

--- $\Gamma _F$-modules $\F (r_1)$ and $\F (r_2)$ are isomorphic if and 
only if there is an $n\in\Z $ such that $r_1=r_2(n)$;

--- any simple $\Z _p[\Gamma _F]$-module 
is isomorphic to some $\F (r)$.

It will be natural to set $\c F(r):=(\F (r),0,0)$ for 
all $r\in (0,1]_p$, 
and to set separately $\c F(0):=(\F _p, \F _p,\id )$. 

With above notation we have the following property, 
where the objects $\L (r)$ were introduced in Subsection \ref{S1.3}.

\begin{Prop}\label{P2.7} For any $r\in[0,1]_p$, $\CV^*(\L (r))=\c F(r)$. 
\end{Prop}

\begin{proof} 
 The proof goes along the lines of Subsection 4.2 of \cite{refAb1}, 
cf. also the beginning of Subsection \ref{S2.4} below. 
\end{proof}

\subsection{A criterion.} \label{S2.4}

Suppose $\c L_1, \c L_2$ are given in notation of 
Subsection \ref{S1.4}  and $q=p^s$. 
Then for $i=1,2$, $\c V^*(\L _i)=V_i$ are 1-dimensional 
vector spaces over $\F _q$ with $\Gamma _F$-action given by 
the character $\chi _i:\Gamma _F\longrightarrow k^*$ 
such that $r(\chi _i)=r_i$. (Note that $(q-1)r_i\in\Z $ and, 
therefore, $\chi _i(\Gamma _F)\subset\F _q^*$.)
Choose $\pi _s\in\bar F$ such that $\pi _s^{q-1}=-p$. Then 
$F_s=F(\pi _s)$ is a tamely ramified extension of $F$ of 
degree $q-1$ and all points of $V_i$ are defined over $F_s$. 
We can identify $V_i$ with the $\F _p[\Gamma _F]$-module 
$\F _q\bar\pi _s^{(q-1)r_i}\subset \bar O/p\bar O$, where 
$\bar\pi _s=\pi _s\,\mathrm{mod}\,p$. These identifications allow us 
to fix the points $h_i^0:=\bar\pi _s^{(q-1)r_i}\in V_i$ 
and to identify  
$V_i$ with the $\Gamma _F$-module $\{\alpha h_i^0\ |\ \alpha\in\F _q\}$. 

Suppose $h_1\in V_1$. Define the homomorphism 
$$F_{h_1}:\Ext _{\F _p[\Gamma _F]}(V_1,V_2)
\longrightarrow Z^1(\Gamma _{F_s},\F _q)=\Hom (\Gamma _{F_s},\F _q),$$
where $\Gamma _{F_s}=\Gal (\bar F/F_s)$, as follows. 
If $V\in\Ext _{\F _q[\Gamma _F]}(V_1,V_2)$ and 
$h\in V$ is a lift of $h_1$ then for any $\tau\in\Gamma _F$,  
$F_{h_1}(V)(\tau )=a_{\tau }\in\F _q$, where  
$\tau h-h=a_{\tau }h_2^0$. 

Clearly, $F_{h_1}(V)$ does not depend on a choice of $h$ and 
it is the zero function if and only if the projection 
$V\To V_1$ admits a $\Gamma _F$-equivariant section. In other words, 
we have the following criterion. 

\begin{Prop} \label{P2.8} $V$ is the trivial extension if 
and only if  for all 
$h_1\in V_1$, it holds $F_{h_1}(V)=0$. 
\end{Prop}

\subsection{Galois modules $\c V^*(E_{cr}(i_0,j_0,\gamma ))$}\label{S2.5}

Suppose we have an object 
$\L =(L,F(L),\varphi ,N)$ of the category $\uL ^*_{cr}.$  
Then there is a 
special $\sigma (\c W_1)$-basis $l_1,\dots ,l_s$ of 
$\varphi (F(L))$ such that for some integers 
$0\leqslant c_1,\dots ,c_s<p$ and a matrix $A\in\GL _s(k)$, the elements 
$u^{c_1}l_1,\dots ,u^{c_s}l_s$ form a $\W _1$-basis of $F(L)$ and 
$(\varphi (u^{c_1}l_1),\dots ,\varphi (u^{c_s}l_s))=
(l_1,\dots ,l_s)A $. 

 For $1\leqslant i\leqslant s$, set 
$\tilde c_i=(p-1)-c_i$. The following Proposition 
is a special case of Corollary \ref{C2.8} (remind that $R^0=R/x_0^p\m _R$).

\begin{Prop} \label{P2.9} With above notation, 
$\c V^*(\L)$ is the $\F _p[\Gamma _F]$-module 
of all $(\theta _1,\dots ,\theta _s)\,
\mathrm{mod}\,x_0^p\m _R\in (R^0)^s$ such that 
$$(\theta _1^p/x_0^{p\tilde c_1},\dots ,\theta _s^p/x_0^{p\tilde c_s})=
(\theta _1,\dots ,\theta _s)A.$$
\end{Prop}

\begin{remark} In \cite{refAb1,refAb2} it was proved 
(in the context of the Fontaine-Laffaille theory) that the family of 
 $\F _p[\Gamma _F]$-modules $\c V^*(\L )$, 
where $\L\in\uL ^*_{cr}$, coincides with the family of all killed by $p$ 
subquotients of crystalline representations of $\Gamma _F$ with weights 
from $[0,p)$. This result can be also extracted from 
Subsection \ref{S4}, where we establish 
that the family of 
 $\F _p[\Gamma _F]$-modules $\c V^*(\L )$, 
where $\L\in\uL ^*$, coincides with the family of all killed by $p$ 
subquotients of semi-stable representations 
of $\Gamma _F$ with weights from $[0,p)$.
\end{remark}

For an $(r_1,r_2)_{cr}$-admissible pair $(i_0,j_0)\in (\Z /s)^2$ and 
$\gamma\in k$, use the description of $E_{cr}(i_0,j_0,\gamma )$ from 
Subsection \ref{S1.4}. 
Then by Corollary \ref{C2.8}, $V=\c V^*(E_{cr}(i_0,j_0,\gamma ))$ is 
identified with the additive group of all 
taken modulo $x_0^p\m _R$ solutions in $R$ of the following 
system of equations 
\[
\begin{array}{lccc} X_i^{(1)p}/x_0^{pa_i}&=
&X_{i+1}^{(1)}, & \text{for all}\ i\in\Z /s; \\ 
X_j^p/x_0^{pb_j}&=&X_{j+1}-\delta _{jj_0}
\gamma ^pX_{i_0+1}^{(1)}, & \text{for all}\ j\in\Z /s 
\end{array}
\]
Note that the first group of equations describes  
$V_1=\c V^*(\L _1)$ and the correspondences 
$X_i^{(1)}\mapsto 0$ and $X_j\mapsto X_j^{(2)}$ with $i,j\in\Z /s$, 
define the map $V\longrightarrow V_2$, where $V_2=\c V^*(\L _2)$ is associated 
with all taken modulo $x_0^p\m _R$ solutions in $R$ of the equations 
$X_j^{(2)p}/x_0^{pb_j}=X^{(2)}_{j+1}$, $j\in\Z /s$. As it was noted  
in Subsection \ref{S2.2}, 
the corresponding $\Gamma _F$-action on $V,V_1$ and $V_2$ comes from 
the natural $\Gamma _F$-action on $R^0$. 

Take $x_s\in R$ such that $x_s^{q-1}=x_0$ and 
$x_s\mapsto \pi _s\mathrm{mod}\,p$ under the natural identification 
$R/x_0^pR\simeq\bar O/p\bar O$. (This identification is given by the correspondence 
$r=\underset{n}\varprojlim (r_n\,\mathrm{mod}\,p)
\mapsto r^{(1)}:=\underset{n\to\infty }\lim r_{n+1}^{p^n}$.)
For $i,j\in\Z /s$, set 
$x_0^{r_1(i)}:=x_s^{(q-1)r_1(i)}$ and 
$x_0^{r_2(j)}:=x_s^{(q-1)r_2(j)}$, and 
introduce the variables $Z_i^{(1)}=x_0^{-pr_1(i)}X_i^{(1)}$, 
$Z_j=x_0^{-pr_2(j)}X_j$, $Z_j^{(2)}=x_0^{-pr_2(j)}X_j^{(2)}$. 
Then the elements of $V$ appear as the taken modulo $\m _R$ 
solutions in $R_0:=\mathrm{Frac}(R)$ of 
the following system of equations 

$$
\begin{array}{cccc} Z_i^{(1)p}&=&Z_{i+1}^{(1)}, & \text{for all}\ i\in\Z /s;\\ 
Z_j^p&=&Z_{j+1}, & \text{for all}\ j\ne j_0+1;\\
Z_{j_0+1}-Z_{j_0+1}^q&=&\gamma ^pZ_{i_0+1}^{(1)}x_0^{p(r_1(i_0)-r_2(j_0))}&
\end{array} 
$$

Note that for the points $h_1^0\in V_1$ and $h_2^0\in V_2$ 
chosen in Subsection \ref{S2.4}, it holds 
$Z_i^{(1)}(h_1^0)=Z_i^{(2)}(h_2^0)=1$, where $i\in\Z /s$. 

Suppose $\alpha\in\F _q$ and $h_1=\alpha h_1^0\in V_1$. 

Let $\c F_s=k((x_s))\subset R_0=\mathrm{Frac}\,R$. The 
 field-of-norms functor gives a natural embedding of the absolute Galois group 
$\Gamma _{\c F_s}$ of $\c F_s$ into $\Gamma _{F_s}$, 
where $F_s=F(\pi _s)$. Then the restriction 
$F_{h_1}(V)|_{\Gamma _{\c F_s}}$ of the cocycle 
$$\{F_{h_1}(V)(\tau )=
A _{\tau ,\alpha }(i_0,j_0,\gamma )\in\F _q\ |\ \tau\in\Gamma _{F_s}\}$$
from Subsection \ref{S2.4} can be described as follows. 

Let $U\in R_0$ be such that 
$U-U^q=\gamma x_0^{r_1(i_0)-r_2(j_0)}$. 
Then for any $\tau\in\Gamma _{\c F_s}$, 
$\sigma ^{j_0}(A _{\tau ,\alpha }(i_0,j_0,\gamma ))=
\sigma ^{i_0}(\alpha )(\tau (U)-U)$ and therefore 
$$A_{\tau ,\alpha }(i_0,j_0,\gamma )=\sigma ^{i_0-j_0}
(\alpha )\sigma ^{-j_0}(\tau U-U).$$
The following Lemma is an immediate consequence of the definition of 
$(r_1,r_2)_{cr}$-admissible pairs.

\begin{Lem} \label{L2.10} With above notation let 
$C=-(q-1)(r_1(i_0)-r_2(j_0))$. Then 
$C$ is a prime to $p$ integer and $1\leqslant C\leqslant q-1$. 
\end{Lem}

\subsection{Galois modules $\c V^*(E_{st}(i_0,j_0,\gamma ))$}
\label{S2.6}

For an $(r_1,r_2)_{st}$-admissible pair $(i_0,j_0)\in (\Z /s)^2$
and $\gamma\in k$, 
use the description of $E_{st}(i_0,j_0,\gamma )$ from Subsection \ref{S1.5}.

By Subsection \ref{S2.2}, $V=\c V^*(E_{st}(i_0,j_0,\gamma ))$ is identified 
(as an abelian group) 
with the solutions 
$\left (\{X_i^{(1)}\ |\ i\in\Z /s\},\{X_j\ |\ j\in\Z /s\}\right )\in R^{2s}$ 
of the following system of equations 
\begin{equation}\label{E2.2}
\begin{array} {ccc}
X_i^{(1)p}/x_0^{pa_i}&=&X_{i+1}^{(1)}, \ \text{for all}\ i\in\Z /s;\\
X_j^{p}/x_0^{pb_j}+\delta _{jj_0}
\gamma ^pX_{i_0}^{(1)p}/x_0^{pa_{i_0}+p}&
=&X_{j+1}, \ \text{for all}\ j\in\Z /s
\end{array}
\end{equation}

The structure of $V$ as an element of 
$\Ext _{\F _p[\Gamma _F]}(V_1,V_2)$ can be described 
along the lines of Subsection \ref{S2.5}. 
The action of $\Gamma _F$ on $V$ comes from the natural 
$\Gamma _F$-action on 
$\widetilde{\c R}^0_{st}$, 
and the embedding of $V$ into $({R}^0_{st})^{2s}$ 
given by the following correspondences:
\newline 
-- if $i\in\Z /s$ then $X_i^{(1)}
\mapsto X_i^{(1)}\operatorname{mod}x_0^{p}\m _R$;
\newline 
-- if $j\notin\{j_0+1,\dots ,j_0+m_0\}$ then 
$X_j\mapsto X_j\operatorname{mod}x_0^{p}\m _R$;
\newline 
-- for $1\leqslant m\leqslant m_0$, $X_{j_0+m}
\mapsto X_{j_0+m}+\gamma ^{p^m}(\tilde b_{j_0}-
\tilde a_{i_0}+1)X^{(1)}_{i_0+m}Y\operatorname{mod}x_0^{p}\m _R$.
\medskip 

Similarly to Subsection \ref{S2.5}, introduce new 
variables by the relations 
$Z_i^{(1)}=x_0^{-pr_1(i)}X_i^{(1)}$, 
$Z_i=x_0^{-pr_2(i)}X_i$ and $Z_i^{(2)}=x_0^{-pr_2(i)}X_i^{(2)}$, $i\in \Z /s$, 
and rewrite system of equations \eqref{E2.2} in the following form: 
$$ 
\begin{array} {cccc} 
Z_i^{(1)p}&=&Z_{i+1}, & \text{for all}\ i\in\Z /s;\\
Z_j^p\ \ &=&Z_{j+1}, &\text{for all}\ j\ne j_0+1;\\
Z_{j_0+1}-Z_{j_0+1}^q&=&\ \ \ \ \ \gamma ^pZ_{i_0+1}^{(1)}
x_0^{p(r_1(i_0)-r_2(j_0)-1)}&
\end{array}
$$

If $\alpha\in\F _q$ and $h_1=\alpha h_1^0\in V_1$, then the 
restriction to $\Gamma _{\c F_s}$ of the cocycle  
$\{F_{h_1}(V)(\tau )=A _{\tau ,\alpha }(i_0,j_0,\gamma )\ |\ \tau\in\Gamma _{F_s}\}$ 
can be described as follows. 
Let $U\in R_0$ be such that 
$$U-U^q=\gamma x_0^{r_1(i_0)-r_2(j_0)-1}.$$
Then for any $\tau\in\Gamma _{\c F_s}$, 
$\sigma ^{j_0}(A _{\tau ,\alpha }(i_0,j_0,\gamma ))
=\sigma ^{i_0}(\alpha )(\tau U-U)$. Thus 
$$A_{\tau ,\alpha }(i_0,j_0,\gamma )=\sigma ^{i_0-j_0}(\alpha )\sigma ^{-j_0}(\tau U-U).$$

The following Lemma is a direct consequence of the definition of 
$(r_1,r_2)_{st}$-admissible pairs, cf. also Proposition \ref{P1.24}

\begin{Lem} \label{L2.11} Let $C=-(q-1)(r_1(i_0)-r_2(j_0)-1)$. 
Then $C$ is a prime to $p$ 
 integer such that $1\leqslant C<(q-1)(1+1/(p-1))$. 
\end{Lem}
\medskip 

\subsection{Galois modules $E_{sp}(j_0,\gamma )$}
\label{S2.7}

In this subsection $(0,j_0)$ is some $(r_1,r_2)_{sp}$-admissible 
pair (i.e. $r_1+1/(p-1)=r_0(j_0)$) and $\gamma\in\F _q$. 
Then  $V=\c V^*(E_{sp}(j_0,\gamma ))$ is identified as an abelian group 
with the solutions 
$$(\{X_i^{(1)}\ |\ i\in\Z /s\},\{X_j^{(2)}\ |\ j\in\Z /s\})\in R^{2s}$$ 
of the following system of equations 
$$
\begin{array} {ccc} 
 X_i^{(1)p}/x_0^{pa_i}&=&X_{i+1}^{(1)}, \text{for all } i\in\Z /s,\\
X_j^{(2)p}/x_0^{pb_j}&=&X_{j+1}^{(2)}, \text{for all }j\in\Z /s.
\end{array}
$$
The corresponding $\Gamma _F$-action comes 
from the natural $\Gamma _F$-action on 
${\c R}^0_{st}$ and the embedding of $V$ into $(R^0_{st})^{2s}$ 
given by the following correspondences:
\newline 
-- if $i\in\Z /s$ then $X_i^{(1)}\mapsto X_i^{(1)}
\operatorname{mod}x_0^{p}\m _R$;
\newline 
-- if $m\in\Z /s$ then $X_{j_0+m}^{(2)}\mapsto X_{j_0+m}^{(2)}+
\gamma ^{p^m}X_m^{(1)}Y\operatorname{mod}x_0^{p}\m _R$.
\medskip 

If $\alpha\in\F _q$ and $h_1=\alpha h_1^0\in V_1$ then the cocycle 
$$\{F_{h_1}(V)(\tau )=A_{\tau ,\alpha }^{sp}(j_0,\gamma )\ |\ \tau\in\Gamma _{F_s}\}$$ 
can be described as follows. 
Note that the point $h_1$ corresponds to the collection 
$(\{\sigma ^i(\alpha )x _0^{pr_1(i)}\ |\ i\in\Z /s\},
\{\sigma ^{i-j_0}(\alpha\gamma )
x _0^{pr_1(i-j_0)}Y\ |\ i\in\Z /s\})$. 
Then for $\tau\in\Gamma _{F_s}$, $\tau (h_1)$ corresponds to the collection 
$$(\{\sigma ^i(\alpha )x _0^{pr_1(i)}\ |\ i\in\Z /s\},\{\sigma ^{i-j_0}(\alpha\gamma )
x _0^{pr_1(i-j_0)}(Y+k(\tau )\widetilde{\log}\varepsilon )\ |\ i\in\Z /s \}).$$
Therefore, $\tau (h_1)-h_1$ corresponds to the collection 
$$(\{\ 0\ |\ i\in\Z /s\},\{\sigma ^{i-j_0}(\alpha\gamma )
x _0^{pr_2(i)}k(\tau )\ |\ i\in\Z /s \}),$$
which corresponds to $\sigma ^{-j_0}(\alpha\gamma )h_2^0$. Therefore, 
$A_{\tau ,\alpha }^{sp}(j_0,\gamma )=\sigma ^{-j_0}(\alpha \gamma )k(\tau )$. 

Notice that for any $\tau\in\Gamma _{\c F_s}\subset\Gamma _{F_s}$, 
$A^{sp}_{\tau ,\alpha }(j_0,\gamma )=0$.
\medskip

\subsection{Fully faithfulness of $\CV ^*$.} 
\label{S2.8}

\ \

In this subsection we prove the following important property.

\begin{Prop} \label{P2.12} The functor $\CV ^*$ is fully faithful. 
\end{Prop} 

\begin{proof} We must prove that for all $\L _1,\L _2\in\uL ^*$, 
the functor $\CV ^*$ induces a bijective map 
$$\Pi (\L _1,\L _2):\Hom _{\uL ^*}(\L _2,\L _1)
\longrightarrow\Hom _{\uC _F}(\CV ^*(\L_1),\CV ^*(\L _2)).$$

By induction on lengths of composition series for $\L_1$ and $\L_2$ it 
will be sufficient to verify that for any two 
simple objects $\L _1$ and $\L_2$:
\medskip 

$\bullet $\ $\Pi (\L_1,\L_2)$ is bijective;
\medskip 

$\bullet $\ the functor $\CV^*$ induces injective map 
$$\mathrm{E\Pi }\,(\L_1,\L_2):\Ext _{\uL^*}(\L_2,\L_1)
\longrightarrow\Ext _{\uC _F}(\CV^* (\L_1),\CV ^*(\L_2)).$$
\medskip 

The first fact has been already checked in Subsection \ref{S2.3}. 

In order to 
verify the second property, notice that for 
any two objects $\L _1,\L _2\in\uL ^*$, the natural map 
$$\Ext _{\uC _F}(\CV ^*(\L_1), \CV ^*(\L _2))
\longrightarrow \Ext _{\uM _F}(\c V ^*(\L _1)),\c V ^*(\L _2))$$
is injective. Therefore, we can prove injectivity of 
$\mathrm{E\Pi }(\L_1,\L_2)$ on the level of functor $\c V^*$. 
In addition, 
for $n_1,n_2\in\N $, 
$\Ext _{\uL^*}(\L ^{n_1}_2,\L ^{n_2}_1)=\Ext _{\uL ^*}(\L _2,\L _1)^{n_1n_2}$ 
(the formation of $\Ext $ is compatible with direct sums). 
So, by Lemma \ref{L1.17}, we can replace $\L_1$ and $\L_2$ 
by the objects introduced in Subsection \ref{S1.5} (where they are denoted also by 
$\c L_1$ and $\c L_2$). 

By Proposition \ref{P1.27}, 
any element of $\Ext _{\uL^*}(\L_2,\L_1)$ 
appears as a sum of standard extensions of the form $E_{cr}(i,j,\gamma _{ij})$, 
$E_{st}(i,j,\gamma _{ij})$ and $E_{sp}(j,\gamma ^{sp}_{j})$. 
Here: 
a) 
$(i,j)\in (\Z /s)^2$ is either $(r_1,r_2)_{cr}$-admissible or 
$(r_1,r_2)_{st}$-admissible and all $\gamma _{ij}\in k$; 
b)  
$j\in\Z /s$ is such that $(0,j)$ is $(r_1,r_2)_{sp}$-admissible  
and $\gamma _{j}^{sp}\in\F _q$. 

\begin{remark} A couple $(i,j)$ can't be both $(r_1,r_2)_{cr}$-admissible and 
$(r_1,r_2)_{st}$-admissible, but it can be 
$(r_1,r_2)_{cr}$-admissible and 
$(r_1,r_2)_{sp}$-admissible at the same time.
\end{remark} 

By Subsections \ref{S2.5}\,-\ref{S2.7}, we can attach 
to these standard extensions the 
1-cocycles $A_{\tau ,\alpha }(i,j,\gamma _{ij})$ and 
$A^{sp}_{\tau ,\alpha }(j,\gamma _j^{sp})$, 
where $\tau\in\Gamma _{F_s}$. It remains to prove that the sum 
of these cocycles is trivial only if all corresponding coefficients 
$\gamma _{ij}$ and $\gamma ^{sp}_{j}$ are equal to 0. 

First, we need the following lemma.

\begin{Lem} \label{L2.13} Suppose for all $(i,j)\in (\Z /s)^2$, 
the elements $U_{ij}\in R_0=\,\mathrm{Frac}\,R$ are such that 
$U_{ij}-U_{ij}^q=\gamma _{ij}x_s^{-C_{ij}}$, 
where all $\gamma _{ij}\in k$ and all 
$C_{ij}$ are prime to $p$ natural numbers. For $\tau\in\Gamma _{\c F_s}$, let 
$B _{\tau }(i,j,\gamma _{ij})=\tau (U_{ij})-U_{ij}\in\F _q$. If for 
all $\alpha\in\F _q$ and all $\tau\in\Gamma _{\c F_s}$,
\begin{equation}\label{E2.3} 
\sum _{i,j,\in\Z /s}\sigma ^{i-j}(\alpha )\sigma ^{-j}B_{\tau }(i,j,\gamma _{ij})=0
\end{equation} 
then all $\gamma _{ij}=0$.
\end{Lem}

\begin{proof} [Proof of Lemma] For different 
prime to $p$ natural numbers $C_{ij}$ the extensions $\c F_s(U_{ij})$ 
behave independently. Therefore, we can assume that all $C_{ij}=C$ are the same. 

Let $j_0=j_0(j)$ be such that $0\leqslant j_0<s$ and 
$j_0\equiv -j\,\mathrm{mod}\,s$.
Then \eqref{E2.3} means that for any $\alpha\in\F _q$, 
$$B _{\alpha }:=\sum_{i,j\in\Z /s} 
\sigma ^{i-j}(\alpha )\sigma ^{j_0}(U_{ij})\in\c F_s.$$

Then 
$$B_{\alpha }-B_{\alpha }^q=\sum _{j\in\Z /s}
\left (\sum_{i\in\Z /s}\sigma ^{i-j}(\alpha )
\gamma _{ij}^{p^{-j}}\right )x_s^{-p^{j _0}C}.$$
Looking at the Laurent series of $B _{\alpha }\in\c F_s$ 
we conclude that all $B _{\alpha }\in\F _q$. 
This means that for all $j\in\Z /s$ and $\alpha\in\F _q$, 
$\sum _{i\in\Z /s}\sigma ^i(\alpha )\gamma _{ij}=0$ and, therefore, 
all $\gamma _{ij}=0$. The lemma is proved
\end{proof}

Now suppose that for all $\alpha\in\F _q$ and 
 $\tau\in\Gamma _{F_s}$, 
the sum of cocycles $A_{\tau ,\alpha }(i,j,\gamma _{ij})$ 
and $A^{sp}_{\tau ,\alpha }(j,\gamma ^{sp}_{j})$ is zero. 
Restrict this sum to the subgroup $\Gamma _{\c F_s}$. Then all 
$sp$-terms will disappear and by above Lemma \ref{L2.13} all $\gamma _{ij}=0$. 
So, for all $\tau\in\Gamma _{F_s}$ and $\alpha\in\F _q$, 
$\sum _{j\in\Z /s}\sigma ^{-j}(\alpha\gamma _{j}^{sp})=0$, 
and this implies that all $\gamma _{j}^{sp}=0$.
\end{proof}

\begin{Cor}\label{C2.14} The functor 
 $\c V^*$ is fully faithful on the subcategories of 
unipotent objects $\uL ^{*u}$ and 
of connected objects $\uL ^{*c}$.
\end{Cor}

\begin{proof} Indeed, on both categories 
the map $\mathrm{\Pi }(\L_1,\L_2)$ 
is already bijective on the level of functor $\c V^*$. 
\end{proof}

\subsection{Ramification estimates}\label{S2.9}

Suppose $\L\in\uL ^*$ and $H=\c V^*(\L )$. 
For any rational number $v\geqslant 0$, denote by $\Gamma _F^{(v)}$ 
the ramification subgroup of $\Gamma _F$ in upper numbering, \cite{refSer}.

\begin{Prop} \label{P2.15}If $v>2-\frac{1}{p}$ then 
$\Gamma _F^{(v)}$ acts trivially on $H$. 
\end{Prop}

 A proof can be obtained along the lines of the paper \cite{refHat} 
(which adjusts Fontaine's approach from \cite{refFo1}). Alternatively, one can apply  
author's method from \cite{refAb3}: if $\tau\in\Gamma ^{(v)}$ 
with $v>2-1/p$ then there is an automorphism $\psi $ of $R$ such that 
$\psi (x_0)=\tau (x_0)$ and $\psi $ induces the trivial action on 
$H$; therefore we can assume that $\tau $ comes from 
the absolute Galois group of $k((x_0))$ and the characteristic $p$ 
approach from \cite{refAb3} 
gives the ramification estimate which coincides with the required 
by the theory of field-of-norms. 

\begin{Cor} \label{C2.16} If $\widetilde{F}$ is the common field-of-definition 
 of points of $\F _p[\Gamma _F]$-modules $\c V(\L )$ for all 
$\L\in\uL^*$, then $v_p(\c D(\widetilde{F}/F))<3-\frac{1}{p}$, where 
$\c D(\widetilde{F}/F)$ is the different of the field extension $\widetilde{F}/F$.
\end{Cor}
\medskip

\section{ Semistable representations with weights from $[0,p)$ 
and filtered $\c W$-modules} \label{S3}

\subsection {The ring $S$ }\label{S3.1}

Let $v=u+p\in\c W$ and let 
$S$ be the $p$-adic closure of the divided power envelope of 
$\c W$ with respect to the ideal generated by $v$. 
Use the same symbols $\sigma $ and $N$ for  natural continuous 
extensions of $\sigma $ and $N$ from $\c W$ to $S$. For $i\geqslant
0$, denote by $\Fil ^iS$ the $i$-th divided power of the ideal $(v)$ in $S$. 
Then for $0\leqslant i<p$, there are $\sigma $-linear morphisms 
$\phi _i=\sigma /p^i:\Fil ^iS\longrightarrow S$. Note that  
$\phi _0=\sigma $ and agree to use the notation $\varphi $
for $\phi _{p-1}$. 
One can see also that 
$S$ is the $p$-adic closure of 
$W(k)[v_0,v_1,\dots ,v_n,\dots ]$, where $v_0=v$ and for all 
$n\geqslant 0$, $v_{n+1}^p/p=v_n$. 

Consider the ideals $\m _S=(p,v,v_1,\dots ,v_n,\dots )$,
$I=(p,v_1,v_2,\dots )$ and $J=(p,v_1v,v_2,\dots ,v_n,\dots )$ of
$S$. Then

--- $\m _S$ is the maximal ideal in $S$;

--- $I=\Fil ^pS+pS\supset J$;

--- $\varphi (I)\subset S$ and $\varphi (J)\subset pS$;

--- $\varphi (v^{p-1})\equiv 1-v_1(\mathrm{mod}\,J)$ and 
$\varphi (v_1)\equiv 1(\mathrm{mod}\,J)$. 
\subsection {The ring of semi-stable periods $\hat A_{st}$} 
\label{S3.2}

Let $R$ be Fontaine's ring and 
let $x_0,\varepsilon \in R$ be the elements chosen in Subsection 
\ref{S2.1}.

Denote by $A_{cr}$ the Fontaine crystalline ring. 
It is the $p$-adic closure 
of the divided power envelope of $W(R)$ with respect to the ideal
$([x_0]+p)$ of $W(R)$, where $[x_0]\in W(R)$ is the Teichm\"uller
representative of $x_0$. Then for $i\geqslant 0$, 
$\Fil ^iA_{cr}$ is the $i$-th divided power of the ideal $([x_0]+p)$ 
in $A_{cr}$. 
Denote by $\sigma :A_{cr}\longrightarrow A_{cr}$  the natural 
morphism induced by the $p$-th power on $R$. Then for 
$0\leqslant i<p$, there are $\sigma $-linear maps 
$\phi _i=\sigma /p^i:\Fil ^iA_{cr}\longrightarrow A_{cr}$. 
We shall often use the simpler
notation $\varphi =\phi _{p-1}$ and $F(A_{cr})=\Fil ^{p-1}A_{cr}$. Notice that 
$A_{cr}$ is provided with the natural continuous $\Gamma _F$-action. 

Let $X$ be an indeterminate. Then $\hat A_{\st }$ is the $p$-adic
closure of the ring 
$A_{cr}[\gamma _i(X)\ |\ i\geqslant 0]\subset 
A_{cr}[X]\otimes _{\Z _p}\Q _p$, where 
for all $i\geqslant 0$, $\gamma _i(X)=X^i/i!$. The ring 
$\hat A_{st}$ has the
following additional structures:

$\bullet $\  the $S$-module structure given by the natural 
$W(k)$-algebra structure and the correspondence $u\mapsto [x_0]/(1+X)$; 

$\bullet $\ the ring endomorphism $\sigma $, 
which is the extension of the above defined endomorphism 
$\sigma $ of $A_{cr }$ via 
the condition $\sigma (X)=(1+X)^p-1$;

$\bullet $\  the continuous $A_{cr}$-derivation 
$N:\hat A_{\st}\longrightarrow \hat A_{\st }$ such that $N(X)=X+1$;

$\bullet $\  for any $i\geqslant 0$, the ideal 
$\Fil ^i\hat A_{\st }$, which is the closure of the ideal 
$\sum _{i_1+i_2\geqslant i}
\left (\Fil ^{i_1}A_{cr }\right )\gamma _{i_2}(X)$;

$\bullet $\  the action of $\Gamma _F$, which is the extension of 
the $\Gamma _F$-action on $A_{cr}$ such that for all
$\tau\in\Gamma _F$, $\tau (X)=[\varepsilon ]^{k(\tau )}(X+1)-1$. Here 
all $k(\tau )\in\mathbb{Z} _p$ are such that $\tau (x_0)=\varepsilon
^{k(\tau )}x_0$. 
\medskip 

Note that for $0\leqslant m<p$, 
$\sigma (\Fil ^m\hat A_{st})\subset p^m\hat A_{st}$ 
and, as earlier, we can set 
$\phi _m=p^{-m}\sigma |_{\Fil ^m\hat A_{st}}$ 
and introduce the simpler notation 
$\varphi =\phi _{p-1}$ and $F(\hat A_{st})=\Fil ^{p-1}\hat A_{st}$.
\medskip 

\subsection{Construction of semi-stable representations of
$\Gamma _F$ with weights from $[0,p)$} \label{S3.3}

For $0\leqslant m< p$, consider the category $\widetilde{\c S}_m$ 
of quadruples $\c M=(M,\Fil ^mM,\phi _m ,N)$, where 
$\Fil ^mM\subset M$ are $S$-modules, $\phi _m:\Fil ^mM\longrightarrow M$ 
is a $\sigma $-linear map and $N:M\longrightarrow M$ is 
a $W(k)$-linear endomorphism 
such that for any $s\in S$ and $m\in M$, $N(sx)=N(s)x+sN(x)$ 
The morphisms of the category $\widetilde{\c S}_m$ are $S$-linear 
morphisms of filtered modules commuting with the corresponding
morphisms $\phi _m$ and $N$. 
Notice that for $0\leqslant m<p$, 
$\hat A_{\st }$ has a natural structure of the object of
the category $\widetilde{\c S}_m$. As earlier, we shall use the simpler notation 
$\varphi =\phi _{p-1}$ and $F(M)=\Fil ^{p-1}M$. 

For $0\leqslant m<p$, the Breuil category $\c S _m$ of strongly divisible 
$S$-modules of 
weight $\leqslant m$  is a full subcategory of 
$\widetilde{\c S}_m$ consisting of the objects 
$\c M=(M,\Fil ^mM,\phi _m,N)$ 
such that 
\medskip 

(1) $M$ is a free $S$-module of finite rank;

(2) $(\Fil ^{m}S)M\subset \Fil ^mM$;

(3) $(\Fil ^mM)\cap pM=p\Fil ^mM$;

(4) $\phi _m (\Fil ^mM)$ spans $M$ over $S$;

(5) $N\phi _m =p\phi _m N$;

(6) $(\Fil ^1S)N(\Fil ^{m}M)\subset\Fil ^mM $.
\medskip

For $\c M\in\c S _m$, let $T^*_{st}(\c M)$ be 
the $\Gamma _F$-module of all $S$-linear and 
commuting with $\phi _m $ and $N$, 
maps 
$f:M\longrightarrow\hat A_{\st }$ such that 
$f(\Fil ^mM)\subset \Fil ^{m}\hat A_{\st }$. Then one has the
following two basic facts:
\medskip 

$\bullet $ 
$T^*_{\st }(\c M)$ is a continuous $\Z _p[\Gamma _F]$-module 
without $p$-torsion, its $\Z _p$-rank equals 
$\mathrm{rk}\,_SM$, and 
$V^*_{\st }(\c M)=T^*_{\st}(\c M)\otimes _{\Z _p}\Q _p$  
is semi-stable $\Gamma _F$-module 
with Hodge-Tate weights from $[0,m]$;
\medskip 

$\bullet $ any semi-stable representation of $\Gamma _F$  with 
Hodge-Tate weights from $[0,m]$, $0\leqslant m<p$, appears in the form 
$V^*_{\st }(\c M)$ for a suitable $\c M\in\c S_m$. 
\medskip

By Theorem 1.3 \cite{refBr2} these facts follow from the  
existence of strongly divisible lattices in $S\otimes _{\c W}F$-modules associated with 
weakly admissible $(\phi _0,N)$-modules with filtration of length $m$. 
Breuil proved this for all $m\leqslant p-2$ but his method can 
be easily extended to cover the case $m=p-1$ as well, cf. also \cite{refBr3}.  
\medskip 
\medskip

\subsection{The category $\uL ^f$}
\label{S3.4}

In this section we introduce $\c W$-analogues of Breuil's 
$S$-modules from the category $\c S _{p-1}$ 
and prove that they can be also used to construct 
semi-stable representations of $\Gamma _F$ with Hodge-Tate weights 
from $[0,p)$. 
\medskip 

\begin{definition} Let $\widetilde{\uL}$ be the category of 
$\L =(L,F(L),\varphi , N_S)$, where $L\supset F(L)$ are 
$\c W$-modules, 
 $\varphi :F(L)\longrightarrow L$ 
is a $\sigma $-linear 
morphism of $\c W$-modules and 
$N_S:L\longrightarrow L_S:=L\otimes _{\c W}S$ is such 
that for all $w\in\c W$ and $l\in L$, 
$N_S(wl)=N(w)l+(w\otimes 1)N_S(l)$. 
For $\L _1=(L_1,F(L_1),\varphi ,N_S)\in \widetilde{\uL}$, the morphisms 
$\Hom _{\widetilde{\uL}}(\L ,\L _1)$ are  
$\c W$-linear $f:L\longrightarrow L_1$ such that 
$f(F(L))\subset F(L_1)$, $f\varphi =\varphi f$ and $fN_S=N_S(f\otimes 1)$. 
\end{definition}
\medskip 

 Let $\c A_{st}=(\hat A_{st}, F(\hat A_{st}), \varphi ,N_S)$, 
where $N_S=N\otimes 1$. 
Then $\c A_{st}$ is an object of the category $\widetilde{\uL}$.
\medskip 

Suppose $\L =(L,F(L),\varphi ,N_S)\in\widetilde{\uL }$. 

Set $L_S:=L\otimes _{\c W}S$, $F(L_S)=(F(L)\otimes 1)S+(L\otimes 1)\Fil ^pS$, 
and $\varphi _S:F(L_S)\longrightarrow F(L_S)$ is 
a unique 
$\sigma $-linear map such that $\varphi _S|_{F(L)\otimes 1}=\varphi \otimes 1$ and for any $s\in\Fil ^pS$ and $l\in L$, 
$\varphi _S(l\otimes s)=(\varphi (v^{p-1}l)\otimes 1)\varphi (s)/\varphi (v^{p-1})$.
\medskip 

\begin{definition} Denote by $\uL ^f$ the full subcategory in $\widetilde{\uL}$ 
consisting of the quadruples $\L =(L,F(L),\varphi ,N_S)$ such that 

$\bullet $\ $L$ is a free $\c W$-module of finite rank;

$\bullet $\ $v^{p-1}L\subset F(L)$, $F(L)\cap pL=pF(L)$ and 
$L=\varphi (F(L))\otimes _{\sigma\c W}\c W$;

$\bullet $\ for any $l\in F(L)$, $vN_S(l)\in 
F(L_S)$  
and $\varphi _S(vN(l))=cN_S(\varphi (l))$, 
where $c=1+u^p/p$. 
\end{definition}
\medskip 

It can be easily seen that for $\L =(L,F(L),\varphi ,N_S)\in\uL ^f$ and the map 
$N=N_S\otimes 1:L_S\longrightarrow L_S$, the quadruple 
$\L _S=(L_S,F(L_S),\varphi _S,N)$ is the object of the category $\c S_{p-1}$

The main result of this Subsection is the following statement.

\begin{Prop}\label{P3.1} For any  $\c M=(M,F(M),\varphi ,N)\in\c S_{p-1}$, 
there is an $\L =(L,F(L),\varphi ,N_S)\in\uL ^f$ such that 
$\c M=\L _S$. 
\end{Prop}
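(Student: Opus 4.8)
The plan is to descend a strongly divisible $S$-module $\c M=(M,F(M),\varphi,N)\in\c S_{p-1}$ to a $\c W$-module along the natural section of $\c W\hookrightarrow S$ given by reduction modulo the divided-power ideal (i.e. $v\mapsto 0$, equivalently $u\mapsto -p$). Concretely, I would first recall that $S=\c W\oplus (\text{div.\ power part})$ compatibly with $\sigma$ and with the augmentation $S\to\c W$ sending $v_n\mapsto 0$; denote the kernel of this augmentation by $\mathrm{Fil}^+S$. Set $L:=M\otimes_{S,\pi}\c W$ where $\pi:S\to\c W$ is a suitable $\sigma$-compatible projection (not the augmentation itself but a $\varphi$-equivariant lift of it). The subtle point is that the augmentation $S\to\c W$ is \emph{not} $\varphi$-equivariant, so I cannot naively quotient; instead I would use property (4) (that $\varphi(F(M))$ spans $M$) together with property (5) ($N\varphi_m=p\varphi_m N$) to produce a canonical $\varphi$-stable $\c W$-lattice $L\subset M$ such that $M=L\otimes_{\c W}S$ and $\varphi(F(L))\otimes_{\sigma\c W}\c W=L$, exactly as in Kisin's and Liu's descent arguments. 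This is the analogue, for the ring $\c W$ rather than Kisin's $\mathfrak S$, of the fact that a strongly divisible module comes from a Kisin module.

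Next I would carry out the construction of $L$ explicitly. Using property (4), choose $m_1,\dots,m_s\in F(M)$ whose images $\varphi(m_1),\dots,\varphi(m_s)$ form an $S$-basis of $M$; by property (1) and (3) one arranges this to be a basis adapted to the filtration, so that after multiplying by appropriate powers of $v$ one gets an $S$-basis of $F(M)$. Write $\varphi$ in this basis by a matrix $A$ over $S$; one checks using $v^{p-1}M\subset F(M)$ and property (2) that $A$ is invertible over $S[1/v]\cap(\text{something})$, more precisely that $v^{p-1}A^{-1}$ has entries in $S$. Then the $\c W$-span $L$ of the $\varphi$-iterated images, taken at the level $u\mapsto -p$, stabilises: the key estimate is that $\varphi$ contracts the "divided-power directions" ($\sigma$ multiplies $v_n$-degrees by $p$ while dividing by growing powers of $p$), so the sequence of $\c W$-lattices obtained by repeatedly applying $\varphi$ to a starting lattice converges $p$-adically to a $\varphi$-stable lattice $L$ with $L\otimes_{\c W}S=M$. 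This is the same contraction-mapping mechanism already used in the proof of Proposition~\ref{P1.7} (appealing to Lemma~\ref{L1.1}) and in Lemma~\ref{L1.4}; here it runs over $\c W$ rather than $\W_1$, with $p$-adic rather than $u$-adic convergence.

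Having produced $L$, I would define $F(L):=F(M)\cap L$ (equivalently $L\cap F(M_S)$), which by property (3) satisfies $F(L)\cap pL=pF(L)$, and verify $v^{p-1}L\subset F(L)$ from property (2) together with $v^{p-1}M\subset F(M)$. The restriction of $\varphi$ to $F(L)$ lands in $L$ by construction of $L$, and one checks $L=\varphi(F(L))\otimes_{\sigma\c W}\c W$ by a rank count exactly as in the proof of Proposition~\ref{P1.3} (the quotient $M/L$ is $v$-torsion-free hence $\varphi(F(M))/\varphi(F(L))$ is $\sigma\c W$-torsion-free). For the monodromy: the operator $N$ on $M$ satisfies $N\varphi_m=p\varphi_m N$, and $N(L)$ need not lie in $L$, but I would define $N_S:L\to L_S=L\otimes_{\c W}S$ as the restriction of $N$; the compatibility relation $\varphi_S(vN(l))=cN_S(\varphi(l))$ with $c=1+u^p/p$ is precisely the translation of property (5) together with the identity $\varphi(v^{p-1})=c^{-1}$ (up to a unit) and property (6) guaranteeing $vN_S(F(L))\subset F(L_S)$. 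Finally one checks $\L_S=\c M$ on the nose: $L\otimes_{\c W}S=M$ by construction, the filtrations match because $F(L_S)=(F(L)\otimes 1)S+(L\otimes 1)\mathrm{Fil}^pS=F(M)$ using property (2) and the explicit form of $F(M)$, and $\varphi_S$, $N$ agree with $\varphi$, $N$ on $\c M$ by construction.

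The main obstacle is the existence and uniqueness of the $\varphi$-stable $\c W$-lattice $L$: one must show the iterated-$\varphi$ construction genuinely converges and is independent of the starting lattice, and that the resulting $L$ is free of the right rank with $L=\varphi(F(L))\otimes_{\sigma\c W}\c W$. This is where the hypothesis that the weights lie in $[0,p)$ (so that $\sigma(\mathrm{Fil}^mS)\subset p^m S$ with $m<p$, giving the crucial contraction) is used in an essential way, and it is the technical heart of the whole section; everything else is a routine but careful transport of structures along $M=L\otimes_{\c W}S$.
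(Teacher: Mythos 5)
Your overall strategy is the same as the paper's: forget the augmentation idea, produce a $\c W$-structural lattice $L\subset M$ of full rank with $F(L)=F(M)\cap L$ and $\varphi(F(L))\subset L$, and then transport the filtration, $\varphi$ and $N$ formally (your last paragraph corresponds to the paper's Lemmas \ref{L3.3} and \ref{L3.4} and is fine in outline). The gap is exactly at the point you yourself call the technical heart. Your mechanism --- ``repeatedly apply $\varphi$ to a starting lattice; $\varphi$ contracts the divided-power directions, so the lattices converge $p$-adically'' --- is not correct as stated, because $\varphi$ does \emph{not} contract the most important divided-power direction: with $v_1=\gamma_p(v)$ one has $\varphi(v_1)\equiv 1$ and $\varphi(v^{p-1})\equiv 1-v_1$ modulo the ideal $J=(p,v_1v,v_2,\dots)$. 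Hence if the matrix of $\varphi$ on a structural lattice is congruent to $C_0+p^nv_1C_1$ modulo $p^nJ$, applying $\varphi$ again turns the $v_1$-term into a unit-size term along the lattice itself; the obstruction along $v_1$ is transported by an invertible $\sigma$-linear operator, not shrunk, so the naive iteration stalls at level $n$ and need not converge. Only the ideal $J$ is genuinely contracted ($\varphi(J)\subset pS$), which is the paper's Lemma \ref{L3.7} and is the one part of your heuristic that is right.

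What is missing is the correction step that kills the $v_1$-component before the contraction can be used: in the paper's Lemma \ref{L3.5} the lattice is replaced by $\bar e'=\bar e+p^n(v_1-v^{p-1})\bar g$, and the condition that this removes the $v_1$-obstruction is a $\sigma$-semilinear equation $(\sigma_0\bar g_1,\bar 0)=\bar g\,C_0+\bar h$ over the residue field, solvable by Lemma \ref{L3.6}, which reduces to Lemma \ref{L1.1}; the surjectivity of $\id-\c A$ there is an Artin--Schreier-type statement and uses that $k$ is algebraically closed. So the argument is a two-step successive approximation (semilinear algebra over $k$ to gain $J$, then $\varphi(J)\subset pS$ to gain a power of $p$), not a contraction-mapping fixed point; no appeal to a Kisin/Liu-style descent theorem is made, and none is available ``for free'' here. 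Also note that uniqueness and independence of the starting lattice, which you list as part of the obstacle, are not needed (and not proved) for the Proposition --- only existence is.
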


\begin{Cor}\label{C3.2} a) If $\L\in\uL ^f$ and $T^*_{st}(\L )=
\Hom _{\widetilde{\uL}}(\L ,\hat A_{st})$ with the induced structure 
of $\Z _p[\Gamma _F]$-module then 
$V^*_{st}(\L )=T^*_{st}(\L )\otimes _{\Z _p}\Q _p$ is a semi-stable 
$\Q _p[\Gamma _F]$-module with Hodge-Tate weights from $[0,p)$ and $\dim _{\Q _p}V^*_{st}(\L )=\mathrm{rk}_{\c W}L$.

b) For any semi-stable $\Q _p[\Gamma _F]$-module $V^*_{st}$ 
with Hodge-Tate weights from $[0,p)$, there is 
an $\L\in\uL ^f$ such that $V^*_{st}\simeq V^*_{st}(\L )$.
\end{Cor}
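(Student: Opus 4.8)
The Corollary is a formal consequence of Proposition \ref{P3.1} together with the two basic facts about $T^*_{st}$ and $V^*_{st}$ recalled in Subsection \ref{S3.3}. The key point is that for $\L =(L,F(L),\varphi ,N_S)\in\uL ^f$ the $\Z _p[\Gamma _F]$-module $T^*_{st}(\L )=\Hom _{\widetilde{\uL}}(\L ,\hat A_{st})$ is naturally identified, as a $\Z _p[\Gamma _F]$-module, with Breuil's $T^*_{st}(\L _S)$: since $\hat A_{st}$, viewed in $\widetilde{\uL}$ via the quadruple $\c A_{st}$, is the restriction of scalars along $\c W\hookrightarrow S$ of $\hat A_{st}\in\c S_{p-1}$, a $\c W$-linear map $f:L\To\hat A_{st}$ compatible with $F$, $\varphi $ and $N_S$ corresponds by base change, bijectively and $\Gamma _F$-equivariantly, to an $S$-linear map $\tilde f:L_S\To\hat A_{st}$ compatible with $F(L_S)$, $\varphi _S$ and $N$; as $\L _S\in\c S_{p-1}$, these $\tilde f$ are exactly the elements of $T^*_{st}(\L _S)$. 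Granting this, part a) follows: applying the two basic facts of Subsection \ref{S3.3} to $\L _S$, the $\Q _p[\Gamma _F]$-module $V^*_{st}(\L )=T^*_{st}(\L )\otimes _{\Z _p}\Q _p=V^*_{st}(\L _S)$ is semi-stable with Hodge--Tate weights in $[0,p-1]\subset [0,p)$, and its $\Q _p$-dimension equals the $\Z _p$-rank of $T^*_{st}(\L _S)$, namely $\operatorname{rk}_SL_S=\operatorname{rk}_{\c W}L$ since $L_S=L\otimes _{\c W}S$.

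For part b), let $V^*_{st}$ be a semi-stable $\Q _p[\Gamma _F]$-module with Hodge--Tate weights in $[0,p)$; as the weights are integers they lie in $[0,p-1]$, so by the second basic fact of Subsection \ref{S3.3} there is $\c M\in\c S_{p-1}$ with $V^*_{st}\simeq V^*_{st}(\c M)$. By Proposition \ref{P3.1} we may write $\c M=\L _S$ for some $\L\in\uL ^f$, and then $V^*_{st}(\L )=T^*_{st}(\L _S)\otimes _{\Z _p}\Q _p=V^*_{st}(\c M)\simeq V^*_{st}$. Thus the only substantive ingredient is Proposition \ref{P3.1}, and that is where I expect the main obstacle to be.

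The plan for Proposition \ref{P3.1} is as follows. Using Breuil's structure theory for strongly divisible modules (equivalently, realizing $\c M$ as a strongly divisible lattice in the $S\otimes _{\c W}F$-module attached to a weakly admissible $(\phi _0,N)$-module, cf. Subsection \ref{S3.3}), one chooses an adapted $S$-basis of $M$ and produces from the Frobenius structure a free $\c W$-submodule $L\subset M$ with $L\otimes _{\c W}S=M$; then one sets $F(L)=F(M)\cap L$, takes $\varphi $ to be the restriction of $\varphi _S=\phi _{p-1}$ (which is defined on $F(L)\subset F(M)=\Fil ^{p-1}M$), and $N_S=N|_L:L\To L_S=M$. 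One verifies, using Nakayama over the local ring $(S,\m _S)$ and condition (4), that these data satisfy all axioms of $\uL ^f$ and that $\L _S=\c M$: the module-theoretic conditions ($L$ free of rank $\operatorname{rk}_SM$, $v^{p-1}L\subset F(L)$, $F(L)\cap pL=pF(L)$ from condition (3), $L=\varphi (F(L))\otimes _{\sigma\c W}\c W$, and the descent of the filtration $F(M)=(F(L)\otimes 1)S+(L\otimes 1)\Fil ^pS$) are checked directly, while the two $N_S$-conditions reduce, for $l\in F(L)$, to $vN_S(l)\in F(L_S)$ — immediate from condition (6), since $v\in\Fil ^1S$ — and to the identity $\varphi _S(vN(l))=cN_S(\varphi (l))$, which follows (working in $M\otimes _{\Z _p}\Q _p$) from $\sigma (v)=u^p+p=pc$ with $c=1+u^p/p$ and from condition (5), i.e. $N\sigma =p\sigma N$, via $\varphi _S(vN(l))=p^{-(p-1)}\sigma (v)\sigma (N(l))=p^{-(p-1)}(pc)\bigl(p^{-1}N(\sigma (l))\bigr)=c\,N\bigl(p^{-(p-1)}\sigma (l)\bigr)=c\,N_S(\varphi (l))$. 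The main obstacle is precisely this descent — producing the $\c W$-lattice $L$ together with $F(L)$ so that Breuil's axioms and $\L _S=\c M$ hold simultaneously — which is the analogue in the present setting of the integral lattice descent of Kisin and Liu cited in the Introduction; once $L$ and $F(L)$ are in place, the $N_S$-axioms are the formal computation above and the Corollary follows as explained.
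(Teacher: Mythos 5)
Your deduction of the Corollary is correct and is essentially the paper's own (largely implicit) argument: identify $\Hom _{\widetilde{\uL}}(\L ,\hat A_{st})$ with Breuil's $T^*_{st}(\L _S)$ by base change $\c W\to S$, then invoke the two facts of Subsection \ref{S3.3} together with Proposition \ref{P3.1}. The only divergence is your closing sketch of Proposition \ref{P3.1} itself (``choose an adapted $S$-basis''), which is not how the paper proves it --- the paper builds the $\c W$-structural lattice with $\varphi (F(L))\subset L$ by the successive-approximation argument of Lemmas \ref{L3.5}--\ref{L3.7} resting on Lemma \ref{L3.6} --- but since the Corollary only uses the statement of Proposition \ref{P3.1}, this does not affect the correctness of your proof of the Corollary.
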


\begin{proof} [Proof of Proposition \ref{P3.1}]

Let $d$ be a rank of $M$ over $S$. If $L\subset M$ is 
a free $\c W$-submodule of rank $d$ and $M$ is generated 
by the elements of $L$ over $S$ we say 
that $L$ is $\c W$-structural (with respect to $M$). 

Let $F(L)=F(M)\cap L$.

\begin{Lem}\label{L3.3}  If $L$ is $\c W$-structural 
for $M$ then 
\medskip 

{\rm a)} $F(L)\supset v^{p-1}L$;
\medskip 

{\rm b)} $F(L)\cap pL=pF(L)$;
\medskip 

{\rm c)} $F(L)$ is a free 
$\c W$-module of rank $d$.
\end{Lem} 

\begin{proof} 

a) $v^{p-1}L\subset (\Fil ^{p-1}S)M\cap L\subset F(M)\cap L=F(L)$.
\medskip 

b) $F(L)\cap pL=L\cap F(M)\cap pL=F(M)\cap pL=
F(M)\cap pM\cap pL=pF(M)\cap pL=pF(L)$.
\medskip 

c) $F(L)$ has no $p$-torsion. Therefore, it will be sufficient to prove that 
$F(L)/pF(L)$ is a free $k[[u]]$-module of rank $d$. 
Consider the following natural embeddings 
of $k[[v]]$-modules
$$L/pL\supset F(L)/pF(L)\supset v^{p-1}L/pv^{p-1}L\simeq L/pL$$
(Use b) and that $pL\cap v^{p-1}L=pv^{p-1}L$.) It remains to note that 
$L/pL$ is free of rank $d$ over $k[[v]]$. 

The Lemma is proved.
\end{proof}

 Suppose $L$ is $\c W$-structural for $M$. 

\begin{Lem} \label{L3.4} 
If $L$ is $\c W$-structural then $\varphi (F(L))$ spans $M$ over $S$. 
\end{Lem} 

\begin{proof} The equality $S=\c W+\Fil ^pS$ implies that 
 $M=L+(\Fil ^pS)L=L+(\Fil ^pS)M$. Therefore, 
$$F(M)=F(M)\cap L+(\Fil ^pS)M=F(L)+(\Fil ^pS)L$$
(use that $F(M)\supset (\Fil ^pS)M$) and in notation of Subsection 
\ref{S3.1} it holds 
$$F(M)=F(L)+v_1L+JM.$$
This implies that $\varphi (F(L))$, 
$\varphi (v_1L)$ and $\varphi (JM)$ span $M$ over $S$. 
But for any $l\in L$, 
$\varphi (v_1l)=\varphi (v_1)\varphi (v^{p-1}l)/\varphi (v^{p-1})
=(1-v_1)^{-1}\varphi (v^{p-1}l)\equiv \varphi (v^{p-1}l)\mathrm{mod}\,\m _SM$.
For similar reasons, $\varphi (JM)\subset pM\subset \m _SM$. This means that 
$\varphi (F(L))$ spans $M$ modulo $\m _SM$. The lemma is proved.
\end{proof}

By above lemma it remains to prove the existence of 
a $\c W$-structural $L$ for $M$ such that $\varphi (F(L))\subset L$. 

 Let $\phi _0$ be a $\sigma $-linear endomorphism of the 
$S$-module $M\in\c S_{p-1}$ such that for all $m\in M$, 
$\phi _0(m)=\varphi (v^{p-1}m)/\varphi (v^{p-1})$. Clearly, 
$\phi _0 (\m _SM)\subset\m _SM$ and, therefore, 
it induces a $\sigma $-linear endomorphism $\sigma _0$ 
of the $k$-vector space $M_k=M/\m_SM$. 
\medskip

\begin{Lem} \label{L3.5} Suppose $n\in\Z _{\geqslant 0}$, $L$ is $\c W$-structural and 
$\varphi (F(L))\subset L+p^nM$. Then there is a $\c W$-structural 
$L'$ for $M$ such that $\varphi (F(L'))\subset L'+p^nJM$.
\end{Lem}

\begin{proof} Denote by $F(L)_k$ the image of $F(L)$ 
in the $k$-vector space 
\linebreak 
$M/\m _SM=L/(\m _S\cap\c W)L=L_k$. 
Let $s=\dim _kF(L)_k$, then $s\leqslant d=\dim _kL_k$. Choose a 
$\c W$-basis $e^{(1)},\dots ,e^{(d)}$ of $L$ and a $\c W$-basis 
$f^{(1)},\dots ,f^{(d)}$ of $F(L)$ such that 
\medskip 

$\bullet $\  for $1\leqslant i\leqslant s$, $f^{(i)}=e^{(i)}$ and for 
$s<i\leqslant d$, $f^{(i)}\in vL$. 
\medskip 

It will be convenient to use 
the following vector notation: $\bar e=(\bar e_1,\bar e_2)$, where 
$\bar e_1=(e^{(1)},\dots ,e^{(s)})$ and 
$\bar e_2=(e^{(s+1)},\dots ,e^{(d)})$, and $\bar f=(\bar f_1,\bar f_2)$, where 
$\bar f_1=\bar e_1$ and $\bar f_2=(f^{(s+1)},\dots ,f^{(d)})$. 

Then in obvious notation it holds 
$(\varphi (\bar f_1),\varphi (\bar f_2))=(\bar e_1,\bar e_2)C$, 
where $C\in\GL _d(S)$. Clearly, $C\equiv C_0+p^nv_1C_1\,\mathrm{mod}\,p^nJ$ with 
$C_0\in\GL _d(\c W)$ and $C_1\in \mathrm{M}_d(\c W)$. 
Clearly, $\varphi (F(L))\subset L+p^nJM$ iff 
$C_1\equiv 0\,\mathrm{mod}\,\m _S$. 
Choose $\bar g=(\bar g_1,\bar g_2)\in L^d$ and set 
$$\bar e_1'=(e^{\prime (1)},\dots ,e^{\prime (s) })
=\bar e_1+p^n(v_1-v^{p-1})\bar g_1$$ 
$$\bar e_2'=(e^{\prime (s+1)},\dots ,e^{\prime (d)})
=\bar e_2+p^n(v_1-v^{p-1})\bar g_2$$

Clearly, the coordinates of $\bar e'=(\bar e_1',\bar e_2')$ 
give an $S$-basis of $M$ and we can introduce 
the structural $\c W$-module $L'=\sum _i\c We^{\prime (i)}$ for $M$. 

Prove that the elements $e^{\prime (i)}$, $1\leqslant i\leqslant s$, and 
$f^{(i)}, s<i\leqslant d$, generate $F(L')\,\mathrm{mod}\,p^nJM$. 
Indeed, we have 
\begin{equation}\label{E3.1} L+p^nIM=L'+p^nIM
\end{equation}
and this implies that the image $F(L)_k$ of $F(L)$ in $L_k$ 
coincides with its analogue $F(L')_k$. 
In addition, for $1\leqslant i\leqslant s$, 
$$e^{\prime (i)}\in L'\cap (F(L)+p^nIM)\subset L'\cap F(M)=F(L').$$
Therefore, it would be sufficient to prove that 
$(vL')\cap F(L')\,\mathrm{mod}\,p^nJM$ is generated by 
the images of $ve^{\prime (i)}$, $1\leqslant i\leqslant s$, and 
$f^{(s+1)},\dots ,f^{(d)}$. But relation \eqref{E3.1} implies that 
$vL+p^nJM=vL'+p^nJM$ and 
$$(vL')\cap F(L')\,\mathrm{mod}\,p^nJM=(vL)\cap F(L)\,\mathrm{mod}\,p^nJM.$$
It remains to note that for $1\leqslant i\leqslant s$, 
$ve^{\prime (i)}\equiv ve^{(i)}\,\mathrm{mod}\,p^nJM$.

Therefore, we can define special bases for $L'$ and $F(L')$ 
by the relations  
 $\bar f_1'=\bar e_1'$ and $\bar f_2'=\bar f_2$
and obtain that  
$$(\varphi (\bar f_1'),\varphi (\bar f_2'))=(\varphi (\bar f_1),
\varphi (\bar f_2))+p^nv_1(\sigma _0\bar g_1,\bar 0)\,\mathrm{mod}\,p^{n}JM$$
and 
$$(\varphi (\bar f_1'), \varphi (\bar f_2'))\equiv 
(\bar e_1',\bar e_2')C_0+p^nv^{p-1}(\bar g_1,\bar g_2)C_0+\qquad\qquad\qquad $$
$$\qquad\qquad +p^nv_1((\bar e_1,\bar e_2)C_1 -(\bar g_1,\bar g_2)C_0+
(\sigma \bar g_1,\bar 0))\,\mathrm{mod}\,p^nJM$$

So, $\varphi (F(L'))\subset L'+p^nJM$ if and only if 
there is an $\bar g=(\bar g_1,\bar g_2)\in L^d$ such that 
$(\sigma _0\bar g_1,\bar 0)\equiv (\bar g_1,\bar g_2)
C_0+\bar h\,\mathrm{mod}\,(\m _S\cap\c W)L$, 
where $\bar h=(\bar e_1,\bar e_2)C_1\in L$ 
and $C_0\,\mathrm{mod}\,\m _S\in\GL _d(k)$. 
The existence of such vector $\bar g$ is implied by Lemma 
\ref{L3.6} below. 
\end{proof} 

\begin{Lem} \label{L3.6} Suppose $V$ is a $d$-dimensional  
vector space over $k$ with a $\sigma $-linear  endomorphism 
$\sigma _0:V\longrightarrow V$ and $\bar a=(\bar a_1,\bar a_2)\in V^d$, where 
$\bar a_1\in V^s$ and $\bar a_2\in V^{d-s}$. Then 
for any $C\in\GL _d(k)$ there is an 
$\bar g=(\bar g_1,\bar g_2)\in V^d$ with 
$\bar g_1\in V^s$ and $\bar g_2\in V^{d-s}$ 
such that 
\begin{equation}\label{E3.2} (\sigma _0\bar g_1,\bar 0)=\bar gC+\bar a. 
\end{equation} 
\end{Lem} 

\begin{proof} 

Let $C^{-1}=\left (\begin{matrix} D_{11} & D_{12} \cr
D_{21} & D_{22} \end{matrix}\right )$ 
with the block matrices of sizes $s\times s$, 
$(d-s)\times s$, $s\times (d-s)$ and $(d-s)\times (d-s)$. 
Then the equality \eqref{E3.2} can be rewritten as 
$$\begin{array}{lcl}  (\sigma _0\bar g_1)D_{11} & = & \bar g_1+\bar a_1' \\ 
        (\sigma _0\bar g_1)D_{21}         &= & \bar g_2+\bar a_2' 
\end{array} $$
where $(\bar a_1', \bar a_2')=\bar aC^{-1}$. Clearly, 
it will be sufficient to solve the first equation in $\bar g_1$, 
but this is a special case of Lemma \ref{L1.1}. 
\end{proof}

\begin{Lem} \label{L3.7} Suppose $n\geqslant 0$ and 
$L$ is $\c W$-structural for $M$ such that 
$\varphi (F(L))\subset L+p^nJM$. Then there is a 
$\c W$-structural $L'$ for $M$ such that 
$\varphi (F(L'))\subset L'+p^{n+1}M$. 
\end{Lem} 

\begin{proof} Suppose the coordinates of 
$\bar e\in M^d$ form a $\c W$-basis of $L$ and $D\in\M _d(\c W)$ is such that 
the coordinates of $\bar f=\bar eD$ form a $\c W$-basis 
of $F(L)$.  Then $\varphi (\bar f)=\bar e+p^n\bar h$, where 
$\bar h\equiv \bar 0\operatorname{mod}JM$. 
Let $\bar e'=\bar e+p^n\bar h$ and let $L'$ be a $\c W$-submodule in $M$ spanned by 
the coordinates of $\bar e'$. Clearly, $L'$ is  $\c W$-structural. 

Prove that $F(L')$ is spanned by the coordinates of $\bar e'D$. 
Indeed, suppose $\bar e$ and $\bar e'$ have the coordinates 
$e^{(i)}$ and, resp., $e^{\prime (i)}$, $1\leqslant i\leqslant s$. 
Then for all $i$, 
$e^{\prime (i)}=e^{(i)}+p^nh^{(i)}$, where 
$h^{(i)}\in JM\subset (\Fil ^pS)M$. This means that a 
$\c W$-linear combination of $e^{(i)}$ belongs to $F(M)$ 
if and only if the same linear combination of $e^{\prime (i)}$ 
belongs to $F(M)$. This implies that $\bar e'D$ spans $F(L')$ 
over $\c W$  because $\bar eD$ spans $F(L)$ over $\c W$.
Then $\varphi (F(L'))\subset L'+p^{n+1}M$ 
because $\varphi (\bar h)\in pM$ (use that $\varphi (J)\subset pS$) and 
$$\varphi (\bar e'D)=
\varphi (\bar eD+p^n\bar hD)=\bar e+p^n\bar h+p^n\varphi (\bar h)\sigma (D)\equiv 
\bar e'\mathrm{mod}p^{n+1}M$$
\end{proof} 
It remains to notice that applying above Lemmas 
\ref{L3.6} and \ref{L3.7} one 
after another we shall obtain a sequence of $\c W$-structural 
modules $L_n$ such that for all $n\geqslant 0$, 
$L_n+p^{n+1}M=L_{n+1}+p^{n+1}M$, where $L_0\otimes _{\c W}S=M$. 
Therefore, $L=\underset{n}\varprojlim L_n/p^n$ is 
$\c W$-structural and $\varphi (L)\subset L$. 

The proposition is completely proved.
\end{proof}

\subsection{The categories $\uL ^t$ and $\uL ^{ft}$}\label{S3.5}\ 
\medskip

\begin{definition} 
 $\c W$-module $L$ is $p$-strict if it is isomorphic 
to $\oplus _{1\leqslant i\leqslant s}\c W/p^{n_i}$, where $n_1,\dots ,n_s\in\N $. 
\end{definition}

In particular, if $L$ is $p$-strict and $pL=0$ then $L$ is a 
free $\c W_1$-module. 
The $p$-strict modules can be efficiently studied via devissage due 
to the following property.

\begin{Lem} \label{L3.8} 
$L$ is $p$-strict if and only if $pL$ and $L/pL$ are $p$-strict.
\end{Lem}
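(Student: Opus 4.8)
The plan is to treat the two directions separately. For the ``only if'' direction I would argue directly: writing $L\cong\bigoplus_{i=1}^{s}\c W/p^{n_i}$ and discarding the summands with $n_i=0$, one has $pL\cong\bigoplus_{n_i\geqslant 2}\c W/p^{n_i-1}$ (via $pw\mapsto w$) and $L/pL\cong(\c W/p)^{s}$, and both of these are $p$-strict by inspection.

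For the ``if'' direction, assume $pL$ and $L/pL$ are $p$-strict. I would first record the elementary reductions: $pL$ and $L/pL$ are finitely generated (being $p$-strict), hence so is $L$; and $pL$ is killed by some $p^{N-1}$, so $p^{N}L=0$. Since $L/pL$ is $p$-strict and killed by $p$, it is a free $\c W_1=k[[u]]$-module, say of rank $d$. Lifting a $\c W_1$-basis of $L/pL$ to $e_1,\dots,e_d\in L$, Nakayama's lemma (applied over the local ring $\c W$) shows the $e_i$ generate $L$, giving an exact sequence $0\to K\to\c W^{d}\to L\to 0$. The first key observation is that a relation $\sum a_ie_i=0$ in $L$ reduces to a relation among the $\c W_1$-basis $\bar e_i$ of the free module $L/pL$, which forces every $a_i\in p\c W$; thus $K\subseteq p\c W^{d}$. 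Writing $K=pK'$ with $K'=\{x\in\c W^{d}\mid px\in K\}$ and using that multiplication by $p$ is injective on $\c W^{d}$, we obtain $pL\cong p\c W^{d}/K\cong\c W^{d}/K'$, so $\c W^{d}/K'$ is $p$-strict.

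The heart of the argument is then to put $K'$ into a normal form. Write $\c W^{d}/K'\cong\bigoplus_{j=1}^{r}\c W/p^{m_j}$ with all $m_j\geqslant 1$; reducing modulo the maximal ideal of $\c W$ gives $r\leqslant d$ and that the image $\overline{K'}$ of $K'$ in $k^{d}$ has dimension $d-r$. Choose $f_1,\dots,f_r\in\c W^{d}$ lifting the standard generators of $\bigoplus\c W/p^{m_j}$, and $h_1,\dots,h_{d-r}\in K'$ whose residues form a $k$-basis of $\overline{K'}$; checking in $k^{d}$ and applying Nakayama shows $(f_1,\dots,f_r,h_1,\dots,h_{d-r})$ is a $\c W$-basis of $\c W^{d}$. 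Since $K'$ contains all $p^{m_j}f_j$ and all $h_i$, the submodule $K''=\sum_j p^{m_j}\c Wf_j+\sum_i\c Wh_i\subseteq K'$ satisfies $\c W^{d}/K''\cong\bigoplus_j\c W/p^{m_j}\cong\c W^{d}/K'$, so the canonical surjection $\c W^{d}/K''\to\c W^{d}/K'$ is a surjective endomorphism of $\bigoplus_j\c W/p^{m_j}$ and hence an isomorphism (finitely generated $\c W$-modules are Hopfian); therefore $K'=K''$. Finally $K=pK'=\sum_j p^{m_j+1}\c Wf_j+\sum_i p\c Wh_i$, so in this basis $L=\c W^{d}/K\cong\bigoplus_{j=1}^{r}\c W/p^{m_j+1}\oplus(\c W/p)^{d-r}$, which is $p$-strict.

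I expect the main obstacle to be precisely this normal-form step: since $\c W$ is not a principal ideal ring, an abstract isomorphism $\c W^{d}/K'\cong\bigoplus\c W/p^{m_j}$ does not automatically produce a $\GL _d(\c W)$-transformation carrying $K'$ onto the standard submodule, and it is the Hopfian property that repairs this. Everything else is bookkeeping, and the point where the hypothesis genuinely uses both conditions is the passage from $K'$ (controlled by $pL$) to $K=pK'$ (which then controls $L$), together with the use of freeness of $L/pL$ to force $K\subseteq p\c W^{d}$.
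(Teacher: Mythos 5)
Your argument is correct, and it takes a genuinely different route from the paper, which gives no written proof of Lemma \ref{L3.8} at all: the paper's proof is a one-line instruction to specialize Breuil's d\'evissage argument for the analogous statement over the more complicated ring $S=\c W^{DP}$ from \cite{refBr2}. Your route is self-contained: you lift a $\c W_1$-basis of $L/pL$ (freeness is automatic here because a $p$-strict module killed by $p$ is free over the discrete valuation ring $\c W_1=k[[u]]$), use that freeness to force the relation module $K$ of the resulting surjection $\c W^{d}\To L$ into $p\c W^{d}$, identify $pL\cong\c W^{d}/K'$ with $K=pK'$ via injectivity of multiplication by $p$ on $\c W^{d}$, and then convert the abstract isomorphism $\c W^{d}/K'\cong\bigoplus_{j}\c W/p^{m_j}$ into an actual normal form for $K'$ using the Hopfian property of finitely generated modules over a commutative ring. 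You correctly identify that last step as the crux: $\c W=W(k)[[u]]$ is not a principal ideal ring, so no elementary divisor theorem applies, and the surjective-endomorphism trick is exactly what repairs this. What the paper's citation buys is uniformity with Breuil's torsion theory over $S$ (where $S/pS$ is far from a valuation ring, so the argument must be more elaborate); what your proof buys is an elementary, checkable argument tailored to $\c W$ that moreover exhibits the invariants explicitly (factors $\c W/p^{m_j+1}$ coming from $pL$, plus $(\c W/p)^{d-r}$ from the excess rank of $L/pL$). Two cosmetic points only: in the ``only if'' direction the summands that vanish in $pL$ are those with $n_i=1$ (the definition already forces $n_i\geqslant 1$), and you should admit the zero module as $p$-strict (empty sum) so the edge cases $pL=0$ or $L/pL=0$ go through; both are immediate.
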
 

\begin{proof} 
Specify Breuil's proof of a similar statement but for more 
complicated ring $S=\c W ^{DP}$ from \cite{refBr2}.
\end{proof}

\begin{definition} Denote by $\uL ^t$ the full subcategory in $\widetilde{\uL}$ 
consisting of the quadruples $\L =(L,F(L),\varphi ,N_S)$ such that 

$\bullet $\ $L$ is $p$-strict;

$\bullet $\ $v^{p-1}L\subset F(L)$, $F(L)\cap pL=pF(L)$ and 
$L=\varphi (F(L))\otimes _{\sigma\c W}\c W$; 

$\bullet $\ for any $l\in F(L)$, $vN_S(l)\in 
F(L_S)$  
and $\varphi _S(vN_S(l))=cN_S(\varphi (l))$, 
where $c=1+u^p/p$. 
\end{definition}
\medskip

\begin{definition}
 Denote by $\uL ^t[1]$ the full subcategory in 
$\uL ^t$, which consists of objects killed by $p$.  
\end{definition}

The category 
$\uL ^t[1]$ is not very far from the category 
$\uL ^*$ introduced  in  Section \ref{S1}. 
Indeed, suppose $\L =(L,F(L),\varphi ,N_S)\in\uL ^t[1]$. 
Note that $N_S(L)\subset L_{S_1}:=L\otimes _{\W _1}S_1=
L/u^pL\oplus (L\otimes 1)\Fil ^pS_1$. (Remind that 
$S_1=S/pS=\c W_1/u^p\c W_1\oplus \Fil ^pS_1$.) 
With this notation we have the following property.

\begin{Prop} \label{P3.9} There is  a unique 
 $N:L\longrightarrow L/u^{2p}$ such that 

a) for any $l\in L$, $N(l)\otimes 1=cN_S(l)$ in 
$L _{S_1}$, where $c=1+u^p/p\in S^*$; 

b) $(L,F(L),\varphi ,N)\in\uL ^*$.
\end{Prop}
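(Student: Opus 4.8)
The plan is to obtain $N$ as the unique second–order extension, supplied by Proposition~\ref{P1.2}, of a first–order operator $N_1$ read off from Breuil's $N_S$. Put $S_1=S/pS$ and recall from Subsection~\ref{S3.1} that modulo $p$ one has $v\equiv u$, that $\Fil^pS_1\supseteq(u^p,v_1,v_2,\dots)$, and that $S_1/\Fil^pS_1\cong k[u]/u^p$ with $\c W_1\hookrightarrow S_1\twoheadrightarrow S_1/\Fil^pS_1$ the natural surjection $\c W_1\to k[u]/u^p$. First I would set $N_1\colon L\to L/u^pL$ to be the composition of $N_S\colon L\to L_{S_1}$ with the base change $L_{S_1}\to L\otimes_{\c W_1}(S_1/\Fil^pS_1)=L/u^pL$. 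Since the derivation $N$ of $S$ restricts to the derivation $N$ of $\c W$, the map $N_1$ is again a $\c W_1$–differentiation. Note also that $c=1+u^p/p\equiv 1+(p-1)!\,\gamma_p(u)\equiv 1\pmod{\Fil^pS_1}$, so $N_1$ is equally the reduction of $cN_S$; this is why the twist by $c$ is invisible at first order.

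Next I would verify the two hypotheses of Proposition~\ref{P1.2}. The condition ``$uN_1(m)\in F(L)\bmod u^pL$ for $m\in F(L)$'' is immediate from the defining property $vN_S(m)\in F(L_S)$ of $\uL^t$: reduce modulo $p$ and project to $L/u^pL$, where $F(L_S)$ maps into the image of $F(L)$. The delicate point is the $\varphi$–compatibility $N_1(\varphi(l))=\varphi(uN_1(l))$ for $l\in F(L)$, which has to be extracted from the twisted relation $\varphi_S(vN_S(l))=cN_S(\varphi(l))$ of $\uL^t$. On the right, reduction modulo $\Fil^pS_1$ turns $cN_S(\varphi(l))$ into $N_1(\varphi(l))$ because $c\equiv 1$. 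On the left, one writes $vN_S(l)=\widehat m\otimes 1+\zeta$ with $\widehat m\in F(L)$ a lift of $uN_1(l)$ (using $u^pL\subseteq F(L)$) and $\zeta$ in the divided–power part $(L\otimes 1)\Fil^pS_1$, so that $\varphi_S(vN_S(l))=\varphi(\widehat m)+\varphi_S(\zeta)$, with $\varphi(\widehat m)$ reducing to $\varphi(uN_1(l))$. I expect the main obstacle of the whole argument to be showing that the correction $\varphi_S(\zeta)$ vanishes modulo $u^pL$: a naive estimate only puts it in $u^pL$, and one must exploit the full $\uL^t$–structure (the cancellations it forces on $N_S$, e.g.\ by reapplying the relation with $l$ replaced by a $\varphi$–image) together with the precise structure of $S$ from Subsection~\ref{S3.1} — the identity $\varphi(v^{p-1})=(1+u^p/p)^{p-1}=c^{p-1}$, the congruence $\varphi(v_1)\equiv 1\pmod J$, and the inclusions $\varphi(J)\subseteq pS$, $\sigma(\Fil^pS)\subseteq pS$ — to get the vanishing.

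Granting this, Proposition~\ref{P1.2} yields a unique $\c W_1$–differentiation $N\colon L\to L/u^{2p}L$ with $N\bmod u^p=N_1$ and $N(\varphi(m))=\varphi(uN(m))$ for all $m\in F(L)$, which is exactly the $\uL^*$–relation. It then remains to record the filtration condition $uN(m)\in F(L)\bmod u^{2p}L$ for $m\in F(L)$, which follows either from the second–order form of $vN_S(m)\in F(L_S)$ or directly from the construction of $N$ in Proposition~\ref{P1.2} out of the values of $N_1$ on $F(L)$; combined with $(L,F(L),\varphi)\in\uL^*_0$ (which holds because $\L\in\uL^t$ is killed by $p$, so $v\equiv u$ and the module axioms of $\uL^t$ degenerate to those of $\uL^*_0$) this gives $(L,F(L),\varphi,N)\in\uL^*$, i.e.\ (b). Property (a) then follows because $N$ and $cN_S$ agree modulo $\Fil^pS_1$ by construction and are each pinned down at the next level by the same $\varphi$–relation — $N$ by Proposition~\ref{P1.2}, $cN_S$ by the $\uL^t$–axiom — so they coincide in $L_{S_1}$ modulo $u^{2p}$. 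Finally, uniqueness is automatic: any $N'$ satisfying (a) reduces to $N_1$ modulo $\Fil^pS_1$, hence modulo $u^p$, and then (b) forces $N'=N$ by the uniqueness clause of Proposition~\ref{P1.2}.
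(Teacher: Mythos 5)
Your strategy is the same as the paper's: extract a first--order operator $N_1$ on $L/u^pL$ from $cN_S$, check the two hypotheses of Proposition \ref{P1.2}, and let its uniqueness clause produce $N$. The difficulty is that the step you yourself single out as ``the main obstacle'' is the entire content of the proof, and you leave it open; moreover, the route you sketch for it would not work. Writing $vN_S(l)=\widehat m\otimes 1+\zeta$ with $\zeta\in (L\otimes 1)\Fil ^pS_1$, the term $\varphi _S(\zeta )$ is \emph{not} small for an arbitrary such $\zeta$: for instance $\varphi (\gamma _p(v))=\sigma (v)^p/(p!\,p^{p-1})=c^p/(p-1)!$ is a unit modulo $p$, so $\varphi _S(l'\otimes\gamma _p(v))$ is a unit multiple of $\varphi (v^{p-1}l')/\varphi (v^{p-1})$ and does not even lie in $u^pL$; re-applying the relation to $\varphi$-images or invoking $\varphi (J)\subset pS$, $\sigma (\Fil ^pS)\subset pS$ cannot make it vanish. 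What the paper uses instead is that $\zeta$ is not arbitrary: because the $\uL ^t$-axiom involves $vN_S(l)$ and $v\equiv u\operatorname{mod}p$, while multiplication by $u$ preserves the decomposition $L_{S_1}=L/u^pL\oplus (L\otimes 1)\Fil ^pS_1$, the divided-power component of $vN_S(l)$ lies in $(uL)\Fil ^pS_1$; and $\varphi _S$ kills $u\,\Fil ^pS_1$ modulo $p$ because it is $\sigma$-semilinear and $\sigma (u)=u^p\in pS$. This is exactly the paper's computation $(uN_1)(F(L))\subset \bigl(uL/u^pL\oplus (uL)\Fil ^pS_1\bigr)\cap\bigl(F(L)/u^pL\oplus L\Fil ^pS_1\bigr)$ followed by $\varphi _S(u\Fil ^pS_1)=0$, and without it your verification of the $\varphi$-compatibility hypothesis of Proposition \ref{P1.2} stops at the decisive point.

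The same missing computation also carries the burden of part (a), and your closing remark that $N$ and $cN_S$ ``are pinned down by the same $\varphi$-relation'' does not replace it. Statement (a) is an equality in $L_{S_1}$ whose left-hand side lies in the summand $L/u^pL$, so it contains the assertion that $cN_S(l)$ has no $\Fil ^pS_1$-component; in the paper this is obtained on $\varphi (F(L))$ from the identity $cN_S(\varphi (l))=\varphi _S(vN_S(l))$ together with the computation above, after which $N$ is the unique lift from Proposition \ref{P1.2}. By taking $N_1$ to be the bare projection of $cN_S$ you sidestep this containment entirely, so both (a) and your uniqueness argument rest on a statement you have not proved. The remaining parts of your write-up (the Leibniz property of $N_1$, the filtration condition $uN_1(F(L))\subset F(L)\operatorname{mod}u^pL$, and the final appeal to Proposition \ref{P1.2}) are correct and agree with the paper.
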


\begin{proof} Let $N_1:=cN_S:L\longrightarrow L_{S_1}$. 
 Then for any $w\in\W _1$ and $l\in L$, it holds 
$N_1(wl)=N(w)l+wN_1(l)$ (use that $N(c)=0$ in $S_1$) and 
there is a commutative diagram (use that $\sigma (c)=1$ in $S_1$) 

$$\xymatrix{F(L)\ar[r]^{\varphi }\ar[d]_{uN_1}  & L\ar[d]^{N_1} 
\\ F(L)_S \ar[r]^{\varphi _S} & L_{S_1} }
$$

Prove that $N_1(\varphi (F(L))\subset L/u^pL$ and, therefore, 
$N_1(L)\subset L/u^pL$. 

Indeed, 
$(uN_1)(F(L))\subset uN_1(L)\cap F(L)_S\subset 
\left (uL/u^pL\oplus (uL)\Fil ^pS_1\right )$
\linebreak 
$\cap \left (F(L)/u^pL\oplus L\Fil ^pS_1\right )
\subset F(L)/u^pL\oplus (uL)\Fil ^pS_1$.
This implies that 
$N_1(\varphi (F(L))\subset\varphi _S(uN_1(F(L)))\subset L/u^pL$ 
because $\varphi _S(u\Fil ^pS_1)=0$. 
So, by Proposition \ref{P1.3} there is 
a unique $N:L\longrightarrow L/u^{2p}$ such that 
$N\mathrm{mod}\,u^p=N_1$ and $(L,F(L),\varphi ,N)\in\uL ^*$. 
\end{proof}

\begin{Cor} \label{C3.10} With above notation the correspondence 
$$(L,F(L),\varphi ,N_S)\mapsto (L,F(L),\varphi ,N)$$ 
induces the equivalence of categories 
$\Pi :\uL ^t[1]\longrightarrow\uL ^*$.
\end{Cor}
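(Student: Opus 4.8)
The statement follows from Proposition \ref{P3.9} once one checks that the indicated assignment is a functor which is fully faithful and essentially surjective; I would organise the argument in these three steps.

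\emph{The functor.} By Proposition \ref{P3.9}, for $\L =(L,F(L),\varphi ,N_S)\in\uL ^t[1]$ the quadruple $\Pi (\L )=(L,F(L),\varphi ,N)$, where $N$ is the unique map characterised there by $N(l)\otimes 1=cN_S(l)$ in $L_{S_1}$ and by $(L,F(L),\varphi ,N)\in\uL ^*$, is indeed an object of $\uL ^*$, and $\Pi $ changes only the last entry of the quadruple. Given a morphism $f$ in $\uL ^t[1]$, i.e. a $\W _1$-linear $f\colon L\to L_1$ commuting with the filtrations, with $\varphi $, and with $N_S$, I would apply $f\otimes 1$ to $N(l)\otimes 1=cN_S(l)$: since $c\in S_1^{\times }$ is central and $(f\otimes 1)N_S=N_Sf$, this yields $f(N(l))\otimes 1=N(f(l))\otimes 1$ in $L_1\otimes _{\W _1}S_1$, hence $fN_1=N_1f$ for $N_1=N\bmod u^p$ (using that $u^p=0$ in $S_1$ and that $L_1/u^pL_1\hookrightarrow L_1\otimes _{\W _1}S_1$). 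Then $fN$ and $Nf$ are two $\W _1$-derivations $L\to L_1/u^{2p}L_1$ which agree modulo $u^p$ and each satisfy $g(\varphi (m))=\varphi (ug(m))$ for $m\in F(L)$, so the uniqueness argument of Proposition \ref{P1.2} gives $fN=Nf$; together with the commutations inherited from $f$, this makes $f$ a morphism of $\uL ^*$.

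\emph{Full faithfulness.} Faithfulness is clear, since $\Pi $ does not alter underlying $\W _1$-linear maps. For fullness, let $\L _1,\L _2\in\uL ^t[1]$ and let $g$ be a morphism $\Pi (\L _1)\to\Pi (\L _2)$ in $\uL ^*$; from $cN_S(l)=N(l)\otimes 1$ on each side and $gN=Ng$ one gets, after cancelling the unit $c$, that $(g\otimes 1)N_S=N_Sg$, so $g$ is already a morphism in $\uL ^t[1]$.

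\emph{Essential surjectivity.} This is the only step with genuine content. Given $(L,F(L),\varphi ,N)\in\uL ^*$, I would define $N_S\colon L\to L_{S_1}$ by $N_S(l):=c^{-1}\bigl(N(l)\otimes 1\bigr)$ (the right-hand side depends only on $N\bmod u^p$, as $u^p=0$ in $S_1$) and claim $(L,F(L),\varphi ,N_S)\in\uL ^t[1]$. The ``lattice'' conditions ($L$ is $p$-strict; $v^{p-1}L\subset F(L)$, using $v\equiv u$; $F(L)\cap pL=pF(L)$ and $L=\varphi (F(L))\otimes _{\sigma \c W}\c W$, both trivial since $pL=0$) and the derivation property of $N_S$ are immediate. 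For the conditions tying $N_S$ to $\varphi $, fix $l\in F(L)$; then $vN_S(l)=c^{-1}\bigl(uN(l)\otimes 1\bigr)$, and since $uN(l)\in F(L)\bmod u^{2p}L$, $u^{2p}=0$ in $S_1$, and $\Fil ^pS_1$ is an ideal of $S_1$, this lies in $F(L_{S_1})$. Writing $uN(l)=\bar m\bmod u^{2p}L$ with $\bar m\in F(L)$, and using $\sigma (c)\equiv 1$ in $S_1$ together with the relation $N(\varphi (l))=\varphi (uN(l))$ valid in $\uL ^*$, one computes
\[
\varphi _S(vN_S(l))=\varphi _S\bigl(c^{-1}(\bar m\otimes 1)\bigr)=\varphi (\bar m)\otimes 1=N(\varphi (l))\otimes 1=cN_S(\varphi (l)),
\]
which is the remaining requirement. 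Since $cN_S(l)=N(l)\otimes 1$ by construction, the uniqueness clauses of Propositions \ref{P3.9} and \ref{P1.2} force the $\uL ^*$-structure that $\Pi $ recovers from this $N_S$ to be the original $N$, so $\Pi (L,F(L),\varphi ,N_S)=(L,F(L),\varphi ,N)$. The main obstacle here is not conceptual but the bookkeeping: each manipulation above rests on the explicit structure of $S_1=S/pS$ from Subsection \ref{S3.1} --- notably $u^p\equiv 0$ and $\sigma (c)\equiv 1$ in $S_1$, the fact that $\Fil ^pS_1$ is an ideal, and the embedding $L/u^pL\hookrightarrow L\otimes _{\W _1}S_1$ for free $\W _1$-modules $L$; granting these, nothing beyond Propositions \ref{P1.2} and \ref{P3.9} is needed.
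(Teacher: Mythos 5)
Your overall architecture (functoriality, full faithfulness, essential surjectivity, with the inverse construction as the only substantive step) is a reasonable reading of what the paper leaves implicit after Proposition \ref{P3.9}, but the inverse construction does not work as written. The formula $N_S(l):=c^{-1}\bigl(N(l)\otimes 1\bigr)$ is \emph{not} a $\c W$-differentiation in the sense required by the definition of $\widetilde{\uL}$: for $w\in\W _1$ and $l\in L$ one finds
$$N_S(wl)-N(w)l-(w\otimes 1)N_S(l)=(N(w)l)\otimes (c^{-1}-1),$$
which is nonzero in general. The point is that, although $u^p\equiv 0$ in $S_1$, the element $c-1=u^p/p$ is a divided power: $c-1\equiv -\gamma _p(v)\not\equiv 0$ in $S_1$, and $u\gamma _p(v)\equiv\gamma _{p+1}(v)\ne 0$ in $S_1$; taking $w=u$ and $l$ a basis vector, the discrepancy is $c^{-1}\bigl(l\otimes u\gamma _p(v)\bigr)$ up to sign, hence nonzero. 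So the quadruple you build is not an object of $\uL ^t[1]$ at all, and essential surjectivity is not established. The same subtlety shows that $cN_S(l)=N(l)\otimes 1$ cannot hold for all $l\in L$ on the nose for an honest object of $\uL ^t[1]$ (it is an identity only after projection to the summand $L/u^pL$, i.e.\ modulo $(L\otimes 1)\Fil ^pS_1$), so your fullness argument, which cancels $c$ in that exact identity, is also too quick, although its conclusion is correct.

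The repair is to define $N_S$ first on $\varphi (F(L))$ — e.g.\ on a $\sigma (\W _1)$-basis $l_i=\varphi (m_i)$ set $N_S(l_i):=c^{-1}(N(l_i)\otimes 1)$ — and extend to $L=\oplus _i\W _1l_i$ by the Leibniz rule, so the differentiation axiom holds by construction; the resulting $N_S$ differs from $c^{-1}(N(\cdot)\otimes 1)$ only by terms in $(L\otimes 1)\Fil ^pS_1$. One then verifies $vN_S(F(L))\subset F(L_S)$ and $\varphi _S(vN_S(l))=cN_S(\varphi (l))$ using $uN(l)\in F(L)\operatorname{mod}u^{2p}L$, $\sigma (c)\equiv 1$ in $S_1$, and — crucially — that $\varphi _S$ kills $v\cdot (L\otimes \Fil ^pS_1)$ modulo $p$, which is exactly why the $\Fil ^p$-discrepancies are harmless; finally the projection of $cN_S$ to $L/u^pL$ equals $N\operatorname{mod}u^p$, so Propositions \ref{P1.2} and \ref{P3.9} return the original $N$. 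Fullness should likewise be argued by noting that for an object of $\uL ^t[1]$ the map $N_S$ is completely determined by its reduction (equivalently by $N$) through $N_S(\varphi (m))=c^{-1}\varphi _S(vN_S(m))$ and the Leibniz rule, since $\varphi _S$ annihilates the $\Fil ^p$-ambiguity; with this, $gN_S=N_Sg$ follows from $gN=Ng$.
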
 

\begin{proof}
We must verify that our correspondence is surjective on objects 
and bijective on morphisms. The first holds because 
$N_S=c^{-1}N\operatorname{mod}u^p$ and the second --- because 
a $\c W_1$-linear map $f$ commutes with $N$ iff it commutes with 
$N\operatorname{mod}u^p$ (use Proposition \ref{P1.2}) 
iff $f\otimes _{\c W_1}S_1$  commutes with $N_S$. 
\end{proof}

\begin{Cor} \label{C}
 The category $\uL ^t$ is preabelian. 
\end{Cor}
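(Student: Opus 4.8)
The plan is to reduce everything, via devissage in the variable $p$, to the already-established fact that $\uL^*$ is pre-abelian (Proposition \ref{P1.3}), exploiting the equivalence $\Pi:\uL^t[1]\longrightarrow\uL^*$ of Corollary \ref{C3.10}. First I would spell out what has to be checked: $\uL^t$ is additive (clear, since it is a full subcategory of the additive category $\widetilde{\uL}$ and is closed under finite direct sums — the conditions defining $\uL^t$, namely $p$-strictness, $v^{p-1}L\subset F(L)$, $F(L)\cap pL=pF(L)$, $L=\varphi(F(L))\otimes_{\sigma\c W}\c W$, and the $N_S$-compatibility, are all stable under $\oplus$ because $p$-strictness is by Lemma \ref{L3.8}), and then that every morphism $f\in\Hom_{\uL^t}(\L_1,\L)$ has a kernel and a cokernel in $\uL^t$. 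As in the proof of Proposition \ref{P1.3}, I would use the forgetful functor to the category of $\c W$-modules with filtration and guess that $\Ker f$ and $\Coker f$ are computed there: $K=\Ker(f:L_1\to L)$ with $F(K)=K\cap F(L_1)$, and $C=(L/f(L_1))/\text{(torsion/non-}p\text{-strict part)}$ with $F(C)$ the image of $F(L)$.

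The key steps, in order: (1) verify $\uL^t$ is additive, as above; (2) for the kernel, show $K$ is $p$-strict — this is where Lemma \ref{L3.8} does the work, by induction on the exponent of $p$ killing $L_1$, using that $pK=\Ker(f|_{pL_1})$ and $K/pK$ embeds in $\Ker(f\bmod p)$ which is free over $\c W_1$ by the analysis already available — and that $F(K)$ inherits $v^{p-1}K\subset F(K)$, $F(K)\cap pK=pF(K)$, the identification $K=\varphi(F(K))\otimes_{\sigma\c W}\c W$ (same torsion-freeness argument as in Proposition \ref{P1.3}), and the $N_S$-compatibility (since $N_S$ restricts to $K$, as $f N_S = N_S(f\otimes 1)$); (3) dually for the cokernel, cutting $L/f(L_1)$ down to its maximal $p$-strict quotient and checking the same list of axioms, again using Lemma \ref{L3.8}; (4) confirm that these objects satisfy the universal properties in $\uL^t$ by the same formal $\Hom$-exactness argument as in Proposition \ref{P1.3}. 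For the devissage to close, I would pass to the mod-$p$ reductions: $pK$, $K/pK$, $p(L/f(L_1))$, $(L/f(L_1))/p$ are all objects of (subquotient-type) categories equivalent via $\Pi$ to subcategories of $\uL^*$, where kernels and cokernels exist; reassembling via Lemma \ref{L3.8} gives $p$-strictness of the total object.

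The main obstacle I expect is step (3): showing that the naive cokernel in filtered $\c W$-modules, after killing the non-$p$-strict part, still satisfies $L'=\varphi(F(L'))\otimes_{\sigma\c W}\c W$ and the strong-divisibility condition $F(L')\cap pL'=pF(L')$. In the free case (Proposition \ref{P1.3}) this rested on torsion-freeness of various quotients; here the presence of $p$-torsion means I must instead track the filtration on $pL'$ and $L'/pL'$ separately and invoke Lemma \ref{L3.8} to glue, which requires knowing that the filtration behaves compatibly under the exact sequence $0\to pL'\to L'\to L'/pL'\to 0$ — this is precisely the kind of statement that needs Breuil-style care with $F(L)\cap pL=pF(L)$, and I would model the argument on the proof of Lemma \ref{L3.3}(b),(c). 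Once that compatibility is in hand, the $N_S$-axiom transports for free because $N_S$ is $W(k)$-linear and already defined on all of $L$, so it descends to any quotient and restricts to any submodule. The final sentence of the proof would simply be that all pieces are in place, so $\uL^t$ is pre-abelian.
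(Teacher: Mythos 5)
Your proposal is correct and follows essentially the same route as the paper: the paper's proof consists precisely of invoking the equivalence $\Pi:\uL^t[1]\To\uL^*$ (Corollary \ref{C3.10}) together with Proposition \ref{P1.3} to handle the killed-by-$p$ case, and then extending to all of $\uL^t$ by Breuil's devissage method via Lemma \ref{L3.8}, which is the devissage-in-$p$ argument you outline. The extra details you supply (explicit kernel/cokernel as in the filtered-module category, with the cokernel obtained by saturating $L/f(L_1)$ — in the paper, by intersecting $f(L)\otimes_{\c W}\c W[u^{-1}]$ with the target — and the strong-divisibility check modeled on Lemma \ref{L3.3}) are exactly the content the paper delegates to Breuil's method.
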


\begin{proof} Corollary \ref{C3.10} 
and Proposition \ref{P1.3} imply that 
$\uL ^t[1]$ is pre-abelian. This can be extended then to the whole 
category $\uL ^t$ by Breuil's method from 
 \cite{refBr2} via above Lemma \ref{L3.8}. 
\end{proof}

Note that if 
$\c L=(L,F(L),\varphi ,N_S)$ and $\c M=(M,F(M),\varphi ,N_S)$ are objects of 
$\uL ^t$ and $f\in\Hom _{\uL}(\c L,\c M)$ then:
\medskip 

$\bullet $\ $\Ker f=(K,F(K), \varphi ,N_S)$, where 
$K=\Ker (f:L\To M)$ and $F(K)=F(L)\cap K$ 
with induced $\varphi $ and $N_S$;
\medskip 

$\bullet $\ $\Coker f=(C,F(C),\varphi ,N_S)$, where 
$C=M/M'$,  $M'$ is equal to  $(f(L)\otimes _{\c W}\c W[u^{-1}])\cap M$ 
and $F(C)=F(M)/(M'\cap F(M))$  with induced $\varphi $ and $N_S$;
\medskip 

$\bullet $\ $f$ is strict monomorphic means that $f:L\To M$ is monomorphism  
of $\c W$-modules, $\left (f(L)\otimes _{\c W}\c W[u^{-1}]\right )\cap M=f(L)$ 
(or, equivalently, $M/f(L)$ is $p$-strict) and $f(F(L))=L\cap F(M)$;
\medskip 

$\bullet $\ $f$ is strict epimorphic means that $f$ 
is epimorphism of $p$-strict modules 
and $f(F(L))=F(M)$. 
\medskip 

According to Appendix A, we can use the concept of $p$-divisible group 
$\{\c L^{(n)}, i_n\}_{n\geqslant 0}$ in $\uL ^t$. In this case 
$\c L^{(n)}=(L_n, F(L_n), \varphi ,N_S)$, where all 
$L_n$ are free $\c W/p^n$-modules of the same rank equal to 
the height of this $p$-divisible group. We have obvious 
equivalence of the category $\uL ^f$ and the category of 
$p$-divisible groups of finite height in $\uL ^t$. 
\medskip 

\begin{definition} Denote by $\uL ^{ft}$ the full subcategory  in 
$\uL ^t$, which consists of strict subobjects of 
$p$-divisible groups in $\uL ^t$. By $\uL ^{ft}[1]$ we denote 
the full subcategory in $\uL ^{ft}$ consisting of all objects killed by $p$.
\end{definition} 

It is easy to see that $\uL ^{ft}$ contains all 
strict subquotients of the corresponding $p$-divisible groups. 
Contrary to the case 
of filtered modules coming from 
crystalline representations, the categories $\uL ^{ft}$ and $\uL ^t$ 
do not coincide but they have the same simple objects.

Note that the functor $\Pi $ from Corollary \ref{C3.10} 
identifies simple objects of the categories $\uL ^t$ and $\uL^*$ 
and for any two objects $\L _1,\L _2\in\uL ^t[1]$, we have 
a natural isomorphism $\Ext _{\uL ^t[1]}(\L _1,\L _2)=
\Ext _{\uL^*}(\Pi (\L _1),\Pi (\L _2))$. One can use the methods 
of Subsection \ref{S1.2} to extend the  
concepts of etale, connected, unipotent and multiplicative objects 
to the whole category $\uL ^t$. The starting point for this extension 
is the case of $W(k)$-modules, which is well-known from the classical Dieudonne 
theory \cite{refDM}. Then we obtain the following standard properties:
\medskip 

$\bullet $\ for any $\L\in\uL ^t$, there are a unique maximal etale subobject 
$(\L ^{et},i^{et})$ and a unique maximal connected quotient object 
$(\L ^c,j^c)$ in $\uL ^t$ such that the sequence 
$0\longrightarrow\L ^{et}\overset{i^{et}}
\longrightarrow\L\overset{j^c}\longrightarrow\L ^c\longrightarrow 0$ 
is exact and the correspondences $\L\mapsto\L ^{et}$ and 
$\L\mapsto\L ^c$ are functorial; 
if $\c L\in\uL ^{ft}$ then $\c L^{et}$ and $\c L^c$ are 
also objects of $\uL ^{ft}$; 
\medskip 

$\bullet $\ for any $\L\in\uL ^t$, there are a unique maximal unipotent subobject 
$(\L ^{u},i^{u})$ and a unique maximal multiplicative quotient object 
$(\L ^m,j^m)$ in $\uL ^t$ such that the sequence 
$0\longrightarrow\L ^{u}\overset{i^{u}}
\longrightarrow\L\overset{j^m}\longrightarrow\L ^m\longrightarrow 0$
is exact and the correspondences $\L\mapsto\L ^{u}$ 
and $\L\mapsto\L ^m$ are functorial; if $\c L\in\uL ^{ft}$ then 
$\c L^u$ and $\c L^u$ are also objects of $\uL ^{ft}$. 
\medskip 

Denote by $\uL ^{et,t}$, $\uL ^{c, t}$, $\uL ^{u, t}$ 
and $\uL ^{m, t}$ the full 
subcategories in $\uL ^t$ consisting of, resp., etale, 
connected, unipotent and multiplicative objects. 
We have also the corresponding full subcategories 
$\uL ^{et, ft}$, $\uL ^{c, ft}$, $\uL ^{u, ft}$ 
and $\uL ^{m, ft}$  
in $\uL ^{ft}$.

The results of Subsection 1.5 and  Appendix A imply that in the category $\uL ^{ft}$: 
\medskip 

$\bullet $\ there is a unique etale $p$-divisible group  
$\c L^{\infty }(0):=\{\c L^{(n)}(0),i_n\}_{n\geqslant 0}$ 
of height 1  such that 
$\c L^{(1)}(0)=\c L(0)$; 
\medskip 

$\bullet $\ there is a unique multiplicative $p$-divisible group 
of height 1, 
\linebreak     
$\c L^{\infty }(1):=
\{\c L^{(n)}(1),i_n\}_{n\geqslant 0}$ such that  
$\c L^{(1)}(1)=\c L(1)$; 
\medskip 

$\bullet $\ for any $p$-divisible group $\c L^{\infty }$ there are 
functorial exact sequences of $p$-divisible groups 
$$0\To \c L^{\infty ,et}\To\c L^{\infty }\To \c L^{\infty ,c}\To 0$$ 
$$0\To\c L^{\infty ,u}\To\c L^{\infty }\To\c L^{\infty ,m}\To 0$$
Here $\c L^{\infty ,et}$ and $\c L^{\infty ,m}$ are products of 
several copies of $\c L^{\infty }(0)$ and, resp.,  $\c L^{\infty }(1)$, 
and $\c L^{\infty ,c}$ and $\c L^{\infty ,u}$ are $p$-divisible groups in 
the categories  $\uL ^{c,ft}$ and, resp.,  $\uL ^{u,ft}$. 

\newpage

\section{Semistable modular representations with weights $[0,p)$}
\label{S4}

In this section we prove that all killed by $p$ subquotients of 
Galois invariant lattices of semistable $\Q _p[\Gamma _F]$-modules with Hodge-Tate 
weights $[0,p)$ can be obtained via the functor $\c V^*$ from Section \ref{S2}. 

\subsection{The functor $\c V^t:\uL^t\longrightarrow\uM _F$}
\label{S4.1}

For $n\geqslant 1$, introduce the objects  
$\c A_{st,n}=
(\hat A_{st ,n}, F(\hat A_{st .n}), \varphi ,N_S)$ 
of the category $\widetilde{\uL }$, 
with $\hat A_{st,n}=\hat A_{st}/p^n\hat A_{st}$, 
$F(\hat A_{st ,n})=F(\hat A_{st })/p^nF(\hat A_{st})$ 
and induced $\varphi $ and $N_S$. 
Let $\c A_{st,\infty}=(A_{st ,\infty }, F(A_{st ,\infty }),\varphi ,N_S)$ 
be the inductive limit of all $\c A_{st,n}$. 

For $\L\in\uL ^t$, set $\c V^t(\L)=\Hom _{\widetilde{\uL}}(\L ,\c A_{st,\infty })$ 
with the induced structure of  $\Gamma _F$-module. This gives the functor 
$\c V^t:\uL ^t\longrightarrow \uM _F$. We shall use the same notation 
for its  
restriction to the category $\uL ^{ft}$. 

\begin{Prop} \label{P4.1} Suppose $\L =(L,F(L),\varphi ,N_S)\in\uL ^t$. Then 
 $N_S|_{\varphi (F(L))}$ is nilpotent. 
\end{Prop}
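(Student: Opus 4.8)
The plan is to exploit the rigidity of the operator $\varphi$ together with the $p$-strictness of $L$, and to reduce to the already-understood situation modulo $p$. First I would observe that $\varphi(F(L))$ is a $\sigma(\mathcal{W})$-module and that the identity $L = \varphi(F(L)) \otimes_{\sigma\mathcal{W}} \mathcal{W}$ makes $\varphi(F(L))$ a lattice inside $L$ in a suitable sense; in particular $N_S$ restricted to $\varphi(F(L))$ takes values in $L_S = L \otimes_{\mathcal{W}} S$, and I want to track how the $u$-adic (and $p$-adic) filtration behaves under $N_S \circ \varphi$. The key structural input is the defining relation of $\underline{\mathcal{L}}^t$: for $l \in F(L)$, $\varphi_S(vN(l)) = c\, N_S(\varphi(l))$ with $c = 1 + u^p/p$ a unit of $S$. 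Since $c$ is a unit, this says $N_S(\varphi(l)) = c^{-1}\varphi_S(vN_S(l))$, so applying $N_S$ to an element of $\varphi(F(L))$ reintroduces a factor essentially of the shape $\varphi_S$ applied to something divisible by $v$ (equivalently, by $u$ after reduction), which is the mechanism that forces contraction.

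The main step is the case $pL = 0$, i.e. $\mathcal{L} \in \underline{\mathcal{L}}^t[1]$. Here Corollary \ref{C3.10} gives an equivalence $\Pi: \underline{\mathcal{L}}^t[1] \to \underline{\mathcal{L}}^*$ under which $N_S$ is matched (up to the unit $c$, which is $1$ in $S_1$) with the operator $N$ of an object of $\underline{\mathcal{L}}^*$; and for objects of $\underline{\mathcal{L}}^*$ the remark after Proposition \ref{P1.5} and the construction of $\mathcal{F}$ from $\underline{\mathrm{MF}}_{p-1}$ record precisely that $N \bmod u^p$ is nilpotent on $\varphi(F(L))$ — indeed the crystalline objects are exactly those where $N|_{\varphi(F(L))} \equiv 0 \bmod u^p$, and the proof of Proposition \ref{P1.8}(c) explicitly uses that "$N \bmod u^p$ is nilpotent on $\varphi(F(L))$". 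So in the killed-by-$p$ case the statement is already available. For the general $p$-strict case I would devissage along $0 \to pL \to L \to L/pL \to 0$ using Lemma \ref{L3.8}: $N_S|_{\varphi(F(L))}$ modulo $p$ is nilpotent by the $pL=0$ case applied to $L/pL$, hence some power $N_S^m(\varphi(F(L))) \subset p L_S$; then iterate, using that $N_S$ respects the $p$-adic filtration and that $L$ has bounded $p$-torsion exponent, to conclude $N_S^{mk}(\varphi(F(L))) \subset p^k L_S = 0$ for $k$ large.

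The step I expect to be the main obstacle is making the reduction-mod-$p$ argument interact correctly with the tensor decomposition $L = \varphi(F(L)) \otimes_{\sigma\mathcal{W}} \mathcal{W}$ and with the fact that $N_S$ lands in $L_S$ rather than in $L$ itself: one must check that "nilpotent modulo $p$ on $\varphi(F(L))$" really does propagate, i.e. that $N_S(p L_S) \subset p L_S$ (clear, $N_S$ is additive and $S$-semilinear in the appropriate sense) and that the filtration of $\varphi(F(L))$ by powers of $p$ is exhaustive and separated because $L$ is $p$-strict. A secondary subtlety is that $N_S$ does not preserve $\varphi(F(L))$ on the nose, only $L_S$; so strictly one shows $N_S$ maps $\varphi(F(L))$ into $L_S$, and that the relevant nilpotency is of the induced endomorphism of $\varphi(F(L)) \otimes S$, from which nilpotency of $N_S|_{\varphi(F(L))}$ (as a map to $L_S$, composed back appropriately) follows. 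Once these bookkeeping points are settled, the argument is the devissage sketched above together with the $\underline{\mathcal{L}}^*$ input already in hand.
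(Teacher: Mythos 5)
Your reduction to the killed-by-$p$ case is exactly the paper's first move: by devissage along the $p$-power filtration of a $p$-strict module and the equivalence $\Pi$ of Corollary \ref{C3.10}, the statement follows from a nilpotency claim for objects of $\uL ^*$ (and your bookkeeping about $N_S(pL_S)\subset pL_S$ and $p^kL_S=0$ for $k\gg 0$ is fine). The gap is in what you then call the "already available" killed-by-$p$ case. The remark after Proposition \ref{P1.5} and the construction of the functor from $\underline{\mathrm{MF}}_{p-1}$ concern only the crystalline subcategory $\uL ^*_{cr}$, where indeed $N|_{\varphi (F(L))}\equiv 0\,\mathrm{mod}\,u^p$; and the sentence in the proof of Proposition \ref{P1.8}(c) that "$N\,\mathrm{mod}\,u^p$ is nilpotent on $\varphi (F(L))$" is a \emph{use} of this fact, not a proof of it. For a general object of $\uL ^*$ the restriction of $N$ to $\varphi (F(L))$ modulo $u^p$ is genuinely nonzero (see the extensions $E_{st}(i_0,j_0,\gamma )$ and $E_{sp}(j_0,\gamma )$ of Subsection \ref{S1.4}, or $\c L(1,1)$ in Section \ref{S5}), so nilpotency there is precisely the content of Proposition \ref{P4.1} and cannot be quoted from earlier sections; citing it is circular.

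What is missing is the short computation the paper isolates as Lemma \ref{L4.2}. You correctly identified the mechanism (the relation $\varphi _S(vN_S(l))=cN_S(\varphi (l))$ with $c$ a unit, becoming $N(\varphi (l))=\varphi (uN(l))$ after applying $\Pi$), but you never run it: by induction on $m$ one gets $N^m(\varphi (l))\equiv \varphi (u^mN^m(l))\,\mathrm{mod}\,u^pL$ for $1\leqslant m\leqslant p$, and then $\varphi (u^pN^p(l))\in\varphi (uF(L))\subset u^pL$ (using $u^{p-1}L\subset F(L)$), so $N^p(\varphi (F(L)))\subset u^pL$, which gives the required nilpotency modulo $u^{p}$ and hence, after the devissage, the nilpotency of $N_S|_{\varphi (F(L))}$. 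With that induction inserted, your outline matches the paper's proof; without it, the central step is unsupported.
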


By devissage and Corollary \ref{C3.10} this is implied by  
the following statement for the objects of the category $\uL ^*$.
 
\begin{Lem} \label{L4.2} If $\L =(L,F(L),\varphi ,N)\in\uL^*$ then 
 $N^p(\varphi (F(L))\subset u^pL$.
\end{Lem}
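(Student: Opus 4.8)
The statement to prove is Lemma~\ref{L4.2}: for $\L=(L,F(L),\varphi,N)\in\uL^*$ one has $N^p(\varphi(F(L)))\subset u^pL$. My plan is to reduce to a computation on the subspace $\varphi(F(L))/uL\subset L/uL$, which is where the nilpotence really lives. Recall from Subsection~\ref{S1.5} that $V\operatorname{mod}u$ is a $\sigma^{-1}$-linear operator on $L/uL$, and that $\varphi$ realizes $F(L)/uF(L)$ inside $L/u^pL$. The key structural input is Proposition~\ref{P1.6}: there is a $\sigma(\W_1)$-basis $l_1,\dots,l_s$ of $\varphi(F(L))$ together with integers $0\leqslant c_i<p$ such that $u^{c_i}l_i$ form a $\W_1$-basis of $F(L)$ — but this holds for $\uL^*_{cr}$, not general $\uL^*$; for general $\uL^*$ one only gets a triangular/filtered version of this. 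So first I would pass to the associated graded picture: choose a $\W_1$-basis $m_1,\dots,m_s$ of $L$ with $u^{c_1}m_1,\dots,u^{c_s}m_s$ a basis of $F(L)$ (possible since $u^{p-1}L\subset F(L)$), write $l_i=\varphi(u^{c_i}m_i)$, and work modulo $u^pL$ throughout, since that is the precision in which $N_1=N\operatorname{mod}u^p$ is defined and since the claim is exactly about landing in $u^pL$.

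The core of the argument is the identity $N(\varphi(m))=\varphi(uN(m))$ for $m\in F(L)$ (part of the definition of $\uL^*$), combined with $uN_1=$ the operator $uN$ acting $\operatorname{mod}u^p$. Applying this repeatedly, $N^k(\varphi(u^{c_i}m_i))=\varphi((uN)^k(u^{c_i}m_i))\operatorname{mod}u^pL$, and $(uN)(u^{c_i}m_i)$ involves the factor $(j+c_i)u^{j+c_i}$-type coefficients seen in the proof of Proposition~\ref{P1.6} and in congruence~\eqref{E1.1}. Write $l_i=\sum_{j\geqslant0}u^j l_{ij}$ with $l_{ij}\in\varphi(F(L))$; then $N_1(l_i)\operatorname{mod}u^p$ is a $k$-linear combination of the $l_{i'0}$'s, i.e.\ $N_1$ induces a genuine $k$-linear (not merely $\sigma$-linear) operator, call it $\bar N$, on the finite-dimensional $k$-vector space $\overline{\varphi(F(L))}:=\varphi(F(L))/(uL\cap\varphi(F(L)))$. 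Since by Proposition~\ref{P1.6} (applied after noting one may reduce to the $\uL^*_{cr}$-type normal form for the purpose of computing $\bar N$, or directly from the definition of $N$ on $\uL^*$ via $N|_{\varphi(F(L))}\equiv0\operatorname{mod}u^p$ when crystalline and a triangular perturbation otherwise) $\bar N$ acts on an $s$-dimensional space, I want to show $\bar N^p=0$ — equivalently $\bar N$ is nilpotent and its nilpotency index is $\leqslant p$, which will give $N^p(\varphi(F(L)))\subset u^pL$ as required.

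To get nilpotency with index $\leqslant p$ I would track the $u$-adic valuations. Set $l=l_i\in\varphi(F(L))$, not in $uL$; form the sequence $l^{(0)}=l$, $l^{(n+1)}=\varphi(u^{a_n}l^{(n)})$ where $a_n\geqslant0$ is minimal with $u^{a_n}l^{(n)}\in F(L)\setminus uF(L)$, exactly as in the proof of Proposition~\ref{P1.8}(c) and Lemma~\ref{L1.4}. Applying $N$ and using $N\varphi=\varphi(uN)$, one sees inductively that each application of $N$ to $\varphi(\text{something})$ either kills it modulo $u^pL$ or produces a term whose $u$-valuation in the $F(L)$-coordinates strictly increases in a controlled way, and after $p$ steps the accumulated valuation forces the result into $u^pL$ — this is the same mechanism as in Proposition~\ref{P1.6}, where for each residue class $\alpha\in\Z/p\Z$ the $\alpha$-graded piece lies in $F(L)$, so that applying $uN$ exactly $p$ times cycles through all residues and lands in $u^pL$. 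Concretely: $(uN)$ multiplies the $u^j$-component by $(j+c_i)$, and $(j+c_i)\operatorname{mod}p$ takes each value in $\Z/p\Z$; after $p$ applications every component has been multiplied by a full product $\prod_{t=0}^{p-1}(\,\cdot+t\,)\equiv0\operatorname{mod}p$, hence vanishes in characteristic $p$ — but we are not in residue characteristic $p$ for the exponents, so the precise bookkeeping is that the factor $\prod_{t}(j+c_i+t)$ contains a multiple of $p$, and that is $0$ in $k$. The main obstacle will be making this last step rigorous in the non-crystalline case, where $N_1|_{\varphi(F(L))}$ is not zero modulo $u^p$ but a strictly upper-triangular (with respect to the valuation filtration on $F(L)$) perturbation; I expect one handles this by a filtration argument showing the perturbation is nilpotent of index bounded by the number of distinct values of $c_i$, which is $\leqslant p$, so it does not worsen the estimate. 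I would set up the filtration $F_\bullet$ on $\varphi(F(L))/uL$ by the minimal $a_n$-type exponents, check $N_1$ shifts this filtration, and conclude $\bar N^p=0$.
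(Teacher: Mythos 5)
You actually write down the identity that does all the work --- $N^k(\varphi(m))\equiv\varphi((uN)^k(m))\ \mathrm{mod}\,u^pL$ for $m\in F(L)$ --- and this is exactly the paper's proof: one shows by induction that $N^m(\varphi(l))\equiv\varphi(u^mN^m(l))\,\mathrm{mod}\,u^pL$ for $1\leqslant m\leqslant p$, and then observes that $(uN)^p(l)$ carries an accumulated factor $u^p$, so $u^pN^p(l)\in u^pL\subset uF(L)$ (because $u^{p-1}L\subset F(L)$), whence $\varphi(u^pN^p(l))\in\varphi(uF(L))=u^p\varphi(F(L))\subset u^pL$. At that point the lemma is finished; no normal form, no reduction modulo $u$, and no nilpotency statement for an induced operator is needed.

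Instead of drawing this conclusion, you divert into proving that the induced $k$-linear operator $\bar N$ on $\varphi(F(L))/\bigl(uL\cap\varphi(F(L))\bigr)$ has nilpotency index $\leqslant p$, and this is where the genuine gaps are. First, the mechanism you invoke --- that after $p$ applications each component is multiplied by $\prod_{t=0}^{p-1}(j+c_i+t)\equiv 0\ \mathrm{mod}\,p$ --- only concerns the leading term of $(uN)^p(u^{c_i}m_i)$ (the one proportional to $m_i$ with no $N$ applied); the remaining terms involving $N(m_i),N^2(m_i),\dots$ do not vanish, and nothing in your bookkeeping makes them do so. They do not need to vanish: they already lie in $u^pL$ because of the factor $u^{c_i+p}$, which is precisely the point you pass over. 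Second, the completion you promise for the non-crystalline case --- Proposition \ref{P1.6} is only available in $\uL^*_{cr}$, and you propose to treat $N_1|_{\varphi(F(L))}$ as a ``triangular perturbation'' whose nilpotency index is bounded by the number of distinct $c_i$ --- is unsubstantiated: that bound is not proved, and even granted, composing it with the crystalline part would not yield index $\leqslant p$ for the sum. So the decisive quantitative step of your plan ($\bar N^p=0$) is left unproved, while the direct valuation count on powers of $u$, available from the identity you already have, renders the whole detour unnecessary.
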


\begin{proof} For any $l\in F(L)$, $N(\varphi (l))=\varphi (uN(l))$. 
Use induction to prove that for $1\leqslant m\leqslant p$, 
$N^m(\varphi (l))\equiv \varphi (u^mN^m(l))\mathrm{mod}\,u^pL$ 
and use then that $\varphi (u^pN^p(l))\in\varphi (uF(L))\subset u^pL$.
\end{proof}

\begin{Prop} \label{P4.3} For $n\geqslant 1$, 
$\oplus _{j\geqslant 0}A_{cr,n}\gamma _j(\log (1+X))$ 
is the maximal $W(k)$-submodule of $\hat A_{st ,n}$ where 
$N$ is nilpotent.
\end{Prop}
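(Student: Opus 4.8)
The plan is to use the explicit description of $\hat A_{st}$ as the $p$-adic closure of $A_{cr}[\gamma_i(X)\mid i\geqslant 0]$ together with the substitution $X=(1+X)-1$, which relates $X$ to the "group-like" element $1+X$ on which $\sigma$ acts by raising to the $p$-th power. First I would introduce the element $t=\log(1+X)=\sum_{j\geqslant 1}(-1)^{j-1}X^j/j$, noting that this series converges in $\hat A_{st}\otimes\Q_p$ but that one must be careful about denominators: since $N(X)=X+1$ one computes $N(1+X)=1+X$, hence $N(t)=1$, so formally $t$ plays the role of $X$ but now sits inside a $W(k)$-submodule on which $N$ acts by a genuine (non-divided-power) derivation. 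The module $M_n:=\oplus_{j\geqslant 0}A_{cr,n}\gamma_j(t)$ is visibly stable under $N$ (because $N\gamma_j(t)=\gamma_{j-1}(t)$) and $N$ is nilpotent on it: for any $a\in A_{cr,n}$ (on which $N$ already acts nilpotently, in fact $N$ kills $A_{cr}$ entirely since $A_{cr}$ has no $X$), the element $a\gamma_j(t)$ is killed by $N^{j+1}$. So the submodule is $N$-nilpotent; the content is maximality.

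For maximality I would argue as follows. An element of $\hat A_{st,n}$ can be written as $\sum_{i\geqslant 0}a_i\gamma_i(X)$ with $a_i\in A_{cr,n}$, $a_i\to 0$. Rewriting $X$ in terms of $t$ via $X=\exp(t)-1=\sum_{m\geqslant 1}\gamma_m(t)$, one sees that the $A_{cr,n}$-span of $\{\gamma_i(X)\mid i\geqslant 0\}$ and of $\{\gamma_j(t)\mid j\geqslant 0\}$ differ by an invertible "change of basis" that is upper-triangular with $1$'s on the diagonal in each graded piece — provided one controls the $p$-adic size of the coefficients, which is where the divided-power structure on both sides matters. Granting this, every element $\xi\in\hat A_{st,n}$ has a unique expansion $\xi=\sum_{j\geqslant 0}b_j\gamma_j(t)$ with $b_j\in A_{cr,n}$. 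Now suppose $N$ is nilpotent on the $W(k)$-submodule generated by $\xi$; equivalently $N^M\xi=0$ for some $M$. Since $N\gamma_j(t)=\gamma_{j-1}(t)$ and $N b_j=0$, one has $N^M\xi=\sum_{j\geqslant M}b_j\gamma_{j-M}(t)$, and by uniqueness of the expansion this forces $b_j=0$ for all $j\geqslant M$. Hence $\xi$ lies in the $A_{cr,n}$-span of $\gamma_0(t),\dots,\gamma_{M-1}(t)$, i.e. $\xi\in M_n$. This shows $M_n$ contains every $N$-nilpotent element, so it is the maximal such submodule.

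The main obstacle I anticipate is the second paragraph: making rigorous the claim that passing from the PD-variables $\gamma_i(X)$ to the PD-variables $\gamma_j(t)=\gamma_j(\log(1+X))$ is an isomorphism of the $p$-adically completed PD-polynomial algebras over $A_{cr,n}$. One needs that $\log(1+X)$ and its divided powers have coefficients with bounded (indeed controlled) denominators so that the expansion converges $p$-adically in $\hat A_{st}$ and is invertible; the standard input is that $\log(1+X)\in X\cdot\Z_p\langle X\rangle$ and, conversely, $\exp$ of a PD-element is again a well-defined PD-expansion, combined with the fact established in Breuil's work that $\hat A_{st}$ carries a compatible PD-structure. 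Once this identification is in place the rest is the formal nilpotency bookkeeping above; I would also remark that the argument is compatible with the transition maps in $n$, so one obtains the analogous statement for $\hat A_{st,\infty}$ by passing to the limit, which is what gets used in the sequel.
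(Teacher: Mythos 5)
There is a genuine gap, and it sits exactly at the point you flagged as the "main obstacle": the passage from the PD-variables $\gamma_i(X)$ to the PD-variables $\gamma_j(t)$, $t=\log(1+X)$, is \emph{not} an invertible change of basis in the $p$-adically completed PD-algebra over $A_{cr,n}$. The direction $\log$ is fine ($t\in X\cdot\Z_p\langle X\rangle$), but the inverse requires $X=\exp(t)-1=\sum_{j\geqslant 1}\gamma_j(t)$, whose coefficients are all $1$ and hence do not tend to $0$ $p$-adically; so $X$ (and likewise each $\gamma_i(X)$) simply does not lie in the $p$-adic completion of $\oplus_jA_{cr}\gamma_j(t)$, and "$\exp$ of a PD-element is again a well-defined PD-expansion" is false in this completed setting. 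Concretely, modulo $p$ the paper shows that $\F_p\langle X\rangle$ is \emph{free of rank $p$} over $\F_p\langle t\rangle$ with basis $1,X,\dots,X^{p-1}$ (via $\gamma_{p^j}(t)\equiv X_j-X_{j+1}\,\mathrm{mod}\,\m_C^2$, where $X_j=\gamma_{p^j}(X)$), so $\hat A_{st,1}=\oplus_{0\leqslant i<p,\,j\geqslant 0}A_{cr,1}\gamma_i(X)\gamma_j(t)$, and the $t$-PD-subalgebra is far from everything. Consequently your "unique expansion $\xi=\sum_jb_j\gamma_j(t)$ with $b_j\in A_{cr,n}$" does not exist for a general $\xi\in\hat A_{st,n}$, and your maximality argument only treats elements of the (strictly smaller) completed $t$-PD-subalgebra; it never touches the elements $\gamma_i(X)$, $1\leqslant i<p$, which are precisely what has to be excluded from the kernel of powers of $N$.

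The bookkeeping in your second paragraph can be salvaged, but only after one proves a structural statement of essentially the same depth as the paper's computation: either the decomposition of $\hat A_{st,1}$ above, or the injectivity of the \emph{formal} (non-convergent) embedding of $A_{st,n}$ into the product $\prod_{j\geqslant 0}A_{cr,n}\gamma_j(t)$ (which the paper introduces right after the proposition for a different purpose). With such an expansion, "$N^M\xi=0$ forces the expansion to terminate" does give maximality; without it, the claim is unsupported. The paper's route is to reduce by devissage to $n=1$ and to the single identity $\Ker(N^p|_{\hat A_{st,1}})=\oplus_{0\leqslant j<p}A_{cr,1}\gamma_j(t)$, and then to verify this by the explicit char-$p$ description $C=\F_p\langle t\rangle[X]_{<p}$ together with the fact that $N^p$ is again a derivation, acting on the $t$-part by the shift $\gamma_j(t)\mapsto\gamma_{j-p}(t)$ and on $X$ through the Euler-type relation coming from $N(1+X)=1+X$; comparing coefficients of $X^i$ then kills all terms with $i\ne 0$ or $j\geqslant p$. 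That computation, or an equivalent substitute for it, is the missing core of your argument.
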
 

\begin{proof} For any $j\geqslant 1$, it holds 
$N(\gamma _j(\log (1+X))=\gamma _{j-1}(\log (1+X))$ 
and $N$ is nilpotent on 
$\oplus _{j\geqslant 0}A_{cr,n}\gamma _j(\log (1+X))$. 
Therefore, it will be sufficient to prove that 
\[ \Ker \left (N^p|_{\hat A_{st, 1}}\right )=
 \underset{0\leqslant j<p}\oplus A_{cr,1}\gamma _j(\log (1+X)).
\]

Let $C=\F _p\langle X\rangle $ be the divided power envelope of 
$\F _p[X]$ with respect to the ideal $(X)$. 
Then $C=\F _p[X_0,X_1,\dots ,X_n,\dots ]_{<p}$ is the ring of polynomials in 
$X_i:=\gamma _{p^i}(X)$, where for all $i\geqslant 0$, $X_i^p=0$. 

Let $\m _C$ be the maximal ideal of $C$ and $Y=\log (1+X)\in C$. 
Then $Y\equiv X_0-X_1\,\mathrm{mod}\,\m _C^2$ and 
for all $j\geqslant 0$, 
$\gamma _{p^j}(Y)\equiv X_j-X_{j+1}\,\mathrm{mod}\,\m ^2_C$. 
This implies that with $Y_j=\gamma _j(Y)$ for all $j\geqslant 0$, 
$$C=\F _p[X_0,Y_0,\dots ,Y_n,\dots ]_{<p}=\F _p\langle Y\rangle [X]_{<p}=
\underset{0\leqslant i<p}\oplus\F _p\langle Y\rangle \gamma _i(X).$$
So, $\hat A_{st,1}=\underset{\substack{j\geqslant 0\\ 0\leqslant i<p}}\oplus 
A_{cr,1}\gamma _i(X)\gamma _j(Y)$. 
Remind $N(X)=X+1$ and for $j\geqslant 1$, 
$N(\gamma _j(Y))=\gamma _{j-1}(Y)$. Using 
that $N^p$ is an $A_{cr,1}$-derivation we obtain that for any 
$P=\sum _{i,j}\alpha _{ij}X^i\gamma _j(Y)
\in\F _p\langle Y\rangle [X]_{<p}$ with $\alpha _{ij}\in\F _p$, 
it holds (use that $N^p(X)=X+1$ and $N^p(\gamma _{j+p}(Y))=\gamma _j(Y)$)
$$N^p(P)=\sum_{i,j}\alpha _{ij}iX^i\gamma _j(Y)+
\sum _{i,j}(i+1)\alpha _{i+1,j}\gamma _j(Y)X^i+
\sum _{i,j}\alpha _{i,j+p}X^i\gamma _j(Y).$$
If $P\in\Ker N^p$ then for all involved indices $i,j$, 
$$i\alpha _{ij}+(i+1)\alpha _{i+1,j}+\alpha _{i,j+p}=0.$$ 
This  implies that 
$\alpha _{ij}=0$ if either $i\ne 0$ or $j\geqslant p$. 

Indeed, 
take $i=p-1$. Then $-\alpha _{p-1,j}+\alpha _{p-1,j+p}=0$. 
Because for $j\gg 0$, $\alpha _{p-1,j}=0$ it implies that all $\alpha _{p-1,j}=0$. 
Then proceed similarly with $i=p-2$ and so on. This proves that all 
$\alpha _{ij}=0$ if $i\ne 0$. It remains to note that for $i=0$, our relations give 
$\alpha _{0,j+p}=0$ for all $j\geqslant 0$. 
\end{proof}

As earlier, consider the category $\widetilde{\uL }_0$. Remind that its objects are 
the triples $(L,F(L),\varphi )$, where $L\supset F(L)$ are $\c W$-modules 
and 
$\varphi :F(L)\longrightarrow L$ is a $\sigma $-linear morphism. 
For any object $\L =(L,F(L),\varphi , N_S)\in\widetilde{\uL }$, 
agree to use the same notation $\L $ for the corresponding object 
$(L,F(L),\varphi )\in\widetilde{\uL }_0$. 

For all $n\geqslant 0$, set $\c A_{cr ,n}=(A_{cr ,n}, F(A_{cr ,n}),
\varphi )\in\widetilde{\uL }_0$ 
with $A_{cr,n}=A_{cr}/p^nA_{cr}$, $F(A_{cr ,n})=F(A_{cr })/p^nF(A_{cr })$ 
and induced $\varphi $.  
Here the $\c W$-module structure on $A_{cr ,n}$ is defined by the 
morphism of $W(k)$-algebras $\c W\longrightarrow A_{cr ,n}$ such that 
$u\mapsto [x_0]$. Denote by  $\c A_{cr ,\infty }$ 
the inductive limit of all $\c A_{cr ,n}$. 

Suppose $\L\in\uL ^t$ and 
$f\in\Hom _{\widetilde{\uL }}(\L ,\c A_{st ,n})$. Then by Propositions  
\ref{P4.1} and \ref{P4.3}, 
$$f(\varphi (F(L)))\subset \underset{j\geqslant 0}
\oplus A_{cr ,n}\gamma _j(\log (1+X)).$$
Consider the formal embedding of the algebra 
$A_{st ,n}$ into the completion 
$\prod _{j\geqslant 0} A_{cr ,n}\gamma _j(\log (1+X))$ 
of $\oplus _{j\geqslant 0}A_{cr ,n}\gamma _j(\log (1+X))$ such that 
$X\mapsto \sum _{j\geqslant 1}\gamma _j(\log (1+X))$. Then any element of 
$A_{st ,n}$ can be uniquely written in the form 
$\sum _{j\geqslant 0}a_j\gamma _j(\log (1+X))$, where all $a_j\in A_{cr ,n}$. 
Note that the $\c W$-module structure on $A_{st ,n}$ is given via the map
$$u\mapsto [x_0]/(1+X)=[x_0]
\sum _{j\geqslant 0}(-1)^j\gamma _j(\log (1+X)).$$

For $j\geqslant 0$, introduce 
the $W(k)$-linear maps $f_j\in\Hom (L,A_{cr ,n})$ such that 
for any $l\in L$, it holds $f(l)=\sum _{j\geqslant 0}f_j(l)\gamma _j(\log (1+X))$. 
Then using methods from \cite{refBr2} obtain the following property.

\begin{Prop} \label{P4.4} 
 a) The correspondence $f\mapsto f_0$ induces isomorphism of 
abelian groups $\c V^t(\L )=\Hom _{\widetilde{\uL}_0}(\L,\c A_{cr ,n})$;

b) for any $j\geqslant 0$ and $l\in L$, $f_j(l)=f_0(N^j(l))$.
\end{Prop}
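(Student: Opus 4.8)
The plan is to follow the template of Proposition \ref{P2.3} (and of Breuil's computations in \cite{refBr2}), reducing everything to the explicit action of $N$ on $\hat A_{st,n}$ together with elementary divided-power identities. Throughout I would fix $n$ with $p^n\L=0$, so that every $f\in\c V^t(\L)$ factors through $\c A_{st,n}$.

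I would prove b) first, since it contains the injectivity half of a). By Propositions \ref{P4.1} and \ref{P4.3}, $f$ sends $\varphi(F(L))$ into $\bigoplus_{j\geqslant 0}A_{cr,n}\gamma_j(\log(1+X))$, so — as $L=\varphi(F(L))\otimes_{\sigma\c W}\c W$ and the $\c W$-action on $\hat A_{st,n}$ preserves the completion $\prod_{j}A_{cr,n}\gamma_j(\log(1+X))$ — the coordinate maps $f_j$ are well defined. The key point is that on this submodule $N$ is $A_{cr}$-linear and acts by the shift $\gamma_j(\log(1+X))\mapsto\gamma_{j-1}(\log(1+X))$, since $N(\log(1+X))=N(X)/(1+X)=1$, while the map $\hat A_{st,n}\to A_{cr,n}$, $\sum_j a_j\gamma_j(\log(1+X))\mapsto a_0$, is a ring homomorphism extending the structure map $S\to A_{cr,n}$, $u\mapsto[x_0]$. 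Comparing $\gamma_0$-coordinates in $f_S\circ N_S=N\circ f$ (with $f_S$, resp.\ $\bar f_0$, the $S$-linear extensions of $f$, resp.\ $f_0$, to $L_S$) then gives $f_1(l)=\bar f_0(N_S(l))$, and iterating yields $f_j(l)=f_0(N^j(l))$. In the $p$-torsion case this is exactly the identity $f_i(l)=f_0(N^i(l))$ from the proof of Proposition \ref{P2.3} carried through the equivalence $\Pi$ of Corollary \ref{C3.10}; the general $p$-strict case then follows by devissage via Lemma \ref{L3.8}.

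For a) it remains to check that $f\mapsto f_0$ lands in $\Hom_{\widetilde{\uL}_0}(\L,\c A_{cr,n})$ and is onto. Well-definedness I would obtain by projecting $f(F(L))\subset F(\hat A_{st,n})$ and $f\varphi=\varphi f$ to the $\gamma_0$-component, using that this projection is a ring map sending $F(\hat A_{st,n})$ into $\Fil^{p-1}A_{cr,n}$ and $\varphi_S$ to $\varphi$; injectivity is b). For surjectivity, given $g\in\Hom_{\widetilde{\uL}_0}(\L,\c A_{cr,n})$ I would set $f(l):=\sum_{j\geqslant 0}\bar g(N^j(l))\gamma_j(\log(1+X))$; by Proposition \ref{P4.1} the sum terminates on $\varphi(F(L))$, so $f$ is defined, $f_0=g$, and $f$ commutes with $N_S$ by construction. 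That $f$ is $\c W$-linear is the analogue of Lemma \ref{l2.4}\,a): the identity $N^j(u\xi)=u(N-\id)^j\xi$ in $L_S$ combined with $\sum_{j\geqslant b}(-1)^{j-b}\binom{j}{b}\gamma_j(\log(1+X))=(1+X)^{-1}\gamma_b(\log(1+X))$ gives $f(ul)=([x_0]/(1+X))f(l)$.

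The main obstacle will be the remaining verification — the analogue of Lemma \ref{l2.4}\,b) — that this $f$ preserves the filtration and commutes with $\varphi_S$; this is where Breuil's machinery from \cite{refBr2} enters, through the defining relations $\varphi_S(v^{p-1}\cdot\,)=\varphi(\cdot\,)$ and $\varphi_S(vN(l))=cN_S(\varphi(l))$ with $c=1+u^p/p$, $\sigma(c)\equiv 1$, $N(c)=0$ in the relevant quotients, together with the bookkeeping identifying $N_S$ with the shift operator through the embedding $\hat A_{st,n}\hookrightarrow\prod_j A_{cr,n}\gamma_j(\log(1+X))$. In the $p$-torsion case one reduces this last point to Proposition \ref{P2.3} via Corollary \ref{C3.10} and then ascends in $n$ by devissage (Lemma \ref{L3.8}).
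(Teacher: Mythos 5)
Your proposal is correct and follows essentially the route the paper itself indicates: the paper gives no argument beyond setting up the coefficient maps $f_j$ via Propositions \ref{P4.1} and \ref{P4.3} and citing Breuil's methods, and your proof fleshes out exactly that — the shift action of $N$ on the $\gamma_j(\log(1+X))$-expansion, the $\gamma_0$-projection as a filtered ring homomorphism over $u\mapsto[x_0]$, and the explicit inverse $g\mapsto\sum_j g(N^j(l))\gamma_j(\log(1+X))$, modelled on Proposition \ref{P2.3} and Lemma \ref{l2.4}. The remaining filtration/$\varphi_S$-compatibility check that you delegate to Breuil's machinery (with devissage via Lemma \ref{L3.8}) is precisely what the paper also leaves to \cite{refBr2}, so no gap relative to the paper's own treatment.
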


\begin{Cor} \label{C4.5} The functor $\c V^t$ is exact. 
\end{Cor}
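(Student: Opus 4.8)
The plan is to deduce exactness of $\c V^t$ from the functorial description obtained in Proposition \ref{P4.4}, namely the identification $\c V^t(\L)=\Hom_{\widetilde{\uL}_0}(\L,\c A_{cr,\infty})$ via $f\mapsto f_0$. Since $\c A_{cr,\infty}$ is the inductive limit of the $\c A_{cr,n}$, and a short exact sequence $0\to\L_1\to\L\to\L_2\to0$ in $\uL^t$ consists of $p$-strict $\c W$-modules, it suffices (by passing to the limit and using that filtered colimits of abelian groups are exact) to show that for each fixed level the functor $\L\mapsto\Hom_{\widetilde{\uL}_0}(\L,\c A_{cr,n})$ takes strict short exact sequences to short exact sequences of abelian groups. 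Left exactness is automatic: $\Hom$ is always left exact, and a strict monomorphism $\L_1\hookrightarrow\L$ is in particular a monomorphism of $\c W$-modules with $F(L_1)=L_1\cap F(L)$, so any morphism $\L_1\to\c A_{cr,n}$ that kills nothing extra is detected correctly. The content is therefore surjectivity of $\c V^t(\L)\to\c V^t(\L_2)$, i.e.\ the lifting property: every $\widetilde{\uL}_0$-morphism $g_2:\L_2\to\c A_{cr,n}$ lifts along the strict epimorphism $\L\to\L_2$ to some $g:\L\to\c A_{cr,n}$.

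First I would reduce to the case $pL=0$ by devissage: using Lemma \ref{L3.8} and the snake lemma applied to multiplication by $p$, a standard induction on the $p$-length reduces the surjectivity statement for $\uL^t$ to the same statement for $\uL^t[1]$, hence — via the equivalence $\Pi:\uL^t[1]\xrightarrow{\sim}\uL^*$ of Corollary \ref{C3.10} — to a statement about the category $\uL^*$ and the reduction $\c A_{cr,1}=\widetilde{\c R}^0$-type object. At this level, the surjectivity is precisely the exactness half of Corollary \ref{C2.5}(b): the functor $\L\mapsto W_0(\L)=\Hom_{\widetilde{\uL}_0^*}(\L,\c R^0)$ (or its variant with $\widetilde{\c R}^0$) is exact on $\uL^*$, which was proved there by exhibiting $W_0(\L)$ as the $\bar R_0$-points of an étale $R_0$-algebra of rank $p^{\mathrm{rk}L}$ and counting. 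Concretely, given a strict epimorphism $\L\to\L_2$ in $\uL^*$, the cardinalities satisfy $|\c V^*(\L)|=|\c V^*(\L_1)|\cdot|\c V^*(\L_2)|$, and combined with left exactness this forces the sequence of abelian groups to be short exact.

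The one genuinely technical point — and the step I expect to be the main obstacle — is justifying the devissage cleanly: one must check that the equivalence $\Pi$ and the functor $\c V^t$ are compatible with the reduction modulo $p$ in the strong sense that the snake-lemma argument actually closes, i.e.\ that $\c V^t(\L)/p\,\c V^t(\L)\hookrightarrow\c V^t(L/pL)$ behaves correctly and that $p$-strictness of $pL$ and $L/pL$ (Lemma \ref{L3.8}) is preserved by kernels and cokernels in $\uL^t$. This is where one invokes ``Breuil's method'' as cited in the proof of Corollary~\ref{C} — the manipulation of strongly divisible lattices under exact sequences — transported from $S$-modules to $\c W$-modules. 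Once that bookkeeping is in place, the result follows formally from Proposition \ref{P4.4}, Corollary \ref{C2.5}(b), and the exactness of filtered colimits.
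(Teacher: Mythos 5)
Your reduction via Proposition \ref{P4.4}, the passage to filtered colimits, and the devissage to the killed-by-$p$ case all match the spirit of the paper's (very terse) proof, which simply runs a Fontaine--Laffaille-style devissage for $\L\mapsto \Hom_{\widetilde{\uL}_0}(\L,\c A_{cr,\infty})$. The genuine gap is in your bottom step: you treat the killed-by-$p$ level of $\c V^t$ as the functor $W_0(\L)=\Hom_{\widetilde{\uL}_0^*}(\L,\c R^0)$ of Corollary \ref{C2.5} and import its exactness/counting. But after applying $\Pi$ the relevant period object is not $\c R^0$: it is $\widetilde{\c A}_{cr,1}=(R/x_0^p)T_1\oplus (R/x_0^p)$, and a morphism is a pair $(a_{-1},a_0)$ subject to relations \eqref{E4.1}. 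The identification of this functor with $\c V^*$ is available only on unipotent objects (Proposition \ref{P4.8}, where one gets $a_{-1}=-a_0$); on general killed-by-$p$ objects the two functors genuinely differ -- that discrepancy is precisely what forces the splittings $\Theta,\widetilde{\Theta}$ and the modified functor $\widetilde{\CV}^{ft}$ later in Section \ref{S4}. You also cannot borrow the later comparisons (Propositions \ref{P4.10}, \ref{P4.16}) to bridge this, because those results already use the short exact sequences produced by $\c V[1]^*$, i.e. the exactness you are trying to prove; that route is circular.

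Nor does the counting shortcut transfer verbatim: eliminating $a_{-1}$ from \eqref{E4.1} yields equations involving $p^2$-th powers of the unknowns, so the associated algebra is not the rank-$p^{s}$ \'etale algebra of Corollary \ref{C2.5}, and the needed equality $|\c V^t(\L)|=p^{\mathrm{rk}_{\c W_1}L}$ for $\L\in\uL^t[1]$ has to be established separately. The intended argument (and what the paper's remark after the corollary points to) is to run the Fontaine--Laffaille induction directly for $\Hom_{\widetilde{\uL}_0}(\cdot,\widetilde{\c A}_{cr,1})$, keeping both components $a_{-1},a_0$: left exactness plus the order count for simple objects and their extensions. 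Your earlier reductions (Proposition \ref{P4.4}, Lemma \ref{L3.8}, snake-lemma devissage) are sound and agree with the paper; it is only the substitution of $\c R^0$ for $\widetilde{\c A}_{cr,1}$ at the final step that fails.
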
 
\begin{proof} 
Let $\uL ^t_0$ be the full subcategory of $\widetilde{\uL} _0$ consisting 
of the triples $(L,F(L),\varphi )$ coming from 
all 
$\L =(L,F(L),\varphi ,N)\in\uL ^t$. By Proposition 
\ref{P4.4} it will be sufficient 
to prove that the 
functor $\c V ^t_0:\uL ^t_0\longrightarrow (Ab)$, such that 
$\c V ^t_0(\L )=
\Hom _{\widetilde{\uL }_0}(\L ,\c A_{cr ,\infty })$, is exact. 
The verification can be done by 
devissage  along the lines of paper    
\cite{refFL}. 
\end{proof}

\begin{remark} One can simplify 
the verification of above corollary   
by replacing $\c A_{cr ,1}$ by the corresponding 
object $\widetilde{\c A}_{cr ,1}$ related to the module 
$\widetilde{A}_{cr ,1}=(R/x_0^p)T_1\oplus (R/x_0^p)$ 
introduced in 
Subsection \ref{S4.2} below. 
\end{remark}

\begin{Cor} \label{C4.6} 
For $\L\in\uL ^f$, let $\{\L ^{(n)},i_n\}_{n\geqslant 0}$ 
be the corresponding $p$-divisible group in the category 
$\uL ^{ft}$. Then in notation of 
Corollary {\rm \ref{C3.2}}, 
$T^*_{st}(\L )=\underset{n}\varprojlim\c V ^{t}(\L ^{(n)})$.
\end{Cor}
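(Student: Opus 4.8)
The statement is essentially formal once the objects involved are in place, so the plan is to unwind both sides into inverse limits of $\Hom$-groups in $\widetilde{\uL}$ and compare them. Recall that for $\L\in\uL^f$ the associated $p$-divisible group in $\uL^{ft}$ has $n$-th term $\L^{(n)}=(L/p^nL,\,F(L)/p^nF(L),\,\varphi,\,N_S)$ with the induced data and transition maps $i_n\colon\L^{(n)}\to\L^{(n+1)}$ given by multiplication by $p$ (cf. Subsection \ref{S3.5} and Appendix \ref{A}); here one uses that the defining conditions of $\uL^f$, together with $p$-torsion freeness of $L$, force $F(L)\cap p^nL=p^nF(L)$ for all $n$, so that all these induced structures make sense. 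Recall also that $\hat A_{st}$ is $p$-torsion free and $p$-adically separated and complete, that $\c A_{st,\infty}=\varinjlim_n\c A_{st,n}$ with injective transition maps (induced by multiplication by $p$ on $\hat A_{st}$), and hence that the $p^n$-torsion submodule of $\c A_{st,\infty}$, together with its induced $\widetilde{\uL}$-structure, is canonically identified with $\c A_{st,n}$ (using also $F(\hat A_{st})\cap p\hat A_{st}=pF(\hat A_{st})$).

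First I would establish the finite-level identity $\c V^t(\L^{(n)})=\Hom_{\widetilde{\uL}}(\L,\c A_{st,n})$. Since $\L^{(n)}$ is killed by $p^n$, any morphism $\L^{(n)}\to\c A_{st,\infty}$ in $\widetilde{\uL}$ has image in $\c A_{st,\infty}[p^n]\cong\c A_{st,n}$, and conversely, so $\c V^t(\L^{(n)})=\Hom_{\widetilde{\uL}}(\L^{(n)},\c A_{st,n})$. On the other hand, any $\widetilde{\uL}$-morphism $\L\to\c A_{st,n}$ is a $\c W$-linear map into a $p^n$-torsion module, hence factors uniquely through the underlying module $L/p^nL$ of $\L^{(n)}$; the factored map is again a $\widetilde{\uL}$-morphism because $F(\L^{(n)})=F(L)/p^nF(L)$ and $\varphi$, $N_S$ on $\L^{(n)}$ are the induced ones. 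Thus $\Hom_{\widetilde{\uL}}(\L^{(n)},\c A_{st,n})=\Hom_{\widetilde{\uL}}(\L,\c A_{st,n})$, and combining gives the claimed identity.

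Next I would match the transition maps and pass to the limit. The inverse limit $\varprojlim_n\c V^t(\L^{(n)})$ is formed with the maps $\c V^t(i_n)\colon\c V^t(\L^{(n+1)})\to\c V^t(\L^{(n)})$; unwinding the previous identification, and using that $i_n$ is multiplication by $p$ while the inclusion $\c A_{st,\infty}[p^n]\hookrightarrow\c A_{st,\infty}[p^{n+1}]$ corresponds to multiplication by $p$ on $\hat A_{st}$, one checks that the two factors of $p$ cancel and $\c V^t(i_n)$ becomes exactly the reduction map $\Hom_{\widetilde{\uL}}(\L,\c A_{st,n+1})\to\Hom_{\widetilde{\uL}}(\L,\c A_{st,n})$ induced by $\hat A_{st}/p^{n+1}\to\hat A_{st}/p^n$. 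Consequently
$$\varprojlim_n\c V^t(\L^{(n)})=\varprojlim_n\Hom_{\widetilde{\uL}}(\L,\c A_{st,n})=\Hom_{\widetilde{\uL}}\bigl(\L,\varprojlim_n\c A_{st,n}\bigr)=\Hom_{\widetilde{\uL}}(\L,\hat A_{st})=T^*_{st}(\L),$$
the middle equality holding because $\L$ is a finitely generated $\c W$-module and $\hat A_{st}$ is $p$-adically complete, so that $\Hom_{\c W}(\L,-)$ commutes with the limit and the conditions cutting out $\widetilde{\uL}$-morphisms (compatibility with $F$, $\varphi$, $N_S$) are preserved under it.

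The main obstacle is the transition-map bookkeeping in the third step: one has to verify precisely that under $\c V^t(\L^{(n)})=\Hom_{\widetilde{\uL}}(\L,\c A_{st,n})$ the maps $\c V^t(i_n)$ become the naive reduction maps, i.e. that the multiplication-by-$p$ hidden in $i_n$ is exactly absorbed by the identification $\c A_{st,\infty}[p^n]\cong\c A_{st,n}$. Everything else is formal juggling of inverse and direct limits, granted the structural facts that $\hat A_{st}$ and $F(\hat A_{st})$ are $p$-torsion free, $\hat A_{st}$ is $p$-adically complete, and $F(\hat A_{st})\cap p\hat A_{st}=pF(\hat A_{st})$; if one wishes, exactness of $\c V^t$ (Corollary \ref{C4.5}) shows moreover that the transition maps of the system $\{\c V^t(\L^{(n)})\}_n$ are surjective, so it is Mittag-Leffler and no $\varprojlim^1$ issue arises.
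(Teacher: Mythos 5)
Your proposal is correct and is essentially the argument the paper leaves implicit: the corollary is stated without proof, being the formal consequence of the definitions $T^*_{st}(\L)=\Hom_{\widetilde{\uL}}(\L,\hat A_{st})$, $\c V^t(\L^{(n)})=\Hom_{\widetilde{\uL}}(\L^{(n)},\c A_{st,\infty})$ and the concrete description $\L^{(n)}=\L/p^n\L$ of the attached $p$-divisible group, exactly as you spell out. Your finite-level identification via the $p^n$-torsion of $\c A_{st,\infty}$, the matching of $\c V^t(i_n)$ with the reduction maps, and the passage to the limit using $p$-torsion freeness, saturation of $F(\hat A_{st})$, and $p$-adic completeness are precisely the verifications the paper takes for granted.
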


\subsection{The functor $\c V[1]^*$}
\label{S4.2} 

Note the following case of Proposition \ref{P4.4}. 

\begin{Prop} \label{P4.7} 
Suppose $\L =(L,F(L),\varphi ,N)\in\uL ^t[1]$. Then 
there is an isomorphism of abelian groups 
$\c V ^t(\L )\simeq \Hom _{\widetilde{\uL}_0}(\L,\c A_{cr ,1})$ and 
$\Gamma _F$ acts on $\c V^t(\L )$ via its natural action  on $A_{st ,1}$ 
and the identification 
$$\iota _{\L }:
\Hom _{\widetilde{\uL}_0}(\L ,\c A_{cr ,1})
\longrightarrow 
\Hom _{\widetilde{\uL}}(\L, \c A_{st ,1}).$$
such that if 
$f_0\in\Hom _{\widetilde{\uL}_0} 
(\L ,\c A_{cr ,1})$ then 
for any $l\in L$, 
$$\iota _{\L}(f_0)(l)=
\sum _{j\geqslant 0}f_0(N^j(l))\gamma _j(\log (1+X))$$
\end{Prop}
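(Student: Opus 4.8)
The plan is to obtain this as the special case $n=1$ of Proposition~\ref{P4.4}, the one case in which the target ring acquires a $\Gamma_F$-action. First I would observe that, since $\L$ is killed by $p$, every morphism $f\colon\L\To\c A_{st,\infty}$ has image in the $p$-torsion subgroup, which is $A_{st,1}=\hat A_{st}/p\hat A_{st}$ because $\hat A_{st}$ is $p$-torsion free (being a subring of $A_{cr}[X]\otimes_{\Z_p}\Q_p$). Hence $\c V^t(\L)=\Hom_{\widetilde{\uL}}(\L,\c A_{st,1})$, and the $\Gamma_F$-action it inherits from $\c A_{st,\infty}$ is precisely the one induced by the natural $\Gamma_F$-action on $\hat A_{st,1}$.

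Next, Proposition~\ref{P4.4}(a) with $n=1$ is exactly the asserted isomorphism of abelian groups $\c V^t(\L)\overset{\sim}{\To}\Hom_{\widetilde{\uL}_0}(\L,\c A_{cr,1})$, $f\mapsto f_0$, where $f_0$ is the constant term in the expansion $f(l)=\sum_{j\geqslant0}f_j(l)\gamma_j(\log(1+X))$ obtained from the embedding $A_{st,1}\hookrightarrow\prod_{j\geqslant0}A_{cr,1}\gamma_j(\log(1+X))$. Proposition~\ref{P4.4}(b) identifies the remaining coefficients, $f_j(l)=f_0(N^j(l))$; therefore the inverse of $f\mapsto f_0$ is the map $\iota_\L$ sending $f_0$ to the map $l\mapsto\sum_{j\geqslant0}f_0(N^j(l))\gamma_j(\log(1+X))$, which is the claimed formula. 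That this series is a well-defined element of $A_{st,1}$ and that $\iota_\L(f_0)$ commutes with $N_S$ (so that it lands in $\widetilde{\uL}$ and not merely in $\widetilde{\uL}_0$) is already part of Proposition~\ref{P4.4}; it rests on the nilpotence of $N$ on $\varphi(F(L))$ (Proposition~\ref{P4.1}) together with the computation of the $N$-nilpotent part of $\hat A_{st,1}$ in Proposition~\ref{P4.3}.

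Finally, the $\Gamma_F$-equivariance assertion is formal: $\c V^t(\L)=\Hom_{\widetilde{\uL}}(\L,\c A_{st,1})$ carries the $\Gamma_F$-action by postcomposition with $\tau\colon\hat A_{st,1}\to\hat A_{st,1}$, and $\iota_\L$ is merely the inverse of a canonical isomorphism of abelian groups, so it transports this action automatically, with no calculation required. I do not expect any genuine obstacle, as the statement is a repackaging of Proposition~\ref{P4.4} in the one case where the target is a $\Gamma_F$-module. The single point worth flagging is that under $\iota_\L$ the transported action on $\Hom_{\widetilde{\uL}_0}(\L,\c A_{cr,1})$ is \emph{not} postcomposition with $\tau$ on $A_{cr,1}$: since $\tau(1+X)=[\varepsilon]^{k(\tau)}(1+X)$ gives $\tau(\log(1+X))=\log(1+X)+k(\tau)\log[\varepsilon]$ with $\log[\varepsilon]\in\Fil^1A_{cr,1}$, the automorphism $\tau$ genuinely mixes the coefficients $f_j$, and the formula for $\iota_\L$ is exactly what encodes this twist.
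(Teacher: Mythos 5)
Your proposal is correct and matches the paper's treatment: the paper itself presents Proposition \ref{P4.7} simply as the killed-by-$p$ case of Proposition \ref{P4.4} (whose parts a) and b) give the isomorphism $f\mapsto f_0$ and the formula $f_j(l)=f_0(N^j(l))$, resting on Propositions \ref{P4.1} and \ref{P4.3}), with the $\Gamma_F$-action transported through $\iota_\L$ exactly as you describe. Your closing remark that the transported action is not plain postcomposition on $A_{cr,1}$, because $\tau$ shifts $\log(1+X)$ by $k(\tau)\log[\varepsilon]$ and so mixes the coefficients, is a correct and worthwhile clarification consistent with how the action is used later in Subsection \ref{S4.2}.
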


Introduce the functor 
$\c V[1]^*:= \c V^t|_{\uL ^t[1]}\circ\Pi ^{-1}:\uL^*\longrightarrow\uM _F$, 
where $\Pi :\uL ^t[1]\To\uL ^*$ is the equivalence of categories from 
Corollary \ref{C3.10}. 

\begin{Prop} \label{P4.8} On the subcategory of unipotent objects $\uL^{*u}$ of $\uL^*$ 
 the functors $\c V [1]^*$ and $\c V^*$ coincide.
\end{Prop}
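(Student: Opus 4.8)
The plan is to compare the two functors $\c V[1]^*$ and $\c V^*$ on a unipotent object $\L=(L,F(L),\varphi,N)\in\uL^{*u}$ by exhibiting a natural isomorphism between the modules describing them. On the one hand, Proposition \ref{P4.7} presents $\c V[1]^*(\L)$ as $\Hom_{\widetilde{\uL}_0}(\L,\c A_{cr,1})$ with $\Gamma_F$ acting through the embedding $\iota_{\L}$ into $\Hom_{\widetilde{\uL}}(\L,\c A_{st,1})$ via the formula $\iota_{\L}(f_0)(l)=\sum_{j\geqslant 0}f_0(N^j(l))\gamma_j(\log(1+X))$. On the other hand, Corollary \ref{C2.9} (in the unipotent form noted in the Remark following it) presents $\c V^*(\L)$ as $\Hom_{\widetilde{\uL}_0^*}(\L,\widetilde{\c R}^u)$ with $\Gamma_F$ acting via $\sum_{0\leqslant i<p}N^{*i}(f_0)\gamma_i(Y)\bmod\widetilde{J}^u$, where $\widetilde{\c R}^u=(R/x_0^p,x_0^{p-1}R/x_0^p,\varphi)$. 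So both sides are built from $\sigma$-linear $\c W_1$-module maps $f_0$ landing in a copy of $R/x_0^p$ (or $A_{cr,1}/\m$, which reduces to the same thing), twisted up by the action of $N$ in the divided-power variable, and the task is to match these two descriptions compatibly with the Galois action.

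First I would fix the identification of the target objects: reduce $\c A_{cr,1}$ modulo its maximal ideal (or rather pass to the quotient that kills the unwanted part, using the Remark after Corollary \ref{C4.5} which replaces $\c A_{cr,1}$ by $\widetilde{\c A}_{cr,1}$ attached to $\widetilde A_{cr,1}=(R/x_0^p)T_1\oplus(R/x_0^p)$); under the map $u\mapsto[x_0]$ on the $\c W$-side and $u\mapsto x_0$ on the $\widetilde{\c R}^u$-side these become isomorphic as objects of $\widetilde{\uL}_0^*$, so $\Hom_{\widetilde{\uL}_0}(\L,\c A_{cr,1})$ and $\Hom_{\widetilde{\uL}_0^*}(\L,\widetilde{\c R}^u)$ are canonically identified as abelian groups. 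Second, I would match the two divided-power variables: on the $\c V[1]^*$ side the relevant generator is $\log(1+X)$, on the $\c V^*$ side it is $Y$, and the $\c W_1$-module structures $u\mapsto[x_0]/(1+X)$ versus $u\mapsto x_0\exp(-Y)$ force the identification $\log(1+X)\leftrightarrow Y$ (equivalently $1+X\leftrightarrow\exp(Y)$); under this the filtration ideals $\widetilde J^u$ and the analogue of $J^0$ correspond. Third, I would check that the $N$-twisting formulas agree: in $\c V[1]^*$ the lift is $\sum_j f_0(N^j(l))\gamma_j(\log(1+X))$, and in $\c V^*$ it is $\sum_i N^{*i}(f_0)\gamma_i(Y)$; since by Corollary \ref{C2.8} the operator $N^*$ acts by $N^{*i}(f_0)(l)=f_0(N^i(l))$, these are literally the same series once $\log(1+X)$ and $Y$ are identified. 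Finally I would verify that the $\Gamma_F$-actions match, i.e. that $\tau(Y)=Y+k(\tau)\widetilde{\log}\varepsilon$ in $R^0_{st}/J^0$ corresponds to $\tau(X)=[\varepsilon]^{k(\tau)}(1+X)-1$ in $\hat A_{st}$ under $1+X\leftrightarrow\exp(Y)$, which is exactly the compatibility of the truncated logarithm with the true logarithm modulo the relevant ideal, already recorded in the computations preceding Lemma \ref{L2.5} and Lemma \ref{L2.7}.

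The main obstacle will be the third step, namely controlling the error terms in the $\varphi$-structure and in the divided-power expansion when one passes from the formal object $\hat A_{st}$ and its quotient $A_{st,1}$ to the small object $R^0_{st}/J^0$. In $\widetilde{\c R}^u$ the map $\varphi$ is the honest $p$-th power normalized by $x_0^{p-1}$, whereas in $\c A_{st,1}$ the map $\varphi$ carries divided-power corrections such as $\varphi(x_0^{p-1-i}\gamma_i(Y))=\gamma_i(Y)(1-(i/2)x_0^pY)$ coming from the definition in Subsection \ref{S2.1}; one must check that all such corrections lie in the ideal being quotiented out (powers of $x_0$ of height at least $p$ after accounting for the divided-power degree, i.e. in $J^0$ resp. $\widetilde J^u$), so that on the relevant quotient the two $\varphi$'s literally coincide. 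This is essentially the same bookkeeping that appears in the proof of Lemma \ref{l2.4} and in the proof of Lemma \ref{L2.7}, so I expect it to go through by the estimate $\varepsilon\equiv 1\bmod x_0^{p/(p-1)}$ together with $\Fil^pR^0_{st}\subset J^0$; but it is the only place where a genuine computation, rather than formal matching of definitions, is required. Once that is in place, the natural isomorphism of abelian groups is visibly $\Gamma_F$-equivariant, and since it is functorial in $\L$ this proves $\c V[1]^*$ and $\c V^*$ coincide on $\uL^{*u}$.
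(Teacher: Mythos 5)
There is a genuine gap, and it sits exactly where your first step declares the comparison to be formal. You assert that after passing to $\widetilde{A}_{cr,1}=(R/x_0^p)T_1\oplus(R/x_0^p)$ the two targets ``become isomorphic as objects of $\widetilde{\uL}_0^*$,'' so that $\Hom_{\widetilde{\uL}_0}(\L ,\c A_{cr ,1})$ and $\Hom_{\widetilde{\uL}_0^*}(\L ,\widetilde{\c R}^u)$ are canonically identified for every $\L$. They are not isomorphic: in $\widetilde{\c A}_{cr ,1}$ one has $\varphi (x_0^{p-1})=1-T_1$ and $\varphi (T_1)=1$, so the $T_1$-component cannot be projected away (killing $T_1$ is not a morphism of filtered $\varphi$-modules, since $\varphi (T_1)=1$ does not lie in the ideal $(T_1)$), and the only natural comparison map goes the other way, namely the embedding $\widetilde{\c R}^u\To\widetilde{\c A}_{cr ,1}$, $r\mapsto (-rT_1,\,r)$. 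Whether this embedding induces a bijection on $\Hom (\L ,-)$ is precisely the content of the proposition, and it is exactly here that unipotence must be used; your proposal never invokes unipotence after the first sentence. If the identification were formal, the functors $\c V[1]^*$ and $\c V^*$ would coincide on all of $\uL ^*$, which is false for multiplicative objects -- this failure is the whole reason the paper needs the splitting $\Theta$ of Proposition \ref{P4.10} and the modified functor $\widetilde{\c V}^{ft}$.

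The paper's proof closes this gap by a computation you are missing: write $f_0(m)=a_{-1}(m)T_1+a_0(m)$; compatibility with $\varphi$ is equivalent to the relations \eqref{E4.1}, namely $a_0(\varphi (m))=a_{-1}(m)^p+a_0(m)^p/x_0^{p(p-1)}$ and $a_{-1}(\varphi (m))=-a_0(m)^p/x_0^{p(p-1)}$. Rewriting these via the operator $\bar V$ on $\bar L=L/u^pL$ gives $a_{-1}(m)=-a_0(m)+a_{-1}(\bar Vm)^p$, and iterating -- using that $\bar V$ is nilpotent precisely because $\L$ is unipotent -- kills the correction term and forces $a_{-1}=-a_0$. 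Hence every $f_0$ factors through the embedding $r\mapsto (-rT_1,r)$ and satisfies $a_0(\varphi (m))=a_0(m)^p/x_0^{p(p-1)}$, which is the defining relation of $\Hom _{\widetilde{\uL }_0}(\L ,\widetilde{\c R}^u)$. Your remaining steps -- matching $Y$ with $\log (1+X)$, the identity $N^{*i}(f_0)=f_0\circ N^i$, and the $\Gamma _F$-equivariance through the truncated logarithm -- do agree in spirit with the final paragraph of the paper's proof (extending the embedding to $R_{st}/x_0^pR_{st}\To \widetilde{A}_{st ,1}$ and passing to the $\Gamma _F$-invariant ideal $J(\widetilde{A}_{st ,1})$), and the $\varphi$-corrections you flagged as the main obstacle are routine; the missing ingredient is the nilpotence argument above, without which the statement you are proving is simply not true at the stated level of generality.
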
 

\begin{proof} The definition of $A_{cr}$ implies that 
 $A_{cr ,1}=(R/x_0^p)[T_1,T_2,\dots ]_{<p}$, where for 
all indices $i\geqslant 1$, $T_i$ comes from $\gamma _{p^i}([x_0]+p)$ and 
 $T_i^p=0$. 
Set $F(A_{cr ,1})=\Fil ^{p-1}A_{cr ,1}=
(x_0^{p-1}R/x_0^pR)\oplus (R/x_0^p)I_1$, where the ideal 
$I_1$ is generated by all $T_i$. Then the corresponding map $\varphi :
F(A_{cr ,1})\longrightarrow A_{cr ,1}$ is uniquely determined by the conditions 
$\varphi (x_0^{p-1})=1-T_1$, $\varphi (T_1)=1$ and 
$\varphi (T_i)=0$ if $i\geqslant 2$. In particular, 
$\varphi (A_{cr ,1})\subset (R/x_0^p)T_1\oplus (R/x_0^p)$. 

Let $\widetilde{A}_{cr ,1}=A_{cr ,1}/J_1$ with the induced structure of 
filtered $\varphi $-module $\widetilde{\c A}_{cr ,1}$, 
where the ideal $J_1$ of $A_{cr ,1}$ is generated 
by the elements $T_1x_0^p$ and $T_i$ with $i\geqslant 2$. Then the projection 
$A_{cr ,1}\longrightarrow \widetilde{A}_{cr ,1}$ induces for any object 
$\L =(L,F(L),\varphi ,N)$ of the category $\uL ^*$, the identification 
(use that $\varphi |_J=0$)
$$\Hom _{\widetilde{\uL}_0}(\L ,\c A_{cr ,1})=
\Hom _{\widetilde{\uL }_0}(\L ,\widetilde{\c A}_{cr ,1}).$$

Introduce $a_0,a_{-1}\in\Hom (L,R/x_0^p)$ such that for any $m\in L$, 
$f_0(m)=a_{-1}(m)T_1+a_0(m)$. Note that $a_0$ and $a_{-1}$ are $\c W_1$-linear, 
where the multiplication by $u$ on $L$ correspondes to the multiplication by 
$x_0$ in $R/x_0^p$. 

Then for any $m\in F(L)$, the requirement 
$f_0(\varphi (m))=\varphi (f_0(m))$ is equivalent to the conditions

\begin{equation}\label{E4.1}
\begin{array} {lcl} a_0(\varphi (m)) & = & a_{-1}(m)^p+
   \dfrac{a_0(m)^p}{x_0^{p(p-1)}}\\
a_{-1}(\varphi (m))& = & -\dfrac{a_0(m)^p}{x_0^{p(p-1)}}
  \end{array}
\end{equation}

Note that these conditions depend only on 
$\bar m=m\operatorname{mod}u^pL$. 
\medskip 

Consider the operator $V:L\longrightarrow L$ from Subsection 
\ref{S1.5}. 
Clearly,  $V(u^pL)\subset uF(L)$ and for $\bar L:=L/u^pL$, 
we obtain the induced operator 
$\bar V:\bar L\longrightarrow\bar L$ (use that $F(L)/uF(L)\subset L/u^pL$). 

For any $m\in\bar L$, relations \eqref{E4.1} can be 
rewritten as follows:

$$\begin{array}{lcl}a_0(m)&=&\dfrac{a_0(\bar Vm)^p}{x_0^{p(p-1)}}+
a_{-1}(\bar Vm)^p\\
a_{-1}(m)&=&-\dfrac{a_0(\bar Vm)^p}{x_0^{p(p-1)}}
\end{array}
$$
Therefore, if $\L $ is unipotent then for any $m\in\bar L$, 
$$a_{-1}(m)=-a_0(m)+a_{-1}(\bar Vm)^p=
-a_0(m)+a_{-1}(\bar V^2m)^{p^2}=\dots =-a_0(m).$$

This implies that for any $m\in F(\bar L)$, 
$a_0(\varphi (m))=a_0(m)^p/x_0^{p(p-1)}$. 
In other words, we have a natural 
identification 
$$\Hom _{\widetilde{\uL }_0}(\L ,\widetilde{\c R}^u)=
\Hom _{\widetilde{\uL }_0}(\L ,\widetilde{A}_{cr ,1})
$$
coming from the map of filtered $\varphi $-modules 
$\widetilde{\c R}^u\longrightarrow\widetilde{\c A}_{cr ,1}$ 
given by the $R$-linear map 
$R/x_0^p\longrightarrow \widetilde{A}_{cr ,1}
=(R/x_0^p)T_1\oplus (R/x_0^p)$ such that 
for any $r\in R/x_0^p$, 
$r\mapsto (-rT_1,r)$. 
(For the definition of $\wt{\c R}\in\uL ^*_0$ cf. Subsection \ref{S2.2}.) 

This implies that for all unipotent $\L\in\uL ^{*u}$, there is a natural  
identification of $\Gamma _F$-modules 
 $\c V[1]^*(\L )=\c V^*(\L )$. 
Indeed, the above embedding 
$R/x_0^p\longrightarrow \widetilde{A}_{cr ,1}$ 
can be extended to the embedding 
of $R_{st}/x_0^pR_{st}$ to 
$\widetilde{A}_{st ,1}=
\prod _{j\geqslant 0}
\widetilde{A}_{cr ,1}\gamma _j(\log (1+X))$, which induces the above identification.  
\end{proof}
\medskip

\subsection{Splittings $\Theta $ and $\widetilde{\Theta }$}
\label{S4.3}

Suppose $\L =(L,F(L),\varphi ,N)\in\uL ^*$. Then there is a standard 
short exact sequence 
\begin{equation}\label{E4.2}
0\longrightarrow \L ^u\overset{i}\longrightarrow\L 
\overset{j}\longrightarrow \L ^m\longrightarrow 0,
\end{equation} 
where 
$(\L ^u,i)$ is the maximal unipotent subobject and 
$(\L ^m,j)$  is the maximal multiplicative quotient of $\L$. 

If $\L ^m=(L^m,F(L^m),\varphi ,N)$ then $F(L^m)=L^m=L_0\otimes _{\F _p}\c W_1$, 
where $L_0=\{l\in L^m\ |\ \varphi (l)=l\}$. 
Suppose $S:L^m\longrightarrow F(L)\subset L$ is a 
$\W _1$-linear section. Then for any $l_0\in L_0$, 
$S(l_0)=\varphi (S(l_0))+g(l_0)$, where 
$g\in\Hom (L_0,L^u)$. If $S':L^m\longrightarrow F(L)$ 
is another $\c W_1$-linear section then for any $l_0\in L_0$, 
$S'(l_0)=\varphi (S'(l_0))+g'(l_0)$. Here 
$g'\in\Hom (L_0,L^u)$ is such that for some 
$h\in\Hom (L_0,L^u)$, it holds 
$$(g'-g)(l_0)=h(l_0)-\varphi (h(l_0)).$$ 

\begin{Prop} \label{P4.9}

a) There is a section $S$ such that $g(L_0)\subset uL^u$. 

b) If $g(L_0),g'(L_0)\subset uL^u$ then $h(L_0)\subset uF(L^u)$.
\end{Prop}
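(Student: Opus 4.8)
The plan is to reduce both parts to the invertibility of an operator of the form $\id-\bar V$, where $\bar V$ is the nilpotent operator $V\operatorname{mod}u$ attached to the unipotent object $\L^u$, and then to invoke Lemma~\ref{L1.1}. First I would record the structural facts about $\L^u$ that I need. By Proposition~\ref{P1.19}, $\L^u$ lies in $\uL^*$, hence in $\uL^*_0$, so $L^u=\varphi(F(L^u))\otimes_{\sigma(\c W_1)}\c W_1=\bigoplus_{0\leqslant i<p}u^i\varphi(F(L^u))$. From this one reads off that $\varphi:F(L^u)\to L^u$ is injective, that $V\varphi=\id_{F(L^u)}$ for the operator $V$ of Subsection~\ref{S1.5}, that $\varphi(F(L^u))$ surjects onto $L^u/uL^u$, and (using $u^p\varphi(m)=\varphi(um)$) that $\varphi(F(L^u))\cap uL^u=\varphi(uF(L^u))$. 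Writing $\bar V$ for the induced operator $V\operatorname{mod}u$ on $L^u/uL^u$, so that $\bar V(\overline x)=\overline{V(x)}$: since $\L^u$ is unipotent, $\bar V$ is nilpotent, and therefore $\id-\bar V$ is bijective on $L^u/uL^u$ by Lemma~\ref{L1.1}\,b).

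Next I would turn the question into a linear equation. If $S'$ is any other $\c W_1$-linear section, then $h:=S'-S$ takes values in $F(L)\cap L^u=F(L^u)$ (because $jh=0$ and $i$ is strict), and conversely $S+h$ is a section for every $\c W_1$-linear $h:L^m\to F(L^u)$; since $L^m=L_0\otimes_{\F_p}\c W_1$, such $h$ are the same as $\F_p$-linear maps $L_0\to F(L^u)$, and the associated map is $g'=g+h-\varphi h$. The key move is the substitution $k:=\varphi\circ h:L_0\to\varphi(F(L^u))$: by injectivity of $\varphi$ together with $V\varphi=\id$ it is a bijection between $\Hom_{\F_p}(L_0,F(L^u))$ and $\Hom_{\F_p}(L_0,\varphi(F(L^u)))$, with inverse $k\mapsto V\circ k$, and $g'=g+V\circ k-k$ (using $\varphi V=\id$ on $\varphi(F(L^u))$). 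Reducing modulo $uL^u$, the requirement $g'(L_0)\subseteq uL^u$ becomes $(\id-\bar V)(\bar k)=\bar g$, an equation in $\Hom_{\F_p}(L_0,L^u/uL^u)$ whose unknown $\bar k$ may be prescribed at will, because $\varphi(F(L^u))$ surjects onto $L^u/uL^u$. For part~a) I would take $\bar k=(\id-\bar V)^{-1}\bar g$, lift it to some $k\in\Hom_{\F_p}(L_0,\varphi(F(L^u)))$, set $h=V\circ k$, and then $S+h$ is the section required. For part~b), under the same substitution the hypothesis $g(L_0),g'(L_0)\subseteq uL^u$ forces $(\id-\bar V)(\bar k)=0$, hence $\bar k=0$ by bijectivity, i.e. $k(L_0)\subseteq\varphi(F(L^u))\cap uL^u=\varphi(uF(L^u))$; therefore $h(L_0)=V(k(L_0))\subseteq V(\varphi(uF(L^u)))=uF(L^u)$, as asserted.

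I expect the only genuine obstacle to be the bookkeeping in the first paragraph: that $\varphi$ is injective on $F(L^u)$, that $V$ really descends to the operator $\bar V$ of Subsection~\ref{S1.5} on $L^u/uL^u$, and in particular the identity $\varphi(F(L^u))\cap uL^u=\varphi(uF(L^u))$. These are routine consequences of $\L^u\in\uL^*_0$, but they require handling the $\sigma$-semilinearity of $\varphi$ and the decomposition $L^u=\bigoplus_{0\leqslant i<p}u^i\varphi(F(L^u))$ with some care. Once they are in place, parts a) and b) are purely formal and rest solely on the unipotence of $\L^u$ and Lemma~\ref{L1.1}.
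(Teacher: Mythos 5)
Your proof is correct and follows essentially the paper's own route: both arguments rest on the nilpotence of $V\bmod u$ on $L^u$ (unipotence) together with $V\varphi=\id$ on $F(L^u)$ and $\varphi V=\id$ on $\varphi(F(L^u))$, i.e.\ on inverting the identity minus a nilpotent operator built from $V$. The paper merely writes this inverse as the explicit truncated series $h=-(Vl+V^2l+\cdots+V^{n_0+1}l)$ for part a) and iterates $V$ directly for part b), instead of passing to $L^u/uL^u$ and quoting Lemma \ref{L1.1}\,b); your substitution $k=\varphi\circ h$ is a harmless repackaging of the same computation.
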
 

\begin{proof} a) It will be sufficient 
to prove that for any $l\in L^u$, 
there is an $h\in F(L^u)$ such that 
$l\equiv h-\varphi (h)\,\mathrm{mod}\,uL^u$. 

Suppose $n_0\geqslant 1$ is such that 
$V^{n_0}(L^u)\subset uF(L^u)$. Then 
for all $n\geqslant n_0$, $V^n(L^u)\subset uF(L^u)$. 
Let $h=-(Vl+V^2l+\dots +V^{n_0+1}l)$. 
By the definition of the operator $V$ for all $1\leqslant i\leqslant n_0+1$, 
$V^il\in F(L^u)$ and $\varphi (V^il)\equiv V^{i-1}l\operatorname{mod}uL^u$. Therefore, 
$h\in F(L^u)$ and $\varphi (h)\equiv -(l+Vl+
\dots +V^{n_0}l)\equiv -l+h\,\mathrm{mod}\,uL^u$.

b) We must prove that if $h\in F(L^u)$ and 
$h-\varphi (h)\in uL^u$ then $h\in uF(L^u)$. 

Indeed, we have $V(h)-h\in V(uL^u)\subset uF(L^u)$ and for all 
$n\geqslant 1$, $V^n(h)\equiv h\mathrm{mod}\,uF(L^u)$ implies that 
$h\in uL^u$. Therefore, 
$\varphi (h)\in uL^u$ and $h\in uF(L^u)$.
\end{proof} 

\begin{Prop} \label{P4.10} 
With above notation the short exact sequence 
 $$0\longrightarrow\c V[1]^*(\L ^m)\longrightarrow 
\c V[1]^*(\L )\longrightarrow 
\c V[1]^*(\L ^u)\longrightarrow 0$$
obtained from \eqref{E4.2} by applying $\c V[1]^*$, 
has a canonical functorial splittings 
$\Theta :\c V[1]^*(\L ^u)\longrightarrow \c V[1]^*(\L )$ and 
$\widetilde{\Theta }:\c V[1]^*(\L )\longrightarrow \c V[1)]^*(\L ^m)$
in the category $\uM _F$.
\end{Prop}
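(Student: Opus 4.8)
The plan is to construct the splittings at the level of the explicit description of $\c V[1]^*$ provided by Proposition~\ref{P4.7} and then check they are $\Gamma_F$-equivariant and functorial. Recall that after applying $\Pi$, an object $\L=(L,F(L),\varphi,N)\in\uL^*$ has $\c V[1]^*(\L)=\Hom_{\widetilde{\uL}_0}(\L,\c A_{cr,1})$ with the twisted $\Gamma_F$-action, and by the proof of Proposition~\ref{P4.8} the relevant target reduces to $\widetilde{\c A}_{cr,1}=(R/x_0^p)T_1\oplus(R/x_0^p)$ so that a homomorphism $f_0$ is encoded by the pair $(a_{-1},a_0)\in\Hom(L,R/x_0^p)^2$ satisfying \eqref{E4.1}. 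The key observation is Proposition~\ref{P4.9}a): choose a $\c W_1$-linear section $S:L^m\To F(L)$ of the multiplicative quotient such that the ``defect'' $g\in\Hom(L_0,L^u)$ satisfies $g(L_0)\subset uL^u$, and Proposition~\ref{P4.9}b) tells us this $S$ is unique up to an $h$ with $h(L_0)\subset uF(L^u)$, hence up to something invisible after the relevant reductions. This canonical choice of section is what produces the splitting.

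\textbf{The construction of $\widetilde\Theta$.} Given $f_0\in\Hom_{\widetilde{\uL}_0}(\L,\widetilde{\c A}_{cr,1})$ representing an element of $\c V[1]^*(\L)$, I would restrict its ``$a_0$-component'' to $L_0\subset L^m$ via the section $S$: concretely, for $l_0\in L_0$ set $\widetilde\Theta(f_0)(l_0)=a_0(S(l_0))^{(1)}$, the leading coefficient, which by the unipotence computation in the proof of Proposition~\ref{P4.8} (where $a_{-1}=-a_0$ is forced on the unipotent part) and relation \eqref{E4.1} applied to $\L^m$ satisfies the correct $\varphi$-compatibility $a_0(l_0)=a_0(l_0)^p/x_0^{p(p-1)}$ defining elements of $\c V[1]^*(\L^m)=\c V^*(\L^m)$. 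One then verifies this is independent of the choice of $S$ within the class allowed by Proposition~\ref{P4.9}b): changing $S$ by $h$ with $h(L_0)\subset uF(L^u)$ changes the output by $a_0$ evaluated on something in $uL^u$, and since $a_0$ is $\c W_1$-linear with $u$ acting as $x_0$, and $\c V^*(\L^m)$ computed modulo $x_0^p\m_R$ kills this contribution after one application of the Frobenius normalization. For $\Theta$, I would dually send an element of $\c V[1]^*(\L^u)$ — given as $(a_{-1},a_0)$ on $L^u$ with $a_{-1}=-a_0$ — to the homomorphism on $L$ obtained by composing with the projection $j:L\To L^m$ in the way dual to the section, using that $S$ provides a splitting $L=i(L^u)\oplus S(L^m)$ as $\c W_1$-modules and extending the given $f_0^u$ by zero on $S(L^m)$; here one checks $\varphi$-compatibility using exactly the property $g(L_0)\subset uL^u$ so that $\varphi$ applied to the section lands back in the section modulo $uL^u$, which is killed.

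\textbf{$\Gamma_F$-equivariance and functoriality} form the next step. The $\Gamma_F$-action on $\c V[1]^*$ comes through the identification $\iota_\L$ of Proposition~\ref{P4.7} and the action of $\tau\in\Gamma_F$ on $X$ (equivalently on $\log(1+X)$) by $Y\mapsto Y+k(\tau)\widetilde{\log}\varepsilon$. Since our section $S$ and the splitting $L=i(L^u)\oplus S(L^m)$ are chosen inside $\uL^*$ — i.e.\ purely in terms of the $\c W_1$-module and $\varphi$ data, with no reference to the Galois action — the maps $\Theta,\widetilde\Theta$ commute with the twisting, because on the multiplicative part $\L^m$ the Galois action is essentially trivial (the periods $x_s^{(q-1)r}$ with $r=1$ contribute the cyclotomic-type character but no $Y$-dependence, matching the fact that $N=0$ there), so the $\gamma_j(Y)$-tail which carries the twist does not interact with the $a_0$-component we extracted. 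Functoriality in $\L$ is automatic once one checks that a morphism $\L\To\L'$ in $\uL^*$ carries a good section to a good section, which follows from the functoriality of the maximal unipotent subobject and maximal multiplicative quotient (stated in Subsection~\ref{S1.5}) together with the uniqueness clause of Proposition~\ref{P4.9}b).

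\textbf{The main obstacle} I expect is verifying that the splitting is genuinely \emph{canonical} — i.e.\ well-defined independently of the auxiliary section $S$ — rather than merely existing. The delicate point is that two admissible sections differ by an $h$ with $h(L_0)\subset uF(L^u)$, and one must track precisely how this ambiguity propagates through \eqref{E4.1} and the passage to $R/x_0^p\m_R$ (or $R/x_0^p$ in the unipotent case of the Remark after Corollary~\ref{C2.9}); the claim is that the ambiguity is exactly absorbed by the coboundary freedom $g'(l_0)=h(l_0)-\varphi(h(l_0))$ recorded in Proposition~\ref{P4.9}, so that the induced map on cohomology classes is unchanged. Making this matching airtight — and simultaneously confirming that $\Theta$ and $\widetilde\Theta$ really do split the sequence, i.e.\ $\widetilde\Theta\circ(\text{incl})=\id$ on $\c V[1]^*(\L^m)$ and the projection composed with $\Theta$ is $\id$ on $\c V[1]^*(\L^u)$ — is where the real work lies; the $\Gamma_F$-equivariance, by contrast, should be essentially formal given the $Y$-independence of the multiplicative contribution.
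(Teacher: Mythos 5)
Your overall strategy (pass to the explicit description of $\c V[1]^*$ via $\widetilde{\c A}_{cr,1}$, pick the normalized section $S$ of Proposition \ref{P4.9}, and split along it) is the paper's strategy, but your actual construction of $\Theta$ fails at the decisive step. You extend $f_0=(a_{-1},a_0)$ from $L^u$ to $L$ ``by zero on $S(L^m)$'', claiming that the defect $g(l_0)=S(l_0)-\varphi(S(l_0))\in uL^u$ is ``killed''. It is not: $a_0$ is $\c W_1$-linear with $u$ acting as $x_0$, so $a_0(uL^u)\subset x_0R/x_0^p$, which is nonzero; what is killed is $uF(L^u)$, since $a_0(F(L^u))\subset x_0^{p-1}R/x_0^p$ gives $a_0(uF(L^u))=0$. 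You have conflated $uL^u$ (where $g$ takes values, Proposition \ref{P4.9}a) with $uF(L^u)$ (where only the ambiguity $h$ between two admissible sections takes values, Proposition \ref{P4.9}b). Concretely, putting $m=S(l_0)$ into \eqref{E4.1} with the zero extension forces $a_0(g(l_0))=0$, which is false in general, so your $\Theta(f_0)$ is not a morphism in $\widetilde{\uL}_0$ at all. The paper's point is exactly that the value on $S(l_0)$ cannot be zero: one must set $a_0(S(l_0))=-a_{-1}(S(l_0))=\c X$, where $\c X$ is the unique solution of the Artin--Schreier-type equation $\c X-\c X^p/x_0^{p(p-1)}=a_0(g(l_0))$; the second relation in \eqref{E4.1} for $m=S(l_0)$ is precisely this equation, and the first then holds because $\c X\in x_0^{p-1}R/x_0^p$ makes $a_{-1}(S(l_0))^p=-\c X^p$ vanish modulo $x_0^p$. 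For the same reason your direct recipe for $\widetilde\Theta$ (restrict $a_0$ along $S$) does not land in $\c V^*(\L^m)$: by \eqref{E4.1} one gets $a_0(S(l_0))-a_0(S(l_0))^p/x_0^{p(p-1)}=a_{-1}(S(l_0))^p+a_0(g(l_0))$, which need not vanish, so the relation $\theta=\theta^p/x_0^{p(p-1)}$ defining $\c V^*(\L^m)$ is violated without the same correction.

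Two further remarks. First, once a functorial section $\Theta$ of $\c V[1]^*(\L)\to\c V[1]^*(\L^u)$ exists, $\widetilde\Theta$ needs no separate construction: it is the retraction onto $\c V[1]^*(\L^m)$ determined formally by $\Theta$ in the short exact sequence of $\Gamma_F$-modules, which is why the paper reduces everything to $\Theta$ alone. Second, the independence of the choice of $S$, which you single out as the main obstacle, is in fact the easy part and is exactly where the distinction above is used correctly: replacing $S$ by $S'$ with admissible $g,g'$ changes $g$ by $h-\varphi(h)$ with $h(L_0)\subset uF(L^u)$, and $a_0(h(l_0))=0$, $a_0(\varphi(h(l_0)))\in a_0(u^pL^u)=0$, so $a_0(g(l_0))$, hence $\c X$, hence $\Theta(f_0)$, is unchanged; functoriality and $\Gamma_F$-equivariance then follow as you outline. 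The real work you are missing is the Artin--Schreier correction itself.
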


\begin{proof} 
It will be sufficient to prove the existence 
of a functorial splitting 
$$\Theta :\Hom _{\widetilde{\uL }_0}(\L^u,\widetilde{\c A}_{cr ,1})
\longrightarrow\Hom _{\widetilde{\uL }_0}(\L ,\widetilde{\c A}_{cr ,1})$$
of the epimorphism 
$\Hom _{\widetilde{\uL }_0}(\L ,\widetilde{\c A}_{cr ,1})
\rightarrow \Hom _{\widetilde{\uL }_0}(\L ^u,\widetilde{\c A}_{cr ,1})$, 
obtained from exact sequence \eqref{E4.2}. 

Suppose $f_0=(a_{-1},a_0):L^u\longrightarrow (R/x_0^p)T_1\oplus (R/x_0^p)$ 
belongs to $\Hom _{\widetilde{\uL }_0}(\L ^u,\widetilde{\c A} _{cr ,1})$. Here 
$a_{-1},a_0\in\Hom _{\c W_1}(L^u,R/x_0^p)$ and for any $l\in L^u$, 
$a_{-1}(l)=-a_0(l)$, cf. Subsection \ref{S4.2}.  

Let $S:L^m\longrightarrow L$ be a $\c W_1$-linear section such that 
for any $l\in L_0$, $S(l_0)=\varphi (S(l_0))+g(l_0)$, where 
$g\in \Hom (L_0,uL^u)$. 

Extend $f_0$ to $\Theta (f_0)=(a_{-1},a_0):L
\longrightarrow (R/x_0^p)T_1\oplus (R/x_0^p)$ by setting 
$a_{0}(S(l_0))=-a_{-1}(S(l_0))=\c X$, where $\c X$ is a 
unique element of $R/x_0^p$ such that 
$\c X-\c X^p/x_0^{p(p-1)}=a_0(g(l_0))$. One can prove that 
$\Theta (f_0)\in\Hom _{\widetilde{\uL }_0}(\L ,\widetilde{\c A}_{cr ,1})$
by verifying relations \eqref{E4.1} with $m=S(l_0)$.
\end{proof}

\subsection{A modification of Breuil's functor}
\label{S4.4}

Remind that Breuil's functor $\c V ^t:\uL ^t\longrightarrow\uM _F$ 
attaches to any $\L\in\uL ^t$, the $\Gamma _F$-module $\c V(\L )=
\Hom _{\widetilde{\uL }}(\L ,\c A _{st,\infty })$.

\begin{Prop} \label{P4.11} 
The functor $\c V^t$ is fully faithful on the 
subcategory of unipotent objects $\uL ^{t,u}$. 
\end{Prop}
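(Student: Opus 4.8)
The strategy is to reduce the statement to the already-established full faithfulness of $\c V^*$ on the subcategory of unipotent objects $\uL^{*u}$ (Corollary \ref{C2.17}), using the equivalence $\Pi:\uL^t[1]\longrightarrow\uL^*$ of Corollary \ref{C3.10} together with Proposition \ref{P4.8}, which says that on unipotent objects killed by $p$ the functors $\c V[1]^*$ and $\c V^*$ coincide. First I would recall that $\uL^{t,u}$ is obtained from objects killed by a power of $p$ by devissage, and that $\c V^t$ is exact (Corollary \ref{C4.5}); so the natural map
\[
\Hom_{\uL^t}(\c L,\c M)\longrightarrow\Hom_{\uM_F}(\c V^t(\c L),\c V^t(\c M))
\]
can be analysed along a composition series whose graded pieces are objects of $\uL^{t,u}[1]=\uL^{t}[1]\cap\uL^{t,u}$. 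Via $\Pi$ these correspond to objects of $\uL^{*u}$, for which the statement is known.

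**Key steps.** (1) Reduce to objects killed by $p$: given $\c L,\c M\in\uL^{t,u}$, filter each by its $p$-power torsion $\c M\supset p\c M\supset p^2\c M\supset\cdots$, noting that all sub- and quotient objects in these filtrations remain unipotent (functoriality of the maximal unipotent subobject) and that the associated graded pieces are killed by $p$. Using exactness of $\c V^t$ and the five lemma for the $\Hom$–$\Ext$ exact sequences in the special/pre-abelian categories $\uL^t$ and $\uC_F$ (Appendix A), full faithfulness on $\uL^{t,u}[1]$ will propagate to all of $\uL^{t,u}$; here one must also know that $\c V^t$ is injective on the relevant $\Ext^1$ groups between killed-by-$p$ objects, which again follows from the corresponding fact for $\c V^*$ on $\uL^{*u}$ (the injectivity half of Corollary \ref{C2.17}) transported through $\Pi$ and Proposition \ref{P4.8}. (2) On $\uL^{t,u}[1]$, apply $\Pi$ and Proposition \ref{P4.8}: for $\c L,\c M\in\uL^{t,u}[1]$ one has $\c V^t(\c L)=\c V[1]^*(\Pi\c L)=\c V^*(\Pi\c L)$ as $\Gamma_F$-modules, and similarly for $\c M$, so
\[
\Hom_{\uL^t}(\c L,\c M)=\Hom_{\uL^*}(\Pi\c L,\Pi\c M)\xrightarrow{\ \sim\ }\Hom_{\uM_F}(\c V^*(\Pi\c L),\c V^*(\Pi\c M))=\Hom_{\uM_F}(\c V^t(\c L),\c V^t(\c M)),
\]
the middle bijection being Corollary \ref{C2.17}. (3) Assemble the devissage: by induction on the $p$-length, using the exact sequence $0\to p\c M\to\c M\to\c M/p\c M\to 0$ and the analogous one for $\c L$, conclude that the comparison map is bijective for all objects of $\uL^{t,u}$.

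**Main obstacle.** The delicate point is the devissage step (1): one must check that the long exact $\Hom$–$\Ext$ sequences obtained by applying $\c V^t$ to short exact sequences of unipotent objects are compatible with the comparison maps, and that the $\Ext^1$-comparison is injective. This requires care because $\uL^t$ and $\uM_F$ are only pre-abelian, so one works with strict short exact sequences and the six-term sequences from Appendix A; the injectivity of $\c V^t$ on $\Ext^1$ between killed-by-$p$ unipotent objects is exactly where the computation of Galois cohomology cocycles from Subsections \ref{S2.5}–\ref{S2.7} (via $\Pi$ and Proposition \ref{P4.8}) is invoked. I expect the rest — the reduction to $p$-torsion and the identification of graded pieces as unipotent objects killed by $p$ — to be routine, following the pattern of \cite{refFL} and \cite{refBr2} already cited in the excerpt for the exactness of $\c V^t$.
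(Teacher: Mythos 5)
Your proposal follows essentially the same route as the paper: the paper's proof also reduces to killed-by-$p$ unipotent objects, where $\c V^t$ is identified with $\c V^*$ via $\Pi$ and Proposition \ref{P4.8}, and then combines the full faithfulness of $\c V^*$ on $\uL^{*u}$ (Corollary \ref{C2.17}) with the exactness of $\c V^t$ to run the devissage. You merely make explicit the $\Hom$--$\Ext$ bookkeeping that the paper leaves implicit, so the argument is correct and matches the paper's.
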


\begin{proof} Indeed, by Subsection \ref{S2.3}, 
$\c V[1]^*$ is fully faithful. Then the exactitude of 
$\c V^t$ together with Proposition \ref{P4.8} implies that $\c V^t|_{\uL ^{u,t}}$ is 
fully faithful. 
\end{proof}

Proposition \ref{P4.10} 
implies that $\c V^t$ 
is very far from to be fully 
faithful on the whole $\uL ^t$:  
 if $\L\in\uL ^t[1]$ and 
$0\longrightarrow\L ^u\longrightarrow\L
\longrightarrow\L^m\longrightarrow 0$ is the standard 
exact sequence then the corresponding sequence of 
$\Gamma _F$-modules admits a functorial splitting.

Introduce a modification $\widetilde{\c V}^{ft}:\uL ^{ft}
\longrightarrow\uM _F$ of Breuil's functor. 

Suppose $\L\in\uL ^{ft}$. From the definition of 
the category $\uL ^{ft}$ in Subsection 
\ref{S3} it follows the existence of 
$\L '\in\uL ^{ft}$ such that $p\L'=\L$. 
 More precisely, 
there are a strict monomorphism 
$i_{\L '}:\L\longrightarrow\L '$ 
and a strict epimorphism $j_{\L '}:\L '\longrightarrow\L $ such that 
$p\,\id _{\L '}=i_{\L '}\circ j_{\L'}$. 
(Note that $j_{\L '}\circ i_{\L '}=p\,\id _{\L }$.) 

Consider the following short exact sequences 

\begin{equation}\label{E4.3}
0\longrightarrow\L\overset{i_{\L '}}
\longrightarrow\L '\overset{C_p}\longrightarrow _p\L '\longrightarrow 0
\end{equation}
\begin{equation}\label{E4.4}
0\longrightarrow\L '_p\overset{K_p}
\longrightarrow\L '\overset{j_{\L '}}
\longrightarrow \L\longrightarrow 0
\end{equation}
and consider the corresponding sequence 
of $\Gamma _F$-modules and their morphisms 

$$\c V^t(_p\L ^{\prime u})\overset{\Theta }\longrightarrow 
\c V^t(_p\L ')\overset{\c V^t(C_p)}\longrightarrow 
\c V^t(\L ')\overset{\c V^t(K_p)}\longrightarrow 
\c V^t(\L '_p)\overset{\wt\Theta }\longrightarrow\c V^t(\L _p^{\prime m}).$$
As earlier, for any $\L\in\uL ^{ft}$, $\L ^u$ is the 
maximal unipotent subobject and $\L ^m$ is 
the maximal multiplicative quotient object for $\L $. 
\begin{Lem} \label{L4.12} 
 $\Ker (\wt\Theta \circ\c V^t(K_p))\supset\mathrm{Im}(\c V^t(C_p)\circ\Theta )$.
\end{Lem}

\begin{proof} The section $\Theta $ depends 
functorially on objects of the category $\uL ^t[1]\supset\uL ^{ft}[1]$. 
Therefore, we have the following commutative diagram

$$\xymatrix{\c V^t(_p\L ^{\prime u})
\ar[rr]^{\c V^t(C_p^u\circ K_p^u)}\ar[d]^{\Theta } &&
\c V^t(\L ^{\prime u}_p)\ar[d]^{\Theta } 
\\ \c V^t(_p\L ')\ar[rr]^{\c V^t(C_p\circ K_p)} && \c V^t(\L '_p) }
$$
and \ \ $\wt\Theta\circ\c V^t(K_p)\circ\c V^t(C_p)\circ\Theta =
(\wt\Theta\circ\Theta)\circ\c V^t(C_p^u\circ K_p^u)=0$. 
\end{proof}

\begin{definition} Set $\c V^t_{\L '}(\L )=
\Ker (\wt\Theta\circ \c V^t(K_p))/
\mathrm{Im}\,(\c V^t(C_p)\circ\Theta )$.
\end{definition}

\begin{Prop} \label{P4.13} With above notation it holds: 

a) $\c V^t_{\L '}(\L )=
\Coker\,\c V^t(C_p)=\c V^t(\L )$ if $\L\in\uL ^{u,ft}$;

b) $\c V^t_{\L '}(\L )=\Ker \,\c V^t(K_p)=\c V^t(\L )$ if 
$\L\in\uL ^{m,ft}$; 

c) for any $\L\in\uL ^{ft}$, we have the induced exact sequence of $\Gamma _F$-modules 
$0\longrightarrow\c V^t(\L ^m)\longrightarrow\c V^t_{\L '}
(\L )\longrightarrow \c V^t(\L ^u)\longrightarrow 0$. 
This sequence depends functorially on the pair $(\L ,\L ')$.
\end{Prop}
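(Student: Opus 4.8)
The plan is to reduce everything to the exactness of $\c V^t$ (Corollary \ref{C4.5}) and to the functoriality of the canonical splittings $\Theta,\widetilde\Theta$ of Proposition \ref{P4.10}. Applying the contravariant exact functor $\c V^t$ to \eqref{E4.3} and \eqref{E4.4} gives exact sequences
$$0\to\c V^t({}_p\L')\xrightarrow{\c V^t(C_p)}\c V^t(\L')\to\c V^t(\L)\to 0,\qquad
0\to\c V^t(\L)\to\c V^t(\L')\xrightarrow{\c V^t(K_p)}\c V^t(\L'_p)\to 0,$$
so $\c V^t(C_p)$ is injective with cokernel $\c V^t(\L)$ and $\c V^t(K_p)$ is surjective with kernel a copy of $\c V^t(\L)$. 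I shall also use that $\L\mapsto\L^u$ and $\L\mapsto\L^m$ take strict exact sequences to strict exact sequences (Section \ref{S3}), and that for $\L$ killed by $p$ the group $\c V^t(\L)$ has cardinality $p^{\,\mathrm{rk}_{\W _1}L}$, as in Corollary \ref{C2.5} via Corollary \ref{C3.10}; by devissage this fixes the cardinality of $\c V^t(\L)$ for every $\L\in\uL^{ft}$.

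For a) one may take $\L'$ unipotent, since a unipotent $\L$ is a strict subobject of the unipotent $p$-divisible group $\c L^{\infty,u}$ of Section \ref{S3}. Then ${}_p\L'=\L'/\L$ and $\L'_p=\Ker j_{\L'}$ are unipotent as well, so $({}_p\L')^u={}_p\L'$ forces $\Theta$ to be an isomorphism and $(\L'_p)^m=0$ forces $\widetilde\Theta$ to be the zero map. Hence the image of $\c V^t({}_p\L^{\prime u})$ in $\c V^t(\L')$ is $\mathrm{Im}\,\c V^t(C_p)$ and the preimage of $\Ker\widetilde\Theta$ is all of $\c V^t(\L')$, so $\c V^t_{\L'}(\L)=\c V^t(\L')/\mathrm{Im}\,\c V^t(C_p)=\Coker\,\c V^t(C_p)=\c V^t(\L)$. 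Part b) is dual: take $\L'$ multiplicative, so ${}_p\L'$ and $\L'_p$ are multiplicative, $\Theta=0$ and $\widetilde\Theta=\mathrm{id}$, whence $\c V^t_{\L'}(\L)=\Ker\,\c V^t(K_p)=\c V^t(\L)$.

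For c) I would feed the construction the canonical sequence $0\to\L^u\to\L\to\L^m\to 0$ together with the sequence $0\to(\L')^u\to\L'\to(\L')^m\to 0$ obtained by applying $\L\mapsto\L^u,\L^m$; these functors preserve the maps $i_{\L'},j_{\L'}$ and the relations $j_{\L'}i_{\L'}=p\,\mathrm{id}_\L$, $i_{\L'}j_{\L'}=p\,\mathrm{id}_{\L'}$, so one obtains morphisms of pairs $((\L')^u,\L^u)\to(\L',\L)\to((\L')^m,\L^m)$ with composite the zero morphism. Since every ingredient $\Theta,\widetilde\Theta,\c V^t(C_p),\c V^t(K_p)$ is functorial, $(\L,\L')\mapsto\c V^t_{\L'}(\L)$ is a contravariant functor of the pair; applying it and using a) for the unipotent $(\L')^u$ and b) for the multiplicative $(\L')^m$ yields a complex
$$\c V^t(\L^m)\longrightarrow\c V^t_{\L'}(\L)\longrightarrow\c V^t(\L^u)$$
with zero composite, functorial in $(\L,\L')$. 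A cardinality count then finishes: from the two displayed exact sequences and their images under $\L\mapsto\L^u$ one checks $|\c V^t_{\L'}(\L)|=|\c V^t(\L)|=|\c V^t(\L^m)|\cdot|\c V^t(\L^u)|$, so the complex is a short exact sequence as soon as its first arrow is injective and its last arrow surjective. Those two facts I would read off the explicit description of $\c V^t_{\L'}(\L)$ as a subquotient of $\c V^t(\L')$: the copy of $\c V^t(\L^m)$ sits inside $\c V^t(\L)=\Ker\,\c V^t(K_p)\subset\c V^t(\L')$ and, since the sequences of Proposition \ref{P4.10} split canonically, meets the image of $\Theta$ trivially, while $\c V^t(\L^u)$ is reached through the surjection $\c V^t(\L')\twoheadrightarrow\c V^t((\L')^u)\twoheadrightarrow\Coker\,\c V^t(C_p^u)$.

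The step I expect to be the main obstacle is this last verification in c): locating precisely, inside $\c V^t(\L')$, the preimage of $\Ker\widetilde\Theta$ and the image of $\Theta$, and checking that the two canonical "halves" $\c V^t(\L^m)$ and $\c V^t(\L^u)$ occupy complementary positions in the subquotient $\c V^t_{\L'}(\L)$. This is exactly where the functoriality of $\Theta$ and $\widetilde\Theta$ along the morphisms of pairs induced by $0\to\L^u\to\L\to\L^m\to 0$ is indispensable, and it calls for careful simultaneous bookkeeping of the several short exact sequences in play.
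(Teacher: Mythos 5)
Your part c) follows essentially the paper's route (use $p\L^{\prime u}=\L^u$, $p\L^{\prime m}=\L^m$ and functoriality of $\Theta ,\widetilde{\Theta}$ to produce the functorial complex, then finish by a chase), but your proofs of a) and b) contain a genuine gap: you are not entitled to ``take $\L'$ unipotent (resp.\ multiplicative)''. The group $\c V^t_{\L '}(\L )$ is defined with respect to the $\L '$ fixed in the ``above notation'', and the proposition asserts the identifications for that given, arbitrary $\L '$; independence of the choice of $\L '$ is precisely Proposition \ref{P4.14}, which comes \emph{after} this proposition and whose proof uses the exact sequences of part c), so replacing $\L '$ by a more convenient one here is circular. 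Nor is the matching-type situation automatic: if $p\L ''=\L $ with $\L $ unipotent, then $\L '=\L ''\prod \c L(1)$ with $i=(i_{\L ''},0)$ and $j=j_{\L ''}\circ \mathrm{pr}_{\L ''}$ also satisfies $p\L '=\L $ (because $p$ kills $\c L(1)$), and for this $\L '$ neither $_p\L '$ nor $\L '_p$ is unipotent, $\widetilde{\Theta }\ne 0$, and $\Ker (\c V^t(K_p)\circ \widetilde{\Theta })$ is a proper subgroup of $\c V^t(\L ')$. The equalities of a) are still true for such $\L '$, but they require a short chase with the splittings — one must check $\Ker (\c V^t(K_p)\circ \widetilde{\Theta })+\mathrm{Im}\,\c V^t(C_p)=\c V^t(\L ')$ and $\Ker (\c V^t(K_p)\circ \widetilde{\Theta })\cap \mathrm{Im}\,\c V^t(C_p)=\mathrm{Im}\,(\Theta \circ \c V^t(C_p))$ — not merely the trivial observation that $\Theta $ is an isomorphism and $\widetilde{\Theta }=0$, which only holds in the matching-type case. (That special case is exactly what c) consumes, since there a), b) are applied to the pairs $(\L ^u,\L ^{\prime u})$ and $(\L ^m,\L ^{\prime m})$; so your c) is not undermined, but a), b) as stated are not proved.)

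Concerning c) itself: your plan coincides with the paper's, which likewise leaves the exactness to ``a diagram chasing''; your counting shortcut ($|\c V^t_{\L '}(\L )|=|\c V^t(\L )|=|\c V^t(\L ^m)|\cdot |\c V^t(\L ^u)|$, reducing the chase to injectivity at $\c V^t(\L ^m)$ and surjectivity onto $\c V^t(\L ^u)$) is a reasonable lightening of that chase, but note it silently uses $|\c V^t({}_p\L ^{\prime m})|=|\c V^t(\L _p^{\prime m})|$, i.e.\ the compatibility of the multiplicative-part functor with the sequences \eqref{E4.3} and \eqref{E4.4}, which should be stated and checked; and the two end verifications that you yourself flag as the main obstacle are still to be carried out before the argument is complete.
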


\begin{proof} The parts a) and b) are obtained directly from 
 definitions. In order to prove c), note that $p\L '=\L $ implies that 
$p\L ^{\prime u}=\L ^u$ and $p\L ^{\prime m}=\L ^m$. This gives a functorial sequence 
$$0\longrightarrow \c V^t_{\L ^{\prime m}}
(\L ^m)\longrightarrow\c V^t _{\L '}(\L )\longrightarrow
\c V^t_{\L ^{\prime u}}(\L ^u)\longrightarrow 0.$$
Then a diagram chasing proves that this sequence is exact. 
\end{proof}

\begin{Prop} \label{P4.14} Suppose for a given $\L\in\uL ^{ft}$, the objects 
 $\L ',\L ''\in\uL ^{ft}$ are such that $p\L '=p\L ''=\L$. Then there is 
a natural isomorphism $f(\L ',\L '')$ of 
$\Gamma _F$-modules such that the following diagram is commutative
$$\xymatrix{0\ar[r] & \c V^t(\L ^m) \ar[d]^{\id }\ar[r] & \c V^t_{\L ''}
(\L )\ar[d]^{f(\L ',\L '')}\ar[r] & \c V^t(\L ^u)\ar[d]^{\id }\ar[r] & 0\\
0\ar[r] & \c V^t (\L ^m)\ar[r] & \c V^t_{\L '}(\L )\ar[r] & \c V^t(\L ^u)\ar[r] & 0 }
$$
(The lines of this diagram are given by Prop \ref{P4.13})
\end{Prop}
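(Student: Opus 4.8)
The plan is to deduce Proposition \ref{P4.14} from functoriality of the construction $\L'\mapsto\c V^t_{\L'}(\L)$ in the auxiliary lift $\L'$, together with a short five‑lemma argument. First I would make precise the notion of a \emph{morphism of lifts}: for objects $\L',\L''\in\uL^{ft}$ with $p\L'=p\L''=\L$, this is a morphism $\psi$ in $\uL^{ft}$ compatible with the structural maps, i.e. $j_{\L'}\circ\psi=j_{\L''}$ and $\psi\circ i_{\L''}=i_{\L'}$, so that $\psi$ in particular restricts to a morphism ${}_p\L''\to{}_p\L'$ and induces one $\L''_p\to\L'_p$. Since $\c V^t$ is exact (Corollary \ref{C4.5}) and the splittings $\Theta,\widetilde\Theta$ of Proposition \ref{P4.10} are functorial on killed‑by‑$p$ objects, such a $\psi$ induces a morphism between the four‑term sequences defining $\c V^t_{\L''}(\L)$ and $\c V^t_{\L'}(\L)$ which is the identity on the outer terms $\c V^t(\L^m)$ and $\c V^t(\L^u)$. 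Passing to cohomology at the middle term gives a morphism between the two short exact sequences of Proposition \ref{P4.13}(c) which is the identity on both ends, and by the five lemma this middle map is automatically an isomorphism.

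Next I would record uniqueness, which is what makes the isomorphism canonical: any two morphisms $\c V^t_{\L''}(\L)\to\c V^t_{\L'}(\L)$ compatible with the exact sequences of Proposition \ref{P4.13}(c) and equal to the identity on the ends differ by a $\Gamma_F$‑homomorphism $\c V^t(\L^u)\to\c V^t(\L^m)$, and this is zero because $\L^u$ and $\L^m$ have disjoint sets of simple subquotients (the $\L(r)$ with $r\ne 1$ versus $\L(1)$); by exactness of $\c V^t$ and Propositions \ref{P2.10} and \ref{P4.8} the same disjointness holds for the constituents of $\c V^t(\L^u)$ and $\c V^t(\L^m)$, whence $\Hom_{\Gamma_F}(\c V^t(\L^u),\c V^t(\L^m))=0$. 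Consequently, for a single morphism of lifts $\psi$ the induced isomorphism is independent of $\psi$; I will call it $f$, and the same vanishing gives the cocycle identity $f(\L',\L'')\circ f(\L'',\L''')=f(\L',\L''')$ whenever the relevant morphisms of lifts are available.

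It then remains to connect arbitrary lifts $\L'$ and $\L''$ of $\L$ by a common dominating lift. Here I would take $\L'''$ to be the fibre product of $j_{\L'}:\L'\to\L$ and $j_{\L''}:\L''\to\L$, formed on underlying filtered $\c W$‑modules and equipped with the $\varphi$ and $N_S$ restricted from $\L'\oplus\L''$ — using that $\uL^{ft}$ is pre‑abelian and that its kernels and strict subobjects are computed on underlying modules, cf. Section \ref{S3.5}. A direct check, modelled on the case of $\c W/p^n$‑modules where $\{(x,y):j_{\L'}(x)=j_{\L''}(y)\}$ carries $p$‑power torsion making its image under multiplication by $p$ canonically isomorphic to $\L$, should show that $\L'''\in\uL^{ft}$, that $p\L'''\cong\L$, and that the two projections $\L'''\to\L'$, $\L'''\to\L''$ are morphisms of lifts; then one sets $f(\L',\L'')=f(\L',\L''')\circ f(\L''',\L'')$ and invokes the uniqueness of the previous paragraph for independence of $\L'''$. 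The main obstacle is exactly this last verification: that the fibre product, taken in a merely pre‑abelian category and with its filtration and $(\varphi,N_S)$‑structure inherited from $\L'\oplus\L''$, still satisfies $p$‑strictness and the condition $L=\varphi(F(L))\otimes_{\sigma\c W}\c W$ defining $\uL^{ft}$, and that the factorisation of multiplication by $p$ through $\L$ genuinely persists — so that $\L'''$ is a lift of $\L$ in the required sense and not just an object mapping to $\L'$ and $\L''$.
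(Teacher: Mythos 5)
Your proposal follows essentially the same route as the paper: the paper's proof likewise replaces $\L ''$ by the fibre product $\L '\underset{\L }\prod \L ''$ taken with respect to the strict epimorphisms $j_{\L '},j_{\L ''}$, so that a morphism of lifts exists, and then invokes functoriality of the construction $\L '\mapsto \c V^t_{\L '}(\L )$ together with a diagram chase showing the induced map is the identity on $\c V^t(\L ^m)$ and $\c V^t(\L ^u)$, hence an isomorphism. The verification you single out as the remaining obstacle --- that the fibre product again lies in $\uL ^{ft}$ and is a lift of $\L $ --- is exactly what the paper also takes for granted, and it does go through via Axiom A2 of special pre-abelian categories and the explicit description of kernels, strict monomorphisms and strict epimorphisms in Subsection \ref{S3.5}, so there is no essential divergence between your argument and the paper's.
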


\begin{proof} By replacing $\L ''$ by $\L '\underset{\L }\prod \L ''$ 
with respect to strict epimorphisms $j_{\L '}$ and $j _{\L ''}$, 
 we can assume that 
there is a map $f:\L ''\longrightarrow\L '$ which induces the identity map 
$p\L ''=\L\longrightarrow p\L '=\L $. Then the existence of $f(\L ',\L '')$ 
follows from functoriality and the diagram chasing implies 
that it induces the identity maps on $\c V^t(\L ^u)$ and $\c V^t(\L ^m)$.
\end{proof}

\begin{definition} For $\L ,\L '\in\uL ^{ft}$ such that $p\L '=\L$, set 
$\widetilde{\c V}^{ft}(\L )=\c V^t_{\L '}(\L )$. 
\end{definition}

The correspondence 
$\L\longrightarrow\widetilde{\c V}^{ft}(\L )$ 
induces the additive exact functor $\widetilde{\c V}^{ft}:\uL ^{ft}\longrightarrow \uM _F$. 

\subsection{$\varphi $-filtered module 
$\widetilde{\c A}_{cr,2}\in\widetilde{\uL }_0$}\label{S4.5}

Let $\xi =[x_0]+p\in W(R)\subset A_{cr}$, and for $n\in\N $, 
$\gamma _n(\xi )=\xi ^n/n!$

\begin{Lem}\label{L4.15} If $n\geqslant 2p$ then 
$\varphi (\gamma _n(\xi ))\in p^2A_{cr}$. 
\end{Lem}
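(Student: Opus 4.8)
The plan is to compute $\varphi(\gamma_n(\xi))$ for $n \geqslant 2p$ and exhibit the claimed divisibility by $p^2$ directly from the divided-power structure of $A_{cr}$. Recall that $\varphi = \phi_{p-1}$ is defined on $\Fil^{p-1}A_{cr}$ by $\varphi = \sigma/p^{p-1}$, and since $\xi = [x_0] + p$ generates the divided-power ideal, $\gamma_n(\xi) \in \Fil^n A_{cr} \subseteq \Fil^{p-1}A_{cr}$ whenever $n \geqslant p-1$. The key elementary fact I would use is that $\sigma(\xi) = [x_0]^p + p = \xi^p - \sum_{i=1}^{p-1}\binom{p}{i}[x_0]^i p^{p-i} + (\text{correction})$; more precisely $\sigma(\xi) = ([x_0]+p)^p - (\text{the terms making } [x_0^p]+p)$, but the cleaner route is to write $\sigma(\xi) = p^{p-1}\varphi(\xi) $ and note $\varphi(\xi) = \varphi([x_0]+p)$ lands in $A_{cr}$ with $\varphi(\gamma_p(\xi)) = \varphi(\xi)^p/p!\,\cdot(\text{unit adjustments})$ — so I need to track $p$-adic valuations carefully through the divided-power relations.

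**First I would** establish the recursive/multiplicative behaviour of $\varphi$ on divided powers: since $\gamma_n(\xi)\gamma_m(\xi) = \binom{n+m}{n}\gamma_{n+m}(\xi)$ and $\varphi$ is a ring homomorphism on the relevant filtered pieces (being $\sigma$-semilinear and multiplicative where defined), one gets $\varphi(\gamma_{n+m}(\xi))\binom{n+m}{n} = \varphi(\gamma_n(\xi))\varphi(\gamma_m(\xi))$. Combined with the base computation $\varphi(\gamma_{p-1}(\xi)) \in A_{cr}^{\times}$-ish (in fact $\varphi(v^{p-1}) \equiv 1 - v_1$ type relation from Subsection 3.1, transported to $A_{cr}$) and $\varphi(\gamma_p(\xi)) = \sigma(\gamma_p(\xi)) = \sigma(\xi)^p/(p!\,p^{p-1})$, I would show $\varphi(\gamma_p(\xi)) \in p A_{cr}$ because $v_p(p!) = 1$ and $v_p(\sigma(\xi)^p) \geqslant 0$ with $\sigma(\xi) \equiv p \pmod{[x_0]A_{cr}}$, actually $\sigma(\xi) = [x_0^p] + p$ so $v_p$ of its $p$-th power contributes; the honest bookkeeping gives $\varphi(\gamma_p(\xi)) \in pA_{cr}$. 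Then inductively, writing $n = qp + r$ with $0 \leqslant r < p$ and using the multiplicativity relation to peel off factors of $\gamma_p(\xi)$, each factor contributes a $p$, so for $n \geqslant 2p$ (i.e. $q \geqslant 2$) one collects at least $p^2$, after checking the binomial coefficients $\binom{n+m}{n}$ appearing are $p$-adic units or at worst do not cancel the gained powers.

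**The hard part will be** controlling the $p$-adic denominators introduced by the divided powers against the $p$-powers gained from $\varphi = \sigma/p^{p-1}$: the naive estimate $\varphi(\gamma_n(\xi)) = \sigma(\xi)^n/(n!\,p^{n(p-1)})$ has a genuinely negative-looking exponent, and the point is that $\sigma(\xi)^n$ itself is highly divisible — indeed $\sigma(\xi) = [x_0]^p + p \in \Fil^1 A_{cr}\cap (\text{stuff})$ and one must use that $\sigma$ maps $\Fil^m$ into $p^m A_{cr}$ for $m < p$ (stated in Subsection 3.2 for $\hat A_{st}$, and the analogous fact holds for $A_{cr}$) together with the fact that $\gamma_n(\xi) \in \Fil^n$. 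So $\sigma(\gamma_n(\xi)) \in p^{\min(n,\,?)}A_{cr}$ — but $n$ can exceed $p$, so I instead decompose $\gamma_n(\xi)$ as a product of $\gamma_{p-1}(\xi)$'s and one small-degree factor, apply $\varphi$ (well-defined and multiplicative on each $\Fil^{p-1}$ factor), and use $\varphi(\gamma_{p-1}(\xi)^a \cdot \gamma_b(\xi)) = \varphi(\gamma_{p-1}(\xi))^a \varphi(\gamma_b(\xi))$ up to the binomial factors. Tracking that for $n \geqslant 2p$ at least two "extra" units of $p$-divisibility survive is the crux; I expect the cleanest phrasing uses the identity $\varphi(\gamma_{p}(\xi)) = \varphi(\gamma_{p-1}(\xi))\varphi(\xi)/p$ (from $\gamma_{p-1}(\xi)\xi = p\,\gamma_p(\xi)$) showing directly $\varphi(\gamma_p(\xi)) \in \tfrac{1}{p}A_{cr}$ is wrong-signed, so one really must invoke $\varphi(\xi) \in \Fil^{1}$-type improvement, i.e. $\varphi(\xi) = \sigma(\xi)/p^{p-1}$ and $\sigma(\xi) = [x_0^p] + p$ with $[x_0^p] = [x_0]^p$, giving $\varphi(\xi) \equiv$ (unit)$\cdot[x_0]^p/p^{p-1} + p^{2-p}$, and the $[x_0]^p$ term has enough $\Fil$-depth to be $p$-divisible after one more application of $\varphi$. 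Assembling these observations — divided-power multiplicativity, $\sigma(\Fil^m) \subseteq p^m A_{cr}$, and induction on $\lfloor n/p\rfloor$ — yields $\varphi(\gamma_n(\xi)) \in p^2 A_{cr}$ for $n \geqslant 2p$.
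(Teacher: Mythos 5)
Your starting data are right ($\xi=[x_0]+p$, $\sigma(\xi)=[x_0]^p+p$, $\varphi=\sigma/p^{p-1}$ on $\Fil^{p-1}A_{cr}$), but the mechanism you build on them fails at two concrete points. First, $\varphi$ is \emph{not} multiplicative: from $\varphi=\sigma/p^{p-1}$ one gets $\varphi(a)\varphi(b)=\sigma(ab)/p^{2(p-1)}$, so the relation you want to iterate, $\binom{n+m}{n}\varphi(\gamma_{n+m}(\xi))=\varphi(\gamma_n(\xi))\varphi(\gamma_m(\xi))$, is off by a factor $p^{p-1}$ (the correct semilinearity is $\varphi(ab)=\sigma(a)\varphi(b)$ for $a\in A_{cr}$, $b\in\Fil^{p-1}A_{cr}$). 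Second, your base case is false: $\varphi(\gamma_p(\xi))=\sigma(\xi)^p/(p!\,p^{p-1})=(p/p!)\,([x_0]^p/p+1)^p$, and since $[x_0]^p/p\in A_{cr}$ and $v_p(p/p!)=0$, this is a unit multiple of $([x_0]^p/p+1)^p$, not an element of $pA_{cr}$ ($p$-divisibility only begins at $n=p+1$). With both the recursion and the base case wrong, the "peel off factors of $\gamma_p(\xi)$, each contributing a $p$" induction cannot be carried out; and the formula you write later, $\varphi(\gamma_n(\xi))=\sigma(\xi)^n/(n!\,p^{n(p-1)})$, repeats the multiplicativity error (the denominator is $p^{p-1}$, applied once, not $n$ times). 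The proposal never settles on a single correct expression and ends in hedges, so as it stands there is no proof.

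The irony is that the correct version of your own displayed identity finishes the lemma in one line, and this is exactly what the paper does: $\varphi(\gamma_n(\xi))=\sigma(\xi)^n/(n!\,p^{p-1})=(p^{n-p+1}/n!)\,([x_0]^p/p+1)^n$, with $[x_0]^p/p+1\in A_{cr}$. So the statement reduces to the purely arithmetic inequality $v_p(n!)+p+1\leqslant n$ for $n\geqslant 2p$, which follows from $v_p(n!)<n/(p-1)$ for $p\geqslant 5$ (and already for $n\geqslant p+3$), while for $p=3$ the same estimate covers $n\geqslant 8$ and the cases $n=6,7$ are checked by hand. No divided-power multiplicativity, no decomposition of $\gamma_n(\xi)$ into blocks, and no auxiliary claim about $\varphi(\xi)$ (which is not even defined, since $\xi\notin\Fil^{p-1}A_{cr}$) is needed.
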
 

\begin{proof} We have $\varphi (\gamma _n(\xi ))=(p^{n-p+1}/n!)([x_0]^p/p+1)^n$.
Therefore, 
 it will be sufficient to verify that for 
$n\geqslant 2p$, $v_p(n!)+p+1\leqslant n$.
Using the estimate $v_p(n!)<n/(p-1)$ we obtain that the required inequality 
holds for $p\geqslant 5$ if $n\geqslant p+3$ and for $p=3$ if $n\geqslant 8$. 
It remains to check that our inequality holds for $p=3$ and $n=6$ and 7.
\end{proof}

Let $J_2$ be the closed ideal in $A_{cr}$ generated by 
$[x_0]^p\xi ^p/p$ and all 
$\xi ^n/n!$ with $n\geqslant 2p$. Then 
$J_2\subset F(A_{cr})$ and $\varphi (J_2)\subset p^2A_{cr}$. 
Introduce $\widetilde{A}_{cr,2}=A_{cr}/(J_2+p^2A_{cr})$ and consider 
the corresponding induced filtered $\varphi $-module 
$\widetilde{\c A}_{cr,2}=(\widetilde{A}_{cr ,2}, F(\widetilde{A}_{cr ,2}),
\varphi )\in\widetilde{\uL}_0$. Clearly, for any $\L\in\uL ^{t}_0$, 
the natural projection $\c A_{cr ,2}\rightarrow\widetilde{\c A}_{cr,2}$ 
induces the identification 
$\Hom _{\widetilde{\uL}_0}(\L ,\c A_{cr ,2})=
\Hom _{\widetilde{\uL}_0}(\L ,\widetilde{\c A}_{cr ,2})$. 

Consider the structure of 
$\widetilde{\c A}_{cr ,2}$ more closely. 

Let $T_1=\xi ^p/p$. With obvious notation the elements 
of $\widetilde{A}_{cr ,2}$ can be written uniquely modulo the subgroup 
$[x_0^pR]T_1+[x_0^{2p}R]+p[x_0^pR]+p^2W(R)$ in the form 
$[r_{-1}]T_1+[r_0]+p[r_1]$, where $r_{-1},r_0,r_1\in R$. 
Informally, we shall use that $r_{-1},r_1\in R/x_0^p$ and $r_0\in R/x_0^{2p}$.  
The $W(R)$-module structure on 
$\widetilde{A}_{cr ,2}$ is induced by usual operations on 
Teichmuller's representatives and the relation 
$pT_1\equiv [x_0]^p\,\mathrm{mod}\,p^2W(R)$. (Use that 
$T_1\equiv [x_0]^p/p+p[x_0]^{p-1}\mathrm{mod}\,p^2W(R)$.)

The $S$-module structure on $\widetilde{A}_{cr ,2}$ is induced 
by the $W(k)$-algebra morphism 
$S\longrightarrow W(R)$ 
such that $u\mapsto [x_0]$. Then $F(\widetilde{A}_{cr ,2})$ 
is generated over $W(R)$ by the images of $T_1$ and $\xi ^{p-1}$. Note that 
$\xi ^{p-1}\equiv [x_0]^{p-1}-p[x_0]^{p-2}\,\mathrm{mod}\,p^2W(R)$. 

The map $\varphi :F(\widetilde{A}_{cr ,2})\longrightarrow \widetilde{A}_{cr ,2}$ 
is uniquely determined by the knowledge of $\varphi (T_1)$ and 
$\varphi (\xi ^{p-1})$. Note that 
$$\varphi (T_1)=\left (\frac {1+[x_0]^p}{p}\right )^p
\equiv 1+[x_0]^p\,\mathrm{mod}(J+p^2A_{cr ,2}+p[\m _R])$$
$$\varphi (\xi ^{p-1})=\left (1+\frac{[x_0]^p}{p}\right )^{p-1}\equiv 1-T_1\,\mathrm{mod}\,(J+p^2A_{cr ,2}+p[\m _R])$$

Suppose $\L\in\uL ^{ft}[1]$ and $\L'\in\uL ^{ft}$ is such that $p\L'=\L$. 
Consider short exact sequences \eqref{E4.3} and \eqref{E4.4}. 
Then the points $f\in\c V^t(_p\L ')$ and 
$\c V^t(C_p)(f)\in\c V^t(\L ')$ are related via the commutative diagram 

$$\xymatrix{\L '\ar[d]^{C_p}\ar[rr]^{\c V^t(C_p)(f)}&&\widetilde{\c A}_{cr ,2}\ar[d]\\
_p\L '\ar[rr]^f&&\widetilde{\c A}_{cr ,1}}
$$
where the right vertical arrow is induced by the correspondence 
$$[r_{-1}]T_1+[r_0]+p[r_1]\,\mapsto [r_{-1}]T_1+[r_0\,\mathrm{mod}\,x_0^p].$$

Similarly, the points $g\in\c V^t(\L')$ and $\c V^t(K_p)(g)\in\c V^t(\L '_p)$ are related 
via the commutative diagram

$$\xymatrix{\L '\ar[rr]^{g}&&\widetilde{\c A}_{cr ,2}\\
\L _p'\ar[u]^{K_p}\ar[rr]^{\c V^t(K_p)(g)}
&&\widetilde{\c A}_{cr ,1}\ar[u]}
$$
where the right vertical arrow is induced by the correspondence 
$$[r_{-1}]T_1+[r_0]\mapsto [r_{-1}x_0^p]+p[r_0].$$

\subsection{Filtered $\varphi $-modules $\c A^0_{cr,1}$ and $\c A_{cr ,2}^0$.}
\label{S4.6} 

Let $A^0_{cr ,2}$ be the $W(R)$-submodule of $\widetilde{A}_{cr ,2}$ consisting of 
elements $[r_{-1}]T_1+[r_0]+p[r_1]$ such that $r_{-1}=-r_0\,\mathrm{mod}\,x_0^p$. 
Then $F(A^0_{cr ,2})=F(\widetilde{A}_{cr ,2})\cap A^0_{cr ,2}$ is 
generated over $W(R)$ by $[x_0^{p-1}]T_1+\xi ^{p-1}$ 
and the congruence 
$$\varphi ([x_0^{p-1}]T_1+\xi ^{p-1})
\equiv -T_1+1\,\mathrm{mod}\,(J_2+p^2A_{cr ,2}+p[\m _R])$$
implies that 
$\varphi (F(A^0_{cr ,2})\subset A^0_{cr ,2}$ and 
$A^0_{cr ,2}=(A^0_{cr ,2}, F(A^0_{cr ,2}),\varphi )\in\widetilde{\uL}_0$.

Note that  $p\c A^0_{cr ,2}=(pA^0_{cr ,2}, pF(A^0_{cr ,2}),\varphi )\in\widetilde{\uL }_0$. 
Then in notation from Subsection \ref{S4.4}, it holds:

$\bullet $\ $\mathrm{Im}\,\Theta =\Hom _{\widetilde{\uL}_0}
(_p\L ',p\c A^0_{cr ,2})$;

$\bullet $\ $\c V^t(C_p)(\mathrm{Im}\,\Theta )=\Hom _{\widetilde{\uL }_0}
(\L',\c A^0_{cr ,2})$;

$\bullet $\ $\Ker\widetilde{\Theta }=\Hom _{\widetilde{\uL }_0}(\L'_p,p\c A^0_{cr ,2})$;

$\bullet $\ $\Ker (\wt\Theta\circ\c V^t(K_p))
=\Hom _{\widetilde{\uL}_0}(\L',p\c A^0_{cr ,2})$.

Therefore, 
$$\widetilde{\c V}^{ft}(\L )=\c V^t_{\L'}(\L )=
\Hom _{\widetilde{\uL}_0}(\L ',\c A^0_{cr ,2}/\c A^0_{cr ,1})=
\Hom _{\widetilde{\uL}_0}(\L ,\c A^0_{cr ,2}/\c A^0_{cr ,1}).$$
\medskip 

\subsection{The functor $\widetilde{\CV }^{ft}$}\label{S4.7}
Let $\L\in\uL ^{ft}$ and let $i^{et}:\L ^{et}
\longrightarrow\L$ 
be the maximal etale subobject of $\L $. 

\begin{definition} The functor 
$\widetilde{\CV }^{ft}:\uL ^{ft}\longrightarrow\underline{\CM }_F$
 is the functor induced by the 
correspondence  $\L\mapsto \widetilde{\CV }^{ft}(\L )=
(\widetilde{\c V}^{ft}(\L ),
\widetilde{\c V}^{ft}(\L ^{et}),
\widetilde{\c V}^{ft}(i^{et}))$.
\end{definition}

The functor $\widetilde{\CV }^{ft}$ is not very far 
from Breuil's functor $\c V^t$ but it 
satisfies the following important property.

\begin{Prop} \label{P4.16} 
 The functor $\widetilde{\CV }^{ft}$ is fully faithful.
\end{Prop}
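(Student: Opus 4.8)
The plan is to reduce full faithfulness of $\widetilde{\CV}^{ft}$ to the already-established full faithfulness of $\CV^*$ (Proposition \ref{P2.15}) on the killed-by-$p$ level, combined with a devissage argument that propagates this to the whole category $\uL^{ft}$. Concretely, for $\L_1,\L_2\in\uL^{ft}$ one must show that $\widetilde{\CV}^{ft}$ induces a bijection
$$
\Hom_{\uL^{ft}}(\L_2,\L_1)\longrightarrow
\Hom_{\underline{\CM}_F}(\widetilde{\CV}^{ft}(\L_1),\widetilde{\CV}^{ft}(\L_2)).
$$
First I would treat injectivity: if $f$ maps to zero, then by the description in Subsection \ref{S4.6}, $\widetilde{\c V}^{ft}$ agrees with $\Hom_{\widetilde{\uL}_0}(-,\c A^0_{cr,2}/\c A^0_{cr,1})$, and since $\c A^0_{cr,2}/\c A^0_{cr,1}$ contains enough points (this is essentially the rank computation behind Corollary \ref{C2.5}), a morphism killed by the functor must already be zero; the maximal-etale bookkeeping in the definition of $\widetilde{\CV}^{ft}$ handles the cofiltered part exactly as in the construction of $\CV^*$.

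For surjectivity, the strategy is devissage on the length of composition series, so the core case is $\L_1,\L_2$ both products of simple objects. Using Corollary \ref{C3.10} (the equivalence $\Pi:\uL^t[1]\to\uL^*$), the extension groups in $\uL^{ft}[1]$ are identified with those in $\uL^*$, so the generators $E_{cr}$, $E_{st}$, $E_{sp}$ of Propositions \ref{P1.12} and \ref{P1.17} give a complete list of building blocks. The point is then to compare $\widetilde{\CV}^{ft}$ with $\CV^*$ on these: by Proposition \ref{P4.8}, $\c V[1]^*$ and $\c V^*$ coincide on unipotent objects, and Proposition \ref{P4.13}(c) identifies the functorial extension $0\to\c V^t(\L^m)\to\widetilde{\c V}^{ft}(\L)\to\c V^t(\L^u)\to 0$ with the splitting data of Proposition \ref{P4.10}; the splitting $\Theta$ kills precisely the "$\c V[1]^*$ vs.\ $\c V^*$" discrepancy that caused $\c V^t$ to fail full faithfulness. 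So on the killed-by-$p$ level $\widetilde{\CV}^{ft}$ restricted to simple subquotients and their extensions reproduces $\CV^*$, and full faithfulness there follows from Proposition \ref{P2.15} together with the injectivity of $\mathrm{E\Pi}$ on $\Ext$ (the map to $\Ext_{\uC_F}$ is injective, as in the proof of Proposition \ref{P2.15}).

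The devissage step itself proceeds by writing short exact sequences $0\to\L_1'\to\L_1\to\L_1''\to 0$ and $0\to\L_2'\to\L_2\to\L_2''\to 0$ with $\L_i''$ killed by $p$, applying the exactness of $\widetilde{\c V}^{ft}$ (noted after the definition in Subsection \ref{S4.4}) to get the two six-term $\Hom$–$\Ext$ sequences of Appendix A, and then invoking the five-lemma once the base cases ($\L_i$ killed by $p$, done above, and the $p$-divisible situation handled via Corollary \ref{C4.6} together with Proposition \ref{P4.14}, which guarantees the construction is independent of the choice of $\L'$ with $p\L'=\L$) are in place. The main obstacle I anticipate is the bookkeeping in the killed-by-$p$ base case: one has to check that the functorial splittings $\Theta,\widetilde{\Theta}$ interact correctly with strict monomorphisms and epimorphisms so that the natural transformation $\widetilde{\CV}^{ft}\Rightarrow$ "$\CV^*$ twisted by the extension of $\c V^t(\L^m)$ by $\c V^t(\L^u)$" is genuinely an isomorphism of functors on $\uL^{ft}[1]$, and that the cofiltration $(\,\cdot\,,\widetilde{\c V}^{ft}(\L^{et}),\ldots)$ is compatible with all of this — in other words, verifying that the modification at the $\uC_F$-level does not introduce new morphisms. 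Everything else is formal manipulation of the pre-abelian formalism of Appendix A.
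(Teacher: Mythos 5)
Your overall outline does coincide with the paper's: reduce by devissage to the killed-by-$p$ subcategory $\uL^{ft}[1]$, show that there the modified functor agrees with $\c V^*$ (via the equivalence $\Pi$ of Corollary \ref{C3.10}), and then invoke the full faithfulness of $\CV^*$ (Proposition \ref{P2.15}). But the central step — that $\Pi^{-1}\circ\widetilde{\c V}^{ft}|_{\uL^{ft}[1]}$ is \emph{naturally isomorphic} to $\c V^*$, compatibly with the $\Gamma_F$-action and with the etale cofiltration — is precisely the point you flag as your "main obstacle" and never actually prove. Asserting that the splitting $\Theta$ "kills precisely the $\c V[1]^*$ vs.\ $\c V^*$ discrepancy" is a restatement of the desired conclusion, not an argument: Proposition \ref{P4.8} only identifies the two functors on unipotent objects, and Proposition \ref{P4.13}(c) only produces \emph{a} functorial extension of $\c V^t(\L^u)$ by $\c V^t(\L^m)$; nothing you cite shows that this extension class is the one computed by $\c V^*$, and that identification is exactly where the content lies (it is why $\c V^t$ fails to be fully faithful while $\widetilde{\c V}^{ft}$ does not). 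Your proposed alternative of checking the standard extensions $E_{cr}$, $E_{st}$, $E_{sp}$ one by one would still require computing $\widetilde{\c V}^{ft}$ on them, which again forces the comparison you have not supplied; moreover the injectivity of the map into $\Ext_{\uC_F}$ was established for $\CV^*$, so it only transfers once the identification of functors is in place. Similarly, your faithfulness argument ("enough points in $\c A^0_{cr,2}/\c A^0_{cr,1}$") is not a proof; it is subsumed by the identification, but does not stand on its own.

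The paper closes this gap concretely: starting from the description of Subsection \ref{S4.6}, $\widetilde{\c V}^{ft}(\L)=\Hom_{\widetilde{\uL}_0}(\L,\c A^0_{cr,2}/\c A^0_{cr,1})$, it exhibits an explicit morphism of filtered $\varphi$-modules
$\c A^0_{cr,2}/p\c A^0_{cr,2}\To\c R^0$, given on representatives by
$[r_0\,\mathrm{mod}\,x_0^p]T_1+[r_0]+p[r_1]\mapsto (r_0+x_0^pr_1)\,\mathrm{mod}\,x_0^p\m_R$,
which identifies the abelian groups $\widetilde{\c V}^{ft}(\L)$ and $\c V^*(\L)$, and then verifies $\Gamma_F$-equivariance by passing to a suitable quotient of $\c A^0_{st,2}/p\c A^0_{st,2}$, exactly as in the unipotent comparison of Subsection \ref{S4.2}. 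Until you carry out such a period-ring comparison (or an equivalent explicit computation of $\widetilde{\c V}^{ft}$ on the standard extensions), your reduction to Proposition \ref{P2.15} is not justified, so the proposal has a genuine gap at its key step; the surrounding devissage and the use of the six-term sequences of Appendix \ref{A} are fine and agree with the paper.
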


\begin{proof} By standard devissage it will be sufficient 
 to verify this property for the restriction 
$\widetilde{\CV }^{ft}|_{\uL ^{ft}[1]}$. Due to 
 Proposition \ref{P2.12} it will be sufficient 
to verify that the functor 
$\widetilde{\c V}^{ft}|_{\uL ^{ft}[1]}\circ\Pi ^{-1}$ coincides with the functor 
$\c V^*$ from Subsection \ref{S2.2}.
This can be proved similarly to the proof 
of the corresponding fact for unipotent objects in 
Subsection \ref{S4.2} as follows. 

 Let 
$$A^0_{st,2}=\underset{j\geqslant 0}\prod A^0_{cr ,2}
\gamma _j(\log (1+X))\subset \widetilde{A}_{st ,2}=
\underset{j\geqslant 0}\prod\widetilde{A}_{cr ,2}\gamma _j(\log (1+X))$$
with induced structures of the objects 
${\c A}^0_{st ,2}$ and $\widetilde{\c A}_{st ,2}$ of 
the category $\widetilde{\uL }$. Then from Subsection \ref{S4.6} it follows that 
$$\c V^t(\L )=\Hom _{\widetilde{\uL }}(\L ,
{\c A}^0_{st ,2}/p{\c A}^0_{st ,2}).$$

One can see easily that the correspondence 
$$[r_0\,\mathrm{mod}\,x_0^p]T_1+[r_0]+p[r_1]
\mapsto (r_0+x_0^pr_1)\,\mathrm{mod}\,x_0^p\m _R$$
induces the morphism 
${\c A}^0_{cr ,2}/p{\c A}^0_{cr ,2}
\longrightarrow \c R^0$ in the category 
$\widetilde{\uL }_0$. This morphism induces 
a unique identification of the abelian groups 
$\c V^t(\L )$ and 
$\Hom (\L ,{\c R}^0)=
\c V^*(\L )$. 
Now going to a suitable factor 
of the object ${\c A}^0_{st ,2}/p{\c A}^0_{st ,2}$ 
we obtain that this identification is compatible with the  
$\Gamma _F$-actions on both abelian groups.
\end{proof}

Now we can describe all Galois invariant lattices of semi-stable 
$\Q _p[\Gamma _F]$-modules with weights from $[0,p)$.

\begin{Cor}\label{C4.17} 
Suppose $V$ is a semi-stable representation of $\Gamma _F$ 
with weights from $[0,p)$, 
$\mathrm{dim}_{\Q _p}V=s$ and $T$ is a $\Gamma _F$-invariant 
lattice in $V$. Then there is a $p$-divisible group $\{\L ^{(n)}, i_n\}_{n\geqslant 0}$ 
of height $s$ in $\uL ^{ft}$   
such that 
$\underset{n}\varprojlim \,\widetilde{\CV }^{ft}(\L ^{(n)})=
(T,T^{et},i^{et})\in\uC  _F$.
\end{Cor}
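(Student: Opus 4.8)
The plan is to reduce Corollary~\ref{C4.17} to the already-established theory by passing through Breuil's equivalence and the modified functor $\widetilde{\CV}^{ft}$. First I would invoke the equivalence between the category of semi-stable $\Q_p[\Gamma_F]$-modules with Hodge-Tate weights from $[0,p)$ together with a choice of Galois-invariant lattice, on one side, and the category of $p$-divisible groups $\{\L^{(n)},i_n\}_{n\geqslant 0}$ of finite height in $\uL^{ft}$, on the other. This is exactly the content of Corollary~\ref{C3.2} combined with the equivalence noted at the end of Subsection~\ref{S3.4} between $\uL^f$ and $p$-divisible groups of finite height in $\uL^t$ (and the observation in Subsection~\ref{S3.5} that these land in $\uL^{ft}$): given the semi-stable $V$ with lattice $T$, there is $\L\in\uL^f$ with $V^*_{st}(\L)\simeq V$ and $T^*_{st}(\L)=T$ as $\Z_p[\Gamma_F]$-modules, and by Corollary~\ref{C4.6} we have $T^*_{st}(\L)=\varprojlim_n\c V^t(\L^{(n)})$ for the associated $p$-divisible group, with $\mathrm{rk}_{\c W}L=\dim_{\Q_p}V=s$, i.e.\ height $s$.

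Next I would compare Breuil's functor $\c V^t$ with the modified functor $\widetilde{\CV}^{ft}$. The key point is that $\widetilde{\c V}^{ft}(\L^{(n)})$ and $\c V^t(\L^{(n)})$ have the same underlying abelian group for each $n$ (they differ only in the $\Gamma_F$-structure, via the splittings $\Theta$, $\widetilde\Theta$): indeed, Subsection~\ref{S4.6} identifies $\widetilde{\c V}^{ft}(\L)$ with $\Hom_{\widetilde{\uL}_0}(\L,\c A^0_{cr,2}/\c A^0_{cr,1})$, and by Proposition~\ref{P4.4} the forgetful comparison with $\Hom_{\widetilde{\uL}_0}(\L,\c A_{cr,n})=\c V^t(\L)$ is an isomorphism of abelian groups. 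Since the transition maps $i_n$ are compatible with both structures (the functors $\c V^t$ and $\widetilde{\c V}^{ft}$ are both additive and exact, by Corollary~\ref{C4.5} and Subsection~\ref{S4.4}), taking $\varprojlim_n$ of $\widetilde{\c V}^{ft}(\L^{(n)})$ recovers the same underlying $\Z_p$-module $T=\varprojlim_n\c V^t(\L^{(n)})$. It remains to check that the extra data in $\uC_F$ matches: the $\Gamma_F$-module $T^{et}$ should be $\varprojlim_n\widetilde{\c V}^{ft}(\L^{(n),et})$ and $i^{et}$ the induced map. This follows because the formation of the maximal \'etale subobject is functorial in $\uL^{ft}$ and commutes with the $p$-divisible-group structure (end of Subsection~\ref{S3.5}), so $\{\L^{(n),et},i_n\}$ is a sub-$p$-divisible-group, and $\widetilde{\CV}^{ft}$ was \emph{defined} on objects by $\L\mapsto(\widetilde{\c V}^{ft}(\L),\widetilde{\c V}^{ft}(\L^{et}),\widetilde{\c V}^{ft}(i^{et}))$; passing to the limit gives $(T,T^{et},i^{et})$ in $\uC_F$.

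Finally, I would need to argue that the $\Gamma_F$-action obtained via $\varprojlim_n\widetilde{\c V}^{ft}(\L^{(n)})$ on the abelian group $T$ agrees with the one coming from the semi-stable representation $V$ and its lattice. Here the essential input is that $\widetilde{\CV}^{ft}$ is fully faithful (Proposition~\ref{P4.16}) and that on objects killed by $p$ it agrees, after $\Pi^{-1}$, with the functor $\c V^*$ of Section~\ref{S2}; together with the ramification-compatible comparison this pins down the $\Gamma_F$-structure uniquely at each finite level $\L^{(n)}$, and hence in the limit. Concretely, the level-$n$ object $\widetilde{\c V}^{ft}(\L^{(n)})$ is identified with $T/p^nT$ as a $\Z_p[\Gamma_F]$-module because $\c V^t(\L^{(n)})=T^*_{st}(\L)/p^nT^*_{st}(\L)$ by Corollary~\ref{C4.6} and the modification only reassembles the extension of the unipotent part by the multiplicative part in a way that is, by construction, the one forced by $\hat A_{st}$-duality; I would spell this out by the diagram-chase already set up in Propositions~\ref{P4.13} and~\ref{P4.14}, which shows $\widetilde{\c V}^{ft}$ is well-defined independently of the auxiliary $\L'$ and sits in the functorial exact sequence $0\to\c V^t(\L^m)\to\widetilde{\c V}^{ft}(\L)\to\c V^t(\L^u)\to 0$ matching the corresponding filtration of $T/p^nT$.

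The main obstacle I expect is precisely this last compatibility of $\Gamma_F$-actions in the limit: one must verify that the Breuil period ring $\hat A_{st}$ and the modified period object $\c A^0_{cr,2}/\c A^0_{cr,1}$ give, after $\varprojlim$, exactly the lattice $T$ inside $V^*_{st}(\L)=T^*_{st}(\L)\otimes\Q_p$ with its genuine Galois action — as opposed to some twist differing on the \'etale/multiplicative graded pieces. The safe route is to reduce mod $p^n$ and mod $p$, apply Proposition~\ref{P4.16} and the identification with $\c V^*$ from Subsection~\ref{S4.7}, and then use the exactness (Corollary~\ref{C4.5}) plus the functoriality of the standard exact sequences to transport the identification up the $p$-divisible tower; the inverse limit then produces the desired object $(T,T^{et},i^{et})\in\uC_F$, with height $s$ since $\mathrm{rk}_{\c W}L=s$.
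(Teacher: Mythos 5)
Your argument has two genuine gaps, and they are exactly the two points the corollary is really about. First, in your opening reduction you assert that, given the pair $(V,T)$, Corollary \ref{C3.2} (plus the identification of $\uL ^f$ with $p$-divisible groups in $\uL ^{ft}$) supplies an $\L\in\uL ^f$ with $V^*_{st}(\L )\simeq V$ \emph{and} $T^*_{st}(\L )=T$. Corollary \ref{C3.2} only gives essential surjectivity on representations: it produces some $\L$ with $V^*_{st}(\L )\simeq V$, hence some lattice, not a prescribed one. That every Galois-invariant lattice is reached is precisely what Corollary \ref{C4.17} claims, and it is not true for Breuil's functor $T^*_{st}$ in the weight range $[0,p)$ -- this failure is the stated reason (cf. the Introduction and Subsection \ref{S4.4}) for introducing $\widetilde{\CV }^{ft}$ at all. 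So your first step begs the question. What is missing is the devissage: start from one tower $\{\L _0^{(n)}\}$ attached to $V$, let $T_1=\varprojlim _n\widetilde{\c V}^{ft}(\L _0^{(n)})$ (one must first check this is a lattice in $V$), rescale so that $p^NT_1\subset T\subset T_1$, realize the resulting finite $\Gamma _F$-subquotient at level $N$ inside the image of $\widetilde{\CV }^{ft}$ using exactness, full faithfulness (Proposition \ref{P4.16}) and the fact that $\uL ^{ft}$ is by definition closed under strict subquotients of $p$-divisible groups, and then reassemble a new $p$-divisible group with the Appendix \ref{A} formalism whose limit is $(T,T^{et},i^{et})$. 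None of this appears in your proposal.

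Second, your comparison of the two functors at finite level is not just unproven but false in general, and it is the obstacle you yourself flag without resolving. You claim $\widetilde{\c V}^{ft}(\L ^{(n)})$ is identified with $\c V^t(\L ^{(n)})\cong T^*_{st}(\L )/p^nT^*_{st}(\L )$ as a $\Z _p[\Gamma _F]$-module, the modification ``only reassembling'' the extension. But mod $p$ the functor $\c V^t=\c V[1]^*$ forces the sequence $0\to\c V(\L ^m)\to\c V(\L )\to\c V(\L ^u)\to 0$ to split functorially (Proposition \ref{P4.10}), whereas $\widetilde{\c V}^{ft}$ agrees with $\c V^*$ (proof of Proposition \ref{P4.16}), which does not; the two functors genuinely differ on torsion objects, so the limit of $\widetilde{\c V}^{ft}$ along the tower is in general a lattice different from $T^*_{st}(\L )$. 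Knowing that the underlying abelian groups have the same order, or that both functors are exact, does not determine the $\Gamma _F$-action on the limit, and an appeal to ``$\hat A_{st}$-duality'' or to full faithfulness does not substitute for the comparison (note also that Proposition \ref{P4.4} compares $\Hom _{\widetilde{\uL }}(\L ,\c A_{st,n})$ with $\Hom _{\widetilde{\uL }_0}(\L ,\c A_{cr,n})$, not with $\Hom _{\widetilde{\uL }_0}(\L ,\c A^0_{cr,2}/\c A^0_{cr,1})$ from Subsection \ref{S4.6}). For the record, the paper states this corollary without proof; the intended argument is the lattice devissage of \cite{refAb1,refAb2} sketched above, which your proposal does not carry out.
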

\medskip

\section {Proof of Theorem \ref{T0.1}.}\label{S5}

As earlier, $p$ is a fixed prime number, $p\ne 2$. 
Starting Subsection \ref{S5.2} we assume $p=3$.

\subsection{} \label{S5.1} 
For all prime numbers $l$, choose embeddings of algebraic closures 
$\bar\Q \subset\bar\Q _l$ and use them to identify the inertia 
groups $I_l=\Gal (\bar\Q _l/\Q _{l,ur})$, where $\Q _{l,ur}$ 
is the maximal unramified extension of $\Q _l$,  with the appropriate  
subgroups in $\Gamma _{\Q }=\Gal (\bar\Q /\Q)$.

Introduce the category $\uM _{\Q}^t$. 
Its objects are the pairs $H_{\Q}=(H,\widetilde{H}_{st})$, where 
$H$ is a finite $\Z _p[\Gamma _{\Q}]$-module unramified outside $p$ 
and $\widetilde{H}_{st}=
(H_{st},H^0_{st},i)\in\uC _{F}^{st}$, where 
$H|_{I_p}=H_{st}$, $F=W(\bar \F_p)[1/p]$ and 
$\uC _F^{st}$ is the image of the functor $\widetilde{\CV}^{ft}$ 
from Subsection \ref{S4.7}. 
 The morphisms in 
$\uM _{\Q}^t$ are compatible morphisms of Galois modules. 
Clearly, $\uM _{\Q}^t$ is a special pre-abelian category, cf. Appendix \ref{A}.

Let $\uM _{\Q}^t[1]$ be the full suibcategory 
of killed by $p$ objects in $\uM _{\Q}^t$. Denote by 
$\c K(p)$ an algebraic extension of $\Q $ such that 
for any $H_{\Q }=(H,\widetilde{H}_{st})\in\uM _{\Q}^t[1]$, 
$\Gamma _{\c K(p)}=\Gal (\bar\Q /\c K(p))$ acts trivially on $H$. In other words, 
$\c K(p)$ can be taken as a common field-of-definition of 
points of all such $\Gamma _{\Q }$-modules $H$. 

Now assume that 
\medskip 

${\bf (C)} $\ $\c K(p)$ is totally ramified at $p$. 
\medskip

Under this assumption we have a natural identification 
$\Gal (\c K(p)/\Q)=\Gal (\c K(p)F/F)$, that is the Galois group 
of the global extension $\c K(p)/\Q $ comes as the Galois group of 
its completion over $F$. Therefore, we can 
identify   
$\uM _{\Q}^t[1]$ with the full subcategory  of $\uC _{F}^{st}$, 
consisting of $(H_{st}, H_{st}^0,i)$ such that $pH_{st}=0$ and 
all points of $H_{st}$ are defined over $\c K(p)F$. 
 In other words, the objects of 
$\uM ^t[1]$ can be described via our local results about killed by $p$ 
subquotients of semistable representations of $\Gamma _F$. 

Denote by $\uM _{\Q }^{ft}[1]$ a full subcategory in 
$\uM _{\Q }^t[1]$ which consists of 
killed by $p$ subquotients of $p$-divisible 
groups in the category $\uM _{\Q}^t$. 

Let $F'$ be the maximal tamely ramified extension of $F$ in $\c K(p)F$. 
Then $\Gal (F'/F)$ is abelian group of order prime  to $p$ 
(use that the residue field of $F$ is algebraically closed) and 
$\Gal (\c K(p)F/F')$ is a $p$-group. This gives an abelian extension 
$\c K'$ of $\Q $ in $\c K(p)$ of prime-to-$p$ degree and such that 
$\c K(p)/\c K'$ is a $p$-extension. This extension 
is unramified outside $p$ and, therefore, it coincides 
(use class field theory) with $\Q (\zeta _p)$. 
In particular, all simple objects in  
$\uM _{\Q}^t[1]$ are of the form 
$\c F(j)=(\F _p(j), 0,0)$ if $1\leqslant j<p$ and 
$\c F(0)=(\F _p(0), \F _p(0),\mathrm{id}\,)$ if $j=0$. 

Let $\uL _{\Q }^{ft}[1]$ and $\uL _{\Q }^t[1]$ be the full subcategories 
of $\uL ^t[1]$ mapped by the functor $\widetilde{\CV }^{ft}$ 
to the objects of $\uM _{\Q }^{ft}[1]$ and, resp., $\uM _{\Q }^t[1]$. 
Clearly, 
$\uL _{\Q }^{ft}[1]$ is a full subcategory in $\uL ^{ft}[1]$ and the only 
simple objects in these  categories are $\c L(r)$, where  
$r\in\{j/(p-1)\ |\ j=0,1,\dots ,p-1\}$.

Suppose 
$H^{\infty }=\{H^{(n)}_{\Q},i_n\}_{n\geqslant 0}$ is a $p$-divisible group 
in the category $\uM _{\Q}^t$. Here all  
$H_{\Q}^{(n)}=(H^{(n)},\widetilde{H}^{(n)}_{st})$ are objects of 
the category $\uM _{\Q }^t$.  
 Let $\c L\in\uL _{\Q }^{ft}[1]$ be such that 
$\widetilde{\CV }^{ft}(\c L)=\widetilde{H}^{(1)}_{st}$. 
 Note that 
the maximal etale subobject $\c L^{et}$ of $\c L$ is isomorphic to 
$\c L(0)^{n_{et}}$, 
where $n_{et}=n_{et}(\c L)\in\Z _{\geqslant 0}$, and 
$\c L/\c L^{et}$ has no simple subquotients isomorphic to 
$\c L(0)$. Similarly, the corresponding 
maximal multiplicative 
quotient $\c L^{m}$ is isomorphic to $\c L(1)^{n_m}$, where 
$n_m=n_m(\c L)\in\Z _{\geqslant 0}$, and the kernel of the canonical projection 
$\c L\longrightarrow\c L^m$ has no simple subquotients isomorphic to 
$\c L(1)$. Therefore, 
for any $\c M\in\uL _{\Q }^{ft}[1]$,  
$$\Ext _{\uL _{\Q }^{ft}[1]}(\c L(0),\c M)=
\Ext _{\uL _{\Q }^{ft}[1]}(\c M,\c L(1))=0.$$ 
This implies that for any $H\in \uM _{\Q }^{ft}[1]$,  
$$\Ext _{\uM _{\Q }^{ft}[1]}(H,\c F(0))=
\Ext _{\uM _{\Q }^{ft}[1]}(\c F(1), H)=0.$$

Therefore, by Theorem \ref{T2} of Appendix A there is an embedding of 
$p$-divisible groups 
$H^{\infty ,m}\subset H^{\infty }$, where  
$H^{(1)m}=\c F(1)^{n_m}$, and there is a projection of 
$p$-divisible groups $H^{\infty }\To H^{\infty ,et}$, where  
$H^{(1)et}=\c F(0)^{n_{et}}$. 

For similar reasons,  
$$\Ext _{\uM _{\Q }^{ft}[1]}(\c F(0),\c F(0))=
\Ext _{\uM _{\Q }^{ft}[1]}(\c F(1),\c F(1))=0$$
and by Theorem \ref{T1} of Appendix A, the corresponding $p$-divisible groups 
$H_{\Q }^{\infty ,m}$ and $H_{\Q }^{\infty ,et}$ 
are unique. Therefore they coincide with the products of trivial $p$-divisible groups 
$(\Q _p/\Z _p)(p-1)$ and, resp., $(\Q _p/\Z _p)(0)$.  

We state this result in the following form. 

\begin{Prop} \label{P5.1} 
Under assumption {\bf (C)}, for any $p$-divisible group 
$H^{\infty }$ in the category $\uM _{\Q }^t$ there is a filtration 
of $p$-divisible groups 
$$H^{\infty }\supset H_{1}^{\infty }\supset H_{0}^{\infty }$$ 
such that $H_{0}^{\infty }=(\Q _p/\Z _p)(p-1)^{n_m}$, 
$H^{\infty }/H_{1}^{\infty }=(\Q_p/\Z_p)(0)^{n_{et}}$ and 
all simple subquotients of  
$H_1^{\infty }/H_0^{\infty }$ belong to $\{\F _p(j)|1\leqslant j\leqslant p-2\}$. 
 \end{Prop}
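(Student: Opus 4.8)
The plan is to read the filtration essentially off the discussion preceding the statement: under ${\bf (C)}$ all the substantive work is already done, and what remains is to assemble the maximal etale quotient and the maximal multiplicative subobject of the given $p$-divisible group into a chain of $p$-divisible subgroups, invoking the structural Theorems \ref{T1} and \ref{T2} of Appendix \ref{A} together with the $\Ext$-vanishings recorded above.

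Concretely, fix $H^{\infty}=\{H^{(n)}_{\Q},i_n\}$ in $\uM_{\Q}^t$ and let $\c L\in\uL^{ft}_{\Q}[1]$ satisfy $\widetilde{\CV}^{ft}(\c L)=\widetilde{H}^{(1)}_{st}$, with $n_{et}=n_{et}(\c L)$ and $n_m=n_m(\c L)$ the multiplicities of $\c L(0)$ and $\c L(1)$ in the maximal etale subobject and the maximal multiplicative quotient of $\c L$. Applying Theorem \ref{T2} of Appendix \ref{A} to $\Ext_{\uM_{\Q}^{ft}[1]}(H,\c F(0))=0$ (valid for all $H$) gives a functorial strict epimorphism of $p$-divisible groups $H^{\infty}\to H^{\infty,et}$ with level-one term $\c F(0)^{n_{et}}$, and applying it to $\Ext_{\uM_{\Q}^{ft}[1]}(\c F(1),H)=0$ gives a functorial strict monomorphism $H^{\infty,m}\to H^{\infty}$ with level-one term $\c F(1)^{n_m}$. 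I would then set $H_1^{\infty}=\Ker(H^{\infty}\to H^{\infty,et})$ and $H_0^{\infty}=H^{\infty,m}$. Since $\Hom_{\uC_F}(\c F(1),\c F(0))=0$ — the cofiltration data distinguishes these two objects even though both are trivial mod $p$ as $\Gamma_F$-modules — the composite $H^{\infty,m}\to H^{\infty}\to H^{\infty,et}$ vanishes on level one, hence identically, so $H^{\infty,m}$ factors through $H_1^{\infty}$ and we obtain the chain $H^{\infty}\supseteq H_1^{\infty}\supseteq H_0^{\infty}$. By construction $H^{\infty}/H_1^{\infty}=H^{\infty,et}$ and $H_0^{\infty}=H^{\infty,m}$; and the level-one term of $H_1^{\infty}/H_0^{\infty}$ is a subquotient of $H^{(1)}$ having neither $\c F(0)$ nor $\c F(1)$ among its simple subquotients, so — since under ${\bf (C)}$ the remaining simple objects of $\uM_{\Q}^t[1]$ are exactly those coming from the Tate twists $\F_p(j)$ with $1\leqslant j\leqslant p-2$ — these are the only simple subquotients that can occur. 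Finally, the self-$\Ext$ vanishings $\Ext_{\uM_{\Q}^{ft}[1]}(\c F(0),\c F(0))=\Ext_{\uM_{\Q}^{ft}[1]}(\c F(1),\c F(1))=0$ together with Theorem \ref{T1} of Appendix \ref{A} force $H^{\infty,et}$ and $H^{\infty,m}$ to be the unique $p$-divisible groups with those level-one reductions, namely $(\Q_p/\Z_p)(0)^{n_{et}}$ and $(\Q_p/\Z_p)(p-1)^{n_m}$.

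The main obstacle I anticipate is purely formal bookkeeping with the $p$-divisible group formalism of Appendix \ref{A}: one must check, using that $\uM_{\Q}^t$ is a special pre-abelian category, that the maximal etale quotient and maximal multiplicative subobject of a $p$-divisible group are again $p$-divisible groups, that $\Ker(H^{\infty}\to H^{\infty,et})$ is a $p$-divisible group, and that a morphism of $p$-divisible groups which vanishes on level one vanishes. All the genuinely hard content — the global $\Ext$-vanishing in $\uM_{\Q}^{ft}[1]$, where assumption ${\bf (C)}$ and the ramification estimates of Subsection \ref{S2.9} enter through the identification of $\uM_{\Q}^{ft}[1]$ with a category of killed-by-$p$ subquotients of semi-stable representations of $\Gamma_F$ with weights in $[0,p)$, and the uniqueness statements of Theorems \ref{T1} and \ref{T2} — is already in hand from the preceding discussion, so the proof itself is short.
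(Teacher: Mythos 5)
Your overall route is the same as the paper's: the statement is proved there by exactly the combination you describe, namely the Ext-vanishings in $\uM _{\Q }^{ft}[1]$ fed into Theorem \ref{T2} to produce the multiplicative sub-$p$-divisible group $H^{\infty ,m}$ and the etale quotient $H^{\infty ,et}$, followed by Theorem \ref{T1} together with the self-Ext vanishings to identify them with $(\Q _p/\Z _p)(p-1)^{n_m}$ and $(\Q _p/\Z _p)(0)^{n_{et}}$; so your decomposition, your use of the appendix, and your appeal to the already-established Ext computations all match the paper.

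The one step that would fail as you state it is the auxiliary principle ``a morphism of $p$-divisible groups which vanishes on level one vanishes.'' This is false in general: multiplication by $p$ on $(\Q _p/\Z _p)(0)$, i.e. the compatible system $p\,\mathrm{id}$ on the levels $\Z /p^n$, is an endomorphism of a $p$-divisible group in $\uM _{\Q }^t$ which is zero on level one but nonzero on level two. What you actually need, and what is true, is that the composite $H^{\infty ,m}\to H^{\infty }\to H^{\infty ,et}$ vanishes at \emph{every} level. This follows by devissage rather than from level one alone: the exact sequences $0\to H_0^{(1)}\to H_0^{(n)}\to H_0^{(n-1)}\to 0$ show by induction that every simple subquotient of $H_0^{(n)}$ is the multiplicative simple object, and likewise every simple subquotient of the $n$-th level of $H^{\infty ,et}$ is $\c F(0)$; since $\Hom _{\uC _F}(\c F(1),\c F(0))=0$ (which, as you correctly note, is precisely what the cofiltration data buys, both underlying $\Gamma _F$-modules being trivial), left exactness of $\Hom $ in both variables in the special pre-abelian category gives $\Hom (H_0^{(n)},H^{(n),et})=0$ for all $n$, hence the composite is the zero morphism of $p$-divisible groups. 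With that substitution your verification of $H_0^{\infty }\subset H_1^{\infty }$ --- a containment the paper leaves implicit --- is complete, and the remainder of your argument coincides with the paper's proof.
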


\subsection{}\label{S5.2}
Assume that $p=3$.

\begin{Lem}\label{L5.2} 
$\c K(3)= \Q (\root 3\of 3,\zeta _9)$, where $\zeta _9$ is 
9-th primitive root of 1.
\end{Lem}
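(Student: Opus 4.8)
The plan is to identify $\mathcal K(3)$ by combining the local ramification constraints proved in Section~\ref{S2} with a discriminant estimate. By definition $\mathcal K(3)$ is the common field of definition of the points of all $H\in\underline{\mathrm M\Gamma}_{\mathbb Q}^t[1]$; equivalently, using Corollary~\ref{C4.17} and the description of $\underline{\mathrm M\Gamma}_{\mathbb Q}^t[1]$ in Subsection~\ref{S5.1}, it is the common field of definition over $\mathbb Q$ of all killed-by-$3$ subquotients of semi-stable representations of $\Gamma_{\mathbb Q}$ unramified outside $3$ and semi-stable at $3$. First I would pin down the tamely ramified part: as noted in Subsection~\ref{S5.1}, under assumption $\mathbf{(C)}$ the maximal tamely ramified subextension of $F$ in $\mathcal K(3)F$ is $F(\zeta_3)$ (it comes from an abelian extension of $\mathbb Q$ unramified outside $3$), so $\zeta_9\in\mathcal K(3)$ follows once one knows $3$ ramifies wildly; and $\root 3\of 3$ produces exactly the simple object $\mathcal L(1/2)=\mathcal F(1)$, i.e. the Tate twist $\mathbb F_3(1)$, whose field of definition over $\mathbb Q$ is $\mathbb Q(\root 3\of 3,\zeta_3)$. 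Hence $\mathbb Q(\root3\of3,\zeta_9)\subseteq\mathcal K(3)$: one checks directly that each of $\mathbb F_3(j)$, $j=1,2$, as well as the relevant extension classes realized by the standard extensions $E_{cr}$, $E_{st}$, $E_{sp}$ (which for $p=3$ are severely limited, since $0\le \tilde a_i,\tilde b_j\le 2$), is defined over this field.

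The reverse inclusion $\mathcal K(3)\subseteq\mathbb Q(\root3\of3,\zeta_9)$ is the heart of the matter and the step I expect to be the main obstacle. Write $\widetilde F$ for $\mathcal K(3)$. Since $\widetilde F/\mathbb Q$ is unramified outside $3$, it suffices to bound its ramification at $3$ and its degree. By Corollary~\ref{C2.19} the different of the completion satisfies $v_3(\mathcal D(\widetilde F_v/\mathbb Q_3))<3-\tfrac13$, which bounds the exponent of $3$ in the discriminant of $\widetilde F$ by $(3-\tfrac13)[\widetilde F:\mathbb Q]$; this gives $|\operatorname{disc}\widetilde F|^{1/[\widetilde F:\mathbb Q]}<3^{\,3-1/3}$. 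I would then feed this root-discriminant bound into the Odlyzko estimates (together with the fact that $\widetilde F$ contains $\mathbb Q(\zeta_9)$, hence has signature with no real places and is of degree divisible by $6$) to conclude that $[\widetilde F:\mathbb Q]$ is bounded; a short computation, aided by tables/SAGE as the introduction advertises, should force $[\widetilde F:\mathbb Q]=18=[\mathbb Q(\root3\of3,\zeta_9):\mathbb Q]$, and since we already have the containment $\mathbb Q(\root3\of3,\zeta_9)\subseteq\widetilde F$, equality follows.

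There is one gap to close before this argument is legitimate: verifying assumption $\mathbf{(C)}$, namely that $\mathcal K(3)$ is totally ramified at $3$. The plan is to observe that any unramified-at-$3$ subextension of $\mathcal K(3)/\mathbb Q$ would be an everywhere-unramified extension of $\mathbb Q$ (being unramified outside $3$ by construction), hence trivial; more precisely, one shows the decomposition group at $3$ in $\mathrm{Gal}(\mathcal K(3)/\mathbb Q)$ is everything, because the residue extension is forced to be trivial by the same class-number-one argument applied after adjoining $\zeta_9$. Granting $\mathbf{(C)}$, the two inclusions above combine to give $\mathcal K(3)=\mathbb Q(\root3\of3,\zeta_9)$. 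I would organize the write-up as: (i) $\mathbf{(C)}$ and the tame part; (ii) the explicit lower bound $\mathbb Q(\root3\of3,\zeta_9)\subseteq\mathcal K(3)$ via $\mathbb F_3(1)$; (iii) the discriminant bound from Corollary~\ref{C2.19} plus Odlyzko to get the matching degree. The delicate point throughout is that all estimates for $p=3$ are genuinely tight — the bound $3-\tfrac13$ is exactly what is needed and no more — so the Odlyzko step must be carried out with care, which is precisely where the computational input enters.
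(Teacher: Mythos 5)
Your overall skeleton (local ramification bound from Corollary \ref{C2.19} $\Rightarrow$ root-discriminant bound $3^{3-1/3}$ $\Rightarrow$ Odlyzko $\Rightarrow$ identify $\c K(3)$) matches the start of the paper's argument, but the decisive step is missing: the Odlyzko estimates with root discriminant below $3^{3-1/3}\approx 18.96$ do \emph{not} force $[\c K(3):\Q ]=18$. They only give $[\c K(3):\Q ]<238$ (this is exactly what the paper extracts), since totally imaginary fields of root discriminant below $19$ can a priori have degree in the hundreds; note also that $K_1=\Q (\root 3\of 3,\zeta _9)$ itself has root discriminant well below the bound, so no discriminant count alone can pin the degree down to $18$. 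The actual content of the lemma is the elimination of any proper extension $K_1\subsetneq \c K(3)$: one passes to the maximal abelian extension $K_2$ of $K_1$ inside $\c K(3)$, uses a SAGE computation that the class number of $K_1$ is $1$ to see that $K_2/K_1$ is totally (and wildly) ramified at $3$, hence contains a Kummer extension $K_1(\root 3\of \eta )$ with $\eta \in O_{K_1}^*$ (here $\zeta _3\in K_1$ is used); a discriminant computation shows compatibility with the bound $3^{3-1/3}$ forces $\eta \equiv 1\,\mathrm{mod}\,O_{K_1}^{*3}(1+3O_{K_1})^{\times}$; and finally an explicit computation of the filtration of $O_{K_1}^*/O_{K_1}^{*3}$ by $v_3(\varepsilon -1)$ (the SAGE program of Appendix \ref{B}, with jumps $18v_3(\varepsilon _i-1)\in\{1,2,4,5,7,8,10,13,16\}$, i.e.\ the Leopoldt-type statement for $K_1$) shows every such $\eta $ is already a cube, so $K_2=K_1$ and, using solubility of $\Gal (\c K(3)/\Q )$ coming from $[\c K(3):K_1]<60$, $\c K(3)=K_1$. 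Your proposal replaces all of this by ``a short computation should force degree $18$'', which is precisely the step that would fail.

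Two smaller points. First, the inclusion $\Q (\root 3\of 3,\zeta _9)\subset \c K(3)$ does need explicit modules, but not the ones you name: $\F _3(1)$ has field of definition $\Q (\zeta _3)$, not $\Q (\root 3\of 3,\zeta _3)$ (the latter arises from an extension of $\F _3(0)$ by $\F _3(1)$), and $\zeta _9$ is not produced merely by ``knowing $3$ ramifies wildly'' --- it comes from a concrete object such as $\c L(1,1)$, whose points are shown in the proof of Lemma \ref{L5.5}(a) to be defined over $F(\zeta _9)$. Second, assumption ${\bf (C)}$ need not (and in the paper does not) be verified in advance: it is a consequence of the equality $\c K(3)=K_1$, since $K_1$ is visibly totally ramified at $3$; the proof of the lemma itself never uses ${\bf (C)}$, so your preliminary ``class-number-one argument after adjoining $\zeta _9$'' is both vague and unnecessary.
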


This Lemma will be proved in Subsection \ref{S5.3} below. 

In particular, $\c K(3)$ satisfies the assumption {\bf (C)}.  

\begin{Prop} \label{P5.3} If $H^{\infty }$ is a 3-divisible group in 
 $\uM _{\Q}^t$ then in its filtration from Proposition \ref{P5.1} 
the 3-divisible group $\hat H^{\infty }=H_1^{\infty }/H_0^{\infty }$ 
is a product of finitely many trivial 3-divisible groups 
$(\Q _3/\Z _3)(1)$.
\end{Prop}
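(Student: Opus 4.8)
The plan is to reduce Proposition~\ref{P5.3} to the vanishing of a single $\Ext ^1$-group and then invoke the uniqueness of $p$-divisible groups in the relevant category. Since $p=3$ we have $p-2=1$, so Proposition~\ref{P5.1} tells us that every simple subquotient of the killed-by-$3$ object $\hat H^\infty[3]$ is isomorphic to $\F _3(1)$, which as an object of $\uC _F^{st}$ is the simple $\c F(1/(p-1))=\c F(1/2)$; this is connected and unipotent, so $\hat H^\infty$ has trivial maximal étale subobject and trivial maximal multiplicative quotient, and $(\Q _3/\Z _3)(1)=\mu _{3^\infty}$ — whose $3$-torsion is precisely $\c F(1/2)$ — is an object of $\uM _{\Q }^t$. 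Thus it suffices to prove (i) that $\hat H^\infty[3]$ is semisimple, i.e. $\Ext ^1_{\uM _{\Q }^{ft}[1]}(\c F(1/2),\c F(1/2))=0$, and then (ii) that by Theorem~\ref{T1} of Appendix~\ref{A} the $3$-divisible group with fibre $\c F(1/2)^{\,n}$ is unique, hence equals $(\Q _3/\Z _3)(1)^{\,n}$.

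For (i) I would identify $\Ext ^1_{\uM _{\Q }^{ft}[1]}(\c F(1/2),\c F(1/2))$ with the subgroup of $H^1(\Gamma _{\Q },\F _3)$ (trivial coefficients, after untwisting) consisting of classes unramified outside $3$ whose restriction to the inertia group at $3$ lies in the image of $\widetilde{\CV }^{ft}$ on extensions of $\c L(1/2)$ by $\c L(1/2)$ in $\uL ^{ft}[1]$. The global factor is one-dimensional by class field theory: the maximal abelian extension of $\Q $ unramified outside $3$ is cyclotomic, so $H^1(\Gamma _{\Q ,\{3\}},\F _3)=\Hom (\Gal (\Q (\zeta _9)/\Q ),\F _3)$ is generated by the character cutting out the cyclic cubic field $\Q (\zeta _9)^+$ of conductor $9$, which is wildly ramified at $3$; hence the restriction map to $H^1(I_3,\F _3)$ is injective. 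The local factor is controlled by the classification of Section~\ref{S1}: for $r_1=r_2=1/2$ one has $\tilde a_i=\tilde b_j=1$ for all indices, so there are no $(r_1,r_2)_{cr}$- and no $(r_1,r_2)_{sp}$-admissible pairs, while every $(r_1,r_2)_{st}$-admissible extension $E_{st}(0,0,\gamma )$ carries a non-trivial monodromy operator and so (as the weight-$1$ object $\c L(1/2)$ has coinciding Frobenius slopes, leaving no room for $N$) cannot occur as a subquotient of a $3$-divisible group with Hodge–Tate weights in $[0,3)$ — this is exactly the phenomenon $\uL ^{ft}\ne\uL ^t$. Therefore the local condition at $3$ is ``split only'', and combining it with the injectivity above gives $\Ext ^1_{\uM _{\Q }^{ft}[1]}(\c F(1/2),\c F(1/2))=0$.

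The step I expect to be the main obstacle is making the previous paragraph rigorous: one must check, using the explicit descriptions of the Galois modules $\c V^*(E_{cr}(i,j,\gamma ))$, $\c V^*(E_{st}(i,j,\gamma ))$, $\c V^*(E_{sp}(j,\gamma ))$ from Subsections~\ref{S2.5}--\ref{S2.7} together with the ramification estimates of Subsection~\ref{S2.9} (Corollary~\ref{C2.19}), that the semistable extensions realised locally at $3$ inside $3$-divisible groups are exactly the split ones, and that no non-split global class survives the local condition. Following the strategy of \cite{refAb2}, this is where the computation $\c K(3)=\Q (\root 3\of 3,\zeta _9)$ of Lemma~\ref{L5.2} enters, so that all the relevant $\Gamma _{\Q }$-modules become trivial over $\Q (\root 3\of 3,\zeta _9)$ and the remaining cohomological input is governed by the arithmetic of this field — its class group, units, and the Leopoldt conjecture, which holds here since $\Q (\root 3\of 3,\zeta _9)$ is abelian over $\Q (\sqrt{-3})$; the Odlyzko discriminant bounds and a SAGE computation then exclude the auxiliary number fields that a non-split extension would force. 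Once (i) is established, (ii) is formal: by Theorem~\ref{T1} the $3$-divisible group with prescribed fibre $\c F(1/2)^{\,n}$ is unique, and $(\Q _3/\Z _3)(1)^{\,n}$ realises it, so $\hat H^\infty=H_1^\infty/H_0^\infty\cong(\Q _3/\Z _3)(1)^{\,n}$, proving the proposition.
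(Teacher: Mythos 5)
Your overall architecture is the paper's: reduce Proposition \ref{P5.3} to the vanishing of the self-extensions of $\c F(1/2)$ in the killed-by-$3$ ``ft'' category (Proposition \ref{P5.4}) and then invoke Theorems \ref{T1} and \ref{T2}. Your bookkeeping of admissible pairs for $r_1=r_2=1/2$, $p=3$ is also correct: there are no $cr$- and no $sp$-admissible pairs, so the only candidate non-split self-extensions are of $st$-type and carry non-trivial $N$. Your global step (the restriction map from $H^1$ of the Galois group of the maximal extension of $\Q$ unramified outside $3$ with $\F_3$-coefficients to $H^1(I_3,\F _3)$ is injective, the source being one-dimensional and cyclotomic) is a legitimate substitute for the way the paper packages the global input, namely the determination $\c K(3)=\Q (\root 3\of 3,\zeta _9)$ in Lemma \ref{L5.2} and the Artin--Schreier computation in Lemma \ref{L5.5}, which cuts the coefficient $\gamma$ of the local candidate extension down to $\F _3$.

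The genuine gap is the decisive claim that the $st$-type extension $E_{st}(0,0,\gamma )$ ``cannot occur as a subquotient of a $3$-divisible group''. Your justification --- coinciding Frobenius slopes of $\c L(1/2)$ leave no room for $N$ --- is a heuristic about the mod-$3$ object and not a proof: the slope constraint $N\varphi =p\varphi N$ lives on the characteristic-zero weakly admissible module, and a priori a higher-dimensional semi-stable representation with $N\ne 0$ could still produce such a killed-by-$3$ subquotient; this is exactly the point where $\uL ^{ft}\ne \uL ^t$ must be exploited, and the tools you name do not do it. The ramification estimates, Odlyzko bounds, and the SAGE/Leopoldt computation enter only Lemma \ref{L5.2} (hence Lemma \ref{L5.5}), i.e.\ they identify the paper's $\c L(1,1)$ (the unique non-split self-extension with $N\ne 0$ defined over $\c K(3)$) as the sole candidate and give the auxiliary vanishing $\Ext _{\uL ^*_{\Q }}(\c L(1),\c L(1,1))=\Ext _{\uL ^*_{\Q }}(\c L(1,1),\c L(1))=0$; they do not exclude it. The paper's actual mechanism, missing from your proposal, is: use that auxiliary vanishing together with the splitting Theorem \ref{T2} to split off a $3$-divisible group whose $3$-torsion is exactly $\CV ^*(\c L(1,1))$, thereby manufacturing a genuine $2$-dimensional semi-stable non-crystalline representation of $\Gamma _F$ with weights in $[0,3)$ whose reduction has semisimplification $\F _3(1)\oplus \F _3(1)$, and then contradict Breuil's classification of $2$-dimensional semi-stable non-crystalline representations (Theorem 6.1.1.2 of \cite{refBr1}), which only permits $\F _3(0)\oplus \F _3(1)$ or $\F _3(1)\oplus \F _3(2)$. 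Without this lifting-plus-classification step (or a rigorous weak-admissibility argument replacing it), your key assertion is unsupported and the proof does not go through.
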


\begin{proof} 
Let $\widehat \uL _{\Q}$ be the full subcategory of $\uL _{\Q}^{ft}[1]$ 
consisting of objects $\c L$ such that $\c L^m=\c L^{et}=0$. 
This category has only one simple object $\c L(1/2)$. 
Let $\widehat\uM _{\Q}$ be the full subcategory in $\uM _{\Q}^{ft}[1]$ 
consisting of the objects 
$\widetilde{\CV }^{ft}(\c L)$, where $\c L\in\widehat\uL _{\Q}$. 
Then $\widehat\uL _{\Q}$ and $\widehat\uM _{\Q}$ are antiequivalent categories 
and $\hat H^{(1)}\in\uM _{\Q}$. By Theorems \ref{T1} and \ref{T2} our Proposition 
is implied by the following result. 
\end{proof} 

\begin{Prop} \label{P5.4} 
 $\Ext _{\hat\uL _{\Q}}(\c L(1/2),\c L(1/2))=0$. 
\end{Prop}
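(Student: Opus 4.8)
始

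The plan is to compute $\Ext_{\widehat{\uL}_{\Q}}(\L(1/2),\L(1/2))$ by combining the local description of extensions from Section~1 with the global ramification constraint coming from assumption~\textbf{(C)} and Lemma~\ref{L5.2}. First I would use Propositions~\ref{P1.12} and \ref{P1.17} to enumerate all extensions of $\L(1/2)$ by itself inside the ambient category $\uL^*$. Here $r_1=r_2=1/2$, so $s(r_i)=1$; taking $s=1$ (or a small multiple), the $p$-digit expansion of $1/2$ for $p=3$ is periodic with digit $a=1$, so $\tilde a=\tilde b=p-1-1=1$ at every index. One then checks which pairs $(i_0,j_0)$ are $(r_1,r_2)_{cr}$-, $(r_1,r_2)_{st}$-, or $(r_1,r_2)_{sp}$-admissible. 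Since $\tilde a_i=\tilde b_j$ identically, no pair can be $(r_1,r_2)_{cr}$-admissible (that requires $\tilde a_{i_0}\neq\tilde b_{j_0}$), so $\Ext_{\uL^*_{cr}}=0$; for the $st$-case one needs $\tilde a_{i_0}-1\neq\tilde b_{j_0}$, i.e. $0\neq 1$, which holds, and one inspects whether the admissibility chain condition on $\tilde a_{i_0+m}-1=\tilde b_{j_0+m}$ (i.e. $0=1$) can ever be satisfied — it cannot, so the chain terminates immediately at $m_0=1$, giving a candidate $st$-extension; the $sp$-case requires $\tilde a_m-1=\tilde b_{j_0+m}$ for all $m$, i.e. $0=1$, which fails, so no $sp$-extensions. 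Thus $\Ext_{\uL^*}(\L(1/2),\L(1/2))$ is generated by the single family $E_{st}(i_0,j_0,\gamma)$ with the relevant indices and $\gamma\in k$.

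Next I would translate each such nontrivial local extension $E_{st}(\cdot)$ into a Galois module via $\c V^*$ and extract its ramification invariant. By Subsection~\ref{S2.6}, the cocycle $A_{\tau,\alpha}(i_0,j_0,\gamma)$ attached to $E_{st}$ restricted to $\Gamma_{\c F_s}$ is governed by the Artin--Schreier element $U$ with $U-U^q=\gamma x_0^{r_1(i_0)-r_2(j_0)-1}$, and Lemma~\ref{L2.14} gives the conductor exponent $C=-(q-1)(r_1(i_0)-r_2(j_0)-1)$, a prime-to-$p$ integer with $1\le C<(q-1)(1+1/(p-1))$. With $p=3$, $q=p^s$, and $r_1(i_0)=r_2(j_0)=1/2$, this becomes $C=-(q-1)(-1/2)=(q-1)/2$ when $q$ is odd — but then $C$ is not prime to... wait: I would carefully recompute $r_1(i_0)-r_2(j_0)-1=-1$, giving $C=q-1$, which is divisible by $p-1=2$; more to the point, one must check whether a nonzero such extension is actually realizable in $\uL^*$ by verifying the conditions of Proposition~\ref{P1.13} — in particular condition~(d), which forces $\gamma_{ijt}=0$ precisely when $\tilde a_{i+m}-1=\tilde b_{j+m}$ for all $m$, which is exactly our situation ($0=1$ never holds, so actually the relevant condition to scrutinize is (a) or (c)). The upshot I expect is that either Proposition~\ref{P1.13} already kills all such extensions inside $\uL^*$, or the surviving ones have a ramification filtration jump that is incompatible with $\c K(3)=\Q(\root3\of3,\zeta_9)$ having the discriminant/different dictated by Lemma~\ref{L5.2} and Proposition~\ref{P2.18}/Corollary~\ref{C2.19}.

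Concretely, for the global vanishing I would argue as follows. An object $\c L\in\widehat\uL_{\Q}$ with $\c L^m=\c L^{et}=0$ that is an extension of $\L(1/2)$ by $\L(1/2)$ produces, via $\widetilde{\CV}^{ft}$, a $\Gamma_{\Q}$-module which is unramified outside $3$, semi-stable at $3$, killed by $3$, with all points defined over $\c K(3)F$. By Lemma~\ref{L5.2}, $\c K(3)=\Q(\root3\of3,\zeta_9)$; over $\Q_3$ this is the compositum of the tamely ramified $\Q_3(\zeta_3)$ and the wildly ramified piece generated by $\root3\of3$ and $\zeta_9$. I would compute the different (equivalently the ramification filtration in upper numbering) of $\c K(3)_3/\Q_3$ explicitly — this is a degree $18$ extension whose ramification data is classical — and compare with what a nontrivial self-extension of $\F_3(1)$-type... no, of $\L(1/2)$-type, forces: the Artin--Schreier jump computed above would require a ramification break strictly beyond the one permitted by the actual group $\Gamma_{\c K(3)F/F}$. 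The contradiction then gives $\gamma=0$ for every would-be nontrivial extension, hence $\Ext_{\widehat\uL_{\Q}}(\L(1/2),\L(1/2))=0$.

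The main obstacle, I expect, is the precise ramification bookkeeping in the last step: one must pin down the upper-numbering ramification filtration of $\Q(\root3\of3,\zeta_9)/\Q$ at the prime above $3$ accurately enough to see that it lies strictly below the break forced by $E_{st}$, and one must be careful that the field-of-norms comparison in Subsection~\ref{S2.9} is being applied with the correct normalization (the factor $1+1/(p-1)$ in Lemma~\ref{L2.14} is exactly the semi-stable enlargement over the crystalline bound $2-1/p$, so the estimate is tight and a naive application could fail). A secondary subtlety is making sure the passage from $\widehat\uL_{\Q}$ to $\uL^*$ and back via $\Pi$ and $\widetilde{\CV}^{ft}$ (Corollaries~\ref{C3.10}, \ref{C4.17}) does not introduce spurious extensions — but since $\widetilde{\CV}^{ft}$ is fully faithful by Proposition~\ref{P4.16}, the $\Ext$ group in $\widehat\uL_{\Q}$ injects into the corresponding $\Ext$ of Galois modules, so it suffices to show the latter vanishes under the ramification constraint, which is where the Odlyzko-type discriminant bounds and the explicit computation with $\c K(3)$ do the work.
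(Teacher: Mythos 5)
Your local enumeration is essentially right: for $p=3$ and $r_1=r_2=1/2$ one has $\tilde a_i=\tilde b_j=1$, there are no $cr$- or $sp$-admissible pairs, and the whole of $\Ext_{\uL^*}(\c L(1/2),\c L(1/2))$ is the $k$-line of classes $E_{st}(0,0,\gamma)$ (the paper's $\c L(1,1)$ is exactly such a class). But both horns of the dilemma on which your argument then rests fail. Proposition \ref{P1.13} does not kill these classes: with $\tilde a_i=1$, $\tilde b_j=1$ none of its conditions (a)--(d) is triggered, so the extensions are genuinely nontrivial in $\uL^*$. More importantly, the ramification/field-of-definition constraint coming from $\c K(3)$ does not kill them either: Lemma \ref{L5.5}(a),(b) — proved by an explicit Artin--Schreier field-of-definition computation, not by comparing ramification breaks or differents — shows that the nontrivial extension $\c L(1,1)$ has all of its points defined over $\c K(3)F=F(\root 3\of 3,\zeta_9)$, and that $\Ext_{\uL^*_{\Q}}$ of the simple object by itself is $\Z/3$, generated by $\c L(1,1)$. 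So the constraint ``defined over $\c K(3)F$'' merely cuts the $k$-line of classes down to $\F_3$; it does not make the group vanish, and no contradiction can be extracted from discriminant/different estimates (the Odlyzko bounds are used only in the proof of Lemma \ref{L5.2}, to pin down $\c K(3)$, not here). Your final reduction — ``it suffices to show the Ext of the corresponding Galois modules vanishes under the ramification constraint'' — is therefore a reduction to a statement that is false at that level.

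The missing ingredient is global and of a different nature. The category $\widehat{\uL}_{\Q}$ is cut out inside $\uL^*_{\Q}$ by the extra requirement that the associated Galois module arise as a killed-by-$3$ subquotient of a $3$-divisible group in $\uM_{\Q}^t$, i.e.\ of a semi-stable representation with Hodge--Tate weights in $[0,2]$. The paper rules out $\c L(1,1)\in\widehat{\uL}_{\Q}$ by lifting: using the Ext-vanishing of Lemma \ref{L5.5}(c) and Theorem \ref{T2} of Appendix \ref{A}, a hypothetical occurrence of $H(1,1)=\CV^*(\c L(1,1))$ in $\widehat\uM_{\Q}$ produces a $3$-divisible group whose first layer is $H(1,1)$, hence a $2$-dimensional semi-stable non-crystalline representation of $\Gamma_F$ whose mod-$3$ semi-simplification is $\F_3(1)\times\F_3(1)$; this contradicts Breuil's classification of $2$-dimensional semi-stable non-crystalline representations (Theorem 6.1.1.2 of \cite{refBr1}), which only allows $\F_3(0)\times\F_3(1)$ or $\F_3(1)\times\F_3(2)$. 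Without this lifting step and the appeal to Breuil's classification, the vanishing of $\Ext_{\widehat{\uL}_{\Q}}(\c L(1/2),\c L(1/2))$ cannot be obtained from ramification estimates alone.
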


\begin{proof} 
Consider the equivalence of the categories $\Pi :\uL ^t\To \uL ^*$ from 
Corollary \ref{C3.10}. This equivalence transforms the functor 
$\widetilde{\CV }^{ft}$ into the functor $\CV ^*$ from Section \ref{S2}, 
cf. the proof of Proposition \ref{P4.16}. 
Therefore, the objects $\L $ of the category 
$\Pi (\uL _{\Q }^t):=\uL ^*_{\Q }$ are characterised by the condition that 
all points of $\c V ^*(\L )$ are defined over the field $\c K(3)F$. 
The objects $\c L$ of the category $\Pi (\widehat {\uL }_{\Q }):=\widehat {\uL }_{\Q }^*$ 
are characterised by the additional properties:  
they are all obtained by subsequent extensions via $\c L(1/2)$ and $\c V ^*(\c L)$ 
appears as a subquotient of semi-stable representation of $\Gamma _F$ with Hodge-Tate weights 
from [0,2].

Introduce the object $\c L(1/2,1/2)=(L,F(L),\varphi ,N)$ 
of the category $\uL^*$ as follows:  

$\bullet $\ $L=\c W_1l\oplus \c W_1l_1$;

$\bullet $\ $F(L)$ is spanned by $ul_1$ and $ul+l_1$;

$\bullet $\ $\varphi (ul_1)=l_1$, $\varphi (ul+l_1)=l$;

$\bullet $\ $N(l_1)\equiv 0\,\mathrm{mod}\,u^3L$, 
$N(l)\equiv l_1\,\mathrm{mod}\,u^3L$.

Clearly, $\c L(1/2,1/2)$ has a natural structure of an element of the group 
$\Ext _{\uL ^*}(\c L(1/2),\c L(1/2))$. 

\begin{Lem} \label{L5.5} 
{\rm a)}\ $\c L(1/2,1/2)\in\uL ^*_{\Q }$;
 
{\rm b)}\ $\Ext _{\uL ^*_{\Q }}(\c L(1/2),\c L(1/2))\simeq\Z /3$ and 
is generated by the class of $\c L(1/2,1/2)$;

{\rm c)}\ $\Ext _{\uL ^*_{\Q }} (\c L(1/2),\c L(1/2,1/2))=
\Ext _{\uL ^*_{\Q}}(\c L(1/2,1/2),\c L(1/2))=0$.
\end{Lem}

This Lemma will be proved in Subsection \ref{S5.4} below. 

Lemma \ref{L5.5} implies that 
$\Ext _{\uL ^*_{\Q }}(\c L(1/2,1/2),\c L(1/2,1/2))=0$ 
and, therefore, any object 
$\c L$ of $\uL ^*_{\Q }$ is the product of several copies of 
$\c L(1/2)$ and $\c L(1/2,1/2)$. 
 
Suppose $\c L=\c L_1\times L(1/2,1/2)\in\widehat{\uL }^*_{\Q}$.  
Then there is a 3-divisible group $\widetilde{H}^{\infty }$ 
in $\uM _{\Q}^t$ such that $\widetilde{H}^{(1)}=H'\times H(1/2,1/2)$, 
where $H'$ and  $H(1/2,1/2)=\CV ^*(\c L(1/2,1/2))$ belong to $\hat\uM _{\Q}$. 
Clearly, we have $\Ext _{\uM _{\Q}^{ft}[1]}(H',H(1/2,1/2))=0$ and 
applying Theorem \ref{T2} we obtain a 3-divisible group $H^{\infty }$ 
in $\uM _{\Q}^{t}$ such that $H^{(1)}=H(1/2,1/2)$. 
This implies the existence of 2-dimensional semi-stable 
(and non-crystalline) representation 
of $\Gamma _F$ with the only simple subquotient $\F _3(1)$, 
that is for any Galois invariant lattice $T$ of such representation, 
the $\Gamma _F$-module $T/3T$  has semi-simple envelope $\F _3(1)\times\F _3(1)$. 
This situation appears as a very special case of Breuil's description  
of 2-dimensional semi-stable (and non-crystalline) 
representations. According to Theorem 6.1.1.2 of 
\cite{refBr1} the corresponding semi-simple envelope is 
either $\F _3(0)\times\F _3(1)$ or $\F_3(1)\times \F _3(2)$. 
The proposition is proved.
\end{proof}

Now our main Theorem appears as the following Corollary.

\begin{Cor} \label{C5.6} 
 If $Y$ is a projective variety with semi-stable 
reduction modulo 3 and good reduction modulo all primes $l\ne 3$ then 
$h^2(Y_{\mathbb C })=h^{1,1}(Y_{\mathbb C })$. 
\end{Cor}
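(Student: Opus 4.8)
The plan is to run the strategy outlined in the Introduction, which Propositions \ref{P5.1}--\ref{P5.4} have already reduced to a bookkeeping exercise: realize a Galois stable lattice in $H^2_{et}(Y_{\bar\Q },\Q _3)$ as a $3$-divisible group in $\uM _{\Q }^t$, use the structure theorem for such objects to split off the ``weight $0$'' and ``weight $p-1$'' parts by a Frobenius eigenvalue argument, and then read off the Hodge numbers from the $p$-adic de Rham comparison.

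First I would put $V=H^2_{et}(Y_{\bar\Q },\Q _3)$, a $\Q _3[\Gamma _{\Q }]$-module of dimension $h^2(Y_{\mathbb C})$. Good reduction of $Y$ at every $l\ne 3$ together with smooth proper base change makes $V$ unramified outside $3$, while semi-stable reduction at $3$ and Tsuji's comparison theorem \cite{refTsu} make $V|_{\Gamma _{\Q _3}}$ semi-stable with Hodge--Tate weights in $[0,2]$; since $p=3$ one has $[0,2]=[0,p)$, so $V|_{\Gamma _{\Q _3}}$ lies in exactly the range covered by the functor $\widetilde{\CV }^{ft}$ of Section \ref{S4}. Next I would fix a $\Gamma _{\Q }$-invariant $\Z _3$-lattice $T\subset V$: by Corollary \ref{C4.17} the tower $\{T/3^nT\}_n$ is the image under $\widetilde{\CV }^{ft}$ of a $p$-divisible group in $\uL ^{ft}$, and, being unramified outside $3$, it thus defines a $3$-divisible group $H^{\infty }$ in $\uM _{\Q }^t$. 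Lemma \ref{L5.2} identifies $\c K(3)=\Q (\root 3\of 3,\zeta _9)$, which is totally ramified at $3$, so the hypothesis ${\bf (C)}$ of Subsection \ref{S5.1} holds and Propositions \ref{P5.1} and \ref{P5.3} apply to $H^{\infty }$. They give a filtration of $3$-divisible groups $H^{\infty }\supset H_1^{\infty }\supset H_0^{\infty }$ with $H_0^{\infty }=(\Q _3/\Z _3)(p-1)^{n_m}$, $H^{\infty }/H_1^{\infty }=(\Q _3/\Z _3)(0)^{n_{et}}$ and $H_1^{\infty }/H_0^{\infty }=(\Q _3/\Z _3)(1)^{m}$; passing to Tate modules and tensoring with $\Q _3$ yields a $\Gamma _{\Q }$-stable filtration $V=V_0\supset V_1\supset V_2\supset V_3=0$ with $V_i/V_{i+1}\simeq \Q _3(i)^{m_i}$, where $m_0=n_{et}$, $m_1=m$, $m_2=n_m$.

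Then I would collapse this filtration. For $l\ne 3$ the module $V$ is unramified, so the Frobenius $F_l$ acts on $V$ preserving $V_{\bullet }$ and acts on each graded piece $\Q _3(i)^{m_i}$ through the power $l^i$; comparing the resulting characteristic polynomial of $F_l$ on $V$ with Deligne's purity bound for $H^2_{et}$ of the good reduction of $Y$ at $l$ (all Frobenius roots must be Weil numbers pure of weight $2$, hence of one and the same archimedean absolute value) forces $m_i=0$ for $i\ne 1$, i.e. $n_{et}=n_m=0$ and $V\simeq \Q _3(1)^{m}$ as $\Gamma _{\Q }$-module, with $m=h^2(Y_{\mathbb C})$. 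Finally I would invoke the $p$-adic de Rham comparison (Faltings \cite{refFa}; applicable since $V|_{\Gamma _{\Q _3}}\simeq \Q _3(1)^m$ is de Rham) to get an isomorphism of filtered $\Q _3$-spaces $D_{dR}(V|_{\Gamma _{\Q _3}})\simeq H^2_{dR}(Y/\Q )\otimes _{\Q }\Q _3$ under which $\dim _{\Q _3}\mathrm{gr}^j=h^{j,2-j}(Y_{\mathbb C})$; since $\Q _3(1)$ has a single Hodge--Tate weight the filtration is concentrated in one degree $d$, so $h^{j,2-j}(Y_{\mathbb C})=0$ for $j\ne d$ and $h^{d,2-d}(Y_{\mathbb C})=m$, and Hodge symmetry $h^{2,0}(Y_{\mathbb C})=h^{0,2}(Y_{\mathbb C})$ rules out $d\in\{0,2\}$, leaving $h^{2,0}(Y_{\mathbb C})=h^{0,2}(Y_{\mathbb C})=0$ and $h^{1,1}(Y_{\mathbb C})=m=h^2(Y_{\mathbb C})$.

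The genuinely hard content of the whole argument — the vanishing of the relevant $\Ext $ groups, resting on Breuil's classification of two-dimensional semi-stable representations and on the matching of the ramification estimates with Leopoldt's conjecture for $\Q (\root 3\of 3,\zeta _9)$ — is already absorbed into Propositions \ref{P5.1}--\ref{P5.4}. In assembling the corollary the step that needs the most care is the first one: verifying that $\{T/3^nT\}$, with its semi-stable restriction to $\Gamma _{\Q _3}$, really defines a $p$-divisible group in $\uM _{\Q }^t$ to which Proposition \ref{P5.3} applies, which is precisely where Corollary \ref{C4.17} and the observation that the Hodge--Tate weights $\{0,1,2\}$ just fit inside $[0,p)$ are indispensable.
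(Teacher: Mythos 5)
Your proposal is correct and follows essentially the same route as the paper: realize a $\Gamma_{\Q}$-stable lattice in $H^2_{et}$ as a $3$-divisible group in $\uM_{\Q}^t$ via Corollary \ref{C4.17}, apply Proposition \ref{P5.3} (with Lemma \ref{L5.2} ensuring condition {\bf (C)}) to get a filtration with graded pieces $\Q_3(i)$, $i=0,1,2$, and use Deligne's purity at primes $l\ne 3$ to eliminate $i=0,2$, so that $V\simeq\Q_3(1)^m$ and hence $h^2(Y_{\mathbb C})=h^{1,1}(Y_{\mathbb C})$. The only difference is that you spell out the final passage from $V\simeq\Q_3(1)^m$ to the Hodge-number equality via the de Rham comparison and Hodge symmetry, a step the paper leaves implicit.
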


\begin{proof} Indeed, let $V$ be the $\Q _3[\Gamma _F]$-module of 
 2-dimensional etale cohomology of $Y$. Then it is a semi-stable representation 
of $F$ and its $\Gamma _F$-invariant lattice determines a 
3-divisible group in the category 
$\uM _{\Q}^t$. By Proposition \ref{P5.3} this 3-divisible group can be built from 
the Tate twists $(\Q _3/\Z _3)(i)$, $i=0,1,2$. 
Equivalently, all $\Gamma _F$-equivariant 
subquotients of $V$ are $\Q _3(i)$ with $i=0,1,2$. 
Applying the Riemann Conjecture (proved by Deligne) 
to the reductions $Y\operatorname{mod}l$ 
with $l\ne 3$, we obtain that $\Q (0)$ and $\Q (2)$ 
do not appear. Therefore, 
$V$ is the product of finitely many $\Q _3(1)$ and 
$h^2(Y_{\mathbb C})=h^{1,1}(Y_{\mathbb C})$. 
\end{proof}

\subsection{Proof of Lemma \ref{L5.2}}\label{S5.3}

Use the ramification estimate 
from Subsection \ref{S2.9} to deduce that 
 the normalized discriminant of $\c K(3)$ over $\Q$ satisfies 
the inequality  
$|D(\c K(3)/\Q )|^{[\c K(3):\Q ]^{-1}}<3^{3-1/3}=18.72075$. 
Then Odlyzko estimates imply that $[\c K(3):\Q ]<230$ \cite{refDI}.

Let $K_0=\Q (\zeta _9)$ and $K_1=\Q (\root 3\of 3,\zeta _9)$. 
Then $K_0$ is the maximal abelian extension of $\Q $ in $\c K(3)$ 
and $K_1\subset\c K(3)$. 
We have also the inequality $[\c K(3):K_1]<60$ and, therefore, 
$\Gal (\c K(3)/\Q )$ is soluble. 

Prove that $K_1=\c K(3)$.

Suppose the field $K_2$ is 
the maximal abelian extension of $K_1$ in $\c K(3)$. 
One can apply the computer package SAGE to prove 
that the group of classes of $K_1$ is trivial. 
Therefore, $K_2$ is totally ramified at 3 and $\Gal (K_2/\Q )$ 
coincides with the Galois group of the corresponding 3-completions.
In particular, the maximal tamely ramified subextension 
of these completions comes from $\Q (\zeta _3)$ 
and, therefore, $K_2/K_1$ is 3-extension. 
Therefore, there is an $\eta\in O^*_{K_1}$ such that 
$K_1(\root 3\of \eta )\subset K_2$. Then a routine computation 
shows that the normalized discriminant for $K_1(\root 3\of\eta )$ 
over $\Q $ is less than $3^{3-1/3}$ if and only if 
$\eta\equiv 1\mathrm{mod}\,O_{K_1}^{*3}(1+3O_{K_1})^{\times}$. 
The Lemma will be proved if we show that
such $\eta\in O^{*3}_{K_1}$. (This is equivalent to 
the Leopoldt Conjecture for the field $K_1$.) 
This was proved via a SAGE computer program written by R.Henderson 
(Summer-2009 Project at Durham University 
supported by Nuffield Foundation). This program, cf. 
Appendix \ref{B}, constructed 
a basis $\varepsilon _i\mathrm{mod}\,O^{*3}_{K_1}$, $1\leqslant i\leqslant 9$, 
of $O_{K_1}^*/O_{K_1}^{*3}$ 
such that $18v_3(\varepsilon _i-1)$ takes values in the set 
$\{1,2,4,5,7,8,10,13,16\}$. In other words, 
$v_3(\eta -1)\geqslant 1>16/18$ implies that $\eta\in O_{K_1}^{*3}$.

Lemma \ref{L5.2} is proved. 
\medskip 

\subsection{Proof of Lemma \ref{L5.5}}\label{S5.4} 
a) Use the notation from the definition of the functor $\c V^t$ 
in Subsection \ref{S4}. 

If $f_0\in\c V^t(\c L(1/2,1/2))$ 
then the correspondence $f_0\mapsto (f_0(l_1),f_0(l))$ 
identifies $\c V^t(\c L(1/2,1/2))$ with the $\F_3$-module 
of couples $(X_{10},X_0)\in (R/x_0^6)^2$ such that 
$X_{10}^3/x_0^3=X_{10}$ and $(X_0^3+X_{10})/x_0^3=X_0$. 
Then the $\F _3[\Gamma _F]$-module $\c V^t(\c L(1/2,1/2))$ 
is identified with the module formed by the images 
of the couples $(X_{10},X_0+X_{10}Y)\in (R_{st}^0)^{2}$ 
in the module $\widetilde{R}^0_{st}=R^0_{st}/(x_0^3\m _R+x_0^2\m _RY+x_0\m _RY^2)$. 

In particular, the corresponding $\Gamma _F$-action on $\c V^t(\c L(1/2,1/2))$ 
comes from the natural $\Gamma _F$-action on the residues 
of $X_{10}$ and $X_0$ modulo $x_0^3\m _R$. Notice there is a natural 
$\Gamma _F$-equivariant identification 
$$\iota :\m _R/(x_0^3\m _R)\longrightarrow \bar \m /3\bar \m,$$
where $\bar\m $ is the maximal ideal of the valuation ring 
of $\bar\Q _3$. This isomorphism $\iota $ comes from the correspondence 
$r\mapsto r^{(1)}$, where for $r=\underset{n}\varprojlim (r_n\,\mathrm{mod}\,p)$, 
$r^{(1)}:=\underset{n\to\infty }\lim r_{n+1}^{p^n}$. 

Then Hensel's Lemma implies the existence of unique 
$Z_{10},Z_0\in \bar\m $ such that the following equalities hold 
$\iota (X_{10}\,\mathrm{mod}\,x_0^3\m _R)=Z_{10}\,\mathrm{mod}\,3\bar\m $,
$\iota (X_0\,\mathrm{mod}\,x_0^3\m _R)=Z_0\,\mathrm{mod}\,3\bar\m $, 
$Z_{10}^3+3Z_{10}=0$ and $Z_0^3+3Z_0=-Z_{10}$.  

Clearly, $F(Z_{10}, Z_0)=F(\zeta _9)$. Therefore, if $\tau\in\Gamma _F$ 
is such that $\tau (\zeta _9)=\zeta _9$ then 
$\tau (X_{10})=X_{10}$ and $\tau (X_0)=X_0$. 

Finally, it follows directly from definitions that 
if $\tau (\root 3\of 3)=\root 3\of 3$ then 
$\tau $ acts as identity on the image of $Y$ in $\widetilde{R}^0_{st}$. 
The part a) of the Lemma is proved.

b) Suppose $\c L=(L,F(L),\varphi ,N)\in\Ext _{\uL ^*_{\Q}}(\c L(1),\c L(1))$. 
Then $L=\c W_1l\oplus\c W_1l_1$, there is an $w\in\c W_1$ such that 
$F(L)$ is spanned by $ul_1$ and $ul+wl_1$ over $\c W_1$, and 
it holds $\varphi (ul_1)=l_1$, $\varphi (ul+wl_1)=l$, 
$N(l_1)\in u^3L$ and $N(l_1)\equiv w^3l_1\,\mathrm{mod}\,u^3L$. 
Notice that $\c L$ splits in $\uL ^*$ iff $w\in u\c W_1$. Therefore, 
we can assume that $w=\alpha \in k$. 

Then the field-of-definition of all points of $\c V^t(\c L)$ 
contains the field-of-definition of all solutions 
$(X_1,X)\,\mathrm{mod}\,x_0^3\m _R\in (R/x_0^3\m _R)^2$ of the 
following congruences: 
$X_1^3/x_0^3\equiv X_1\,\mathrm{mod}\,x_0^3\m _R$ and 
$(X^3+\alpha ^3X_1)/x_0^3\equiv X\,\mathrm{mod}\,x_0^3\m _R$. 

Let $x_1\in R$ be such that $x_1^2=x_0$. Then we can take  
$X_1=x_1^3$ and for $T=X/x_1^3$ one has the following 
Artin-Schreier-type congruence:
$$T^3-T\equiv -\alpha ^3/x^6\,\mathrm{mod}\,\m _R.$$

Using calculations from above part a) we can conclude that 
$\c L\in\uL ^*_{\Q}$ if and only if the field-of-definition 
of $T\,\mathrm{mod}\,\m _R$ over $k((x_1))$ 
belongs to the field-of-definition of  
$T_0\,\mathrm{mod}\,\m _R$ over $k((x_1))$, 
where $T_0^3-T_0\equiv -x_1^{-6}\,\mathrm{mod}\,\m _R$. 
By Artin-Schreier theory this happens if and only if 
$\alpha\in\F _3$ and,therefore, $\c L\simeq\c L(1/2,1/2)$. 

c) Suppose $\c L=(L,F(L),\varphi ,N)\in\Ext _{\uL ^*_{\Q}}
(\c L(1/2), \c L(1/2,1/2))$. 

Then we can assume that: 

---  $L=\c W_1l\oplus\c W_1l_1\oplus\c W_1m$;

--- $F(L)$ is spanned over $\c W_1$ by $ul_1$, $ul+l_1$ 
and $um+wl+w_1l_1$ with $w,w_1\in\c W_1$; 

--- $\varphi (ul_1)=l_1$, $\varphi (ul+l_1)=l$ and 
$\varphi (um+wl+w_1l_1)=m$. 

Then the condition $u^2m\in F(L)$ implies that $wl_1\in F(L)$, 
or $w\in u\c W_1$ and we can assume that $w=0$. Then 
the submodule $\c W_1m+\c W_1l_1$ determines a subobject 
$\c L'$ of $\c L$, $\c L'\in\uL ^*_{\Q}$ and using calculations 
from  b) we conclude that $w_1\in\F _3\,\mathrm{mod}\,u\c W_1$. 
Therefore, we can assume that $w_1=\alpha\in\F _3$ and for 
$m'=m-\alpha l$ we have $m'\in F(L)$ and $\varphi (um')=m'$, 
i.e. $\c L$ is a trivial extension.

Now suppose $\c L=(L,F(L),\varphi ,N)\in 
\Ext _{\uL^*_{\Q}}(\c L(1/2,1/2),\c L(1/2))$. 

Then we can assume that:

--- $L=\c W_1m\oplus\c W_1m_1\oplus\c W_1l$;

--- $F(L)$ is spanned over $\c W_1$ by $ul$, $um_1+wl$ 
and $um+m_1+w_1l$ with $w,w_1\in\c W_1$; 

--- $\varphi (ul)=l$, $\varphi (um_1+wl)=m_1$ and 
$\varphi (um+m_1+w_1l)=m$. 

Again the condition $u^2m\in F(L)$ implies that $w\in u\c W_1$ 
and,therefore, we can assume that $w=0$. Then 
the quotient module $L/\c W_1m_1$ is the quotient of $\c L$ in 
the category $\uL ^*$. This quotient must belong to the subcategory 
$\uL ^*_{\Q}$. This implies that $w_1\in\F _3\,\mathrm{mod}\,u\c W_1$, 
and, as earlier, $\c L$ becomes a trivial extension. 

The Lemma is completely proved.

\appendix 

\newpage

\section{$p$-divisible groups in pre-abelian categories}\label{A}

\subsection{Short exact sequences in pre-abelian categories}
\label{AS1}

\subsubsection{Pre-abelian categories} \label{AS1.1}

Introduce the concept of a special pre-abelian category following mainly 
\cite{A4}, cf. also \cite{A1,A2,A5}.
Remind that $\c S$ is a pre-abelian category if $\c S$ is an additive
category and for any its morphism $u\in\Hom _{\c S}(A,B)$, 
there exist $\Ker\,u=(A_1,i)$ and $\Coker\,u=(B_1,j)$, 
where $i\in\Hom _{\c S}(A_1,A)$ and $j\in\Hom _{\c S}(B,B_1)$. 
For any objects $A,B\in\c S$, let $A\prod B$ and $A\coprod B$
be their product and coproduct, respectively. 
There is 
a canonical isomorphism $A\prod B\simeq A\coprod B$ in $\ S$. 
More generally, for
given morphisms 

$\bullet $\  $\alpha \in\Hom _{\ S}(C,A)$, $\beta\in\Hom_{\c S}(C,B)$, 
there is a fibered coproduct 
$(A\coprod _CB,i_A,i_B)$, with $i_A\in\Hom _{\ S}(A,A\coprod _CB)$,
$i_B\in\Hom _{\c S}(B,A\coprod _CB)$ which completes  
the diagram 
$A\overset{\alpha }\longleftarrow C\overset{\beta }\longrightarrow B$ 
to a cocartesian square; 

$\bullet $\ $f\in\Hom _{\c S}(A,C)$ and $g\in\Hom _{\c S}(B,C)$, 
there is a fibered product  $(A\prod _CB, p _A,p_B)$, 
with $p _A\in\Hom _{\c S}(A\prod _CB,A)$, 
$p _B\in\Hom _{\c S}(A\prod _CB,B)$, which
completes the diagram $A\overset{f}\longrightarrow C\overset{g}\longleftarrow B$
to a cartesian square.

Suppose 
$i\in\Hom _{\c S}(A_1,A)$, $f\in\Hom _{\c S}(A_1,B)$ 
and $(B\coprod _{A_1}A, i_A,i_B)$ is their
fibered coproduct. If $(A_2,j)=\Coker\,i$ then there
is a morphism $j_B:B\coprod _{A_1}A\rightarrow C$ such that the
following diagram
$$\xymatrix{A_1\ar[d]^f\ar[rr]^i&&A\ar[d]^{i_A}\ar[rr]^j&&A_2\ar[d]^{id}\\
B\ar[rr]^{i_B}&&B\coprod_{A_1}A\ar[rr]^{j_B}&&A_2}
$$
is commutative (use the zero morphism from $B$ to $A_2$). 
A formal verification shows that 
$(A_2,j_B)=\Coker\,i_B$.

Suppose $j\in\Hom _{\c S}(A,A_2)$, $g\in\Hom _{\c S}(B,A_2)$
 and $(B\prod _{A_2}A, p _B,p _A)$ is
their fibered product. If $(A_1,i)=\Ker\,j$ then 
there is an $i_B:A_1\rightarrow B\prod_{A_2}A$ 
(use the zero map from $A_1$ to $B$) such that
the following diagram 
$$
\xymatrix{A_1\ar[rr]^{i}&& A\ar[rr]^{j} && A_2 \\ 
A_1\ar[rr]^{i_B}\ar[u]^{id} && B\prod _{A_2}A 
\ar[rr]^{p _B}\ar[u]^{p _A}&&B\ar[u]^g}
$$
is commutative and $(A_1,i_B)=\Ker\,p _B$. 

\subsubsection{Strict morphisms} \label{AS1.2}

A morphism $u\in\Hom_{\c S}(A,B)$ is strict if the canonical 
morphism $\mathrm{Coim}u=\Coker(\Ker\,u)\rightarrow\mathrm{Im}u
=\Ker(\Coker\,u)$ is isomorphism. One can verify that always 
$\Ker\,u=(A_1,i)$ is a strict monomorphism and 
$\Coker\,u=(B_1,j)$ is a strict epimorphism. 
By definition, a sequence of objects and morphisms 
\begin{equation}\label{AE1.1} 
0\longrightarrow A_1\overset{i}\longrightarrow A
\overset{j}\longrightarrow A_2\longrightarrow 0
\end{equation}
in $\c S$ is short exact if $(A_1,i)=\Ker\,j$ and
$(A_2,j)=\Coker\,i$. In particular, any strict monomorphism 
(resp. strict epimorphism) can be
included in a short exact sequence. 

\begin{definition} A pre-abelian category is special if it 
satisfies the following two axioms:

{\bf SP1.} if $\alpha :C\rightarrow A$ is strict monomorphism then 
$i_B:B\rightarrow A\underset{C}\coprod B$ is also strict monomorphism;

{\bf SP2.} if $f:A\rightarrow C$ is strict epimorphism then 
$p_B:A\underset{C}\prod B\rightarrow B$ is also strict epimorphism.
\end{definition}
A typical
example of pre-abelian special category is 
the category of modules with filtration.

Consider short exact sequence \eqref{AE1.1} in $\c S$. If
$f\in\Hom _{\c S}(A_1,B)$ then we have the following
commutative diagram
$$\xymatrix{0 \ar[r]& A_1\ar[d]^f\ar[rr]^{i}&&
A\ar[d]^{i_A}\ar[rr]^{j} && A_2 \ar[d]^{id}\ar[r]& 0\\ 
0\ar[r]& B\ar[rr]^{i_B} && A\coprod _{A_1}B\ar[rr]^{j_B} && A_2\ar[r]&0}
$$
Then $j_B=\Coker\,i_B$ is strict
epimorphism and by axiom {\bf SP1}, $i_B$ is  strict monomorphism. Then
$\Ker\,j_B=
\Ker(\Coker\,i_B)=\mathrm{Im}i_B=(B,i_B)$ and,
therefore, the lower row of the above diagram is exact. 

Dually, for any $g\in\Hom _{\c S}(B,A_2)$ there is a
commutative diagram
 $$\xymatrix{0\ar[r]&A_1\ar[rr]^i&&A\ar[rr]^j&&A_2\ar[r]&0\\
0\ar[r]&A_1\ar[rr]^{i_B}\ar[u]^{id}&&A\prod _{A_1}B\ar[rr]^{p
  _B}\ar[u]^{p _A}&&B\ar[u]^g\ar[r]&0}
$$
where $i_B=\Ker\, j_B$ is strict monomorphism, by Axiom {\bf SP2},  
$p _B$ is  strict epimorphism and the lower row of this diagram is
exact. 
\subsubsection{Bifunctor $\Ext _{\c S}$} \label{AS1.3}
Notice that in special pre-abelian categories, 
the composition of strict monomorphisms (resp., epimorphisms) 
is again strict and 
in the following commutative diagram with exact rows 
$$\xymatrix{0\ar[r]&A_1\ar[rr]\ar[d]^{id}&&A\ar[rr]\ar[d]^f&&A_2\ar[r]\ar[d]^{id}&0\\
0\ar[r]&A_1\ar[rr]&&A'\ar[rr]&&A_2\ar[r]&0}
$$
the morphism $f$ is isomorphism. 
Therefore, 
one can introduce the set of equivalence classes of
short exact sequences $\Ext _{\c S}(A_2,A_1)$. 
This set is functorial in both arguments due to axioms 
{\bf SP1} and {\bf SP2}. 

Suppose the objects of $\c S$ are provided with commutative 
group structure respected by morphisms of $\c S$. 
Then for any $A,B\in\c S$, $\Ext _{\c S}(A,B)$ has a 
natural group structure, where the class of split short exact
sequences plays a role of neutral element. Remind that the sum
$\varepsilon _1+\varepsilon _2$ of two extensions 
$\varepsilon _1:0\longrightarrow A_1\overset{i'}\longrightarrow A'
\overset{j'}\longrightarrow A_2\longrightarrow 0$ and 
$\varepsilon _2:0\longrightarrow A_1\overset{i^{\prime\prime }}\longrightarrow 
A^{\prime\prime}\overset{j^{\prime\prime}}\longrightarrow 
A_2\longrightarrow 0$ 
is the lower line of the following commutative diagram relating the rows 
$l=\varepsilon _1\oplus\varepsilon _2$, 
$\nabla ^*(l)$ and 
$(+)_*\nabla ^*(l)$,
$$\xymatrix{l:0\ar[r] &A_1\prod
  A_1\ar[r]
^{i'\prod i^{\prime\prime }}&A'\prod A^{\prime\prime
}\ar[r]^{j'\prod j^{\prime\prime }}&A_2\prod A_2\ar[r]&0\\
\nabla ^*(l):0\ar[r] &A_1\prod
  A_1\ar[d]^+\ar[u]^{id}\ar[r]^{i'\prod i^{\prime\prime }}&
A'\prod _{A_2}A^{\prime\prime}\ar[d]\ar[u]\ar[r]^{j'\prod j^{\prime\prime}}&
A_2\ar[d]^{id}\ar[u]^{\nabla}\ar[r]&0\\
(+)_*\nabla ^*(l):0\ar[r] &A_1\ar[r]
&A^{\prime\prime}\ar[r]&A_2\ar[r]&0}
$$
Here $\nabla $ is the diagonal morphism, $+$ is the morphism of the group
dstructure on $\c S$. For any $f\in\Hom _{\c S}(A_1,B)$ and 
$g\in\Hom _{\c S}(B,A_2)$ the corresponding morphisms 
$f_*:\Ext _{\c S}(A_2,A_1)\rightarrow
\Ext _{\c S}(A_2,B)$ and 
$g^*:\Ext _{\c S}(A_2,A_1)\rightarrow
\Ext _{\c S}(B,A_1)$ are homomorphisms of abelian groups. 
The proof is completely formal and goes along the lines of 
\cite{A3}. 

Suppose $\varepsilon\in\Ext _{\c S}(A_2,A_1)$, then 
the extension $\varepsilon +(-\mathrm{id})^*\varepsilon $ splits. We shall need
below the following explicit description of this splitting. 

Let 
$\varepsilon : 0\longrightarrow A_1\overset{i}\longrightarrow 
A\overset{j}\longrightarrow A_2\longrightarrow 0$.  
Then $\varepsilon +(-\mathrm{id})^*\varepsilon $ is the lower row in the
following diagram  
$$\xymatrix{0\ar[r] &A_1\prod A_1\ar[d]\ar[r]^{i\prod
    i}&A\prod_{A_2}A\ar[d]\ar[r]^{(j,j)}&A_2\ar[d]^{id}\ar[r]&0\\
0\ar[r]&A_1\ar[r]&A_0\ar[r]&A_2\ar[r]&0}
$$ 
where the left vertical arrow is the cokernel  of the
diagonal embedding $\nabla :A_1\rightarrow A_1\prod A_1$. One
can see that the epimorphic map $A_0\rightarrow A_1$, which
splits the lower exact sequence, is induced by the morphism 
$p _1-p _2:A\prod _{A_2}A\rightarrow A$.

Finally, one can apply Serre's arguments \cite{A6} 
to obtain for any short exact sequence 
$0\longrightarrow A_1\overset{i}\longrightarrow A
\overset{j}\longrightarrow A_2\longrightarrow 0$ 
and any $B\in\c S$, the following standard 6-terms exact sequences of
abelian groups  
$$\begin{array}{rrll}
0\longrightarrow \Hom _{\c S}(B,A_1)&
\overset{i_*}\longrightarrow\Hom_{\c
    S}(B,A)&\overset{j_*}\longrightarrow\Hom_{\c S}(B,A_2)&\\
\overset{\delta }\longrightarrow &\Ext_{\c S}(B,A_1)
\overset{i_*}\longrightarrow &\Ext _{\c S}(B,A)\overset{j_*}
\longrightarrow \Ext _{\c S}(B,A_2)
\end{array}
$$ 
$$
\begin{array}{rrll}
0\longrightarrow\Hom _{\c S}(A_2,B)& 
\overset{j^*}\longrightarrow\Hom_{\c 
S}(A,B)&\overset{i^*}\longrightarrow\Hom_{\c S}(A_1,B)&\\
\overset{\delta }\longrightarrow &\Ext_{\c S}(A_2,B)
\overset{i^*}\longrightarrow &\Ext _{\c S}(A,B)\overset{j^*}\longrightarrow 
\Ext _{\c S}(A_1,B)
\end{array}
$$ 

\subsection{$p$-divisible groups}\label{AS2}

In this section $\c S$ is special pre-abelian category 
consisting of group objects. Denote by 
$\c S_1$ the full subcategory of 
objects killed by $p$ in $\c S$, where $p$ is a
fixed prime number. Clearly, $\c S_1$ is again special pre-abelian category. 

\subsubsection{Basic definitions} \label{AS2.1}

Consider an inductive system $(C^{(n)},i^{(n)})_{n\geqslant 0}$ of
objects of $\c S$, where $C^{(0)}=0$ and $i^{(n)}:C^{(n)}\rightarrow
C^{(n+1)}$ are strict monomorphisms for all $n\geqslant 0$. Let for all
$n\geqslant m\geqslant 0$, $i_{mn}=i^{(n-1)}\circ \ldots \circ i^{(m+1)}\circ 
i^{(m)}\in\Hom _{\c L}(C^{(m)},C^{(n)})$. Then all
$i_{mn}$ are strict monomorphisms. 
Follow Tate's paper [Ta] to define 
a $p$-divisible group in $\c S$ as an inductive system
$(C^{(n)},i^{(n)})_{n\geqslant 0}$ in 
$\c S$ such that for all $0\leqslant m\leqslant n$,  

a) $\Coker\, i_{mn}=(C^{(n-m)},j_{n,n-m})$, i.e. there are 
short exact sequences: 
$$0\longrightarrow C^{(m)}\overset{i_{mn}}\longrightarrow C^{(n)}
\overset{j_{n,n-m}}\longrightarrow C^{(n-m)}\longrightarrow 0
$$

b) there are commutative diagrams 
$$\xymatrix{C^{(n)}\ar[rd]_{j_{n,n-m}}\ar[rr]^{p^m\mathrm{id}_{C^{(n)}}}&&C^{(n)}\\
&C^{(n-m)}\ar[ru]_{i_{n-m,n}}&}
$$ 
\medskip 

The above definition implies the existence of the 
following commutative diagrams with exact rows (where $m\leqslant n\leqslant n_1$):

$$
\xymatrix{0\ar[r]&C^{(m)}\ar[d]^{id}\ar[rr]^{i_{mn}}&&C^{(n)}
\ar[d]^{i_{nn_1}}\ar[rr]^{j_{n,n-m}}&&
C^{n-m)}\ar[d]^{i_{n-m,n_1-m}}\ar[r]&0\\
0\ar[r]&C^{(m)}\ar[rr]^{i_{mn_1}}&&C^{(n_1)}
\ar[rr]^{j_{n_1,n_1-m}}&&C^{(n_1-m)}\ar[r]&0}
$$

$$\xymatrix{0\ar[r]&C^{(n)}\ar[d]^{j_{nm}}
\ar[rr]^{i_{nn_1}}&&C^{(n_1)}
\ar[d]^{j_{n_1,m+n_1-n}}\ar[rr]^{j_{n_1,n_1-n}}
&&C^{(n_1-n)}\ar[d]^{id}\ar[r]&0\\
0\ar[r]&C^{(m)}\ar[rr]^{i_{m,m+n_1-n}}&&
C^{(m+n_1-n)}\ar[rr]^{j_{m+n_1-n,n_1-n}}&&C^{(n_1-n)}\ar[r]&0}
$$
Also, for all $n\geqslant m\geqslant 0$, it holds 

$\bullet $\  $(C^{(m)},i_{mn})=\mathrm{Ker}(p^mid _{C^{(n)}})$,
    $(C^{(m)},j_{nm})=\Coker\,(p^{n-m}id_{C^{(n)}})$;

$\bullet $\  $i_{mn}=i_{n-1,n}\circ \ldots \circ i_{m,m+1}$ 
and $j_{nm}=j_{m+1,m}\circ \ldots\circ 
j_{n,n-1}$.
\medskip

The set of $p$-divisible groups in $\c S$ has a natural
structure of category. This category is pre-abelian. In particular, 
$$0\longrightarrow (C^{(n)}_1,i_1^{(n)})_{n\geqslant 0}
\overset{(\gamma _n)_{n\geqslant 0}}\longrightarrow (C^{(n)},i^{(n)})_{n\geqslant 0}
\overset{(\delta _n)_{n\geqslant  0}}\longrightarrow (C_2^{(n)},i_2^{(n)})_{n\geqslant 0}
\longrightarrow 0
$$
is a short exact sequence of $p$-divisible groups iff for all $n\geqslant 1$, 
there are following commutative diagrams with short exact rows in $\c S$ 
$$\xymatrix{0\ar[r]&C_1^{(n)}\ar[d]^{i_1^{(n)}}\ar[r]^{\gamma _n}&
C^{(n)}\ar[d]^{i^{(n)}}\ar[r]^{\delta _n}&C_2^{(n)}\ar[d]^{i_2^{(n)}}\ar[r]&0\\
0\ar[r]&C_1^{(n+1)}\ar[r]^{\gamma _{n+1}}&C^{(n+1)}
\ar[r]^{\delta _{n+1}}&C_2^{n+1}\ar[r]&0}
$$

\subsubsection{A property of uniqueness of $p$-divisible groups}
\label{AS2.2}

\begin{Thm} \label{T1} Let $D$ be an object of ${\c S}_1$ such that
  $\Ext _{{\c S}_1}(D,D)=0$. If $(C^{(n)},i^{(n)})_{n\geqslant 0}$ and 
$(C_1^{(n)},i_1^{(n)})_{n\geqslant 0}$ are $p$-divisible groups in $\c S$
such that 
$C^{(1)}\simeq C_1^{(1)}\simeq D$ then these $p$-divisible groups are
isomorphic.
\end{Thm}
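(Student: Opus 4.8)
The plan is to prove the statement by induction on $n$, constructing a compatible system of isomorphisms $\psi_n : C^{(n)} \xrightarrow{\sim} C_1^{(n)}$ that commutes with the structure maps $i^{(n)}$ and $i_1^{(n)}$. The base case $n=1$ is given by hypothesis: fix an isomorphism $\psi_1 : C^{(1)} \xrightarrow{\sim} C_1^{(1)}$ (we are free to identify both $C^{(1)}$ and $C_1^{(1)}$ with $D$). For the induction step, suppose $\psi_n$ has been constructed. Recall from Subsection \ref{AS2.1} that both $p$-divisible groups sit in short exact sequences
$$0 \longrightarrow C^{(n)} \overset{i^{(n)}}{\longrightarrow} C^{(n+1)} \overset{j_{n+1,1}}{\longrightarrow} C^{(1)} \longrightarrow 0,$$
$$0 \longrightarrow C_1^{(n)} \overset{i_1^{(n)}}{\longrightarrow} C_1^{(n+1)} \overset{j'_{n+1,1}}{\longrightarrow} C_1^{(1)} \longrightarrow 0,$$
and that the quotients are $C^{(1)} \simeq C_1^{(1)} \simeq D$. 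The idea is that the isomorphism class of $C^{(n+1)}$ as an extension of $D$ by $C^{(n)}$ lives in $\Ext_{\c S}(D, C^{(n)})$, and $\psi_n$ transports it to a class in $\Ext_{\c S}(D, C_1^{(n)})$; I want to show this transported class agrees (up to the automorphisms available) with the class of $C_1^{(n+1)}$, so that the universal property of extensions yields $\psi_{n+1}$.

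The key device for comparing the two extension classes is a devissage on the quotient. More precisely, I would argue by a secondary induction (or directly, using the filtration $C^{(n)} \supset p C^{(n)} \supset \dots$ with graded pieces $\simeq D$) that $\Ext_{\c S}(D, C^{(n)})$ is controlled by $\Ext_{\c S}(D, D)$: the six-term exact sequences from Subsection \ref{AS1.3} applied to $0 \to p C^{(n)} \to C^{(n)} \to D \to 0$ (note $pC^{(n)} \simeq C^{(n-1)}$ and $C^{(n)}/pC^{(n)} \simeq D$ by the diagrams of \ref{AS2.1}) give an exact sequence relating $\Ext_{\c S}(D, pC^{(n)})$, $\Ext_{\c S}(D, C^{(n)})$, and $\Ext_{\c S}(D, D) = 0$. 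Iterating, $\Ext_{\c S}(D, C^{(n)}) = 0$ for all $n$. Hence \emph{every} extension of $D$ by $C^{(n)}$ splits — in particular both $C^{(n+1)}$ (as extension of $D$ by $C^{(n)}$) and the pushforward of $C_1^{(n+1)}$ along $\psi_n^{-1}$ are split, so they are isomorphic as extensions, and pulling back along $\psi_n$ gives the desired $\psi_{n+1}$ with $\psi_{n+1} \circ i^{(n)} = i_1^{(n)} \circ \psi_n$.

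Wait — if $\Ext_{\c S}(D, C^{(n)}) = 0$ then $C^{(n+1)} \simeq C^{(n)} \oplus D$ as an object, which is too strong to be automatic; the resolution is that this is exactly what the hypothesis $\Ext_{\c S_1}(D,D) = 0$ forces, and it is consistent because we only claim an \emph{abstract} isomorphism of $p$-divisible groups, not that the system is split as a system. So the clean structure of the argument is: (1) establish $\Ext_{\c S}(D, C^{(n)}) = 0$ for all $n \ge 1$ by induction using the six-term sequence and the short exact sequences $0 \to C^{(n-1)} \to C^{(n)} \to D \to 0$; (2) for the main induction, given $\psi_n$, use vanishing of $\Ext_{\c S}(D, C_1^{(n)})$ to see that the two extensions $C^{(n+1)}$ and $(\psi_n)_* C^{(n+1)}$ versus $C_1^{(n+1)}$ over $C_1^{(n)}$ are both split, hence isomorphic compatibly with the inclusions; (3) check the resulting $(\psi_n)_{n \ge 0}$ respects the $i^{(n)}$, which is built into the construction, so it defines an isomorphism of $p$-divisible groups.

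The main obstacle I anticipate is step (2): splitness of an extension of $D$ by $C^{(n)}$ does not by itself produce an isomorphism of extensions carrying one chosen splitting to another, because the splittings differ by an element of $\Hom_{\c S}(D, C^{(n)})$, and I must ensure the chosen splitting of $C^{(n+1)}$ can be matched with that of $C_1^{(n+1)}$ while keeping the identification on $C^{(n)}$ fixed. This is a diagram-chase using the functoriality of $\Ext$ and $\Hom$ in both arguments (Subsection \ref{AS1.3}) together with the commutative diagrams of \ref{AS2.1} linking $i_{n,n+1}$, $j_{n+1,1}$ and multiplication by $p$; it is routine but needs care. A secondary subtlety is that the category $\c S$ is only pre-abelian, so I must consistently work with \emph{strict} morphisms and short exact sequences in the sense of Appendix \ref{AS1.2}, and verify at each step that the maps produced (e.g. the splitting $A_0 \to A_1$ described at the end of \ref{AS1.3}) are strict — but the special axioms {\bf A1}, {\bf A2} are exactly what guarantees this.
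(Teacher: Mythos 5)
There is a genuine gap, and it is precisely at the point you flagged and then talked yourself out of. Your step (1) asserts $\Ext_{\c S}(D,C^{(n)})=0$, deduced by devissage from the hypothesis; but the hypothesis is $\Ext_{\c S_1}(D,D)=0$, i.e.\ vanishing of extensions \emph{inside the killed-by-$p$ subcategory}, and this says nothing about $\Ext_{\c S}(D,D)$: an extension of $D$ by $D$ in $\c S$ need not be killed by $p$. Concretely, take $\c S$ to be finite abelian $p$-groups and $D=\Z /p$; then $\Ext_{\c S_1}(D,D)=0$ (every killed-by-$p$ extension is a sum of copies of $\F _p$), yet $\Z /p^2$ is a non-split extension of $D$ by $D$ in $\c S$, and $(\Z /p^n)_{n\geqslant 0}$ is a $p$-divisible group with $C^{(1)}\simeq D$ none of whose levels split. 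Worse, your claimed vanishing contradicts the axioms of a $p$-divisible group internally: if $C^{(2)}$ were a split extension, it would be killed by $p$, so $\Ker (p\,\id _{C^{(2)}})=C^{(2)}$, whereas the axioms require this kernel to be $(C^{(1)},i_{12})$, forcing $D=\Coker\,i_{12}=0$. So the strategy of showing that \emph{each} of the two extension classes vanishes cannot work: the theorem asserts uniqueness of the $p$-divisible group, not its splitness, and your step (2) (matching two chosen splittings) is moot because the splittings do not exist.

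What the paper does instead is compare the two extensions without ever splitting them. After identifying $C^{(m)}=C_1^{(m)}$ for $m\leqslant n_0$, it views $C^{(n_0+1)}$ and $C_1^{(n_0+1)}$ as classes $\varepsilon _{n_0+1},\varepsilon '_{n_0+1}\in\Ext _{\c S}(C^{(n_0)},C^{(1)})$ (with $D\simeq C^{(1)}$ as the \emph{sub}object) and forms their difference via the fibered product $C^{(n_0+1)}\prod _{C^{(n_0)}}C_1^{(n_0+1)}$ pushed out along the cokernel of the diagonal $C^{(1)}\to C^{(1)}\prod C^{(1)}$. The two inputs your argument never uses are: (i) multiplication by $p$ on $C^{(n_0+1)}$ and on $C_1^{(n_0+1)}$ factors through $C^{(n_0)}$, and (ii) both extensions restrict to the \emph{same} class $\varepsilon _{n_0}$ by the induction hypothesis, so the analogous difference one level down is trivial. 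Together these force the difference class to be the image of an extension of $C^{(1)}$ by $C^{(1)}$ that is killed by $p$, i.e.\ an element of $\Ext _{\c S_1}(D,D)$ — and only at this point does the hypothesis enter, giving $\varepsilon _{n_0+1}=\varepsilon '_{n_0+1}$ and hence an isomorphism $f_{n_0+1}$ compatible with the inclusions. In short: the hypothesis is used to kill the \emph{difference} of the two classes, never either class individually; your steps (1)–(2) need to be replaced by this difference argument.
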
 

\begin{proof} We must prove that for all $n\geqslant 1$, 
there are isomorphisms 
\linebreak 
  $f_n:C^{(n)}\rightarrow C_1^{(n)}$ such that
  $i_1^{(n)}\circ f_n=f_{n+1}\circ i^{(n)}$. Suppose 
$n_0\geqslant 1$ and all such isomorphisms have
  been constructed for $1\leqslant n\leqslant n_0$. 
Therefore, we can assume for simplicity that 
 $C^{(n)}=C_1^{(n)}$ for $1\leqslant n\leqslant n_0$. Consider the
  following commutative duagrams with exact rows: 
\begin{equation}\label{AE5.1}
\xymatrix{\varepsilon _{n_0+1}: & 0\ar[r]&C^{(1)}\ar[r]^{i_1}&
C^{(n_0+1)}\ar[r]^{j_1}&C^{(n_0)}\ar[r]&0\\
\varepsilon _{n_0}: & 0\ar[r]&C^{(1)}\ar[u]^{\id}\ar[r]^{i}&
C^{(n_0)}\ar[u]^{i^{(n_0)}}\ar[r]^{j}&
C^{(n_0-1)}\ar[u]^{i^{(n_0-1)}}\ar[r]&0}
\end{equation}
\begin{equation}
\label{AE5.2}
\xymatrix{\varepsilon '_{n_0+1}: & 0\ar[r]&C^{(1)}\ar[r]^{i'_1}&
C_1^{(n_0+1)}\ar[r]^{j'_1}&C^{(n_0)}\ar[r]&0\\
\varepsilon _{n_0}: & 0\ar[r]&C^{(1)}\ar[u]^{\id}\ar[r]^{i}&
C^{(n_0)}\ar[u]^{i_1^{(n_0)}}\ar[r]^{j}&C^{(n_0-1)}\ar[u]^{i^{(n_0-1)}}\ar[r]&0}
\end{equation}

Here in standard notation of Subsection \ref{AS2.1}, $i_1=i_{1,n_0+1}$,
$i_1'=i_{1,n_0+1}'$, $i=i_{1n_0}$, $j=j_{n_0,n_0-1}$, 
$j_1=j_{n_0+1,n_0}$ and 
$j'_1=j_{n_0+1,n_0}'$ (the dash means that the corresponding morphism is
related to the second $p$-divisible group). We must construct  
isomorphism $f_{n_0+1}:C^{(n_0+1)}\rightarrow C_1^{(n_0+1)}$ such that 
$f_{n_0+1}\circ i^{(n_0)}=i_1^{(n_0)}$. Consider the following commutative
diagram obtained from above two diagrams. 
 \begin{equation}
\label{AE5.3}
\xymatrix{0\rightarrow C^{(1)}\prod C^{(1)}\ar[rr]^{i_1\prod i_1'}&&
C^{(n_0+1)}\underset{C^{(n_0)}}\prod C_1^{(n_0+1)}\ar[rr]^{(j_1,j_1')}
&&C^{(n_0)}\rightarrow 0\\
0\rightarrow C^{(1)}\prod C^{(1)}\ar[u]^{\id}\ar[rr]^{i\prod i}&&
C^{(n_0)}
\underset{C^{(n_0-1)}}\prod C^{(n_0)}\ar[u]^{i^{(n_0)}\prod i^{(n_0)}_1}
\ar[rr]^{(j,j)}&&C^{(n_0-1)}
\ar[u]^{i^{(n_0-1)}}\rightarrow 0}
\end{equation}
Notice that the morphisms of multiplication by $p$ in $C^{(n_0+1)}$
and $C_1^{(n_0+1)}$ can be factored as follows 
$$\xymatrix{C^{(n_0+1)}\ar[rd]_{j_1}\ar[rr]^p&&C^{(n_0+1)}&
  C_1^{(n_0+1)}\ar[rd]_{j_1'}\ar[rr]^p&&C_1^{(n_0+1)}\\
& C^{(n_0)}\ar[ru]_{i^{(n_0)}}&&& C^{(n_0)}\ar[ru]_{i_1^{(n_0)}}&}
$$
Therefore, we obtain the following commutative diagram 
\begin{equation}
\label{AE5.4}
\xymatrix{C_1^{n_0+1)}\prod
  _{C^{(n_0)}}C^{(n_0+1)}\ar[d]^{(j_1',j_1)}\ar[rr]^p&&C_1^{(n_0+1)}\prod
    _{C^{(n_0)}}C^{(n_0+1)}\\
C^{(n_0)}\ar[rr]^{\nabla }&& C^{(n_0)}\prod _{C^{(n_0-1)}}C^{(n_0)}
\ar[u]^{i_1^{(n_0)}\prod i^{(n_0)}}}
\end{equation}
(here $\nabla $ is the diagonal morphism). Let $\alpha :C^{(1)}\prod
C^{(1)}\rightarrow C^{(1)}$ be the cokernel of the diagonal morphism 
$\nabla :C^{(1)}\rightarrow C^{(1)}\prod C^{(1)}$. Clearly, $\nabla $
and $\alpha $ are, resp., strict monomorphism and strict epimorphism. 
Set $(D_{n_0+1},\alpha _1)=\Coker\,(
(i_1\prod i'_1)\circ\nabla )$ and $(D_{n_0},\alpha _0)=\Coker\,(
  (i\prod i)\circ\nabla )$. Applying $\alpha _*$ to diagram \eqref{AE5.3} 
  obtain the two lower rows of the following diagram 
\begin{equation}
\label{AE5.5}
\xymatrix{0\ar[r]&C^{(1)}\ar[r]&D_0\ar[r]&C^{(1)}\ar[r]&0\\
0\ar[r]&C^{(1)}\ar[u]^{id}\ar[r]&D_{n_0+1}
\ar[r]\ar[u]^s&C^{(n_0)}\ar[u]^{j_{n_01}}\ar[r]&0\\
0\ar[r]&C^{(1)}\ar[u]^{id}\ar[r]&D_{n_0}
\ar[r]\ar[u]^u&C^{(n_0-1)}\ar[r]\ar[u]^{i_{n_0-1,n_0}}&0}
\end{equation}
Note that the middle line of this diagram equals $\varepsilon
_{n_0+1}-\varepsilon '_{n_0+1}\in\Ext\,(C^{(n_0)},C^{(1)})$, and 
at the third row we have a trivial extension. This implies the
existence of 
the first row of our diagram. As it was pointed out earlier, 
a splitting of the third line can be done via the morphism $f$ from
the commutative diagram 
\begin{equation}
\label{AE5.6}
\xymatrix{& C^{(n_0)}\prod
_{C^{(n_0-1)}}C^{(n_0)}\ar[rd]^{\alpha _0}\ar[ld]_{p_1-p_2} & \\
C^{(1)}&&D_{n_0}\ar[ll]_f}
\end{equation}
(Notice that the morphism $s:D_{n_0+1}\rightarrow D_0$ is the cokernel 
of the composition $\Ker f\rightarrow D_{n_0}\overset{u}\rightarrow D_{n_0+1}$. )

Above diagram \eqref{AE5.4} means that the morphism of multiplication by
$p$ on $C_1^{(n_0+1)}\prod _{C^{(n_0)}}C^{(n_0+1)}$ factors through
the diagonal embedding of $C^{(n_0)}$ into $C^{(n_0)}\prod
_{C^{(n_0-1)}}C^{(n_0)}$. 
From diagram \eqref{AE5.6} it follows that $p\,\mathrm{id} _{D_{n_0+1}}$ factors through
the embedding 
$\mathrm{\,Ker}f\rightarrow D_{n_0}\overset{u}\rightarrow  D_{n_0+1}$. Therefore, 
$pD_0=0$ i.e. the first line in diagram \eqref{AE5.5} is an element of the
trivial 
group $\Ext _{\c S_1}(C^{(1)},C^{(1)})=0$. 
So, the second row in \eqref{AE5.5} is a trivial extension,
i.e. the extensions $\varepsilon _{n_0+1}$ and $\varepsilon '_{n_0+1}$
from diagrams \eqref{AE5.1} and \eqref{AE5.2} are equivalent. This implies the existence of 
isomorphism $f_{n_0+1}$. 
\end{proof}

\subsubsection{Splitting of extensions of $p$-divisible groups}\label{AS2.3}

\begin{Thm} \label{T2} Suppose $(C^{(n)},i^{(n)})_{n\ge 0}$ is a $p$-divisible
  group in the category $\c S$ and there are $D_1,D_2\in\c S_1$
  such that $C^{(1)}\in\Ext _{\c S_1}(D_2,D_1)$ and 
$\Ext _{\c S_1}(D_1,D_2)=0$. Then there is an exact sequence
of $p$-divisible groups 
$$\xymatrix{0\ar[r]&(C_1^{(n)},i_1^{(n)})_{n\geqslant
    0}\ar[r]&(C^{(n)},i^{(n)})_{n\geqslant 
    0}\ar[r]&(C_2^{(n)},i_2^{(n)})_{n\ge 0}\ar[r]&0}
$$
in $\c S$ such that $C_1^{(1)}=D_1$ and $C_2^{(1)}=D_2$. 
\end{Thm}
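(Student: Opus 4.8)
The plan is to construct the sub-$p$-divisible group $(C_1^{(n)},i_1^{(n)})_{n\geqslant 0}$ inside the given $(C^{(n)},i^{(n)})_{n\geqslant 0}$ level by level, by induction on $n$, and then to produce $(C_2^{(n)},i_2^{(n)})_{n\geqslant 0}$ as the cokernel tower; the hypothesis $\Ext_{\c S_1}(D_1,D_2)=0$ will be fed in only at the killed-by-$p$ level, after all obstructions have been pushed down the tower. Throughout I would work with the formalism developed in Appendix~\ref{A}: fibered products and coproducts, axioms A1 and A2, the behaviour of strict mono/epimorphisms, and the two six-term $\Hom$--$\Ext$ exact sequences of Subsection~\ref{AS1.3}. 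The construction is essentially dual in flavour to the proof of Theorem~\ref{T1}, and I expect to reuse its main device (the ``$p_1-p_2$'' factorisation of multiplication by $p$) at the decisive point.

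First I would set up the induction. At level $1$ put $C_1^{(1)}=D_1$ with its given strict monomorphism $\gamma_1\colon D_1\hookrightarrow C^{(1)}$, whose cokernel is $D_2$. Assume that for all $m\leqslant n$ one has strict monomorphisms $\gamma_m\colon C_1^{(m)}\hookrightarrow C^{(m)}$ commuting with the $i^{(m)}$, together with cokernels $\delta_m\colon C^{(m)}\twoheadrightarrow C_2^{(m)}$, such that $(C_1^{(m)})_{m\leqslant n}$ and $(C_2^{(m)})_{m\leqslant n}$ form $p$-divisible groups truncated at level $n$ in the sense of Subsection~\ref{AS2.1}, compatibly with $(C^{(m)})$. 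For the inductive step, form the fibered product $E=C^{(n+1)}\times_{C^{(n)}}C_1^{(n)}$ along $j_{n+1,n}$ and $\gamma_n$. By axiom A2, $E$ is a strict subobject of $C^{(n+1)}$ with $C^{(n+1)}/E\cong C_2^{(n)}$, and the kernel $i_{1,n+1}(C^{(1)})$ of $j_{n+1,n}$ gives a short exact sequence $0\to C^{(1)}\to E\to C_1^{(n)}\to 0$. Pushing this out along the given surjection $C^{(1)}\twoheadrightarrow D_2$ (axiom A1) produces an extension $\bar\varepsilon\colon 0\to D_2\to\bar E\to C_1^{(n)}\to 0$ with $\bar E=E/i_{1,n+1}(\gamma_1(D_1))$. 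If $\bar\varepsilon$ is split, a section $C_1^{(n)}\to\bar E$ pulled back along $E\to\bar E$ yields a strict monomorphism $\gamma_{n+1}\colon C_1^{(n+1)}\hookrightarrow E\hookrightarrow C^{(n+1)}$ with $C_1^{(n+1)}\cap i_{1,n+1}(C^{(1)})=i_{1,n+1}(D_1)$ and $j_{n+1,n}(C_1^{(n+1)})=\gamma_n(C_1^{(n)})$; a diagram chase then shows $\gamma_{n+1}$ is tower-compatible, that $C_1^{(n+1)}/i_{n,n+1}(C_1^{(n)})\cong D_1$ and $\Ker(p^n\id_{C_1^{(n+1)}})=i_{n,n+1}(C_1^{(n)})$, and that $C_2^{(n+1)}:=\Coker\gamma_{n+1}$ continues the quotient tower (it sits in $0\to D_2\to C_2^{(n+1)}\to C_2^{(n)}\to 0$). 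So the step reduces to showing $[\bar\varepsilon]=0$ in $\Ext_{\c S}(C_1^{(n)},D_2)$.

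Killing this obstruction is the hard part. Here I would run a dévissage down the tower $(C_1^{(m)})$: truncated $p$-divisibility gives $C_1^{(n)}/pC_1^{(n)}\cong C_1^{(1)}=D_1$ and $pC_1^{(n)}\cong C_1^{(n-1)}$, so the six-term sequence of Subsection~\ref{AS1.3} attached to $0\to C_1^{(n-1)}\to C_1^{(n)}\to D_1\to 0$ squeezes $\Ext_{\c S}(C_1^{(n)},D_2)$ between $\Ext_{\c S}(D_1,D_2)$ and $\Ext_{\c S}(C_1^{(n-1)},D_2)$; iterating, the vanishing of $[\bar\varepsilon]$ is reduced, level by level, to the triviality of the analogous auxiliary extensions $E/i(D_1)$ and ultimately to $\Ext_{\c S}(D_1,D_2)$. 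The delicate point — which I expect to be the main obstacle — is that these auxiliary extensions a priori live in $\Ext_{\c S}$ rather than in $\Ext_{\c S_1}$, since an extension of a $p^n$-torsion object by a $p$-torsion object need not be killed by $p$. This should be dealt with exactly as in the proof of Theorem~\ref{T1}: using that multiplication by $p$ on $C^{(n+1)}$ factors as $i_{n,n+1}\circ j_{n+1,n}$ while on $C^{(1)}$ it is zero, one shows that $pE$ lands in the relevant submodule, so that the obstruction extension — equivalently, the difference of the two competing continuations of the tower, formed via the cokernel of a diagonal morphism and a $p_1-p_2$ map as in diagrams \eqref{AE5.3}--\eqref{AE5.6} — is annihilated by $p$ and hence defines a class in $\Ext_{\c S_1}(D_1,D_2)=0$.

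Finally I would assemble the quotient: set $C_2^{(n)}=\Coker(\gamma_n\colon C_1^{(n)}\to C^{(n)})$, so that functoriality of cokernels together with axiom A1 supplies the transition morphisms $i_2^{(n)}$, and then verify by the $3\times 3$ diagram chases of Subsections~\ref{AS1} and~\ref{AS2.1}, applied to the diagrams relating the $C_1$-, $C$- and $C_2$-towers, that $(C_2^{(n)},i_2^{(n)})_{n\geqslant 0}$ satisfies the $p$-divisible group axioms and that $0\to(C_1^{(n)})\to(C^{(n)})\to(C_2^{(n)})\to 0$ is a short exact sequence of $p$-divisible groups in $\c S$ with $C_1^{(1)}=D_1$ and $C_2^{(1)}=D_2$, as required.
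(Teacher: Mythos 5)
Your overall architecture is sound and is organized genuinely differently from the paper's: you run a single induction, pulling back the whole previously constructed layer via the fibered product $E=C^{(n+1)}\prod_{C^{(n)}}C_1^{(n)}$ and reading the obstruction as a class $[\bar\varepsilon]\in\Ext_{\c S}(C_1^{(n)},D_2)$, whereas the paper builds the triangular array $C_{kt}$ (diagrams $E_k^t$, $\Delta_k^t$, $\Omega_k^t$) that peels off one copy of $D_1$ at a time, so that at every stage the obstruction is already an extension of $D_1$ by $D_2$. The reduction of the lifting problem to the splitting of $\bar\varepsilon$, the recovery of $C_1^{(n+1)}$ from a section, and the cokernel construction of the $C_2$-tower are fine in outline. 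The gap is in the decisive step, the vanishing of $[\bar\varepsilon]$. Your d\'evissage only squeezes the class between $\Ext_{\c S}(D_1,D_2)$ and $\Ext_{\c S}(C_1^{(n-1)},D_2)$, and neither group is assumed to vanish: the hypothesis is $\Ext_{\c S_1}(D_1,D_2)=0$, not $\Ext_{\c S}(D_1,D_2)=0$, so you must exhibit a \emph{specific} killed-by-$p$ extension of $D_1$ by $D_2$ whose splitting forces that of $\bar\varepsilon$. The mechanism you invoke for this, the $p_1-p_2$ difference trick of Theorem \ref{T1} and diagrams \eqref{AE5.3}--\eqref{AE5.6}, does not apply here: there are no two competing towers to subtract, and $\bar E$ itself is certainly not killed by $p$ once $n\geqslant 2$ (it surjects onto $C_1^{(n)}$), so the assertion that ``the obstruction extension is annihilated by $p$'' is, as stated, about the wrong object.

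What is actually needed at this point (and what the paper's $\Omega$/$\Delta$ bookkeeping secures in its own organization) is roughly the following. Since $p\,\id_{C^{(n+1)}}=i^{(n)}\circ j_{n+1,n}$, multiplication by $p$ on $E$ factors through the projection $\mathrm{pr}:E\to C_1^{(n)}$, via the morphism $C_1^{(n)}\to E$ determined by the pair $(i^{(n)}\circ\gamma_n,\ p\,\id_{C_1^{(n)}})$; composed with $E\to\bar E$ this morphism kills the copy of $D_1$ inside $C_1^{(n)}$ and hence descends to a canonical partial section $\tilde s:C_1^{(n-1)}\to\bar E$ over $pC_1^{(n)}\simeq C_1^{(n-1)}$, whose image is exactly the image of $p\,\id_{\bar E}$. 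Consequently $\bar E/\tilde s(C_1^{(n-1)})$ is an extension of $D_1$ by $D_2$ which \emph{is} killed by $p$, hence split by the hypothesis $\Ext_{\c S_1}(D_1,D_2)=0$, and lifting a complement of $D_2$ back to $\bar E$ splits $\bar\varepsilon$. Some argument of this type (or the paper's layer-by-layer version, where the killed-by-$p$ statement is the assertion $p\,\id_{D^*}=0$ proved from the $\Omega_k^t$ diagrams) must replace your appeal to Theorem \ref{T1}; without it the inductive step is unproved. Note also that maintaining the truncated $p$-divisible-group identities for $C_1^{(n+1)}$ --- which is precisely what makes the above factorization available at the next step --- is itself a nontrivial verification in a pre-abelian category; the paper spends the diagrams \eqref{AD1}--\eqref{AD4} on the analogous check, so your one-line diagram chase should be expanded accordingly.
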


\begin{proof} We have the exact sequence 
$0\longrightarrow D_1\overset{i}\longrightarrow C^{(1)}
\overset{j}\longrightarrow D_2\longrightarrow 0$. 
Set $C_1^{(1)}=D_1$ and $\gamma _1=i$.  We must show for all $n\geqslant 0$, 
the existence of objects $C^{(1)}_n$, strict monomorphisms 
$\gamma _n:C_1^{(n)}\rightarrow C^{(n)}$ and
$i_1^{(n)}:C_1^{(n)}\rightarrow C_1^{(n+1)}$ such that 
$(C_1^{(n)},i_1^{(n)})_{n\geqslant 0}$ is a $p$-divisible group and the
system $(\gamma _n)_{n\geqslant 0}$ defines an embedding of this
$p$-divisible group into the original 
$p$-divisible group $(C^{(n)},i^{(n)})_{n\geqslant 0}$. Agree to use for all $0\leqslant m\leqslant n$, the 
notation $i_{mn}$ and $j_{nm}$ from Subection \ref{AS2.1} for the original
$p$-divisible group and set $C^{(n)}=C_{n0}$. 

Illustrate the idea of proof by considering the case $n=2$. 

Consider the following commutative diagram with exact rows 
$\varepsilon _2$ and $\varepsilon _2^{(1)}=i^*\varepsilon _2$:
$$\xymatrix{\varepsilon _2:0\ar[r]&C_{10}\ar[rr]^{i_{12}}&&C_{20}
\ar[rr]^{j_{21}}&&C_{10}\ar[r]&0\\
\varepsilon _2^{(1)}:0\ar[r]&C_{10}\ar[u]^{id}\ar[rr]^{i_{12}^{(1)}}
&&C_{21}\ar[u]^{\gamma
  _2^{(1)}}
\ar[rr]^{j_{21}^{(1)}}&&C_{11}\ar[u]^i\ar[r]&0}
$$
By axiom {\bf SP1} from Subsection \ref{AS1.2}, $\gamma _2^{(1)}$ is a strict monomorphism
and the equality $p\,\mathrm{id} _{C_{20}}=i_{12}\circ j_{21}$ implies that 
  $p\,\mathrm{id} _{C_{21}}=(i_{12}^{(1)}
\circ i)\circ j_{21}^{(1)}$. Then 
the morphism 
  $j_*:\Ext _{\c S}(C_{11},C_{10})\rightarrow
  \Ext _{\c S}(C_{11},D_2)$ induces the following commutative
    diagram 
 $$\xymatrix{0\ar[r]&C_{10}\ar[d]^j\ar[rr]^{i_{12}^{(1)}}&&
C_{21}\ar[d]^f\ar[rr]^{j_{21}^{(1)}}&&C_{11}\ar[d]^{id}\ar[r]&0\\
0\ar[r]&D_2\ar[rr]&&D_{21}\ar[rr]&&C_{11}\ar[r]&0}
$$
and $(C_{11},i_{12}^{(1)}\circ i)=\Ker f$. From the above
decomposition of $p\,\mathrm{id} _{C_{21}}$ it follows that it factors through
the embedding of $\mathrm{Ker}f$, therefore, $p\id_{D_{21}}=0$, i.e. 
$D_{21}\in\Ext _{\c S_1}(C_{11},D_2)=0$. Then the exact
  sequence $\Hom _{\c S}-\Ext _{\c S}$ implies the
  existence of a commutative diagram 
 $$\xymatrix{0\ar[r]&C_{10}\ar[rr]^{i_{12}^{(1)}}&&
C_{21}\ar[rr]^{j_{21}^{(1)}}&&C_{11}\ar[r]&0\\
0\ar[r]&C_{11}\ar[u]^i\ar[rr]^{i_{12}^{(2)}}&&C_{22}\ar[rr]^{j_{21}^{(2)}}
\ar[u]^{\gamma _2^{(2)}}&&C_{11}\ar[u]^{id}\ar[r]&0}
$$
Verify that one can set $C_1^{(2)}=C_{22}$ and
$i_1^{(1)}=i^{(2)}_{12}$. Indeed, 
$$\gamma _2^{(2)}\circ p\,\mathrm{id} _{C_{22}}= p\,\mathrm{id} _{C_{21}}
\circ\gamma _2^{(2)}=
(i_{12}^{(1)}\circ i)
\circ 
(j_{21}^{(1)}\circ\gamma _2^{(2)})
=
\gamma _2^{(2)}
\circ i_{12}^{(2)}\circ 
j_{21}^{(2)}$$ 
and because $\gamma _2^{(2)}$ is monomorphism,
$p\,\mathrm{id}_{C_{22}}= i_{12}^{(2)}\circ j_{21}$. This means that we constructed a
segment of length 2 of the $p$-divisible group 
$(C_1^{(n)},i_1^{(n)})_{n\geqslant  0}$. 

Consider the general case. 
\begin{Lem} For $k\geqslant 1$ and $1\leqslant t\leqslant k$, 
in the category $\c S$ there are the following commutative diagrams
with exact lines (for second diagram $E^t_k, t\ne 1$ and for 
forth diagram $\Omega ^t_k, t\ne k$): 
$$\xymatrix{E_k^1)&0\ar[r]&C_{k-1,0}\ar[rr]^{i_{k-1,k}}&&
C_{k0}\ar[rr]^{j_{k1}}&&C_{10}\ar[r]&0\\
&0\ar[r]&C_{k-1,0}\ar[u]^{id}\ar[rr]^{i_{k-1,k}^{(1)}}&&C_{k1}\ar[rr]^{j_{k1}^{(1)}}
\ar[u]^{\gamma _k^{(1)}}&&C_{11}\ar[u]^{i}\ar[r]&0}
$$
$$\xymatrix{E_k^t)&0\ar[r]&C_{k-1,t-2}\ar[rr]^{i_{k-1,k}^{(t-1)}}&&
C_{k,t-1}\ar[rr]^{j_{k1}^{(t-1)}}&&C_{11}\ar[r]&0\\
&0\ar[r]&C_{k-1,t-1}\ar[u]^{\gamma _{k-1}^{(t-1)}}\ar[rr]^{i_{k-1,k}^{(t)}}&&C_{kt}\ar[rr]^{j_{k1}^{(t)}}
\ar[u]^{\gamma _k^{(t)}}&&C_{11}\ar[u]^{id}\ar[r]&0}
$$
$$\xymatrix{\nabla _k^t)&0\ar[r]&C_{k-1,t-1}\ar[d]_{j_{k-1,k-2}^{(t-1)}}\ar[rr]^{i_{k-1,k}^{(t)}}&&
C_{kt}\ar[d]_{j_{k,k-1}^{(t)}}\ar[rr]^{j_{k1}^{(t)}}&&C_{11}\ar[d]_{id}\ar[r]&0\\
&0\ar[r]&C_{k-2,t-1}\ar[rr]^{i_{k-2,k-1}^{(t)}}&&C_{k-1,t}\ar[rr]^{j_{k-1,1}^{(t)}}
&&C_{11}\ar[r]&0}
$$
$$
\xymatrix{\Omega
  _k^t)&0\ar[r]&C_{kt}\ar[d]_{j_{k,k-1}^{(t)}}
\ar[rr]^{\gamma _k^{(t)}}&&
C_{k,t-1}\ar[d]_{j_{k,k-1}^{(t-1)}}
\ar[rr]^{f_{k}^{(t)}}&&D_2\ar[d]_{id}\ar[r]&0\\
&0\ar[r]&C_{k-1,t}\ar[rr]^{\gamma _{k-1}^{(t)}}&&C_{k-1,t-1}\ar[rr]^{f_{k-1}^{(t)}}
&&D_2\ar[r]&0}
$$
(Here for all $k\geqslant 0$, $C_{k0}=C^{(k)}$, $i_{k,k+1}=i^{(k)}$, 
$j_{k+1,1}$ and $j_{k+1,k}^{(0)}=j_{k+1,k}$ are the morphisms from
Subsection \ref{AS2.1}, all $i_{k,k+1}^{(t)}$ and $\gamma _k^{(t)}$ are strict
monomorphisms and all $f^{(t)}_{k+1}$ and 
$j_{k+1,k}^{(t)}$ are strict epimorphisms.)
\end{Lem}
\begin{proof} Use diagram $E_1^1$ to set
  $C_{11}=D_1$, $\gamma _1^{(1)}=i$, $j_{11}=\mathrm{id} _{C_{10}}$,
  $j_{11}^{(1)}=\mathrm{id} _{C_{11}}$. 
Then for any $k\geqslant 2$, the upper row of $E_k^1$ is
the short exact sequence $\varepsilon _k\in\Ext _{\c
  S}(C_{10},C_{k-1,0})$ from the original $p$-divisible group 
$(C_{k0},i^{(k)})_{k\ge 0}$. Therefore, its lower row equals 
$i^*\varepsilon _k\in\Ext _{\c S}(D_1,C_{k-1,0})$. This
defines the objects $C_{k1}$, strict monomorphisms $i_{k-1,k}^{(1)}$,
strict epimorphisms $j_{k1}^{(1)}$ and morphisms $\gamma
_k^{(1)}$, which are strict monomorphisms (use axiom A1 and 
that $i$ is a strict monomorphism). 

For any $k\ge 2$, the relation 
$(j_{k,k-1})_*\varepsilon _k=\varepsilon _{k-1}$ 
implies the relation 
$(j_{k,k-1})_*(i^*\varepsilon
_k)=i^*\varepsilon _{k-1}$. This gives  the morphism
$j_{k,k-1}^{(1)}:C_{k1}\rightarrow C_{k-1,1}$ such that $\Delta _k^1$
commutes. Because $j_{k-1,k-2}$ is a strict
epimorphism so is the morphism $j_{k,k-1}^{(1)}$. 

The upper row of diagram $\Omega _k^1$ is obtained from the
middle column of diagram $E_k^1$ because $\mathrm{Coker}\gamma
_k^{(1)}\simeq \mathrm{Coker}i=(D_2,j)$. Similarly, 
the lower row of $\Omega _k^1$ is obtained from diagram 
$E_{k-1}^1$. The left square of $\Omega ^1_k$ is commutative by the
definition of $j_{k,k-1}^{(1)}$. The 
right square is commutative because $\Omega ^1_k$ relates 
diagrams $E_k^1$ and $E_{k-1}^1$. For $k=2$, the constructed morphism
$j_{k,k-1}^{(1)}$ clearly coincides with the morphism $j_{k,k-1}$ from diagram $E_k^1$. 

Suppose now we are given integers $k_0\geqslant 2$ 
and $t_0<k_0$ such that
diagrams $E_k^t$, $\Delta _k^t$ and $\Omega ^t_k$ have been
already constructed for all $k<k_0$ and all 
relevant $t$ and for 
$k=k_0$ and all $1\leqslant t\leqslant t_0$.

{\it Constructing $E_{k_0}^{t_0+1}$}. 
Consider the following diagram obtained by applying
$(f_{k_0-1}^{(t_0)})_*$ to the lower
row of $E_{k_0}^{t_0}$: 
$$\xymatrix{\varepsilon _{k_0}^{(t_0)}:&0\ar[r]&C_{k_0-1,t_0-1}\ar[d]_{f^{(t_0)}_{k_0-1}}
\ar[rr]^{i_{k_0-1,k_0}^{(t_0)}}&&C_{k_0,t_0}\ar[d]
\ar[rr]^{j_{k_01}^{(t_0)}}&&C_{11}\ar[d]_{id}\ar[r]&0\\
&0\ar[r]&D_2\ar[rr]&&D^*\ar[rr]&&C_{11}\ar[r]&0}
$$
Clearly, 
$\Ker \,(C_{k_0t_0}\rightarrow D^*)=(C_{k_0-1,t_0},
i^{(t_0)}_{k_0-1,k_0}\circ\gamma _{k_0-1}^{(t_0)})$. 
Consider the strict
monomorphism 
$\gamma _{k_0t_0}:=\gamma_{k_0}^{(1)}
\circ \ldots \circ \gamma _{k_0}^{(t_0)}:C_{k_0t_0}\rightarrow C_{k_00}$ 
and an analogous morphism 
$\gamma _{k_0-1,t_0-1}:C_{k_0-1,t_0-1}\rightarrow C_{k_0-1,0}$. 
Because $t_0\ne k_0$, one can obtain from diagrams $\Omega
^{t_0}_{k_0}$ and $E^t_{k_0}$ the following commutative diagram
$$\xymatrix{C_{k_0t_0}\ar[d]_{j_{k_0,k_0-1}^{(t_0)}}
\ar[rrrr]^{\gamma _{k_0t_0}}&&&&C_{k_00}\ar[d]_{j_{k_0,k_0-1}}\\
C_{k_0-1,t_0}\ar[rr]^{\gamma _{k_0-1}^{(t_0)}}&&C_{k_0-1,t_0-1}\ar[d]_{i^{(t_0)}_{k_0-1,k_0}}
\ar[rr]^{\gamma _{k_0-1,t_0-1}}&&C_{k_0-1,0}\ar[d]_{i_{k_0-1,k_0}}\\
&&C_{k_0t_0}\ar[rr]^{\gamma _{k_0t_0}}&&C_{k_00}}
$$
Then $p\mathrm{id} _{C_{k_00}}=i_{k_0-1,k_0}\circ j_{k_0,k_0-1}$ implies 
that 
$$p\mathrm{id} _{C_{k_0t_0}}=
 (i^{(t_0)}_{k_0-1,k_0}\circ\gamma _{k_0-1}^{(t_0)})
\circ j_{k_0,k_0-1}$$ 
i.e.  $p\mathrm{id} _{C_{k_0t_0}}$ factors through 
$\mathrm{Ker}(C_{k_0t_0}\rightarrow D^*)$ and, therefore, 
$p\mathrm{id} _{D^*}=0$. Then $\Ext _{\c S_1}(C_{11},D_2)=0$ implies
that 
$(f^{(t_0)}_{k_0-1})_*\varepsilon ^{(t_0)}_{k_0}=0$, 
and we obtain from the exact sequence 
$\Hom _{\c S}-\Ext _{\c S}$ that the following commutative diagram
with rows $\varepsilon _{k_0}^{(t_0)}$ and $\varepsilon _{k_0}^{(t_0+1)}$ 
can be taken as $E_{k_0}^{t_0+1}$:
$$\xymatrix{0\ar[r]&
C_{k_0-1,t_0-1}\ar[rr]^{i^{(t_0)}_{k_0-1,k_0}}&&C_{k_0t_0}
\ar[rr]^{j_{k_01}^{(t_0)}}&&C_{11}\ar[r]&0\\
0\ar[r]&C_{k_0-1,t_0}
\ar[u]_{\gamma _{k_0-1}^{(t_0)}}\ar[rr]^{i_{k_0-1,k_0}^{(t_0+1)}}&&
C_{k_0,t_0+1}\ar[u]_{\gamma _{k_0}^{(t_0+1)}}
\ar[rr]^{j_{k_01}^{(t_0+1)}}&&C_{11}\ar[u]^{id}\ar[r]&0}
$$

{\it Constructing $\Delta ^{t_0+1}_{k_0}$}. Assume that $t_0+1<k_0$. 
The above extension $\varepsilon _{k_0}^{t_0+1}$ is not
uniquely defined. Show that its choice can be done in such a way
that all diagrams $\Delta _{k_0}^{t_0+1}$ were commutative. Consider the 
short exact sequences from diagram $\Omega _{k_0-1}^{t_0}$.  
They give rise to the following exact sequences of abelian groups 
(where for $i=1,2$, $H_i:=\Hom (C_{11},D_i)$ and 
$E_i=\Ext (C_{11},D_i)$),

\begin{equation}\label{AF1}
\xymatrix 
{H_1\ar[dd]^{id}\ar[r]&\Ext(C_{11},C_{k_0-1,t_0})
\ar[dd]_{j^{(t_0)}_{k_0-1,k_0-2*}}\ar[rr]^{\gamma _{k_0-1*}^{(t_0)}}
&&\Ext(C_{11},C_{k_0-1,t_0-1})\ar[dd]^{j^{(t_0-1)}_{k_0-1,k_0-2*}}\ar[r]&
E_1\ar[dd]^{id}\\
&&&&&\\
H_2\ar[r]&\Ext(C_{11},C_{k_0-2,t_0})
\ar[rr]^{\gamma _{k_0-2*}^{(t_0)}}
&&\Ext(C_{11},C_{k_0-2,t_0-1})\ar[r]&E_2}
\end{equation}
As we saw earlier, the commutativity of $E_{k_0}^{t_0+1}$ is
equivalent  to the relation 
\begin{equation}\label{AF2}
(\gamma _{k_0-1}^{(t_0)})_*\varepsilon _{k_0}^{(t_0+1)}=\varepsilon
_{k_0}^{(t_0)}
\end{equation}
From $\Delta _{k_0}^{t_0}$ it follows that 
$\varepsilon _{k_0-1}^{(t_0)}=(j_{k_0-1,k_0-2}^{(t_0-1)})_*
\varepsilon _{k_0}^{(t_0)}$, and from $E_{k_0-1}^{t_0+1}$ it follows that 
$(\gamma _{k_0-2}^{(t_0)})_*\varepsilon _{k_0-1}^{(t_0+1)}=\varepsilon
_{k_0-1}^{(t_0)}$. Then \eqref{AF1} implies that the extension
$\varepsilon _{k_0}^{(t_0+1)}$ from relation \eqref{AF2} can be
chosen in such a way that 
$(j_{k_0-1,k_0-2}^{(t_0)})_*\varepsilon _{k_0}^{(t_0+1)}=\varepsilon
_{k_0-1}^{(t_0+1)}$, 
and this gives $\Delta _{k_0}^{t_0+1}$. 

{\it Constructing $\Omega _{k_0}^{t_0+1}$.}
The above arguments imply that the left squares of diagrams
$E_{k_0}^{t_0+1}$ and $E_{k_0-1}^{t_0}$ are related via the following
commutative diagram 
$$\xymatrix{
C_{k_0-1,t_0-1} \ar[rrrrrr]^{i_{k_0-1,k_0}^{(t_0)}}
\ar[rdd]^{j_{k_0-1,k_0-2}^{(t_0-1)}}  & & & & & & C_{k_0t_0} 
\ar[ldd]^{j_{k_0,k_0-1}^{(t_0)}} \\ 
& & & & & &   \\ 
 & C_{k_0-1,t_0-2} \ar[rrrr]^{i^{(t_0-1)}_{k_0-2,k_0-1}} & & & &   
C_{k_0-1,t_0-1}   & \\
& & & & & &  \\ 
& & & & & &  \\ 
& & & & & & \\ 
& & & & & & \\ 
 & C_{k_0-2,t_0-1}\ar[uuuuu]^{\gamma _{k_0-2}^{(t_0-1)}} \ar[rrrr]^{i_{k_0-2,k_0-1}^{(t_0)}} & & & &
C_{k_0-1,t_0} \ar[uuuuu]^{\gamma _{k_0-1}^{(t_0)}}&   \\
& & & & & & \\
C_{k_0-1,t_0} \ar[rrrrrr]^{i^{(t_0+1)}_{k_0-2,k_0-1}}\ar[ruu]^{j_{k_0-1,k_0-2}^{(t_0)}} 
\ar[uuuuuuuuu]^{\gamma _{k_0-1}^{(t_0-1)}} & & & 
& & & C_{k_0,t_0+1} \ar[uuuuuuuuu]^{\gamma _{k_0}^{(t_0+1)}}
\ar[luu]^{j_{k_0,k_0-1}^{(t_0+1)}} \\
}$$
From diagrams $\Omega _{k_0-1}^{t_0}$, $E^{t_0+1}_{k_0}$ and
  $E_{k_0-1}^{t_0}$ it follows that the induced map 
$\Coker\gamma _{k_0}^{(t_0+1)}\rightarrow\Coker\gamma
_{k_0-1}^{(t_0)}\simeq D_2$ is isomorphism. This is equivalent to the
existence of $\Omega ^{t_0+1}_{k_0}$. The lemma is proved.
\end{proof} 
For any $k\geqslant 1$, set $C_{kk}=C_1^{(k)}, i_{k-1,k}^{(k)}=i_1^{(k)}$. 
Then use diagrams $E_k^k$ to define the inductive system 
$(C_1^{(k)}, i_1^{(k)})_{k\geqslant 0}$. Denote by $\gamma _k$ the strict
monomorphism 
$\gamma _k^{(1)}
\circ\ldots \circ
\gamma _k^{(k)}
:C_1^{(k)}\rightarrow C^{(k)}$. From diagrams $E_k^t$, 
$1\le t\le k$, obtain the following commutative diagrams:
\begin{equation} \label{AD1}
\xymatrix{0\ar[r]&C^{(k-1)}\ar[rr]^{i^{(k)}}&&C^{(k)}
\ar[rr]^{j_{k1}}&&C^{(1)}\ar[r]&0\\
0\ar[r]&C_1^{(k-1)}\ar[u]^{\gamma _{k-1}}\ar[rr]^{i_1^{(k)}}&&C_1^{(k)}
\ar[u]^{\gamma _k}\ar[rr]^{j_{k1}^{(k)}}&&C_1^{(1)}\ar[u]^{\gamma _1}\ar[r]&0}
\end{equation}
It remains only to prove that the inductive system
$(C_1^{(n)},i_1^{(n)})_{n\ge 0}$ is a $p$-divisible group in $\c
S$. 
From diagrams $E_k^k$ and $\Delta _k^{k-1}$ obtain the following
commutative diagrams with exact rows 
\begin{equation}\label{AD2}
\xymatrix{0\ar[r]&C_{k-1,k-1}
\ar[dd]_{j_{k-1,k-2}^{(k-2)}\circ\gamma _{k-1}^{(k-1)}}\ar[rr]^{i_{k-1,k}^{(k)}}&&
C_{kk}\ar[dd]^{j_{k,k-1}^{(k-1)}\circ\gamma _k^{(k)}}\ar[rr]^{j_{k1}^{(k)}}&&C_{11}
\ar[dd]^{id}\ar[r]&0\\
&&&&&&\\
0\ar[r]&C_{k-2,k-2}\ar[rr]^{i_{k-2,k-1}^{(k-1)}}&&C_{k-1,k-1}\ar[rr]^{j_{k-1,1}^{(k-1)}}
&&C_{11}\ar[r]&0}
\end{equation}

If $k=3$ then the left vertical morphism of this diagram is equal to 
$j_{21}^{(1)}\circ\gamma _2^{(2)}=j_{21}^{(2)}$ and is a strict
monomorphism. By induction all
morphisms $j'_{k,k-1}:=j_{k,k-1}^{(k-1)}\circ\gamma _k^{(k)}$ are strict
epimorphisms and are included in the following commutative diagrams 
\begin{equation}\label{AD3}
\xymatrix{C^{(k)}\ar[rr]^{j_{k,k-1}}&&C^{(k-1)}\\
C_1^{(k)}\ar[u]^{\gamma _k}\ar[rr]^{j'_{k,k-1}}&&C_1^{(k-1)}\ar[u]^{\gamma _{k-1}}}
\end{equation}
For $0\leqslant m\leqslant n$, set 
$j'_{nm}=j'_{m+1,m}
\circ\ldots \circ 
j'_{n,n-1}$ and
$i'_{mn}=
i'_{n-1,n}
\circ\ldots\circ 
 i'_{m,m+1}$. Composing diagrams \eqref{AD2} 
obtain the following commutative diagram with exact rows 
$$\xymatrix{0\ar[r]&C_1^{(n-1)}\ar[dd]^{j'_{n-1,m-1}}\ar[rr]^{i'_{n-1,n}}&&
C_1^{(n)}\ar[dd]^{j'_{nm}}\ar[rr]^{j^{(n)}_{n1}}&&C_1^{(1)}\ar[dd]^{id}\ar[r]&0\\
&&&&&\\
0\ar[r]&C_1^{(m-1)}\ar[rr]^{i'_{m-1,m}}&&C_1^{(m)}
\ar[rr]^{j_{m1}^{(m)}}&&C_1^{(1)}\ar[r]&0}
$$
Thus, $i'_{n-1,n}$ induces the isomorphism
$\Ker\,j'_{n-1,m-1}\simeq \Ker\, j'_{nm}$. Therefore, 
$\Ker\,j'_{nm}=(C_1^{(n-m)},i'_{n-m,n})$ if we prove that 
\begin{equation}\label{AD4}
\Ker\, j'_{k1}=(C_1^{k-1}, i'_{k-1,k}).
\end{equation}

As we noticed earlier, $j'_{k1}=
j_{21}
\circ\ldots
\circ 
j_{k,k-1}$. Therefore, diagrams \eqref{AD3} imply that 
$j_{k1}\circ\gamma _k=\gamma _1\circ j'_{k1}$. Now diagram \eqref{AD1} implies that 
$\gamma _1\circ j_{k1}^{(k)}=\gamma _1\circ j'_{k1}$ and, therefore,
$j_{k1}^{(k)}=j'_{k1}$ because $\gamma _1$ is monomorphism. Hence
equality \eqref{AD4} folows from diagram \eqref{AD1} and 
$(C_1^{(n)}, i_1^{(n)})_{n\geqslant 0}$ satisfies 
the part a) of the definition of $p$-divisible groups. 

From diagrams \eqref{AD1} and \eqref{AD3} 
one can easily obtain for all indices $0\leqslant 
m\leqslant n$, the commutativity  of the following diagrams 
$$\xymatrix{C^{(n)}\ar[rr]^{j_{n,n-m}}&&
C^{(n-m)}\ar[rr]^{i_{n-m,n}}&&C^{(n)}\\
C_1^{(n)}\ar[u]^{\gamma _n}\ar[rr]^{j'_{n,n-m}}&&
C_1^{(n-m)}\ar[u]^{\gamma
  _{n-m}}\ar[rr]^{i'_{n-m,n}}&&C_1^{(n)}\ar[u]^{\gamma _n}}
$$
Because $\gamma _n$ is monomorphism, the equality
$i_{n-m,n}\circ j_{n,n-m}=p^m\mathrm{id} _{C^{(n)}}$ implies the equality 
$ i'_{n-m,n}\circ j'_{n,n-m}=p^m\mathrm{id} _{C_1^{(n)}}$. This gives the part b)
of the definition of $p$-divisible groups for $(C_1^{(n)},
i_1^{(n)})_{n\geqslant 0}$. The proposition is proved. 
\end{proof}

\section{SAGE program} \label{B} 

This program computes the class number of 
the field $\Q (\root 3\of 3,\zeta _9)$ and finds the basis 
$f_1,f_2,\dots ,f_9$ of the 3-subgroup of units in 
this field such that 
for the normalized 3-adic valuation $v_3$ and all $1\leqslant i\leqslant 9$, 
the natural numbers  
$a_i=18v_3(f_i\pm 1)$  
are prime to 3 and 
$1\leqslant a_1<a_2<\dots <a_9$. The result appears as the 
vector $af =(a_1,a_2,\dots ,a_9)=(1,2,4,5,7,8,10,13,16)$. 

\ \ 

\begin{verbatim}

sage: L.<b>=NumberField(x^3-3); 
sage: R.<t>=L[]
sage: M.<c>=L.extension(t^6+t^3+1);  
sage: X.<d>=M.absolute_field(); 
sage: h=X.class_number(); 
sage: e=X.units()
sage: e.append(X.zeta(9))
sage: def p(x): 
...       for i in range(1,3):
...          if valuation(norm(X(x+2*i-3)),3)!=0:
...               break 
...       return valuation(norm(X(x+2*i-3)),3)
...          
...
sage: a=[p(x) for x in e]
sage: f=[e.pop(a.index(min(a)))]
sage: while len(e)!=0:
...       a=[p(x) for x in e]
...       i0=a.index(min(a))
...       
...       for j in range(len(f)):
...           for k in range(5):
...               s=0
...               if p(f[j]^(3^k))>p(e[i0]):
...                   break
...              
...               if p(e[i0])==p(f[j]^(3^k)):
...                   s=1
...                   break
...               
...           if s==1:
...               for i in range(1,3):
...                   if p(e[i0])<p(e[i0]/(f[j]^(i*3^k))):
...                       e[i0]=e[i0]/(f[j]^(i*3^k))
...                       break
...                       
...               break
...           if j+1==len(f) and s==0:
...                     f.append(e.pop(i0))
...
sage: af=[p(x) for x in f]; 
sage: print h
sage: print af
1
[1, 2, 4, 5, 7, 8, 10, 13, 16]


\end{verbatim}

\begin{remark}
 First 4 lines introduce the field $X=\Q (\root 3\of 3,\zeta _9)$; 
its elements appear  
as polynomials in variable $d$ of degree $\leqslant 17$. Then we find the 
class number of $X$ and form the array $e=(e[1],\dots ,e[9])$ 
of minimal generators of the group $U/U^3$, where 
$U$ is the group of units in $X$. 
Next block gives a standard procedure to determine for any  
$x\in U$ the maximal natural number $p(x)$ 
 such that $x\pm 1$ is divisible precisely by $\pi ^{p(x)}$, 
where $\pi\in X$, $(\pi ^{18})=(3)$. The remaining part of the program is 
based on a standard 
technique from Linear algebra to rearrange the given system of generators 
$e$ into a new system $f$ with required properties.  As a matter of fact, we use 
that  
the class number of $X$ is prime to 3 (it equals 1) by allowing $k<5$ on 
line 22. (Any unit $x\equiv 1\operatorname{mod}\pi ^{28}$ is a cube 
in the 3-completion of $X$ by Hensel's Lemma and, therefore, is a cube in $X$.) 
The last two lines contain the values of the 
class number of $X$ and the exponents 
$(a(f[1]),\dots ,a(f[9]))$. 

\end{remark}

\end{document}